\def\sqr#1#2{{\vcenter{\vbox{\hrule height.#2pt
              \hbox{\vrule width.#2pt height#1pt \kern#1pt \vrule width.#2pt}
          \hrule height.#2pt}}}}
\def\signed #1{{\unskip\nobreak\hfil\penalty50
          \hskip2em\hbox{}\nobreak\hfil#1
          \parfillskip=0pt \finalhyphendemerits=0 \par}}
\def\endpf{\signed {$\sqr69$}}
\def\esssup{\mathop{\rm esssup}}
\def\essinf{\mathop{\rm essinf}}
\def\max{\mathop{\rm max}}
\def\exp{\mathop{\rm exp}}
\def\inf{\mathop{\rm inf}}
\def\tr{\hbox{\rm tr$\,$}}
\font\tenbb=msbm10 \font\sevenbb=msbm7 \font\fivebb=msbm5
\newtheorem{lemma}{Lemma}[section]
\newtheorem{remark}{Remark}[section]
\newtheorem{theorem}{Theorem}[section]
\newtheorem{definition}{Definition}[section]
\newtheorem{proposition}{Proposition}[section]
\begin{document}

\title{ {Stochastic representation for solutions of a system of coupled HJB-Isaacs equations with integral-partial operators}
}
\author{Sheng Luo \\
{\small Department of Applied Mathematics, The Hong Kong Polytechnic University, Hung Hom, Kowloon,
Hong Kong.}\\
{\small \textit{E-mail: 22038664r@connect.polyu.hk}}\\
Wenqiang Li\thanks{%
W. Li  acknowledges the financial support from the NSF of P.R. China (No. 12101537, 12271304) and the Doctoral Scientific Research Fund of Yantai University (No. SX17B09).
}\\
{\small School of Mathematics and Information Sciences, Yantai University,
Yantai 264005, P. R. China.}\\
{\small \textit{E-mail: wenqiangli@ytu.edu.cn}}\\
Xun Li\thanks{%
 X. Li acknowledges the financial support from the Research Grants Council of Hong Kong under grants (No.~15216720, 15221621, 15226922), PolyU 1-ZVXA, and 4-ZZLT.
 }\\
{\small Department of Applied Mathematics, The Hong Kong Polytechnic University, Hung Hom, Kowloon,
Hong Kong.}\\
{\small \textit{E-mail:li.xun@polyu.edu.hk  }}\\
Qingmeng Wei{ \thanks{ 
Q. Wei  acknowledges the financial support from the NSF of Jilin Province for Outstanding Young Talents
(No. 20230101365JC) and the NSF of P.R. China (No. 11971099).
}\ \thanks{%
Corresponding author.}}\\
{\small School of Mathematics and Statistics, Northeast Normal University,
Changchun 130024, P. R. China.}\\
{\small \textit{E-mail: weiqm100@nenu.edu.cn}}}
\date{\today}
\maketitle

\begin{abstract} In this paper, we focus on the stochastic representation of a system of coupled Hamilton-Jacobi-Bellman-Isaacs (HJB-Isaacs (HJBI), for short) equations which is  in fact a system of coupled  Isaacs' type integral-partial differential equation.
For this, we introduce an associated  zero-sum stochastic differential game, where the state  process is  described by a classical stochastic differential equation (SDE, for short) with jumps, and the cost functional of recursive type is  defined by a new type of  backward stochastic differential equation (BSDE, for short)   with two Poisson random measures, whose wellposedness and  {a} prior estimate as well as the comparison theorem are investigated  for the first time. One of the {Poisson} random measures $\mu$ appearing in the  SDE  and the BSDE stems from the integral term of the HJBI equations; the other random measure in BSDE is introduced to link the coupling factor of the HJBI equations.
We show through an extension of the  dynamic programming principle  that the lower value function of this game problem is  the viscosity solution of the system of our coupled HJBI equations. The uniqueness of the viscosity solution is also obtained in a space of continuous functions satisfying  certain growth condition. In addition,  also the upper  value function of the game is shown to be  the solution of the associated   system of coupled Issacs' type of integral-partial differential equations.
As a byproduct, we obtain the existence of the value for the game problem under the well-known Isaacs' condition.

\end{abstract}

\noindent \textbf{Keywords:} Stochastic representation, coupled FBSDEs with jumps, stochastic differential games, HJBI equations, strong DPP.

\newpage

\section{ Introduction}

Let  $T>0$ be an arbitrarily fixed finite time horizon, $U$ and $V$ be two compact metric spaces, $m\geq 2$ be an integer, and let $ \mathbf{M}=\{1,2,\cdots, m\}$.
In this paper, we focus on  the following  system of integral-partial differential equations :
\begin{equation}
\left\{
\begin{aligned}
&\! \frac{\partial W_{i}}{\partial t}(t,x)+\sup\limits_{u\in U}\underset{v\in V}{%
\mathop{\rm inf}}\Big\{b_{i} (t,x,u,v )DW_{i}(t,x)
  +\frac{1}{2}\tr\big(\sigma _{i}\sigma _{i}^{\ast
}(t,x,u,v)D^{2}W_{i}(t,x)\big)+B^i_{u,v}W_i(t,x) \\
 & \!    +f_{i}\big(t,x,\mathbf{W}(t,x),D
W_{i}(t,x)\sigma _{i}(t,x, u,v), C^i_{u,v}W_i(t,x),u,v\big)\Big\}=0,\ (t,x,i)\in [ 0,T)\times \mathbb{R}^n\times \mathbf{M},\\
&W_{i}(T,x)=\Phi_i (x),\quad (x,i)\in \mathbb{R}^n\times \mathbf{M},
\end{aligned}%
\right.   \label{1}
\end{equation}%
where $\mathbf{W}(t,x)=\big(W_1,\cdots,W_m\big)(t,x)$ and
\begin{equation}\label{ee01}
\begin{aligned}
&B^i_{u,v}W_i(t,x):= \int_{E}\Big[W_i\big(t,x+\gamma_i(t,x,u,v,e)\big)-W_i(t,x)-DW_i(t,x)\gamma_i(t,x,u,v,e)\Big]\nu(de),\\
&C^i_{u,v}W_i(t,x):= \int_{E} \Big[W_i\big(t,x+\gamma_i(t,x,u,v,e)\big)-W_i(t,x)\Big]\rho(x,e)\nu(de),\  (t,x,i)\in \lbrack 0,T]\times \mathbb{R}^n\times \mathbf{M}.
\end{aligned}
\end{equation}
The involved deterministic functions $b_i$, $\sigma_i$, $\gamma_i$, $f_i$, $\Phi_i $, $1\leq i\leq m$, and $\rho$   will be specified in Section 3.
Notice that system \eqref{1} consists of $m$ equations,   and these equations are coupled with each other through the vector $\mathbf{W}(t,x)$ in the functions $f_i$.
We call such an equation \eqref{1}   \emph{system of coupled  Hamilton-Jacobi-Bellman-Isaacs (HJBI) equations  with integral-partial operators}, its solution is the vector-valued function $\mathbf{W}(\cdot,\cdot)$.

Our aim is to study the stochastic representation for the viscosity solution of the system \eqref{1}. For this, we introduce
 an associated two-player  zero-sum stochastic differential game  defined on a Wiener-Poisson space. Precisely, the dynamics is described
  by the doubly controlled stochastic differential equations (SDEs)  with jumps, see  \eqref{fequ3.1}.
 The payoff functional is of recursive type and defined by   the first component  of the solution of  a backward {SDE (BSDE)}  with two Poisson random measures, see \eqref{equ3.1}.
The main result of this paper is to show that the related lower value functions   of the game problems  give  a probabilistic interpretation to   the solution of system \eqref{1}, that is,  the lower value functions solve    system \eqref{1} in  the sense of viscosity solution. To this end, we combine the theory of zero-sum stochastic differential games with the tool of forward-backward SDEs (FBSDEs) with jumps. But beyond the combination of tools, the proof of the main results and necessary auxiliary lemmas turn out to be rather tricky, subtle and technical.

The theory of FBSDEs with jumps has been turned out to be a powerful and useful tool to address a variety of problems including, among others,  optimal control problems \cite{LP, LW-SIAM, TL}, differential game problems \cite{BHL, LW-AMO}, investment portfolio management problems \cite{LM2021}, and also  the weak solution of integral-partial differential equations (PDEs).
 Barles, Buckdahn and Pardoux \cite{BBP}  associated a system of parabolic integral-PDEs with a type of FBSDE with jumps (without controls), where the backward equation was multi-dimensional.
 When  the backward equation was one-dimensional,  Buckdanhn, Hu and Li \cite{BHL} studied a class of Issacs' type integral-PDEs via associated zero-sum stochastic differential  games linked with FBSDE with jumps.   So it seems quite natural and interesting to consider how the stochastic  controlled system with jumps  link  with the Isaacs' type PDEs, when the backward equation in \cite{BHL} is multi-dimensional.
The existing literature  about  the  probabilistic interpretation  of the coupled system of parabolic PDEs including  Pardoux, Pardeilles and Rao \cite{PPR-1997} without control, Buckdahn, Hu \cite{BH-2010} with only one player and our recent work \cite{LLW-2020} with two players, has strongly inspired us for   the transformation technique.
Following the framework of Elliot-Kalton type ``strategy against control" setting for  differential games, we establish a finite set of lower value functions of our zero-sum stochastic differential games and prove that the vector-valued function, constructed via the elements in this set, is a viscosity solution of the system of coupled HJBI equations with integral-partial operators \eqref{1}.
Compared with the existing references, the contribution of our paper includes the following {three} aspects.

(i) Firstly we  introduce  a new type of   BSDEs with two Poisson random measures, which is due to the appearance of the integral term and the coupling factor in system \eqref{1}. For such backward equations,  we carry out the study of   the existence and the uniqueness of the solution, a useful estimate as well as the comparison theorem, which play an important role in our study.

  (ii) It is the first time to study \eqref{1} as the system of Isaacs' type HJB equations.
  We use a  transformation approach to link the coupled   system  \eqref{1} with a forward-backward {double} controlled system, where the backward part is in fact multi-dimensional.
The transformation brings us some new aspects. Firstly,  two Poisson random measures appear in our controlled system,
 one has the task to give a stochastic interpretation to the integral in the HJB-Isaacs system,
 the other one is to overcome the coupling in  system \eqref{1}. Secondly,  the dynamic programming principle (DPP) has  the strong form, i.e., the classical DPP involving stopping times. The proof of the  strong DPP demands a series of delicate  results, whose proof  are far from  routine methods.

 Our research generalizes  previous works in a great extent. Without  the integral and the two control  terms,    system \eqref{1} reduces to  the  case considered in  Pardoux, Pardeilles and Rao \cite{PPR-1997}.
  Buckdahn, Hu \cite{BH-2010} studied  system \eqref{1} without the integral term  and only involving one control term either $u$ or $v$. Compared with {the} recent work \cite{LLW-2020}, system \eqref{1} involves the integral operators, and this is novel.
 When $m=1$, our work is reduced that by  Buckdahn, Hu and Li \cite{BHL}  (seeing also Li, Peng \cite{LP}  involving only one control).

 (iii)  If another form $\inf\limits_{{v\in V}}\sup\limits_{u\in U}\{\cdots\}$ instead of $\sup\limits_{u\in U}\inf\limits_{{v\in V}}\{\cdots\}$ appears in system \eqref{1}, the upper value functions  of our game problem can also provide the probabilistic interpretation for this case in the sense of  the viscosity solution.
Additionally, when assuming the well-known Isaacs' condition, our game problem has a value.

Our paper is organized as follows. In Section 2, we introduce the underlying Wiener-Poisson space and provide some preliminaries for the BSDEs which involve two Poisson random measures.   The framework of the corresponding zero-sum stochastic differential games is introduced in Section 3. Moreover, also the properties of the upper and the lower value functions are   investigated in this section.
  In Section 4, we obtain the existence of the viscosity solution of the system \eqref{1}, as well as the uniqueness in a class of  continuous functions.
Finally, we provide  the proof of the strong DPP (Theorem \ref{SDPP}) in the Appendix.

\section{Preliminaries}
%

\subsection{Some notations}

Let  $\mathbf{L}=\mathbf{M}-\{m\}$ be an index set equipped with its $\sigma$-field $\mathcal{L}$ of all subsets of $\mathbf{L}$, where
$\mathbf{M}=\{1,2,\cdots
,m\}$, and  $m\geq 2$ is a given integer.
We introduce the underlying probability space $(\Omega,\mathcal{F},P)$, which is defined as the completed product space of the following three spaces:

\medskip

\noindent    (i) $(\Omega_1,{\mathcal{F}_1},P_1)$  is a classical Wiener space, that is,  $\Omega_1=C_0(\mathbb{R};\mathbb{R}^d)$ is the set of all continuous functions from $\mathbb{R}$ to $\mathbb{R}^d$ with value $0$ at time zero; $\mathcal{F}_1$ is the completed Borel $\sigma$-field on $\Omega_1$; $P_1$ is the Wiener measure such that the canonical processes $B_s(\omega)=\omega(s)$, $s\in\mathbb{R}_+$, $\omega\in\Omega_1$ and $B_{-s}(\omega)=\omega(-s)$, $s\in\mathbb{R}_+$, $\omega\in\Omega_1$, are two independent $d$-dimensional Brownian motions.
Denote the filtration on $(\Omega_1,{\mathcal{F}_1},P_1)$ by $\mathbb{F}^B=(\mathcal{F}_s^B)_{s\geq 0}$ with
   $$\mathcal{F}_s^B:=\sigma\big\{B_r, r\in(-\infty,s]\big\}\vee \mathcal{N}_{P_1},$$
where $\mathcal{N}_{P_1}$ is the collection of $P_1$-null sets.

\medskip

\noindent    (ii)  $(\Omega_2,{\mathcal{F}_2},P_2)$ is a Poisson space. Let $\Omega_2$ be the set of all $\mathbf{L}$-valued point functions  $p$  defined on  a countable subset  $D_{p}$ of the real line $\mathbb{R}$.  The counting measure  $N$ on $\mathbb{R}\times \mathbf{L}$ at $p\in\Omega_2$ is defined as follows
$$N\big(p,(s,t]\times \Delta\big):=\sharp\big\{r\in D_{p}\cap (s,t]\mid p(r)\in\Delta\big\},\ \Delta\in\mathcal{L},\ s,t\in\mathbb{R},\ s<t,$$
where $\sharp$ denotes the cardinal number of the set. We identify the point function $p$ with $N(p,\cdot)$ and denote by $\mathcal{F}_2$  the smallest $\sigma$-field on $\Omega_2$ such that the coordinate mapping $p\rightarrow N\big(p,(s,t]\times \Delta\big),$ $\Delta\in\mathcal{L},$ $ s,t\in\mathbb{R},$ $ s<t$, is measurable. For fixed $\lambda>0$, we  consider a probability measure $P_2$ on $(\Omega_2,\mathcal{F}_2)$ under which the  coordinate measure $N$ becomes a Poisson random measure with the following compensator  $$\hat{N}\big((s,t]\times \{l\}\big):=(t-s)\lambda\sum
\limits_{n=1}^{m-1}\delta_n(l)\footnote{Here, $\delta_n(\cdot)$ represents Dirac measure over $\mathbf{L}$, for $n=1,\cdots, m-1$.}=\lambda(t-s),\ l\in \mathbf{L}.$$
 Then, the Poisson martingale measure $\big\{\tilde{N}\big((s,t]\times \{l\}\big)\big\}_{
s\leq t}$ is given by
 $$\big\{\tilde{N}\big((s,t]\times \{l\}\big)\big\}_{
s\leq t}:=\big\{(N-\hat N)\big((s,t]\times \{l\}\big)\big\}_{s\leq t}=\big\{N\big((s,t]\times \{l\}\big)-\lambda (t-s)\big\}_{s\leq t},\ \text{for\ all}\ l\in \mathbf{%
L}.$$ Notice that the processes $\big\{\tilde{N}\big((s,t]\times \{l\}\big)\big\}_{s\leq t
},\ 1\leq l\leq m-1$, are independent.
The filtration $(\mathcal{F}_t^N)_{t\geq 0}$ generated by the coordinate measure $N$ is defined as follows
$$\mathcal{F}_t^N:=\Big(\bigcap\limits_{s>t}\dot{\mathcal{F}}_s^N\Big)\vee\mathcal{N}_{P_2},\ t\geq 0,$$
where $\dot{\mathcal{F}}_t^N:=\sigma\big\{N\big((s,r]\times \Delta\big),-\infty<s\leq r\leq t, \Delta\in \mathcal{L}\big\},\ t\geq 0.$

\medskip

\noindent    (iii)  $(\Omega_3,{\mathcal{F}_3},P_3)$ too is    a Poisson space. Let $\Omega_3$ be the set of all point functions $\bar{p}: D_{\bar{p}} \rightarrow E$, where $D_{\bar{p}}$ is a countable subset of the real line $\mathbb{R}$, and $E:=\mathbb{R}^{l}\backslash\{0\}$ is equipped with its Borel $\sigma$-field  $\mathcal{B}(E)$. Denote by $\mathcal{F}_3$ the smallest $\sigma$-field on $\Omega_3$ such that the coordinate mapping $\bar{p}\rightarrow \mu\big(\bar{p},(s,t]\times \Delta\big),$  $ \Delta\in\mathcal{B}(E),$ $s,t\in\mathbb{R},$ $ s<t$, defined analogously to the coordinate mapping $N$ in $(\Omega_2,\mathcal{F}_2,P_2)$,  is measurable.
%
%
   The probability $P_3$ is considered on the measurable space $(\Omega_3,\mathcal {F}_3)$ to ensure that the coordinate measure $\mu(\bar{p},dtde)$ is  a Poisson random measure with the compensator $\hat{\mu}(dtde):=dt\nu(de)$ and the process $\big\{\tilde{\mu}\big((s,t]\times A\big)\big\}_{s\leq t}:=\big\{(\mu-\hat{\mu})\big((s,t]\times A\big)\}_{s\leq t}$ is a martingale, for all  $A\in\mathcal {B}(E)$ satisfying $\nu(A)<\infty$. Here $\nu$ is supposed to be a $\sigma$-finite measure on $\big(E,\mathcal {B}(E)\big)$ with $\displaystyle \int_E\big(1\wedge|e|^2\big)\nu(de)<\infty.$
The filtration generated by $\mu$ on  $(\Omega_3,{\mathcal{F}_3},P_3)$ is denoted by $(\mathcal{F}_t^\mu)_{t\geq 0}$ with
   $\mathcal{F}_t^\mu:=\Big(\bigcap\limits_{s>t}\dot{\mathcal{F}}_s^\mu\Big)\vee\mathcal{N}_{P_3}$, where
$$\dot{\mathcal{F}}_t^{\mu}:=\sigma\Big\{\mu\big((s,r]\times \Delta\big),-\infty<s\leq r\leq t, \Delta\in \mathcal{B}(E)\Big\},\ t\geq 0.$$

\noindent Then, by putting  $\Omega:=\Omega_1\times \Omega_2\times \Omega_3$, $\mathcal {F}:=\mathcal {F}_1\otimes \mathcal {F}_2\otimes \mathcal {F}_3$, $P:=P_1\otimes P_2\otimes P_3$  with $\mathcal {F}$ being  completed with respect to $P$, we introduce the underlying probability space $(\Omega,\mathcal{F},P)$. We denote by  $\mathbb{F}=(\mathcal{F}_t)_{t\geq 0}$   the filtration on  $(\Omega,{\mathcal{F}},P)$, where  $\mathcal{F}_t:=   \mathcal{F}_t^B\otimes\mathcal{F}_t^N\otimes\mathcal{F}_t^\mu$, $t\geq0$, augmented by all $P$-null sets.




\medskip

For $0\leq t\leq s\leq T,i\in \mathbf{M}$ and $l\in \mathbf{L}$, we denote  $%
N_{s}:=  N\big((0,s]\times \mathbf{L}\big),$ $N_{s}(l):=  N\big((0,s]\times \{l\}\big), $ $\tilde{N}%
_{s}(l):=  N_{s}(l)-\lambda s,$
and we introduce an $\mathbf{M}$-valued Markov process
 $
N^{t,i}$ as
$$N_{s}^{t,i}:= \Big(i+\sum\limits_{l=1}^{m-1}lN\big((t,s]\times \{l\}\big)\Big)%
\mbox{mod}(m),$$
where $(j) \mbox{mod}%
(m)$ is identified with $j^{\prime }\in \mathbf{M}$ such that $%
j-j^{\prime }$ is a multiple of $m$, for  any given natural $j\geq 1$.

\medskip

Finally, we introduce the following spaces of the processes which will be used in our paper. Let $t\in [0,T]$. Denote by $\mathcal{P}_{t,T}$ the $\sigma $-field of $(\mathcal{%
F}_{t})_{t\geq 0}$-predictable subsets of $\Omega \times [t,T]. $
\noindent \begin{itemize}[leftmargin=*]

 \item ${\mathcal{S}}_\mathbb{F}^{2}(t,T;\mathbb{R}^n)\!\!:= \!\!  \Big\{\psi \!\mid \!\psi :\Omega\times [t,T]
\rightarrow \mathbb{R}^n$ is  an $(\mathcal{F}_{t})_{t\geq 0}$-adapted c\`{a}dl%
\`{a}g process  with $\mathbb{E}\Big[\sup\limits_{s\in [t,T]}|\psi_{s}|^{2}\Big]<\infty
\Big\}; \!\!\!\!\!\!\!\!$

 \item $\mathcal{M}_\mathbb{F}^{2}(t,T;\mathbb{R}^d)\!\!:= \!\!  \Big\{\varphi \!\mid \!\varphi :\Omega\times[t,T]  \rightarrow \mathbb{R}^d$ is   $(\mathcal{F}_{t})_{t\geq 0}$-%
progressively measurable with $\displaystyle \mathbb{E}  \int_{t}^{T} |\varphi _{t}|^{2}dt< \infty \Big\};\!\!\!\!\!\!\!\!$

\item $\big[L^{2}(\mathcal{P}_{t,T} \otimes \mathcal{L})\big]^{m-1}\!\!:= \!\! \Big\{  H\mid H=\big(H(1),\cdots,H(m-1)\big):\Omega \times \lbrack t,T]\times \mathbf{L}\rightarrow \mathbb{R}^{m-1}$ is $%
\mathcal{P}_{t,T}
\otimes \mathcal{L}$-measurable\\
\mbox{\hskip4cm }  with
  $\| H\|^2 _{\lbrack L^{2}(%
\mathcal{P}_{t,T} \otimes \mathcal{L})]^{m-1}}\!\!:= \!\!  \mathbb{E} \displaystyle\int_{t}^{T}\sum\limits_{l=1}^{m-1}H_{s}(l)^{2}ds  < \infty \Big\};$

\item $\big[L^{2}(\mathbf{L};\mathbb{R})\big]^{m-1}:= \Big\{ H(\cdot)\mid H(\cdot):\mathbf{L}\rightarrow \mathbb{R}$\Big\}.
Moreover, for a map $H(\cdot):\mathbf{L}\rightarrow\mathbb{R}$, we introduce the
norm  $\|
H\| _{[L^{2}(\mathbf{L};\mathbb{R})]^{m-1}}:=\Big[\sum\limits_{l=1}^{m-1}H(l)^{2}\Big]^{\frac{1}{2}};$

\item $\mathcal {K}_\nu^2(t,T;\mathbb{R})\!\!:= \!\!    \Big\{K \mid K:\Omega\times [t,T]\times E \rightarrow \mathbb{R}$ is $\mathcal {P}_{t,T}\otimes\mathcal{B}(E)$-measurable with \\
    $\mbox{\hskip5cm }\|
K\|^2_{\mathcal {K}_\nu^2(t,T;\mathbb{R})}:=\mathbb{E} \displaystyle\int_t^T\!\! \int_E|K_s(e)|^2\nu(de)ds  <\infty \Big\};$

\item  $L^2_\nu\big(E,\mathcal {B}(E);\mathbb{R}\big)\!\!:= \!\!    \Big\{K \!\mid\! K\! :\!  E \! \rightarrow\!  \mathbb{R}$ is $\mathcal{B}(E)$-measurable with $\|
K\|^2_{L_\nu^2(E,\mathcal {B}(E);\mathbb{R})}\! :=\!  \displaystyle \int_E|K(e)|^2\nu(de) < \infty \Big\}.\! \! \! \! \! $

\end{itemize}
For any $t\in[0,T]$, we put
$$\begin{aligned}
& \mathcal{S}^{2}[t,T]:= {\mathcal{S}}%
^{2}_{\mathbb{F}}(t,T;\mathbb{R}^n)\times \lbrack L^{2}(\mathcal{P}_{t,T}\otimes \mathcal{L}%
)]^{m-1}\times \mathcal{M}^{2}_{\mathbb{F}}(t,T;\mathbb{R}^{d})\times \mathcal {K}_\nu^2(t,T;\mathbb{R}),\\
& \mathcal{M}^{2}[t,T]:= {\mathcal{M}}%
^{2}_{\mathbb{F}}(t,T;\mathbb{R}^n)\times \lbrack L^{2}(\mathcal{P}_{t,T}\otimes \mathcal{L}%
)]^{m-1}\times \mathcal{M}^{2}_{\mathbb{F}}(t,T;\mathbb{R}^{d})\times \mathcal {K}_\nu^2(t,T;\mathbb{R}).\end{aligned}$$

\subsection{\protect\large BSDEs with  two Poisson random measures}
 In this subsection, we introduce and study a type of   BSDE with two Poisson random measures,
\begin{equation}
\left\{%
\begin{aligned}
\!dY_{s}=&-g\big(s,Y_{s},H_{s},Z_{s},K_s\big)ds+Z_{s}dB_{s}+\sum\limits_{l=1}^{m-1} H_{s}(l)d%
\tilde{N}_{s}(l)+\int_EK_s(e)\tilde\mu(ds,de),\ s\in [ 0,T],  \\
\!Y_T=& \ \xi,
\end{aligned}
\right.  \label{equ2.1}
\end{equation}%
where  the coefficient $g:\Omega
\times [ 0,T]\times \mathbb{R}\times [L^{2}(\mathbf{L};\mathbb{R})]^{m-1}\times \mathbb{R}^{d}\times L^2_\nu\big(E,\mathcal {B}(E);\mathbb{R}\big) \rightarrow \mathbb{R}$ is $\mathcal{P}_{0,T}$%
-measurable for every fixed $(y,h,z,k)\in \mathbb{R} \times [L^{2}(\mathbf{L};\mathbb{R})]^{m-1}\times \mathbb{R}^{d}\times L^2_\nu\big(E,\mathcal {B}(E);\mathbb{R}\big)$ and satisfies:
\begin{description}
\item[$(\mathbf{A1})$] (i) There exists some constant $L_g\geq 0$ such that, $P$%
-a.s., for all $t\in [ 0,T]$, $(y_{i},h_{i},z_{i},k_i)\in \mathbb{R}\times  [L^{2}(\mathbf{L};\mathbb{R})]^{m-1}\times
\mathbb{R}^{d}\times L^2_{\nu}\big(E,\mathcal {B}(E);\mathbb{R}\big)$, $i=1,2$,
\begin{equation*}\begin{aligned}
&\big|g(t,y_{1},h_{1},z_{1},k_1)-g(t,y_{2},h_{2},z_{2},k_2)\big|\\
 \leq&
L_g\big(|y_{1}-y_{2}|+\| h_{1}-h_{2}\|_{[L^{2}(\mathbf{L};\mathbb{R})]^{m-1}}+|z_{1}-z_{2}|+\| k_{1}-k_{2}\|_{L^2_{\nu}(E,\mathcal {B}(E);\mathbb{R})}\big);
\end{aligned}\end{equation*}%
(ii) $ \displaystyle \mathbb{E}\Big[ \int_{0}^{T}|g(s,0,0,0,0)|^{2}ds \Big]<+\infty .$
\end{description}

Similar to Lemma 2.3 in Tang, Li \cite{TL}, we get the following martingale representation theorem under our filtered  probability space $(\Omega, \mathcal {F},\mathbb{F},P)$.

\begin{lemma}\sl\label{MRT}
(Martingale Representation Theorem) Let $M(\cdot)$ be an $\mathbb{F}$-adapted $\mathbb{R}$-valued square integrable martingale. Then there exists a triple
$\big(p(\cdot,\cdot), q(\cdot), r(\cdot,\cdot)\big)\in  \big[L^2(\mathcal {P}_{0,T}\otimes \mathcal {L})\big]^{m-1} \times \mathcal {M}_\mathbb{F}^2(0,T;\mathbb{R}^d)\times \mathcal {K}_{\nu}^2(0,T;\mathbb{R})$, such that
$$M(t)=M(0)+\int_0^tq(s)dB_s+\int_0^t\sum_{l=1}^{m-1}p(s,l)d\tilde N_s(l)+\int_0^t\int_Er(s,e)\tilde\mu(ds,de).$$

\end{lemma}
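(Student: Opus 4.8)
The plan is to reduce the martingale representation on the product Wiener--Poisson space $(\Omega,\mathcal F,\mathbb F,P)$ to the three known representation theorems on the factor spaces, exactly as in Lemma 2.3 of Tang--Li \cite{TL}. First I would fix an $\mathbb F$-adapted $\mathbb R$-valued square-integrable martingale $M(\cdot)$ and note that, since $\mathcal F_T = \mathcal F^B_T \otimes \mathcal F^N_T \otimes \mathcal F^\mu_T$ and the three factors are built from mutually independent noises (two independent Brownian motions, the point process $N$ with compensator $\lambda(t-s)\sum_{n=1}^{m-1}\delta_n$, and the Poisson random measure $\mu$ with compensator $dt\,\nu(de)$), the terminal value $M(T)\in L^2(\Omega,\mathcal F_T,P)$ can be approximated in $L^2$ by finite sums of products $\xi^{(1)}\xi^{(2)}\xi^{(3)}$ with $\xi^{(j)}\in L^2(\Omega_j,\mathcal F^{(j)}_T,P_j)$; equivalently, the linear span of such product random variables is dense in $L^2(\Omega,\mathcal F_T,P)$. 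It therefore suffices to establish the representation for $M(t)=\mathbb E[\xi^{(1)}\xi^{(2)}\xi^{(3)}\mid \mathcal F_t]$ and then pass to the limit, using that the three classes of stochastic integrals appearing in the claimed formula are orthogonal in $\mathcal M^2$ so that the map from the integrand triple to the martingale is an isometry onto a closed subspace.

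Next, for such a product functional I would use independence to factor the conditional expectation: $M(t) = M^{(1)}(t)\,M^{(2)}(t)\,M^{(3)}(t)$ where $M^{(1)}(t)=\mathbb E[\xi^{(1)}\mid\mathcal F^B_t]$, $M^{(2)}(t)=\mathbb E[\xi^{(2)}\mid\mathcal F^N_t]$, $M^{(3)}(t)=\mathbb E[\xi^{(3)}\mid\mathcal F^\mu_t]$ are square-integrable martingales on the respective factor spaces. For $M^{(1)}$ the classical Brownian martingale representation theorem gives $M^{(1)}(t)=M^{(1)}(0)+\int_0^t q^{(1)}(s)\,dB_s$ with $q^{(1)}\in\mathcal M^2_{\mathbb F^B}(0,T;\mathbb R^d)$; for $M^{(2)}$, since $(\mathcal F^N_t)$ is generated by the finitely many independent compensated counting processes $\tilde N(l)$, $1\le l\le m-1$, the representation theorem for Poisson-process martingales gives $M^{(2)}(t)=M^{(2)}(0)+\int_0^t\sum_{l=1}^{m-1}p^{(2)}(s,l)\,d\tilde N_s(l)$ with $p^{(2)}\in[L^2(\mathcal P_{0,T}\otimes\mathcal L)]^{m-1}$; and for $M^{(3)}$ the representation theorem for martingales of a Poisson random measure (see e.g. the jump-measure martingale representation used in \cite{BBP}) gives $M^{(3)}(t)=M^{(3)}(0)+\int_0^t\int_E r^{(3)}(s,e)\,\tilde\mu(ds,de)$ with $r^{(3)}\in\mathcal K^2_\nu(0,T;\mathbb R)$. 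Then I would apply the product rule (It\^o's formula for semimartingales with jumps) to $M^{(1)}M^{(2)}M^{(3)}$: the quadratic-covariation cross terms between $B$, $\tilde N$ and $\tilde\mu$ all vanish by independence, so the differential is a pure martingale increment and collects into exactly the three stochastic integrals in the statement, with integrands $q(s)=M^{(2)}_{s-}M^{(3)}_{s-}q^{(1)}(s)$, $p(s,l)=M^{(1)}_{s-}M^{(3)}_{s-}p^{(2)}(s,l)$, $r(s,e)=M^{(1)}_{s-}M^{(2)}_{s-}r^{(3)}(s,e)$, which are seen to lie in the required spaces by the Cauchy--Schwarz inequality together with Doob's $L^2$-inequality applied to the bounded-in-$L^2$ factor martingales (and a standard localization/truncation if one wants to avoid assuming boundedness).

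Finally, I would assemble the density and isometry arguments: writing $M(T)=\lim_n \xi_n$ in $L^2$ with each $\xi_n$ a finite sum of product functionals, the representation is linear in the terminal value, so each $\xi_n$ yields a triple $(p_n,q_n,r_n)$ in the product space $[L^2(\mathcal P_{0,T}\otimes\mathcal L)]^{m-1}\times\mathcal M^2_{\mathbb F}(0,T;\mathbb R^d)\times\mathcal K^2_\nu(0,T;\mathbb R)$; the orthogonality of the three stochastic integrals gives $\mathbb E|\int_0^T q_n\,dB + \sum_l\int_0^T p_n(\cdot,l)\,d\tilde N(l) + \int_0^T\int_E r_n\,\tilde\mu|^2 = \|q_n\|^2 + \lambda\|p_n\|^2 + \|r_n\|^2$ (up to the constant $\lambda$ from the compensator of $\tilde N$), so $(p_n,q_n,r_n)$ is Cauchy in the product Hilbert space, with limit $(p,q,r)$; passing to the limit and using that the stochastic-integral map is continuous yields $M(t)=M(0)+\int_0^t q\,dB + \int_0^t\sum_l p(\cdot,l)\,d\tilde N(l)+\int_0^t\int_E r\,\tilde\mu$ for all $t$, after taking a c\`adl\`ag modification. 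Uniqueness of the triple follows from the same isometry. The main obstacle is really the bookkeeping in the product-rule step — verifying that all mixed brackets vanish and that the resulting integrands genuinely belong to the stated spaces rather than merely being progressively measurable — but this is routine given independence of the three noise sources; everything else is a direct transcription of the classical argument in \cite{TL}.
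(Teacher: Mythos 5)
Your proposal is correct: the reduction to product functionals by density, the factorization $M=M^{(1)}M^{(2)}M^{(3)}$ by independence, the three factor-space representation theorems, the vanishing of all mixed brackets (continuity of the Brownian factor and the a.s. absence of common jump times of $N$ and $\tilde\mu$), and the final isometry--limit argument constitute the standard proof of such a result on a product Wiener--Poisson space. The paper gives no proof of Lemma \ref{MRT} at all, merely invoking Lemma 2.3 of Tang--Li, and your argument is exactly the adaptation that this citation stands for, so your route coincides with the intended one (the only points worth a remark are that $\mathcal F_0$ is nontrivial here because of the two-sided time axis, so $M(0)$ is a random variable and the factor MRTs are used with an independent initial enlargement, and that the square-integrability of the product integrands is cleanest via independence rather than Cauchy--Schwarz, both of which your sketch accommodates).
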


Based on  Lemma \ref{MRT}, we have the following well-posedness of BSDE  with  jumps  \eqref{equ2.1}.

\begin{theorem}
\label{l1}\sl Under the condition $(\mathbf{A1})$, for any random variable $\xi
\in L^{2}(\Omega ,\mathcal{F}_{T},P;\mathbb{R})$, BSDE with jumps %
\eqref{equ2.1} admits a  unique adapted solution
$
\big(Y,H,Z,K\big)\in \mathcal{S}^{2}[0,T].
$
\end{theorem}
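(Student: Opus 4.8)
The plan is to prove existence and uniqueness by the standard fixed-point (contraction mapping) argument, adapted to accommodate the two Poisson random measures $\tilde N$ and $\tilde\mu$. First I would set up the map $\Psi$ on the Banach space $\mathcal{M}^2[0,T]$ (equivalently, equipped with a weighted norm $\mathbb{E}\int_0^T e^{\beta s}(|\cdot|^2+\cdots)\,ds$ for $\beta$ large): given $(y,h,z,k)\in\mathcal{M}^2[0,T]$, consider the square-integrable martingale
$$M_t:=\mathbb{E}\Big[\xi+\int_0^T g\big(s,y_s,h_s,z_s,k_s\big)\,ds\;\Big|\;\mathcal{F}_t\Big],$$
which is well-defined in $L^2$ by $(\mathbf{A1})$(i)–(ii) and the fact that $(y,h,z,k)\in\mathcal{M}^2[0,T]$. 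Applying the Martingale Representation Theorem (Lemma \ref{MRT}) to $M$ yields a triple $(H,Z,K)\in [L^2(\mathcal{P}_{0,T}\otimes\mathcal{L})]^{m-1}\times\mathcal{M}_\mathbb{F}^2(0,T;\mathbb{R}^d)\times\mathcal{K}_\nu^2(0,T;\mathbb{R})$, and then I define $Y_s:=M_s-\int_0^s g(r,y_r,h_r,z_r,k_r)\,dr$, so that $(Y,H,Z,K)$ solves the linear BSDE with driver frozen at $(y,h,z,k)$ and terminal value $\xi$. One checks $Y\in\mathcal{S}^2_\mathbb{F}(0,T;\mathbb{R})$ by Doob's inequality applied to $M$, so $\Psi(y,h,z,k):=(Y,H,Z,K)$ indeed maps into $\mathcal{S}^2[0,T]\subset\mathcal{M}^2[0,T]$.

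Next I would establish that $\Psi$ is a contraction on $\mathcal{M}^2[0,T]$ under a suitably weighted norm. Take two inputs $(y^i,h^i,z^i,k^i)$, $i=1,2$, with outputs $(Y^i,H^i,Z^i,K^i)$, and write $\hat Y=Y^1-Y^2$ etc. Applying Itô's formula to $e^{\beta s}|\hat Y_s|^2$ between $0$ and $T$ — using $\hat Y_T=0$ and that the stochastic integrals against $dB$, $d\tilde N(l)$, and $\tilde\mu(ds,de)$ are true martingales (their expectations vanish; the orthogonality of $B$, $\tilde N$ and $\tilde\mu$ ensures no cross terms) — produces the identity
$$\mathbb{E}\Big[\int_0^T e^{\beta s}\big(\beta|\hat Y_s|^2+|\hat Z_s|^2+\lambda\|\hat H_s\|^2_{[L^2(\mathbf{L};\mathbb{R})]^{m-1}}+\|\hat K_s\|^2_{L^2_\nu}\big)\,ds\Big]=2\,\mathbb{E}\Big[\int_0^T e^{\beta s}\hat Y_s\,\hat g_s\,ds\Big],$$
where $\hat g_s=g(s,y^1_s,h^1_s,z^1_s,k^1_s)-g(s,y^2_s,h^2_s,z^2_s,k^2_s)$. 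The Lipschitz bound $(\mathbf{A1})$(i) and Young's inequality $2ab\le \varepsilon a^2+\varepsilon^{-1}b^2$ then give, after absorbing the $|\hat Y_s|^2$, $|\hat Z_s|^2$, $\|\hat H_s\|^2$, $\|\hat K_s\|^2$ terms on the right into the left-hand side, a bound of the form (contraction constant $<1$) times the weighted norm of the input differences, provided $\beta$ is chosen large enough relative to $L_g$ (and one is slightly careful with the $\|\hat H\|$ and $\|\hat K\|$ coefficients, which involve the intensity $\lambda$ and $\nu$). Hence $\Psi$ has a unique fixed point in $\mathcal{M}^2[0,T]$, which is the unique solution of \eqref{equ2.1}; the a priori $\mathcal{S}^2$ bound on $Y$ from the previous paragraph upgrades it to a solution in $\mathcal{S}^2[0,T]$, and uniqueness in $\mathcal{S}^2[0,T]$ follows a fortiori.

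The main obstacle — or rather the point demanding genuine care rather than routine effort — is handling the two distinct Poisson components simultaneously in the Itô formula and in the energy estimate. One must verify that the jump-integral terms produced by the Itô expansion of $e^{\beta s}|\hat Y_s|^2$ (namely the compensated terms $\sum_l(\hat Y_{s-}+\hat H_s(l))^2-\hat Y_{s-}^2-2\hat Y_{s-}\hat H_s(l)$ integrated against $\hat N(ds,l)$, and the analogous expression against $\mu(ds,de)$) correctly contribute $\lambda\|\hat H_s\|^2$ and $\int_E|\hat K_s(e)|^2\nu(de)$ respectively after taking expectations, that the cross-variation between $\tilde N$ and $\tilde\mu$ vanishes (which is exactly the independence of $\Omega_2$ and $\Omega_3$ built into the product construction), and that all local-martingale terms are genuine martingales given the $L^2$ integrability afforded by membership in $\mathcal{M}^2[0,T]$ and $\mathcal{K}_\nu^2$; a localization/stopping-time argument may be needed to justify taking expectations before passing to the limit. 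Since all of this parallels the classical one-measure case (as in Tang–Li \cite{TL}) essentially term by term, once Lemma \ref{MRT} is in hand the proof is conceptually straightforward.
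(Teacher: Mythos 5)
Your proposal is correct and follows essentially the same route as the paper: construct the map via the martingale $M_t=\mathbb{E}[\xi+\int_0^T g(s,\bar Y_s,\bar H_s,\bar Z_s,\bar K_s)ds\mid\mathcal{F}_t]$, invoke the martingale representation theorem (Lemma \ref{MRT}) to produce $(H,Z,K)$, and obtain the fixed point by an It\^o/weighted-norm contraction estimate on $e^{\beta s}|\hat Y_s|^2$ with $\beta$ large relative to $L_g$ (and $\lambda$). The only cosmetic differences are that the paper uses the Burkholder--Davis--Gundy inequality rather than Doob's inequality to upgrade $Y$ to $\mathcal{S}^2_{\mathbb{F}}$ and specifies the explicit weight $b=1+\tfrac{8L_g^2(1+\lambda)}{\lambda}$ giving contraction constant $\tfrac{1}{\sqrt 2}$.
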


\begin{proof}
For any given $\big(\bar Y,\bar H,\bar Z,\bar K\big)\in \mathcal{M}^{2}[0,T]$,
we denote
$$\displaystyle M_t:= \mathbb{E}\Big[\xi+\int_0^Tg\big(s,\bar Y_{s},\bar H_{s},\bar Z_{s},\bar K_s\big)ds\mid \mathcal {F}_t \Big],\quad t\in[0,T].$$
Obviously,  $M$ is an $\mathbb{F}$-adapted square integrable martingale. From Lemma \ref{MRT},
there exists a triple of $ (H,Z,K )\in  \big[L^2(\mathcal {P}_{0,T}\otimes \mathcal {L})\big]^{m-1}\times \mathcal {M}_\mathbb{F}^2(0,T;\mathbb{R}^d)\times  \mathcal {K}_{\nu}^2(0,T;\mathbb{R})$ satisfying
$$M_t=M_0+\int_0^t Z_sdB_s+\int_0^t\sum_{l=1}^{m-1}H_s(l)d\tilde N_s(l)+\int_0^t\int_E K_s(e)\tilde \mu(ds,de),\quad t\in[0,T].$$
Further, we get
\begin{equation}\label{equ-M}M_t=M_T-\int_t^T Z_sdB_s-\int_t^T\sum_{l=1}^{m-1}H_s(l)d\tilde N_s(l)-\int_t^T\int_E K_s(e)\tilde \mu(ds,de),\quad t\in[0,T].\end{equation}

For some fixed  constant  $b>0$ (which will be specified later), we  consider the equivalent norm of   the Banach space  {$\mathcal{M}^{2}[0,T]$:}
$$\begin{aligned}&\|\big(Y,H,Z,K\big)\|_b:=   \Big[  \mathbb{E}  \int_{0}^{T}e^{bt} \Big(|Y_{t}|^{2}+\lambda  \sum\limits_{l=1}^{m-1}|H_{t}(l)|^{2}+|Z _{t}|^{2}  + \int_E |K_t(e)|^2\nu(de)\Big)dt\Big]^\frac12.
 \end{aligned}$$
 %
By setting  $\displaystyle Y_t:= \mathbb{E}\Big[\xi+\int_t^Tg\big(s,\bar Y_{s},\bar H_{s},\bar Z_{s},\bar K_s\big)ds \mid\mathcal {F}_t \Big]$, $t\in[0,T]$, and using \eqref{equ-M}, we have
\begin{equation}\begin{aligned}
&Y_t=\xi+\int_t^Tg\big(s,\bar Y_{s},\bar H_{s},\bar Z_{s},\bar K_s\big)ds-\int_t^T Z_sdB_s\\
&\qquad\qquad\qquad-\int_t^T\sum_{l=1}^{m-1}H_s(l)d\tilde N_s(l)-\int_t^T\int_E K_s(e)\tilde \mu(ds,de),\quad t\in[0,T],
\end{aligned}\end{equation}
which  defines a mapping $\Lambda:\mathcal{M}
^{2}[0,T]\rightarrow\mathcal{M}
^{2}[0,T]$  with  $\Lambda(\bar Y,\bar H,\bar Z,\bar K)=(Y,H,Z,K)$.

Next, we show that the mapping $\Lambda$ is contractive under the norm $\|\cdot\|_b$.
For $(\bar Y^i,\bar H^i,\bar Z^i,\bar K^i)\in \mathcal{M}
^{2}[0,T]$, $i=1,2,$ we denote
 $$\begin{aligned}
 & (Y^i,H^i,Z^i,K^i)=\Lambda(\bar Y^i,\bar H^i,\bar Z^i,\bar K^i),\ i=1,2,\\
 &(\hat Y,\hat H,\hat Z,\hat K)=(Y^1-Y^2,H^1-H^2,Z^1-Z^2,K^1-K^2),\\
 &(\tilde{Y},\tilde{H},\tilde{Z},\tilde{K})=(\bar Y^1-\bar Y^2,\bar H^1-\bar H^2,\bar Z^1-\bar Z^2,\bar K^1-\bar K^2).\end{aligned}$$
Applying It\^o's formula to $e^{bs}|\hat Y_s|^2$, for all $s\in[0,T]$, we get
%
$$\begin{aligned}
&\mathbb{E}\Big[ e^{bs}|\hat Y_s|^2+\int_s^Te^{br}\Big(b|\hat Y_r|^2+\lambda   \sum\limits_{l=1}^{m-1}|\hat H_r(l)|^2 + |\hat Z_r|^2   +\int_E|\hat K_r(e)|^2\nu(de)\Big)dr |\mathcal {F}_s\Big]\\
=&\mathbb{E}\Big[  2\int_s^Te^{br}\hat Y_r\big(g(r,\bar Y_r^1,\bar H_r^1,\bar Z_r^1,\bar K_r^1)-g(r,\bar Y_r^2,\bar H_r^2,\bar Z_r^2,\bar K_r^2)\big)dr|\mathcal {F}_s\Big] \\ %
\leq & \frac{8 L_g^2(1+\lambda)}{\lambda}\mathbb{E}\Big[   \int_s^Te^{br}|\hat Y_r|^2dr|\mathcal {F}_s\Big]\\
&+ \frac{\lambda}{2(1+\lambda)} \mathbb{E}\Big[   \int_s^T e^{br}\Big( |\tilde Y_r |^2+ \sum\limits_{l=1}^{m-1}|\tilde H_r(l) |^2+|\tilde Z_r|^2+\int_E|\tilde K_r(e) |^2\nu(de)\Big)dr|\mathcal {F}_s\Big] ,
\end{aligned}$$
where $L_g$ is the Lipschitz constant in $(\mathbf{A1})$-(i).
Taking $b:=1+ \frac{8 L_g^2(1+\lambda)}{\lambda}$, we get
$$\begin{aligned}
&\mathbb{E}\Big[  \int_0^T e^{br}\Big(|\hat Y_r|^2+ \lambda\sum\limits_{l=1}^{m-1}|\hat H_r(l)|^2 +|\hat Z_r|^2 + \int_E|\hat K_r(e)|^2\nu(de) \Big)dr\Big]\\
\leq & \frac 12 \mathbb{E}\Big[   \int_0^T e^{br}\Big( |\tilde Y_r |^2+\lambda\sum\limits_{l=1}^{m-1}|\tilde H_r(l) |^2+|\tilde Z_r|^2+\int_E|\tilde K_r(e) |^2\nu(de)\Big)dr\Big], %
\end{aligned}$$
which means that   $$\|(\hat Y,\hat H,\hat Z,\hat K)\|_b\leq \frac{1}{\sqrt{2}}\|(\tilde Y,\tilde  H,\tilde  Z,\tilde  K)\|_b.$$
Then there exists a  unique fixed point $(Y,H,Z,K)\in\mathcal{M}
^{2}[0,T] $ such that
$(Y,H,Z,K)=\Lambda(Y,H,Z,K)$. Therefore, the BSDE with jumps \eqref{equ2.1} has a unique solution  $(Y,H,Z,K)\in\mathcal{M}
^{2}[0,T]$.
Furthermore, it is easy to check $Y\in {\mathcal{S}}_\mathbb{F}
^{2}(0,T;\mathbb{R})$ by using the Burkholder-Davis-Gundy inequality.

\end{proof}

For  $g:\Omega \times \lbrack 0,T]\times \mathbb{R}\times [L^{2}(\mathbf{L};\mathbb{R})]^{m-1}\times \mathbb{R}^{d}\times L^2_\nu\big(E,\mathcal {B}(E);\mathbb{R}\big) \rightarrow
\mathbb{R}$ satisfying $(\mathbf{A1})$,   we suppose that the
driver  $g_{i}$, $i=1,2$, has the form
\begin{equation*}
g_{i}\big(s,y,h,z,k\big)=g\big(s,y,h,z,k\big)+%
\varphi_{i}(s),\ s\in [0,T],
\end{equation*}%
where $\varphi_i\in \mathcal{M}_\mathbb{F}^{2}(0,T;\mathbb{R})$.
Denote by $(Y^{1},H^{1},Z^{1},K^1)$ and $(Y^{2},H^{2},Z^{2},K^2)$ the solution of BSDE \eqref{equ2.1} with the data $(\xi _{1},g_{1})$ and $(\xi _{2},g_{2})$,
respectively. Then, we have the following  useful estimate.

\begin{lemma}
\label{l2} \sl Under $(\mathbf{A1})$, for all  $\beta >2L_g+4L_g^2+\frac2\lambda  L_g^2+1$, there exists  some constant  $C>0$ such that the difference of $(Y^{1},H^{1},Z^{1},K^1)$ and $%
(Y^{2},H^{2},Z^{2},K^2)$ satisfy
\begin{equation*}
\begin{array}{l}
\displaystyle |Y_{t}^{1}-Y_{t}^{2}|^{2}+ \mathbb{E}\Big[\int_{t}^{T}e^{\beta
(s-t)}\Big(|Y_{s}^{1}-Y_{s}^{2}|^{2}+\lambda\sum%
\limits_{l=1}^{m-1}|H_{s}^{1}(l)-H_{s}^{2}(l)|^{2} +|Z_{s}^{1}-Z_{s}^{2}|^{2}\\
\displaystyle \hskip7cm+\int_E|K_{s}^{1}(e)-K_{s}^{2}(e)|^{2}\nu(de)\Big)ds\mid\mathcal{F}_{t}\Big] \\
\displaystyle \leq C\mathbb{E}\Big[e^{\beta (T-t)}|\xi _{1}-\xi _{2}|^{2}+
\int_{t}^{T}e^{\beta (s-t)}|\varphi _{1}(s)-\varphi _{2}(s)|^{2}ds\mid\mathcal{F%
}_{t}\Big],\text{ \ P-}a.s.,\text{ }t\in \lbrack 0,T].
\end{array}%
\end{equation*}%

\end{lemma}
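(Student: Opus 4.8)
The plan is to obtain the estimate from It\^o's formula applied to $s\mapsto e^{\beta(s-t)}|Y^1_s-Y^2_s|^2$ on $[t,T]$, combined with $(\mathbf{A1})$-(i) and Young's inequality, with the weights chosen so that the constant $\beta$ is exactly large enough to absorb all the contributions carrying the factor $|Y^1_s-Y^2_s|$. Write $\hat Y:=Y^1-Y^2$, $\hat H:=H^1-H^2$, $\hat Z:=Z^1-Z^2$, $\hat K:=K^1-K^2$, $\hat\xi:=\xi_1-\xi_2$ and $\hat\varphi:=\varphi_1-\varphi_2$. Since $g_i=g+\varphi_i$ with $\varphi_i\in\mathcal{M}^2_{\mathbb{F}}(0,T;\mathbb{R})$ again satisfies $(\mathbf{A1})$, Theorem \ref{l1} gives $(Y^i,H^i,Z^i,K^i)\in\mathcal{S}^2[0,T]$ for $i=1,2$; subtracting the two copies of \eqref{equ2.1} shows that $(\hat Y,\hat H,\hat Z,\hat K)\in\mathcal{S}^2[0,T]$ solves the BSDE with terminal value $\hat\xi$ and generator $\Delta g_s+\hat\varphi(s)$, where $\Delta g_s:=g(s,Y^1_s,H^1_s,Z^1_s,K^1_s)-g(s,Y^2_s,H^2_s,Z^2_s,K^2_s)$.

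First I would apply It\^o's formula to $e^{\beta(s-t)}|\hat Y_s|^2$ and integrate over $[t,T]$. Because $B$ and the two Poisson random measures $N,\mu$ are mutually independent, the quadratic variation of $\hat Y$ is the sum of the Brownian part $|\hat Z_s|^2\,ds$, the part $\sum_{l=1}^{m-1}|\hat H_s(l)|^2\,dN_s(l)$, and the part $\int_E|\hat K_s(e)|^2\,\mu(ds,de)$. Taking $\mathbb{E}[\,\cdot\,|\mathcal{F}_t]$, the stochastic integrals against $dB$, $d\tilde N(l)$ and $\tilde\mu(ds,de)$ vanish (they are genuine martingales thanks to $(\hat Y,\hat H,\hat Z,\hat K)\in\mathcal{S}^2[0,T]$, after a routine localization and the Burkholder--Davis--Gundy inequality, exactly as in the proof of Theorem \ref{l1}), and the compensators $\lambda\,ds$ and $\nu(de)\,ds$ turn the jump quadratic variations into $\lambda\sum_{l=1}^{m-1}|\hat H_s(l)|^2$ and $\int_E|\hat K_s(e)|^2\nu(de)$. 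This yields
\begin{equation*}
\begin{aligned}
|\hat Y_t|^2&+\mathbb{E}\Big[\int_t^Te^{\beta(s-t)}\Big(\beta|\hat Y_s|^2+\lambda\sum_{l=1}^{m-1}|\hat H_s(l)|^2+|\hat Z_s|^2+\int_E|\hat K_s(e)|^2\nu(de)\Big)ds\,\Big|\,\mathcal{F}_t\Big]\\
&=\mathbb{E}\big[e^{\beta(T-t)}|\hat\xi|^2\,\big|\,\mathcal{F}_t\big]+\mathbb{E}\Big[\int_t^T2e^{\beta(s-t)}\hat Y_s\big(\Delta g_s+\hat\varphi(s)\big)ds\,\Big|\,\mathcal{F}_t\Big].
\end{aligned}
\end{equation*}

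Next I would bound the last term. By $(\mathbf{A1})$-(i), $|\Delta g_s|\le L_g\big(|\hat Y_s|+\|\hat H_s\|_{[L^2(\mathbf{L};\mathbb{R})]^{m-1}}+|\hat Z_s|+\|\hat K_s\|_{L^2_\nu(E,\mathcal{B}(E);\mathbb{R})}\big)$, while $2\hat Y_s\hat\varphi(s)\le|\hat Y_s|^2+|\hat\varphi(s)|^2$. Applying $2ab\le\varepsilon^{-1}a^2+\varepsilon b^2$ with $\varepsilon=\lambda/2$ to $2L_g|\hat Y_s|\,\|\hat H_s\|$ and with $\varepsilon=1/2$ to $2L_g|\hat Y_s|\,|\hat Z_s|$ and $2L_g|\hat Y_s|\,\|\hat K_s\|$, the integrand $2\hat Y_s(\Delta g_s+\hat\varphi(s))$ is dominated by
$$\Big(2L_g+\tfrac{2}{\lambda}L_g^2+4L_g^2+1\Big)|\hat Y_s|^2+\tfrac{\lambda}{2}\sum_{l=1}^{m-1}|\hat H_s(l)|^2+\tfrac12|\hat Z_s|^2+\tfrac12\int_E|\hat K_s(e)|^2\nu(de)+|\hat\varphi(s)|^2.$$
The coefficient of $|\hat Y_s|^2$ is precisely the threshold $2L_g+4L_g^2+\tfrac2\lambda L_g^2+1$, so, moving the $\hat Y$-, $\hat H$-, $\hat Z$- and $\hat K$-contributions to the left-hand side, there remains for every $\beta$ above the threshold a term $\mathbb{E}\big[\int_t^Te^{\beta(s-t)}\big(\delta|\hat Y_s|^2+\tfrac\lambda2\sum_l|\hat H_s(l)|^2+\tfrac12|\hat Z_s|^2+\tfrac12\int_E|\hat K_s(e)|^2\nu(de)\big)ds\,|\,\mathcal{F}_t\big]+|\hat Y_t|^2$ with $\delta:=\beta-(2L_g+4L_g^2+\tfrac2\lambda L_g^2+1)>0$, bounded above by $\mathbb{E}\big[e^{\beta(T-t)}|\hat\xi|^2+\int_t^Te^{\beta(s-t)}|\hat\varphi(s)|^2ds\,|\,\mathcal{F}_t\big]$. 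Dividing through by $\min\{\delta,\lambda/2,1/2\}>0$ and keeping the nonnegative $|\hat Y_t|^2$ term then gives the claimed inequality with a constant $C=C(\beta,L_g,\lambda)$.

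The only delicate point is the precise bookkeeping of the Young-inequality weights so that the $|\hat Y_s|^2$-coefficient generated by the four cross terms and the $\hat\varphi$-term adds up to exactly $2L_g+4L_g^2+\tfrac2\lambda L_g^2+1$, which is what forces the stated lower bound on $\beta$; the justification that the stochastic integrals in It\^o's formula have zero conditional expectation is routine and is handled by the same localization-plus-BDG device used for Theorem \ref{l1}.
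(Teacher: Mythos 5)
Your proposal is correct and follows essentially the same route as the paper: Itô's formula applied to $e^{\beta(s-t)}|\hat Y_s|^2$, conditioning on $\mathcal{F}_t$ so the martingale parts vanish, then the Lipschitz bound plus Young's inequality to absorb the cross terms into the left-hand side for $\beta$ above the threshold. Your weight bookkeeping ($\tfrac{\lambda}{2}$ for the $\hat H$-term, $\tfrac12$ for the $\hat Z$- and $\hat K$-terms) reproduces exactly the constant $2L_g+4L_g^2+\tfrac2\lambda L_g^2+1$ in the statement, so nothing further is needed.
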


\begin{proof}
For convenience, we set $(\hat Y,\hat H,\hat Z,\hat K):=(Y^1-Y^2,H^1-H^2,Z^1-Z^2,K^1-K^2)$.
For some $\beta>0$, applying It\^o's formula to $e^{\beta(s-t)}|\hat Y_{s}|^{2}$, we get
$$\begin{aligned}
&  |\hat Y_t|^2+ \mathbb{E}\Big[\int_t^T e^{\beta (r-t)}\Big(\beta |\hat Y_r|^2+\lambda \sum\limits_{l=1}^{m-1}|\hat H_r(l)|^2 +|\hat Z_r|^2+\int_E|\hat K_r(e)|^2 \nu(de)\Big)dr\mid \mathcal {F}_t\Big]\\
=& \mathbb{E}\Big[e^{\beta (T-t)}|\xi _{1}-\xi _{2}|^{2} + 2\int_t^Te^{\beta (r-t)} \hat Y_r \Big(g(r,Y_r^1,H_r^1,Z_r^1,K_r^1)-g(r, Y_r^2, H_r^2, Z_r^2,K_r^2)\Big)dr\\
&\quad  +2\int_t^Te^{\beta (r-t)} \hat Y_r  \big(\varphi_1(r)-\varphi_2(r)\big) dr\mid \mathcal {F}_t\Big]\\
%
\leq &\mathbb{E}\Big[e^{\beta (T-t)}|\xi _{1}-\xi _{2}|^{2}\mid\mathcal{F}_{t}\Big]+\mathbb{E}\Big[\int_t^Te^{\beta (r-t)} |\varphi_1(r)-\varphi_2(r)|^2 dr\mid\mathcal{F}_{t}\Big]\\
&+(2L_g+2L_g^2+\frac 2\lambda L_g^2+1)\mathbb{E}\Big[\int_t^Te^{\beta (r-t)} |\hat Y_r|^2\mid \mathcal {F}_t \Big] \\
&+\frac12\mathbb{E}\Big[\int_t^Te^{\beta (r-t)} \Big(\lambda \sum\limits_{l=1}^{m-1}|\hat H_r(l)|^2+|\hat Z_r|^2+\int_E |\hat K_r(e)|^2\nu(de)\Big)dr\mid \mathcal {F}_t \Big].
\end{aligned}$$
Therefore,  for $\beta >2L_g+2L_g^2+\frac 2\lambda L_g^2+1$, we have
$$\begin{aligned}
&  |\hat Y_t|^2+ \mathbb{E}\Big[\int_t^T  e^{\beta (r-t)}\Big(|\hat Y_r|^2 + \lambda\sum\limits_{l=1}^{m-1}|\hat H_r(l)|^2+|\hat Z_r|^2 +\int_E|\hat K_r(e)|^2 \nu(de)\Big)dr\mid \mathcal {F}_t\Big]\\
\leq &C\mathbb{E}\Big[e^{\beta (T-t)}|\xi _{1}-\xi _{2}|^{2} +\int_t^Te^{\beta (r-t)}  |\varphi_1(r)-\varphi_2(r)|^2 dr\mid\mathcal{F}_{t}\Big].
\end{aligned}$$
\end{proof}

Next, we study the comparison theorem for BSDE with jumps \eqref{equ2.1}. For this,
let $$\Psi:\Omega\times [0,T]\times \mathbb{R}\times \mathbb{R}^{m-1}\times \mathbb{R}^d\times \mathbb{R}\rightarrow\mathbb{R}$$ be
$\mathcal {P}_{0,T}\otimes\mathcal {B}(\mathbb{R})\otimes\mathcal {B}(\mathbb{R}^{m-1})\otimes\mathcal {B}(\mathbb{R}^d)\otimes\mathcal {B}(\mathbb{R})$-measurable, and satisfy
\begin{description}
\item[$(\mathbf{A2})$] (i) For all $t\in[0,T]$, $(y_i,h_i,z_i,k_i)\in \mathbb{R}\times \mathbb{R}^{m-1}\times \mathbb{R}^d\times \mathbb{R}$, $i=1,2,$ for some constant $L_\Psi\geq 0$, $P$-a.s.,
    $$|\Psi(t,y_1,h_1,z_1,k_1)-\Psi(t,y_2,h_2,z_2,k_2)|\leq L_\Psi\big(|y_1-y_2|+|h_1-h_2|+|z_1-z_2|+|k_1-k_2|\big);$$

 {\rm(ii)} $\Psi(\cdot,0,0,0,0)\in \mathcal {M}_\mathbb{F}^2(0,T;\mathbb{R})$;

 {\rm(iii)} There exist  two nonnegative constants $\alpha$ and $\beta$  such that,
  for any $j\in\mathbf{L}$,  $t\in[0,T]$, $(y,z)\in\mathbb{R}\times \mathbb{R}^{d}$,    $h=(h_1,\cdots,h_{m-1}),$ $h'=(h'_1,\cdots,h'_{m-1})\in \mathbb{R}^{m-1}$ with $h_p=h'_p$, $\forall\ p\neq j$, $p\in \mathbf{L}$,  $h_j\geq h'_j$, and $k,\ k'\in\mathbb{R}$ with $k\geq k'$,
    $$\Psi(t,y,h,z,k )-\Psi(t,y,h',z,k')\geq \alpha(h_j-h'_j)+ \beta(k-k').$$
\end{description}
Furthermore, let $\bar\rho:\Omega\times [0,T]\times E\rightarrow\mathbb{R}$ be $\mathcal {P}_{0,T}\otimes \mathcal {B}(E)$-measurable and satisfy $0\leq \bar\rho(t,e)\leq C(1\wedge|e|)$, $t\in[0,T]$, $e\in E$.
For $(t, y,h,z,k)\in  [0,T]\times \mathbb{R}\times[L^{2}(\mathbf{L};\mathbb{R})]^{m-1}\times\mathbb{R}^d\times L_\nu^2\big(E,\mathcal {B}(E);\mathbb{R}\big)$,  we define
\begin{equation}\label{equ-g}g(t, y,h,z,k):= \Psi\Big(t, y,  h(1),\cdots,h(m-1)\ ,z,\int_Ek(e)\bar\rho(t,e)\nu(de)\Big).\end{equation}
Clearly, the mapping $g$ satisfies the assumption $(\mathbf{A1})$ if $\Psi$ satisfies $(\mathbf{A2})$-(i) and $(\mathbf{A2})$-(ii).

\begin{theorem}\label{Com-Th}\sl (Comparison Theorem) Suppose that  $(\mathbf{A2})$ holds and the mapping $g$ is given by \eqref{equ-g}.
Let $\xi$, $\xi'\in L^2(\Omega, \mathcal {F}_T,P;\mathbb{R})$ and the mapping $g'$  satisfy  $(\mathbf{A1})$.
We denote by $(Y,H,Z,K)$ $($resp., $(Y',H',Z',$ $K')$$)$ the unique solution of equation \eqref{equ2.1} associated with $(\xi,g)$ $($resp., $(\xi',g')$$)$. If $\xi\geq \xi'$, $P$-a.s., and for all $(t,y,h,z,k)\in [0,T]\times \mathbb{R}\times [L^{2}(\mathbf{L};\mathbb{R})]^{m-1}\times \mathbb{R}^d\times L^2_\nu(E,\mathcal {B}(E);\mathbb{R})$, it holds
%
$$g(t,y,h,z,k)\geq g'(t,y,h,z,k),\ P\text{-a.s.}$$
Then we have $$Y_t\geq Y'_t,\ P\text{-a.s.},\ \text{for\ all}\ t\in[0,T].$$ And if, in addition, we also assume that
$P(\xi>\xi') > 0$, then $P\{Y_t>Y'_t\} > 0$, $0 \leq t\leq T$. In particular, $Y_0> Y'_0$.

\end{theorem}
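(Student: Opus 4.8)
The plan is to reduce the claim, via the standard linearisation used for comparison of BSDEs with jumps, to the positivity of a linear BSDE for the difference process, and then to read off the sign through an adjoint (density) process; the only genuinely new feature is that the two compensated Poisson measures --- $d\tilde N_s(l)$, of intensity $\lambda$, and $\tilde\mu(ds,de)$, of compensator $\nu(de)\,ds$ --- have to be handled simultaneously. First I would put $(\hat Y,\hat H,\hat Z,\hat K):=(Y-Y',H-H',Z-Z',K-K')$ and $\hat\xi:=\xi-\xi'\geq 0$; then $(\hat Y,\hat H,\hat Z,\hat K)$ solves \eqref{equ2.1} with terminal datum $\hat\xi$ and generator $g(s,Y_s,H_s,Z_s,K_s)-g'(s,Y'_s,H'_s,Z'_s,K'_s)$, which I would split as
$$\big[g(s,Y_s,H_s,Z_s,K_s)-g(s,Y'_s,H'_s,Z'_s,K'_s)\big]+R_s,\qquad R_s:=g(s,Y'_s,H'_s,Z'_s,K'_s)-g'(s,Y'_s,H'_s,Z'_s,K'_s),$$
with $R_s\geq 0$ by the hypothesis $g\geq g'$. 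Since $g$ has the form \eqref{equ-g}, I would linearise the first bracket by turning $(Y',H',Z',K')$ into $(Y,H,Z,K)$ one coordinate at a time --- first $y$, then the scalar jump components $h(1),\dots,h(m-1)$, then $z$, then $k$ --- producing predictable coefficients $a$, $b$, $\gamma^1,\dots,\gamma^{m-1}$ and $\delta(\cdot)$ (the relevant difference quotients, with any admissible fixed value where a denominator vanishes) for which
$$g(s,Y_s,H_s,Z_s,K_s)-g(s,Y'_s,H'_s,Z'_s,K'_s)=a_s\hat Y_s+b_s\cdot\hat Z_s+\sum_{l=1}^{m-1}\gamma^l_s\hat H_s(l)+\int_E\delta_s(e)\hat K_s(e)\nu(de).$$
Here $(\mathbf{A2})$-(i) gives $|a_s|,|b_s|,|\gamma^l_s|\leq L_\Psi$; $(\mathbf{A2})$-(iii) applied with a single perturbed coordinate gives $\gamma^l_s\geq\alpha\geq 0$; and, as $g$ sees $k$ only through the scalar $\int_E k(e)\bar\rho(s,e)\nu(de)$, I would write $\delta_s(e)=\theta_s\bar\rho(s,e)$ with $\beta\leq\theta_s\leq L_\Psi$ (Lipschitz continuity of $\Psi$ and its scalar monotonicity in the last argument, both from $(\mathbf{A2})$), so that $0\leq\delta_s(e)\leq L_\Psi C(1\wedge|e|)$.

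Next I would introduce the adjoint process $\Gamma$ on $[t,T]$ with $\Gamma_t=1$ and
$$d\Gamma_s=\Gamma_{s-}\Big(a_s\,ds+b_s\,dB_s+\sum_{l=1}^{m-1}\frac{\gamma^l_s}{\lambda}\,d\tilde N_s(l)+\int_E\delta_s(e)\,\tilde\mu(ds,de)\Big).$$
Because $\gamma^l_s/\lambda\geq 0>-1$ and $\delta_s(e)\geq 0>-1$, the stochastic exponential part of $\Gamma$ is strictly positive; and because $a,b,\gamma^l$ are bounded and $0\leq\delta_s(e)\leq C(1\wedge|e|)$ with $\int_E(1\wedge|e|^2)\nu(de)<\infty$, standard estimates make $\Gamma$ a strictly positive, square-integrable martingale on $[t,T]$ --- this is the step for which the monotonicity assumption $(\mathbf{A2})$-(iii) is indispensable. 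Applying It\^o's formula to the product $\Gamma_s\hat Y_s$, the normalisations $\gamma^l_s/\lambda$ and $\delta_s$ are exactly those needed so that the cross-variations against $d\tilde N_s(l)$ and $\tilde\mu(ds,de)$ cancel the linear drifts in $\hat H$ and $\hat K$ (and similarly for the Brownian and $\hat Y$ terms), leaving only the drift $-\Gamma_s R_s\,ds$; hence $\Gamma_\cdot\hat Y_\cdot+\int_t^\cdot\Gamma_rR_r\,dr$ is a local martingale on $[t,T]$, and, being bounded below by $-\big(\sup_{r\in[t,T]}\Gamma_r\big)\big(\sup_{r\in[t,T]}|\hat Y_r|\big)\in L^1$ (Doob's inequality for $\Gamma$ together with $Y,Y'\in\mathcal{S}^2_{\mathbb{F}}(0,T;\mathbb{R})$ from Theorem \ref{l1}), it is a supermartingale. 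Comparing its values at $s=t$ and $s=T$ and conditioning on $\mathcal{F}_t$ then yields
$$\hat Y_t\geq\mathbb{E}\Big[\Gamma_T\hat\xi+\int_t^T\Gamma_sR_s\,ds\ \Big|\ \mathcal{F}_t\Big]\geq 0,\qquad t\in[0,T],$$
since $\Gamma>0$, $\hat\xi\geq 0$ and $R\geq 0$; by right-continuity this gives $Y_t\geq Y'_t$ for all $t\in[0,T]$, $P$-a.s.

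For the strict statement, $\Gamma_sR_s\geq 0$ already gives $\hat Y_t\geq\mathbb{E}[\Gamma_T\hat\xi\mid\mathcal{F}_t]$. If $P(\xi>\xi')>0$, then $\Gamma_T\hat\xi\geq 0$ with $P(\Gamma_T\hat\xi>0)=P(\hat\xi>0)>0$ because $\Gamma_T>0$ a.s., hence $\mathbb{E}[\Gamma_T\hat\xi]>0$; consequently the nonnegative random variable $\mathbb{E}[\Gamma_T\hat\xi\mid\mathcal{F}_t]$ is strictly positive on a set of positive probability, which is exactly $P\{Y_t>Y'_t\}>0$ for every $t\in[0,T]$, and in particular $Y_0>Y'_0$.

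The hard part will be the construction and analysis of $\Gamma$: one must (i) linearise the $k$-variable through the scalar $\int_E k(e)\bar\rho(s,e)\nu(de)$ so as to land on a genuine jump coefficient $\delta_s(e)$ inheriting the estimate $0\leq\delta_s(e)\leq C(1\wedge|e|)$; (ii) exploit the sign information in $(\mathbf{A2})$-(iii) to force $\gamma^l_s/\lambda>-1$ and $\delta_s(e)>-1$ --- without it, Lipschitz continuity alone gives only $\gamma^l_s\geq-L_\Psi$, possibly below $-\lambda$, and then $\Gamma$ need not remain positive; and (iii) keep the two compensators apart, the mismatch between the intensity $\lambda$ of $\tilde N(l)$ and the driver term $\gamma^l_s\hat H_s(l)$ being precisely what forces the coefficient $\gamma^l_s/\lambda$ in the definition of $\Gamma$. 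Checking that $\Gamma$ is a true square-integrable martingale rather than merely a positive local martingale also relies on the boundedness of the coefficients and on $\int_E(1\wedge|e|^2)\nu(de)<\infty$, but that is routine.
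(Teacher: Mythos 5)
Your proposal is correct and follows essentially the same route as the paper: linearise the driver coordinate by coordinate, use $(\mathbf{A2})$-(iii) to get nonnegative jump coefficients (your $\gamma^l_s/\lambda$ and $\delta_s(e)=\theta_s\bar\rho(s,e)$ are exactly the paper's $g^{2,l}_s$ and $g^4_s\bar\rho(s,e)$), and exploit the strict positivity of the resulting Dol\'eans-Dade exponential adjoint to read off the sign of $\hat Y$ and the strict comparison. The only differences are cosmetic: you start the adjoint at time $t$ and argue via a supermartingale inequality, whereas the paper starts $P$ at $0$ and writes the exact conditional representation $\hat Y_tP_t=\mathbb{E}\big[(\xi-\xi')P_T+\int_t^T\hat g_sP_s\,ds\mid\mathcal{F}_t\big]$.
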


\begin{proof}  Setting $(\hat Y,\hat H,\hat Z,\hat K):= (Y-Y',H-H',Z-Z',K-K')$, we know $(\hat Y,\hat H,\hat Z,\hat K)$ satisfies the following linear backward equation with jumps:
\begin{equation}\label{eemm}
\left\{\begin{aligned}
&\! d\hat Y_s=-\Big[g_s^1\hat Y_s+\lambda\sum_{l=1}^{m-1}g_s^{2,l}\hat H_s(l)+g_s^3\hat Z_s+g_s^4\int_E\hat K_s(e)\bar\rho(s,e)\nu(de)+\hat g_s\Big]ds\\
&\!\hskip 1.3cm+\sum\limits_{l=1}^{m-1}\hat H_s(l)d\tilde N_s(l)+\hat Z_sdB_s+\int_E\hat K_s(e)\tilde\mu(ds,de),\ \ s\in[0,T],\\
 &\!\hat Y_T=\xi-\xi',
\end{aligned}\right.\end{equation}
where
$$\begin{aligned}
&g_s^1:= \left\{\begin{aligned}
&\! \frac{g(s,Y_s,H_s,Z_s,K_s)-g(s,Y'_s,H_s,Z_s,K_s)}{\hat Y_s }, \hskip1cm   \mbox{if  } \hat Y_s\neq 0,\\
&\!  0,  \hskip7cm   \mbox{ otherwise},
 \end{aligned}\right.\\
 & g_s^{2,l}:=  \left\{\begin{aligned}
& \!\frac{g(s,Y'_s,\bar{H}^l_s,Z_s,K_s)-g(s,Y'_s,\tilde{H}^l_s,Z_s,K_s)}{\lambda\hat H_s(l)}, \hskip0.7cm  \mbox{ if } \hat H_s(l)\neq 0,\\
&\!  0,   \hskip6.8cm\ \mbox{otherwise},
   \end{aligned}\right.\\
&g_s^3:=  \left\{\begin{aligned}
& \!\frac{g(s,Y'_s,H'_s,Z_s,K_s)-g(s,Y'_s,H'_s,Z'_s,K_s)}{|\hat Z_s|^2}\hat Z_s, \hskip0.5cm  \mbox{ if } \hat Z_s\neq 0,\\
 & \! 0, \hskip7.1cm  \mbox{otherwise},
   \end{aligned}\right.\\
& g_s^4:=  \left\{\begin{aligned}
 &\! \frac{g(s,Y'_s,H'_s ,Z'_s,K_s)-g(s,Y'_s,H'_s,Z'_s,K'_s)}{\int_E\hat K_s(e)\bar\rho(s,e)\nu(de) },  \hskip0.9cm  \mbox{ if } \int_E\hat K_s(e)\bar\rho(s,e)\nu(de)\neq0,\\
& \! 0,   \hskip7.2cm   \mbox{otherwise},
 \end{aligned}\right.\\
 &\hat g_s:=  g(s,Y'_s,H'_s,Z'_s,K'_s)-g'(s,Y'_s,H'_s,Z'_s,K'_s),
 \end{aligned}$$
 with  $l=1,2,\cdots, m-1,$ and
 $$\bar{H}^l_s:=\big( H'_s(1),\cdots,H'_s(l-1),H_s(l),H_s(l+1),\cdots,H_s(m-1)\big),$$
 $$\tilde{H}^l_s:=\big( H'_s(1),\cdots,H'_s(l-1),H'_s(l),H_s(l+1),\cdots,H_s(m-1)\big).$$
Now we introduce the  adjoint equation of BSDE \eqref{eemm} as follows,
\begin{equation}\label{eead}
\left\{\begin{aligned}
&\! dP_s=g_s^1P_sds+ \sum_{l=1}^{m-1}g_s^{2,l}P_{s-}d\tilde{N}_s(l)+g_s^3P_sdB_s+\int_Eg_s^4P_{s-}\bar\rho(s,e) \tilde\mu(ds,de),\ \ s\in[0,T],\\
 &\! P_0=1.
\end{aligned}\right.\end{equation}
Its solution $P(\cdot)$  can be expressed as (refer to Theorem 122 in \cite{Situ})
\begin{equation}\label{2020111401}
P_t=\exp\Big\{\int_0^tg_s^1ds+\int_0^tg_s^3 dB_s-\frac 12 \int_0^t|g_s^3|^2ds  \Big\} \prod\limits_{0< s\leq t}(1+ Q_s^1)\exp\{- Q_t^2\},\quad t\in[0,T],
\end{equation}
with
$$\begin{aligned}
& Q_s^1:=\sum_{l=1}^{m-1}g_s^{2,l}\Delta{N}_s(l)+ \int_Eg_s^4 \bar\rho(s,e) \mu(\{s\},de),\quad s\in[0,T],\\
&Q_s^2:=\int_0^s\lambda\sum_{l=1}^{m-1}g_s^{2,l}ds+\int_0^s \int_Eg_s^4 \bar\rho(s,e) \nu(de)ds,\quad s\in[0,T].\end{aligned}$$
By  $(\mathbf{A2})$-(iii) and $\bar\rho(\cdot,\cdot)\geq 0$, we have
$ Q_s^1\geq 0$, a.s., for $s\in[0,T]$. Combined with \eqref{2020111401}, we conclude that
 $P_t> 0$, $t\in[0,T]$.
By applying It\^o's formula to $\hat Y_sP_s$, we get
\begin{equation}\label{2020111402}
\hat Y_t P_t=\mathbb{E}\Big[(\xi-\xi')P_T+\int_t^T\hat g_sP_sds|\mathcal {F}_t\Big],\quad t\in[0,T].
\end{equation}
 Combined   with  $P_t> 0$, $t\in[0,T]$, we get $Y_t\geq Y'_t$, for all $t\in[0,T]$.
Moreover, it follows from \eqref{2020111402}  that if $P(\xi>\xi') > 0$, then $P\{Y_t>Y'_t\} > 0$, $0 \leq t\leq T$.

\end{proof}

\section{Stochastic Differential Games with Jumps}

 In this section, we formulate the problem of  two-player zero-sum stochastic differential  game  with jumps.
Let   $U$ and $V$ be two  compact metric
spaces, the admissible control sets for the two players are given as
 $$\begin{aligned}
 &\mathcal{U}=\Big\{u:[0,T]\times \Omega \rightarrow U\mid u\mbox{ is\ an }(\mathcal{F}%
_{t})_{t\geq0}{\rm-progressively\ measurable\ process }\Big\}  \mbox{ (for Player I)}, \\
 &\mathcal{V}=\Big\{v:[0,T]\times \Omega \rightarrow V\mid   v\  {\rm is\ an \ } (\mathcal{F}
_{t})_{t\geq0} {\rm-progressively\  measurable\  process}\Big\}  \mbox{ (for Player II)}.  \end{aligned}$$
For every $i\in \mathbf{M}=\{1,2,\cdots,m\}$, let
\begin{equation*}
\begin{aligned}
 b_{i}:\mathbb{[}0,T\mathbb{]}\times \mathbb{R}^n \times U\times V\rightarrow \mathbb{R}^n,\ \sigma _{i}:
[0,T]\times \mathbb{R}^n \times U\times
V\rightarrow \mathbb{R}^{n\times d},\
\gamma_i:[0,T]\times \mathbb{R}^n \times U\times V\times E\rightarrow \mathbb{R}^n
\end{aligned}%
\end{equation*}%
satisfy
\begin{description}
\item[$(\mathbf{H1})$]
(i) For every fixed $x\in \mathbb{R}^n$, $e\in  E$, $b_i(\cdot,x,\cdot,\cdot),\ \sigma_i(\cdot,x,\cdot,\cdot),\ \gamma_i(\cdot,x,\cdot,\cdot,e)$ are continuous in $(t,u,v)$.

(ii) There exists some constant $L>0$ and a mapping $\kappa:E\rightarrow \mathbb{R}^+$ satisfying $\displaystyle \int_E\kappa^2(e)\nu(de)<+\infty$  such that, for all $t\in[0,T]$,  $x_1$, $x_2\in \mathbb{R}^n$, $u\in U$, $v\in V$, $i\in \mathbf{M}$, $e\in E$,
\begin{equation}\nonumber
\begin{aligned}
&|b_i(t,x_1,u,v)-b_i(t,x_2,u,v)|+|\sigma_i(t,x_1,u,v)-\sigma_i(t,x_2,u,v)|\leq L|x_1-x_2|,\\
%
&|\gamma_i(t,x_1,u,v,e)-\gamma_i(t,x_2,u,v,e)|\leq \kappa(e)|x_1-x_2|,\\
& |\gamma_i(t,0,u,v,e)|\leq \kappa(e).
\end{aligned}
\end{equation}
\end{description}

For any given admissible control  $(u,v)\in \mathcal {U}\times \mathcal {V}$, $i\in \mathbf{M}$, the initial stopping time $\tau$ with $0\leq \tau\leq T$, $P$-a.s., and the initial state $\eta\in L^2(\Omega, \mathcal {F}_\tau,P;\mathbb{R}^n)$, we consider the following  forward stochastic system with jumps:
\begin{equation}
\left\{
\begin{aligned}
&\!dX_{s}^{\tau,\eta ,i;u,v}  =     b_{N_{s}^{\tau,i}}(s,X_{s}^{\tau,\eta
,i;u,v},u_{s},v_{s})ds   +\sigma _{N_{s}^{\tau,i}}(s,X_{s}^{\tau,\eta ,i;u,v},u_{s},v_{s})dB_{s}\\
&\!\qquad\qquad\quad   +\int_E\gamma_{N_{s}^{\tau,i}}(s,X_{s-}^{\tau,\eta ,i;u,v},u_{s},v_{s},e)\tilde{\mu}(ds,de),\text{ \ \ }s\in \lbrack \tau,T],    \\
&\! X_{\tau}^{\tau,\eta ,i;u,v}   =  \  \eta .    \end{aligned}%
\right.  \label{fequ3.1}
\end{equation}
Obviously,  under   assumption $(\mathbf{H1})$, the SDE with jumps  \eqref{fequ3.1} has a unique solution $X^{\tau,\eta ,i;u,v}\in {\mathcal{S}}_\mathbb{F}^{2}(\tau,T;\mathbb{R}^n)$. Moreover,  there exists some constant $C>0$ such that, for any $\tau\in[0,T]$, $i\in \mathbf{M}$, $(u,v)\in \mathcal {U}\times \mathcal {V}$, $\eta,$ $\eta'\in L^2(\Omega, \mathcal {F}_\tau,P;\mathbb{R}^n)$,  we have, $P$-a.s.,
  \begin{equation}\label{ee00}
  \begin{aligned}
 \mathbb{E}\Big[\sup\limits_{s\in[\tau,T]}|X_s^{\tau,\eta,i;u,v}-X_s^{\tau,\eta',i;u,v}|^2\mid \mathcal {F}_\tau\Big]\leq C|\eta-\eta'|^2.
 \end{aligned} \end{equation}
 For any $p\geq2$, we have the existence of  some constant $C_p>0$ such that, for any stopping time $\tau$ with $0\leq \tau\leq s \leq T$, $P$-a.s., $i\in \mathbf{M}$, $(u,v)\in \mathcal {U}\times\mathcal {V}$, $\eta\in L^p(\Omega, \mathcal {F}_\tau,P;\mathbb{R}^n)$,
   \begin{equation}\label{ee222}\begin{aligned}
   & \text{(i)} \ \mathbb{E}\Big[\sup\limits_{r\in[\tau,T]}|X_r^{\tau,\eta,i;u,v}|^p\mid \mathcal {F}_\tau\Big]\leq C_p(1+|\eta|^p),\ P\text{-a.s.},\\
 & \text{(ii)} \ \mathbb{E}\Big[\sup\limits_{r\in[\tau,s]}|X_r^{\tau,\eta,i;u,v}-\eta|^p\mid \mathcal {F}_\tau\Big]\leq C_p(s-\tau) (1+|\eta|^p),\ P\text{-a.s.}
 \end{aligned}\end{equation}
 %
For the details of the above estimates, the readers can refer to \cite{BBP, BHL, LW-SPA, YZ}.

\medskip

For every $i\in \mathbf{M}$, let the following mappings
\begin{equation*}
\begin{array}{lll}
&  f_{i}:\mathbb{[}0,T\mathbb{]}\times \mathbb{R}^n\times \mathbb{R}^{m}\times
\mathbb{R}^{d}\times\mathbb{R}\times U\times V\rightarrow \mathbb{R},\ \  \Phi_i :%
\mathbb{R}^n\rightarrow \mathbb{R}, &
\end{array}%
\end{equation*}%
satisfy
\begin{description}
\item[$(\mathbf{H2})$]
(i) For every $(x,a,z,k)\in \mathbb{R}^n\times \mathbb{R}^m\times \mathbb{R}^d\times \mathbb{R}$, ${f}_i(\cdot, x,a,z,k,\cdot,\cdot)$ is continuous in $(t,u,v)$;

(ii) ${f}_i$ and   $\Phi_i $ are Lipschitz continuous in $(x,a,z,k)$ and $x$, respectively, uniformly with  respect to  $(t,u,v)\in [0,T]\times U\times V$;

(iii) For the mapping $\rho:\mathbb{R}^n\times E\rightarrow \mathbb{R}$, there exists a constant $C>0$ such that,
$$\begin{array}{lll}
0\leq\rho(x,e)\leq C(1\wedge|e|),\quad x\in \mathbb{R}^n, \ e\in E,\\
|\rho(x_1,e)-\rho(x_2,e)|\leq {C(1\wedge|e|)|x_1-x_2|},\quad x_1,x_2\in \mathbb{R}^n, \ e\in E.\end{array}$$
\end{description}
\begin{description}
\item[$(\mathbf{H3})$] For fixed $\lambda>0$, there exists two constants $\alpha$ and $\beta$ satisfying  $\alpha\geq 0$ and $ \beta\geq0 $  such that, for any $i,j\in \mathbf{M}$, $i\neq j$, $(t,x,z,u,v)\in \lbrack 0,T]\times \mathbb{R}^n\times \mathbb{R}^{d}\times U\times V$,  and $a,a'\in \mathbb{R}^m$ with $a_p=a'_p$, $\forall p\neq j$,  $a_j\geq a'_j$,  and  $k,k'\in\mathbb{R}$ with $k\geq k'$,
    $$f_i(t,x,a,z,k,u,v)-f_i(t,x,a',z,k',u,v)\geq \alpha(a_j-a'_j)+\beta(k-k').$$
\end{description}


  Similar to \cite{BH-2010} and \cite{LLW-2020}, for every $i\in \mathbf{M},$
   we introduce a mapping $\tilde{f}_{i}:\mathbb{[}0,T\mathbb{]}\times \mathbb{R}^n%
\times \mathbb{R}\times \mathbb{R}^{m-1}\times \mathbb{R}^{d}\times \mathbb{R}\times U\times
V\rightarrow \mathbb{R}$ associated with the function $f_i$ as follows:
\begin{equation*}
\tilde{f}_{i}(t,x,y,h,z,k,u,v):=  f_{i}(t,x,a,z,k,u,v),
\end{equation*}%
where  $y$ and  $h=\big(h(1),\ldots,h(m-1)\big)$ are related with  $a=(a_{1},\cdots ,a_{m})$ as
\begin{equation}\label{y-a-h}
\left\{
\begin{aligned}
&\!a_{j}  =  y+h(m-i+j),& j<i,    \\
&\!a_{j}  =  y, & j=i,    \\
&\!a_{j}  =   y+h(j-i),& j>i.
\end{aligned}%
\right.
\end{equation}%
Clearly, for all $(t,x)\in [ 0,T]\times \mathbb{R}^n,\ z\in \mathbb{R}^{d},\ k\in \mathbb{R},\ u\in U,\ v\in V,$
\begin{equation*}
\left\{
\begin{aligned}
&\!f_{1}(t,x,a,z,k,u,v)   =   \tilde{f}_{1}(t,x,a_{1},h^{1},z,k,u,v),    \\
&\!f_{2}(t,x,a,z,k,u,v)   =  \tilde{f}_{2}(t,x,a_{2},h^{2},z,k,u,v),    \\
&\!\ \ \ \ \ \ \ \ \ \ \ \ \ \ldots \ldots &  &  &  \\
&\!f_{m}(t,x,a,z,k,u,v)  = \tilde{f}_{m}(t,x,a_{m},h^{m},z,k,u,v),
\end{aligned}
\right.
\end{equation*}%
where $h^{i}=\big(h^{i}(1),\cdots ,h^{i}(m-1)\big)$ with
\begin{equation*}
h^{i}(j)=a_{(i+j)\mbox{mod}( m)}-a_{i}=\left\{
\begin{aligned}
&\!a_{i+j}-a_{i},\ \  \ \ \ \ 1\leq j\leq m-i, &  &  &  \\
&\! a_{i+j-m}-a_{i},\ \ \ m-i+1\leq j\leq m-1.&  &  &
\end{aligned}
\right.
\end{equation*}%
 %
It is easy to check that, for each $i\in\mathbf{M}$,
the function    $\tilde{f}_i$  has   the following properties  similar to
   $(\mathbf{H2})$ and $(\mathbf{H3})$:

\begin{description}
 \item$(\mathbf{H2})'$  For every $(x,y,h,z,k)\in \mathbb{R}^n\times \mathbb{R}\times \mathbb{R}^{m-1}\times \mathbb{R}^d\times \mathbb{R}$, $\tilde{f}_i(\cdot, x,y,h,z,k,\cdot,\cdot)$ is continuous in $(t,u,v)$;

 $\ \ \tilde{f}_i$ is  Lipschitz in  $(x,y,h,z,k)$, uniformly with respect to $(t,u,v)\in [0,T]\times U\times V$.

\item$(\mathbf{H3})'$ For  $j\in\mathbf{L}$,  $(t,x,y,z,u,v)\in [0,T]\times  \mathbb{R}^n\times \mathbb{R}\times \mathbb{R}^{d}\times U\times V$,
 $h,$ $h'\in \mathbb{R}^{m-1}$ with $h_p=h'_p$, $\forall\ p\neq j$, $  h_j\geq h'_j$, and $k$, $k'\in\mathbb{R}$ with $k\geq k'$,
    $$\tilde f_i(t,x,y,h,z,k, u,v)-\tilde f_i(t,x,y,h',z,k',u,v)\geq {\alpha(h_j-h'_j)+\beta(k-k')}.$$
    \end{description}


For any given admissible controls $(u,v)\in \mathcal{U}\times
\mathcal{V}$, $ i\in \mathbf{M}$ and the initial data $(\tau,\eta )\in \lbrack
0,T]\times L^{2}(\Omega ,\mathcal{F}_{\tau},P;\mathbb{R}^n)$, we consider the following backward system with two Poisson random measures
\begin{equation}
\left\{
\begin{aligned}
&\!dY_{s}^{\tau,\eta ,i;u,v}   =    -\tilde{f}_{N_{s}^{\tau,i}}\Big(s,X_{s}^{\tau,\eta
,i;u,v},Y_{s}^{\tau,\eta ,i;u,v},H_{s}^{\tau,\eta ,i;u,v},Z_{s}^{\tau,\eta
,i;u,v}, \\
& \! \qquad\qquad\qquad\qquad\qquad\int_EK_{s}^{\tau,\eta ,i;u,v}(e)\rho(X_{s}^{\tau,\eta ,i;u,v},e)\nu(de),u_{s},v_{s}\Big)ds +\lambda \sum\limits_{l=1}^{m-1}H_{s}^{\tau,\eta
,i;u,v}(l)ds\\
& \!\qquad\qquad\qquad+Z_{s}^{\tau,\eta
,i;u,v}dB_{s}+\sum\limits_{l=1}^{m-1}H_{s}^{\tau,\eta ,i;u,v}(l)d\tilde{N}%
_{s}(l)+\int_E  K_{s}^{\tau,\eta ,i;u,v}(e)\tilde{\mu}(ds,de),    \\
&\! Y_{T}^{\tau,\eta ,i;u,v}   =  \Phi_{N_{T}^{\tau,i}}(X_{T}^{\tau,\eta ,i;u,v}),
\end{aligned}%
\right.  \label{equ3.1}
\end{equation}
where $X^{\tau,\eta ,i;u,v}$ is the solution of equation \eqref{fequ3.1}. Obviously, due to Theorem \ref{l1}, \eqref{equ3.1} admits a unique solution
 $(Y^{\tau,\eta ,i;u,v},H^{\tau,\eta ,i;u,v},$ $Z^{\tau,\eta,i;u,v},K^{\tau,\eta ,i;u,v})\in \mathcal{S}^{2}[\tau,T]$.
Similar to  Proposition A.1    in \cite{BHL},   there exists some constant $C>0$ such that, for all
$\tau\in[0,T]$, $i\in \mathbf{M}$, $(u,v)\in \mathcal {U}\times \mathcal {V}$, $\eta,\eta'\in L^2(\Omega, \mathcal {F}_\tau,P;\mathbb{R}^n)$, P-a.s.,
 \begin{equation}\label{equ3.01}\begin{aligned}
 {\rm (i)} &\  \mathbb{E}\Big[\sup\limits_{s\in[\tau,T]}|Y_s^{\tau,\eta,i;u,v}|^2+\int_\tau^T\Big(\lambda\sum\limits_{l=1}^{m-1}|H_s^{\tau,\eta,i;u,v}(l)|^2 + |Z_s^{\tau,\eta,i;u,v}|^2  +\int_E|K_s^{\tau,\eta,i;u,v}(e)|^2\nu(de)\Big)ds\mid \mathcal {F}_\tau\Big]\\
 &\leq C(1+|\eta|^2),\\
 {\rm(ii)}&\ \mathbb{E}\Big[\sup\limits_{s\in[\tau,T]}|Y_s^{\tau,\eta,i;u,v}-Y_s^{\tau,\eta',i;u,v}|^2+\int_\tau^T\lambda\sum\limits_{l=1}^{m-1}|H_s^{\tau,\eta,i;u,v}(l)-H_s^{\tau,\eta',i;u,v}(l)|^2ds\\
 & \ \ \  +\int_\tau^T|Z_s^{\tau,\eta,i;u,v}-Z_s^{\tau,\eta',i;u,v}|^2 ds +\int_\tau^T\int_E|K_s^{\tau,\eta,i;u,v}(e)-K_s^{\tau,\eta',i;u,v}(e)|^2\nu(de)ds\mid  \mathcal {F}_\tau\Big]\\
&\ \leq C|\eta-\eta'|^2. \end{aligned}\end{equation}

For the given  control processes $(u,v)\in\mathcal {U}\times\mathcal {V}$  and the initial data $(t,x,i)\in [0,T]\times \mathbb{R}^n\times\mathbf{M}$,
the  gain  functional $J(t,x,i;u,v)$ is defined  as follows
\begin{equation}
J(t,x,i;u,v):=   Y_{s}^{t,x,i;u,v}|_{s=t}, \label{equ3.05}
\end{equation}%
where $Y^{t,x,i;u,v}$ is the first component of the solution of the BSDE with jumps \eqref{equ3.1} with initial data $(\tau,\eta)=(t,x)$.
%
The classical result in stochastic  control theory (referring to \cite{BL, P-1997}) implies that, for any $i\in \mathbf{M}$, $(t,\eta)\in [0,T]\times
L^{2}(\Omega ,\mathcal{F}_{t},P;\mathbb{R}^n),$ we also have
\begin{equation}
J(t,\eta ,i;u,v)=J(t,x,i;u,v)|_{x=\eta }=Y_{t}^{t,\eta ,i;u,v},\ \ %
\mbox{P-a.s.}  \label{equ3.07}
\end{equation}%

To formulate our game problem consistent with  the Elliott-Kalton ``strategy against control" setting in \cite{EK}, we introduce the following  definitions of   admissible controls and  nonanticipative strategies on \textcolor{red}{a} stochastic time interval $[[\tau_1,\tau_2]]$\footnote{$[[\tau_1,\tau_2]]:=   \big\{(t,\omega)\in
[0,T]\times\Omega,\tau_1(\omega)\leq t\leq \tau_2(\omega)\big\}.$}, where
$\tau_1$ and $\tau_2$ are two stopping times with $t\leq\tau_1< \tau_2\leq T$, $P$-a.s.

\begin{definition}\label{Def-1}\sl
\  Let $u^0\in {U}$. A process $u=\big\{u_r(\omega), (r,\omega)\in [[\tau_1,\tau_2]]\big\}$
is an admissible control for Player  I on $[[\tau_1,\tau_2]]$, if $u\textbf{1}_{[[\tau_1,\tau_2]]}+u^0\textbf{1}_{[[0,T]]\backslash[[\tau_1,\tau_2]]}$ is $(\mathcal {F}_r)$-progressively measurable and with values in ${U}$.
The set of all admissible controls for Player  I on $%
[[\tau_1,\tau_2]]$ is denoted by $\mathcal{U}_{\tau_1,\tau_2}$. We identify two processes $u$ and $\bar{u}$
in $\mathcal{U}_{\tau_1,\tau_2}$ and write $u\equiv \bar{u}$ on $%
[[\tau_1,\tau_2]]$, if $ P\{u=\bar{u}\ a.e.\ in \
[[\tau_1,\tau_2]]\}=1.$

Similarly, we can define the admissible control process $v=\big\{v_r(\omega), (r,\omega)\in [[\tau_1,\tau_2]]\big\}$
 for Player II on $[[\tau_1,\tau_2]]$. The set of all admissible controls for Player  II on $%
[[\tau_1,\tau_2]]$ is denoted by $\mathcal{V}_{\tau_1,\tau_2}$.
 $v=\bar{v}$ on $%
[[\tau_1,\tau_2]]$ in $\mathcal{V}_{\tau_1,\tau_2}$ can also be interpreted similarly.
\end{definition}

\begin{definition}\label{Def-2}\sl
\ A nonanticipative strategy for Player  I on $[[\tau_1,\tau_2]]$ is a mapping $%
\alpha:\mathcal{V}_{\tau_1,\tau_2}\rightarrow \mathcal{U}_{\tau_1,\tau_2}$
such that, for any $\mathbb{F}$-stopping time $S:\Omega \rightarrow[{{%
[\tau_1,\tau_2]}}]$ and any $v_1,v_2\in \mathcal{V}_{\tau_1,\tau_2},$ with $%
v_1\equiv v_2$ on $[[\tau_1,S]]$, it holds that $\alpha(v_1)\equiv
\alpha(v_2)$ on $[[\tau_1,S]]$. Nonanticipative strategies $\beta: \mathcal{U}_{\tau_1,\tau_2}\rightarrow\mathcal{V}_{\tau_1,\tau_2}$ for Player  II on $%
[[\tau_1,\tau_2]]$,  are
defined similarly. The set of all nonanticipative strategies $\alpha:
\mathcal{V}_{\tau_1,\tau_2}\rightarrow \mathcal{U}_{\tau_1,\tau_2}$ for
Player  I on $[[\tau_1,\tau_2]]$ is denoted by $\mathcal{A}_{\tau_1,\tau_2}$, while the
 set of all nonanticipative strategies $\beta:  \mathcal{U}%
_{\tau_1,\tau_2}\rightarrow \mathcal{V}_{\tau_1,\tau_2}$ for Player  II on $%
[[\tau_1,\tau_2]]$ is denoted by $\mathcal{B}_{\tau_1,\tau_2}.$
\end{definition}

\medskip

The Elliott-Kalton ``strategy against control" setting  means that, Player II (resp. Player I) will choose an
admissible strategy  $\beta$ (resp. $\alpha$) responding to the admissible control $u $ (resp. $v$) chosen by
Player I (resp. Player II)  to minimize (resp. maximize) his/her loss (resp. gain) $J(t,x,i;u,v)$.
%
According to this mechanism,    the associated lower and upper value functions are defined respectively as follows: for any $(t,x,i)\in[0,T]\times\mathbb{R}^n\times\mathbf{M}$,
\begin{equation}\label{equ3.08}
\begin{aligned}
 &(\text{lower})\ W_{i}(t,x)= \mathop{\rm essinf}\limits_{\beta \in
\mathcal{B}_{t,T}}\mathop{\rm esssup}\limits _{u\in \mathcal{U}%
_{t,T}}J\big(t,x,i;u,\beta (u)\big),\\
 &(\text{upper})\ U_{i}(t,x)= \mathop{\rm esssup}\limits _{\alpha \in
\mathcal{A}_{t,T}}\mathop{\rm essinf}\limits_{v\in \mathcal{V}%
_{t,T}}J\big(t,x,i;\alpha (v),v\big).
\end{aligned}
\end{equation}
%
%
%
For fixed $(t,x,i)\in [0,T]\times \mathbb{R}^n\times\mathbf{M}$, since $W_{i}(t,x)$ is the essential infimum and the essential supremum over a family of $\mathcal {F}_t$-measurable criterion
functional $J\big(t,x,i;u,\beta(u)\big)$, it  is  originally an $\mathcal{F}_{t}$-measurable
random variable. However, it turns out  to be deterministic as shown in the following Proposition \ref{Pro---3.1}. From now on, we focus on the study of the lower value functions $W_{i}(t,x)$, $i\in\mathbf{M}$, the upper value functions $U_{i}(t,x)$, $i\in \mathbf{M}$, can be investigated in a same manner.

Using the same arguments as Proposition 3.1 in \cite{BHL}, we   get the following result.
\begin{proposition}\label{Pro---3.1}\sl
For any $(t,x,i)\in \lbrack 0,T]\times \mathbb{R}^n\times
\mathbf{M}$,
the lower value function $W_{i}(t,x)$ is   deterministic in the sense that $%
\mathbb{E}[W_{i}(t,x)]=W_{i}(t,x),$  $P$-a.s.
\end{proposition}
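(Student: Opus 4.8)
The plan is to adapt the classical argument (as in Proposition~3.1 of \cite{BHL}) showing that an essential infimum/supremum of $\mathcal{F}_t$-measurable game payoffs over Elliott--Kalton strategies is in fact invariant under the natural group of measure-preserving transformations of the Wiener--Poisson space that fix the past up to time $t$, and hence is $P$-a.s. constant. First I would introduce, for a bounded $\mathcal{F}_t$-measurable transformation $\tau_h$ of $\Omega$ (for instance, for the Wiener component, a Girsanov-type shift $\omega\mapsto\omega+h\mathbf 1_{(t,T]}$ composed so as to remain measure preserving, and for the two Poisson components the corresponding shift of the atoms located in $(t,T]$ by an $\mathcal{F}_t$-measurable amount), the induced action on controls, strategies, and on the solutions of the forward SDE \eqref{fequ3.1} and the backward system \eqref{equ3.1}. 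The key structural fact is that the coefficients $b_i,\sigma_i,\gamma_i,\tilde f_i,\Phi_i$ are deterministic, so the solution maps are equivariant: $X^{t,x,i;u,v}\circ\tau_h = X^{t,x,i;u\circ\tau_h,v\circ\tau_h}$ and likewise for $Y$, because the driving noises transform consistently and uniqueness of solutions (Theorem~\ref{l1} for the BSDE, and the standard wellposedness of \eqref{fequ3.1}) pins down the transformed process as the solution of the same equation driven by the transformed data. Consequently $J(t,x,i;u,v)\circ\tau_h = J(t,x,i;u\circ\tau_h,v\circ\tau_h)$, $P$-a.s.

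Next I would check that the transformation acts bijectively on the admissible control sets $\mathcal{U}_{t,T}$, $\mathcal{V}_{t,T}$ and on the nonanticipative strategy sets $\mathcal{A}_{t,T}$, $\mathcal{B}_{t,T}$: since $\tau_h$ fixes $\mathcal{F}_t$ and maps $(\mathcal{F}_r)$-progressively measurable processes to $(\mathcal{F}_r)$-progressively measurable processes (the shift in $(t,T]$ is built from $\mathcal{F}_t$-measurable, hence adapted, data), $u\mapsto u\circ\tau_h$ is a bijection of $\mathcal{U}_{t,T}$ onto itself, and $\beta\mapsto \tau_h^{-1}\circ\beta\circ\tau_h$ (suitably interpreted) is a bijection of $\mathcal{B}_{t,T}$ preserving the nonanticipativity property on stochastic intervals $[[t,S]]$. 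Then, because the essential supremum over $u\in\mathcal{U}_{t,T}$ of the family $\{J(t,x,i;u,\beta(u))\}$ commutes with the measure-preserving transformation $\tau_h$ (essential suprema of a family of random variables are transformed into the essential supremum of the transformed family, up to $P$-null sets, since $\tau_h$ preserves $P$), and likewise for the essential infimum over $\beta$, I obtain $W_i(t,x)\circ\tau_h = W_i(t,x)$, $P$-a.s., for every such $h$.

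Finally I would conclude using a zero--one type argument: the family of transformations $\{\tau_h\}$ is rich enough that the only random variables invariant under all of them (up to $P$-null sets) are the $P$-a.s.\ constants. Concretely, one shows the invariance forces $W_i(t,x)$ to be measurable with respect to the $P$-trivial $\sigma$-field $\mathcal{F}_t$-independent of the future increments and, after also exploiting shift invariance inside $[0,t]$ (or directly invoking that an $\mathcal{F}_t$-measurable random variable invariant under all Wiener shifts supported on $(0,t]$ together with the corresponding Poisson shifts is degenerate), that $W_i(t,x)$ equals its own expectation $P$-a.s.; hence $\mathbb{E}[W_i(t,x)]=W_i(t,x)$, $P$-a.s.

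The main obstacle I anticipate is the bookkeeping around the \emph{two} Poisson random measures: one must verify that the chosen transformation is genuinely $P$-preserving on the product space $\Omega_1\times\Omega_2\times\Omega_3$ and that it interacts correctly with the predictable integrands $H(\cdot)$, $K(\cdot,\cdot)$ and with the jump terms $\int_E K_s(e)\tilde\mu(ds,de)$ and $\sum_l H_s(l)\,d\tilde N_s(l)$; in particular one needs the equivariance of the BSDE solution, which relies on the uniqueness part of Theorem~\ref{l1} applied on the shifted space, and on the fact that the terminal data $\Phi_{N_T^{t,i}}(X_T)$ and the Markov chain $N^{t,i}$ also transform consistently (the chain is driven only by $N$ restricted to $(t,T]$). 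Once equivariance of both the forward and backward flows is established, the remaining measure-theoretic steps are routine and parallel to \cite{BHL}.
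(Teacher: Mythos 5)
Your overall strategy (invariance of the essential $\inf$/$\sup$ under a family of transformations of the underlying Wiener--Poisson space, followed by a degeneracy lemma) is indeed the approach the paper relies on: the paper simply invokes the argument of Proposition 3.1 in \cite{BHL}, which is the Buckdahn--Li transformation method adapted to the jump setting. However, as written your proposal has a genuine gap: you let the transformations act on the \emph{future}, i.e.\ you shift the Brownian path and relocate the Poisson atoms on $(t,T]$ while ``fixing the past up to time $t$''. This is the wrong direction for two reasons. First, the claimed equivariance $X^{t,x,i;u,v}\circ\tau_h=X^{t,x,i;u\circ\tau_h,v\circ\tau_h}$ fails for such shifts: composing with a shift supported on $(t,T]$ changes the driving increments of $B$, $N$, $\mu$ on $[t,T]$, so $X^{t,x,i;u,v}\circ\tau_h$ solves the equation driven by the \emph{shifted} noise (with an extra drift, respectively displaced jumps), not the same equation with merely transformed controls; uniqueness then does not identify it with $X^{t,x,i;u\circ\tau_h,v\circ\tau_h}$. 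Second, even if one had invariance of $W_i(t,x)$ under transformations that fix the past, this would be vacuous: $W_i(t,x)$ is $\mathcal{F}_t$-measurable, hence \emph{every} such random variable, constant or not, is invariant under maps that leave the noise on $(-\infty,t]$ untouched, so no zero--one argument can start from there. The crux of the proof is exactly the invariance you only mention parenthetically at the end and never establish, namely invariance under transformations acting on the noise \emph{before} time $t$.

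The correct implementation (the one in \cite{BHL}) is: take $\tau_h$ acting only on $(-\infty,t]$ (Cameron--Martin shifts of the Brownian component there, together with suitable measure-equivalent transformations of the two Poisson configurations on $(-\infty,t]$), leaving the increments of $B$ and the restrictions of $N,\mu$ to $(t,T]$ unchanged. Because $b_i,\sigma_i,\gamma_i,\tilde f_i,\Phi_i$ are deterministic and the forward--backward system on $[t,T]$ with initial datum $(t,x)$ is driven solely by those post-$t$ increments, uniqueness of solutions gives $X^{t,x,i;u,v}\circ\tau_h=X^{t,x,i;u\circ\tau_h,v\circ\tau_h}$ and $Y^{t,x,i;u,v}\circ\tau_h=Y^{t,x,i;u\circ\tau_h,v\circ\tau_h}$, hence $J(t,x,i;u,\beta(u))\circ\tau_h=J\big(t,x,i;u\circ\tau_h,\beta^{\tau_h}(u\circ\tau_h)\big)$; since $u\mapsto u\circ\tau_h$ and $\beta\mapsto\beta^{\tau_h}$ are bijections of $\mathcal{U}_{t,T}$ and $\mathcal{B}_{t,T}$, and $\tau_h$ maps $P$ into an equivalent measure (Girsanov; note a deterministic Cameron--Martin shift is \emph{not} measure preserving, but equivalence of measures is enough because essential suprema and infima only depend on null sets), one gets $W_i(t,x)\circ\tau_h=W_i(t,x)$ $P$-a.s.\ for all such $\tau_h$. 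Only then does the degeneracy lemma for $\mathcal{F}_t$-measurable random variables invariant under this (rich enough) family yield $W_i(t,x)=\mathbb{E}[W_i(t,x)]$ $P$-a.s. So you should swap the time interval on which your transformations act, prove the equivariance for past transformations as above (this is where the two Poisson measures and the chain $N^{t,i}$ need the bookkeeping you anticipate), and replace ``measure preserving'' by ``$P\circ\tau_h^{-1}\sim P$'' in the argument for the essential extrema.
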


Combining this with \eqref{equ3.01} and \eqref{equ3.05},  we also have the following properties for the lower value functions.

\begin{lemma}
\label{l2.1}\sl  There exists a constant $C>0$ such that, for all $i\in \mathbf{M},$ $x$, $x_1,$ $x_2 \in \mathbb{R}^n,$ $ t\in [ 0,T],$
\begin{equation}  \label{3.7}
\begin{aligned}
  |W_{i}(t,x_1)-W_{i}(t,x_2)|\leq C|x_1-x_2|,\
   |W_{i}(t,x)|\leq C(1+|x|).
\end{aligned}
\end{equation}
\end{lemma}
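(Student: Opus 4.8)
The plan is to derive both estimates from the corresponding estimates for the forward-backward system \eqref{fequ3.1}--\eqref{equ3.1}, namely \eqref{ee222} and \eqref{equ3.01}, together with the representation \eqref{equ3.07} of the gain functional and the variational characterization \eqref{equ3.08}. For the Lipschitz estimate, fix $i\in\mathbf{M}$, $t\in[0,T]$ and $x_1,x_2\in\mathbb{R}^n$. The key point is that the map $(u,v)\mapsto J(t,x,i;u,v)$ inherits a Lipschitz dependence on $x$: indeed, by \eqref{equ3.05} and \eqref{equ3.07}, $J(t,x_1,i;u,v)-J(t,x_2,i;u,v)=Y_t^{t,x_1,i;u,v}-Y_t^{t,x_2,i;u,v}$, and by \eqref{equ3.01}(ii) (with $\eta=x_1$, $\eta'=x_2$, and using that the conditional expectation given $\mathcal{F}_t$ of a deterministic-time-$t$ quantity is itself) we get $|J(t,x_1,i;u,v)-J(t,x_2,i;u,v)|\le C|x_1-x_2|$, $P$-a.s., with a constant $C$ independent of $(u,v)$.

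Next I would propagate this uniform Lipschitz bound through the $\mathop{\rm esssup}$ and $\mathop{\rm essinf}$ operations. For any fixed nonanticipative strategy $\beta\in\mathcal{B}_{t,T}$ and any $u\in\mathcal{U}_{t,T}$, we have $J\big(t,x_1,i;u,\beta(u)\big)\le J\big(t,x_2,i;u,\beta(u)\big)+C|x_1-x_2|$, hence taking $\mathop{\rm esssup}_{u\in\mathcal{U}_{t,T}}$ on both sides and then $\mathop{\rm essinf}_{\beta\in\mathcal{B}_{t,T}}$ yields $W_i(t,x_1)\le W_i(t,x_2)+C|x_1-x_2|$; by symmetry in $x_1,x_2$ we obtain the first inequality in \eqref{3.7}. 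The second inequality is obtained the same way from the growth bound: from \eqref{equ3.01}(i), $|J(t,x,i;u,v)|=|Y_t^{t,x,i;u,v}|\le \big(\mathbb{E}[|Y_t^{t,x,i;u,v}|^2\mid\mathcal{F}_t]\big)^{1/2}\le C(1+|x|)$ uniformly in $(u,v)$, and passing to the value function through $\mathop{\rm esssup}$ and $\mathop{\rm essinf}$ preserves this bound. Finally, Proposition \ref{Pro---3.1} guarantees that $W_i(t,x)$ is deterministic, so the $P$-a.s. inequalities are genuine numerical inequalities.

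The only genuinely delicate point is the passage from the pointwise (in $(u,v)$ or in $(u,\beta)$) estimates to estimates on the essential supremum/infimum: one must check that if a family $\{\xi_\lambda\}$ and $\{\eta_\lambda\}$ of random variables satisfies $\xi_\lambda\le\eta_\lambda+c$ $P$-a.s. for every $\lambda$ (with $c$ a constant), then $\mathop{\rm esssup}_\lambda\xi_\lambda\le\mathop{\rm esssup}_\lambda\eta_\lambda+c$, and similarly for $\mathop{\rm essinf}$; this is a standard lattice property of the essential supremum, but it is the step that requires care because the index sets $\mathcal{U}_{t,T}$, $\mathcal{B}_{t,T}$ are uncountable. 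One also needs the elementary fact that $J(t,x_1,i;u,\beta(u))-J(t,x_2,i;u,\beta(u))$ is controlled by $C|x_1-x_2|$ \emph{uniformly} over the admissible pair $(u,\beta(u))$, which is exactly what the uniformity of the constant $C$ in \eqref{equ3.01} provides. Everything else is routine, so I expect the write-up to be short, essentially a transcription of the argument in Proposition 3.3 of \cite{BHL} or Lemma 3.1 of \cite{LLW-2020} adapted to the present two-Poisson-measure setting.
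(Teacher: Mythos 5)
Your proposal is correct and follows essentially the same route as the paper, which obtains Lemma \ref{l2.1} directly by combining Proposition \ref{Pro---3.1} with the uniform-in-$(u,v)$ estimates \eqref{equ3.01} and the definition \eqref{equ3.05}--\eqref{equ3.07} of $J$, exactly as you do. Your explicit treatment of the passage through $\mathop{\rm esssup}$/$\mathop{\rm essinf}$ is just the routine lattice argument the paper leaves implicit, so there is nothing to add.
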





\subsection{Dynamic Programming Principle}
In order to  establish the relationship between the lower  value function $\mathbf{W}(t,x)=(W_{1}(t,x),\cdots,$ $W_{m}(t,x))$, $(t,x)\in [0,T]\times \mathbb{R}^n$ and %
the system   \eqref{1} of coupled HJBI equations with integral-partial operators, we need the related dynamic programming principle.
For this, we firstly adapt the  stochastic backward semigroup introduced by Peng
\cite{P-1997}    to our framework.
\begin{definition}\label{Def-bs}\sl
 Given $(t,x,i)\in [0,T]\times \mathbb{R
}^n\times \mathbf{M}$, $(u,v)\in \mathcal{U}\times \mathcal{V}$, for any stopping
times $\varsigma_1$ and $\varsigma_2 $ such that $t\leq \varsigma_1 \leq \varsigma_2 \leq T$, $P$-a.s., and  $\xi\in L^{2}(\Omega ,\mathcal{F}_{\varsigma_2},P;\mathbb{R}^n)$,  the stochastic backward semigroup
$G_{\varsigma_1, \varsigma_2}^{t,x,i;u,v}[\cdot]$ is defined as
\begin{equation*}
G_{\varsigma_1,\varsigma_2}^{t,x,i;u,v}[ \xi ]:=   %
\widetilde{Y}_{\varsigma_1}^{t,x,i;u,v},
\end{equation*}%
where $(\widetilde{Y}_{s}^{t,x,i;u,v},\widetilde{H}%
_{s}^{t,x,i;u,v},\widetilde{Z}_{s}^{t,x,i;u,v},\widetilde{K}_{s}^{t,x,i;u,v})_{s\in \lbrack t,\varsigma_2]}$ is the
solution of the following BSDE with the random time horizon $\varsigma_2$:
{\small \begin{equation}\label{equ_SBG}
\left\{
\begin{aligned}
&\!d\widetilde{Y}_{s}^{t,x,i;u,v}  =   -\tilde{f}_{N_{s}^{t,i}}\Big(s, {X}%
_{s}^{t,x,i;u,v},\widetilde{Y}_{s}^{t,x,i;u,v},\widetilde{H}_{s}^{t,x,i;u,v},\widetilde{Z}%
_{s}^{t,x,i;u,v},\int_E\widetilde K_{s}^{t,x,i;u,v}(e)\rho(X_{s}^{t,x,i;u,v},e)\nu(de),u_{s},v_{s}\Big)ds  \\
 &\!\hskip1.7cm +\lambda \sum\limits_{l=1}^{m-1}\widetilde{H}_{s}^{t,x,i;u,v}(l)ds+\widetilde{Z}%
_{s}^{t,x,i;u,v}dB_{s}+\sum\limits_{l=1}^{m-1}\widetilde{H}_{s}^{t,x,i;u,v}(l)d%
\tilde{N}_{s}(l)+\int_E\widetilde K_{s}^{t,x,i;u,v}(e)\tilde{\mu}(ds,de),  \\
&\!\widetilde{Y}_{\varsigma_2 }^{t,x,i;u,v} =   \xi,\ \ s\in[t,\varsigma_2],
\end{aligned}
\right.
\end{equation}%
}
where $X^{t,x,i;u,v}$ is the solution of equation \eqref{fequ3.1} with $(\tau,\eta)=(t,x)\in [0,T]\times\mathbb{R}^n$.
\end{definition}

By using the backward semigroup $G_{\varsigma_1,\varsigma_2 }^{t,x,i;u,v}[\cdot]$,  we get the following relation for the solution component  $Y^{t,x,i;u,v}$
of BSDE with jumps \eqref{equ3.1} with $(\tau,\eta) =(t,x)$:
\begin{equation}
G_{t,\varsigma_2 }^{t,x,i;u,v}\big[Y_{\varsigma_2
}^{t,x,i;u,v}\big]=G_{t,T}^{t,x,i;u,v}\big[\Phi_{N_{T}^{t,i}}(X_{T}^{t,x,i;u,v})\big]=Y_{t}^{t,x,i;u,v}=J(t,x,i;u,v).
\end{equation}

Now we present the following   two dynamic programming principles in this paper. The first one (Theorem \ref{WDPP}) is classical and known as a weak DPP, while the second one (Theorem \ref{SDPP}) is called a strong DPP in the sense that it generalizes the weaker version to  stopping times.

\begin{theorem}
\label{WDPP}\sl Suppose that   $(\mathbf{H1})$-$(\mathbf{H3})$ hold. For any $t\in[0,T]$, $\delta\in[0,T-t]$,
$x\in \mathbb{R}^n,\ i\in \mathbf{M},$ we have
\begin{equation}\label{equ_DPP}
W_{i}(t,x)=\mathop{\rm essinf}\limits_{\beta \in \mathcal{B}_{t,t+\delta }}%
\mathop{\rm esssup}_{u\in \mathcal{U}_{t,t+\delta }}G_{t,t+\delta
}^{t,x,i;u,\beta (u)}\Big[W_{N_{t+\delta }^{t,i}}\Big(t+\delta , {X}_{t+\delta
}^{t,x,i;u,\beta (u)}\Big)\Big],\ P\text{-a.s.}
\end{equation}%

\end{theorem}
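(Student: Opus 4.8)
The plan is to follow the by-now-standard two-inequality scheme for dynamic programming principles in the Elliott--Kalton ``strategy against control'' framework (as in \cite{BHL, BH-2010, LLW-2020}), adapted to the backward system \eqref{equ3.1} with the two Poisson random measures. Write $W_i^\delta(t,x)$ for the right-hand side of \eqref{equ_DPP}. First I would record that, thanks to the Lipschitz estimates \eqref{equ3.01} and \eqref{ee00}--\eqref{ee222} together with Lemma \ref{l2.1}, the terminal data $W_{N_{t+\delta}^{t,i}}\big(t+\delta, X_{t+\delta}^{t,x,i;u,\beta(u)}\big)$ lies in $L^2(\Omega,\mathcal{F}_{t+\delta},P;\mathbb{R})$ and depends on $\eta=x$ in a Lipschitz way, so that all the backward-semigroup objects below are well defined and satisfy the a~priori bounds of Theorem \ref{l1} and Lemma \ref{l2}. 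I would also note that the essential infimum/supremum in $W_i^\delta$ is again deterministic (same argument as Proposition \ref{Pro---3.1}).

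The proof splits into proving $W_i(t,x)\le W_i^\delta(t,x)$ and $W_i(t,x)\ge W_i^\delta(t,x)$. For the first inequality, fix $\beta\in\mathcal{B}_{t,T}$ arbitrary and $u\in\mathcal{U}_{t,t+\delta}$; one extends $u$ to $\mathcal{U}_{t,T}$ by concatenating it at time $t+\delta$ with a control steering the state/recursive-cost system from $(t+\delta, X_{t+\delta})$, and one uses the flow property of the SDE--BSDE system with jumps and the semigroup identity $G_{t,t+\delta}^{t,x,i;u,\beta(u)}\big[G_{t+\delta,T}^{t+\delta,X_{t+\delta},\cdot}[\,\Phi\,]\big]=G_{t,T}^{t,x,i;u,\beta(u)}[\Phi]$ (uniqueness of the solution of \eqref{equ3.1} on $[t,T]$ pieced from its restrictions). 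The monotonicity of the map $\xi\mapsto G_{t,t+\delta}^{t,x,i;u,\beta(u)}[\xi]$, which is exactly the comparison theorem (Theorem \ref{Com-Th}) applied to the BSDE \eqref{equ_SBG}, lets one replace the terminal value $Y_{t+\delta}^{\cdots}$ inside $G_{t,t+\delta}$ by the essential supremum over controls on $[t+\delta,T]$ and then by $W_{N_{t+\delta}^{t,i}}(t+\delta,X_{t+\delta})$ up to an $\varepsilon$; taking $\operatorname*{esssup}_u$ and then $\operatorname*{essinf}_\beta$ gives $W_i(t,x)\le W_i^\delta(t,x)+C\varepsilon$.

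For the reverse inequality $W_i(t,x)\ge W_i^\delta(t,x)$ one argues by a measurable-selection / pasting construction: given $\varepsilon>0$, on a countable partition of $\mathbb{R}^n$ (or of $\Omega$, exploiting that $W$ is deterministic and Lipschitz) one selects near-optimal strategies $\beta^{k}\in\mathcal{B}_{t+\delta,T}$ for the sub-game started at $(t+\delta,x_k)$, glues them with an arbitrary $\beta_1\in\mathcal{B}_{t,t+\delta}$ into a single $\beta\in\mathcal{B}_{t,T}$ that is genuinely nonanticipative, and then estimates $J(t,x,i;u,\beta(u))$ from above using the semigroup decomposition, the comparison theorem, and the Lipschitz continuity of $W$ and of the value functions in the spatial variable (estimate \eqref{equ3.01}(ii)) to pass from $x_k$ back to $X_{t+\delta}^{t,x,i;u,\beta(u)}$. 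The $\operatorname*{essinf}_\beta\operatorname*{esssup}_u$ on the left then dominates $W_i^\delta(t,x)-C\varepsilon$. I expect the main obstacle to be this second inequality: one must check carefully that the pasted strategy $\beta$ is nonanticipative on $[[t,T]]$ in the sense of Definition \ref{Def-2} (the gluing must respect stopping-time restrictions), and that all the error terms introduced by the spatial discretization are controlled uniformly in $u$ by the Lipschitz constants from Lemma \ref{l2.1}, \eqref{ee222} and \eqref{equ3.01}; the presence of the second Poisson measure $\tilde N$ and the coupling through $\tilde f_{N_s^{t,i}}$ does not change the structure of the argument but must be tracked through each estimate. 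Finally, letting $\varepsilon\downarrow 0$ in both inequalities yields \eqref{equ_DPP}.
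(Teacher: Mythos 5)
Your toolbox is the right one (restriction/concatenation of controls and strategies, the backward semigroup and its flow identity, the comparison theorem, the representation of $W_{N_{t+\delta}^{t,i}}(t+\delta,X_{t+\delta})$ at a random initial datum via a partition/Lipschitz argument, and the nonanticipativity check for pasted strategies) — this is exactly the machinery of the proof the paper omits (it refers to Theorem 3.1 of \cite{BHL}) and of the paper's own Appendix proof of the strong DPP. But the way you have wired the two inequalities is wrong, and as written each half contains a step that fails. In your first half you fix an \emph{arbitrary} $\beta\in\mathcal{B}_{t,T}$ and then ``replace'' $Y_{t+\delta}^{t,x,i;u,\beta(u)}$ first by $\mathop{\rm esssup}_{u_2}J(t+\delta,X_{t+\delta},N_{t+\delta}^{t,i};u_2,\beta_2(u_2))$ and then by $W_{N_{t+\delta}^{t,i}}(t+\delta,X_{t+\delta})$ ``up to an $\varepsilon$''. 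The second replacement is false for an arbitrary $\beta$: since $W$ is the essential infimum over $\beta_2$, one only has $\mathop{\rm esssup}_{u_2}J(\cdot;\beta_2)\ge W_{N_{t+\delta}^{t,i}}(t+\delta,X_{t+\delta})$, with no upper control by $W+\varepsilon$ unless $\beta_2$ has been chosen $\varepsilon$-optimal. In your second half you build one special pasted strategy $\beta$ (near-optimal pieces $\beta^k$ on $[t+\delta,T]$ glued to a first-interval strategy) and estimate $J(t,x,i;u,\beta(u))$ \emph{from above}, then claim that $\mathop{\rm essinf}_\beta\mathop{\rm esssup}_u J$ ``dominates'' $W_i^\delta(t,x)-C\varepsilon$. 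That is a non sequitur: an upper bound on $J$ along one particular $\beta$ can only yield an upper bound on the essinf--esssup, never the lower bound you claim.

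The fix is to swap the constructions, exactly as in Steps 1 and 2 of the paper's Appendix (with $\tau=t+\delta$). To prove $W_i(t,x)\ge W_i^\delta(t,x)$: take an \emph{arbitrary} $\beta\in\mathcal{B}_{t,T}$, restrict it to $\beta_1,\beta_2$, and choose $\varepsilon$-optimal \emph{controls} $u_1^\varepsilon\in\mathcal{U}_{t,t+\delta}$ (for $\mathop{\rm esssup}_{u_1}G_{t,t+\delta}^{t,x,i;u_1,\beta_1(u_1)}[W_{N_{t+\delta}^{t,i}}(t+\delta,X_{t+\delta})]\ge W_i^\delta$) and $u_2^\varepsilon\in\mathcal{U}_{t+\delta,T}$ (using $W_{N_{t+\delta}^{t,i}}(t+\delta,X_{t+\delta})\le\mathop{\rm esssup}_{u_2}J(\cdot;u_2,\beta_2(u_2))$), so that all bounds go \emph{downward} and $\mathop{\rm esssup}_u J(t,x,i;u,\beta(u))\ge W_i^\delta(t,x)-C\varepsilon$ for every $\beta$; then take the essinf over $\beta$. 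To prove $W_i(t,x)\le W_i^\delta(t,x)$: this is where the pasting belongs, and the first piece cannot be arbitrary — take $\beta_1^\varepsilon\in\mathcal{B}_{t,t+\delta}$ $\varepsilon$-optimal for the essinf defining $W_i^\delta$ and, for each $u_1$, a $\beta_2^\varepsilon\in\mathcal{B}_{t+\delta,T}$ $\varepsilon$-optimal for the sub-game at $(t+\delta,X_{t+\delta}^{t,x,i;u_1,\beta_1^\varepsilon(u_1)},N_{t+\delta}^{t,i})$ (here your partition/Lipschitz selection and the nonanticipativity check are needed), paste them into $\beta^\varepsilon\in\mathcal{B}_{t,T}$, and show $J(t,x,i;u,\beta^\varepsilon(u))\le W_i^\delta(t,x)+C\varepsilon$ for \emph{all} $u$, whence $W_i(t,x)\le\mathop{\rm esssup}_uJ(t,x,i;u,\beta^\varepsilon(u))\le W_i^\delta(t,x)+C\varepsilon$. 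With this re-pairing (and the Lemma \ref{l2}/Theorem \ref{Com-Th} estimates to absorb the $\varepsilon$'s through the semigroup) your sketch becomes the standard argument the paper intends.
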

\begin{proposition}
\label{pro3.1} \sl Under the assumptions $(\mathbf{H1})$-$(\mathbf{H3})$, for each  $i\in \mathbf{M}$, $W_{i}(t,x)$ is $\frac{1}{2}$-H\"{o}lder continuous in $%
t $.  That is, there exists some
constant $C$ such that, for each $i\in \mathbf{M},$ for every $x\in \mathbb{R}^n,$
$t,\ t^{\prime }\in \lbrack 0,T],$
\begin{equation*}
|W_{i}(t,x)-W_{i}(t^{\prime },x)|\leq C(1+|x|)|t-t^{\prime }|^{\frac{1}{2}}.
\end{equation*}%

\end{proposition}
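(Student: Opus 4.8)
The plan is to establish the $\frac12$-Hölder continuity in $t$ by combining the Lipschitz continuity in $x$ (Lemma \ref{l2.1}) with the weak dynamic programming principle (Theorem \ref{WDPP}) and the moment estimates for the forward SDE \eqref{fequ3.1} and the BSDE \eqref{equ3.1}. Fix $i\in\mathbf{M}$, $x\in\mathbb{R}^n$, and $0\le t<t'\le T$, and set $\delta=t'-t$. Applying Theorem \ref{WDPP} on the interval $[t,t+\delta]=[t,t']$, we write
\begin{equation*}
W_i(t,x)=\mathop{\rm essinf}\limits_{\beta\in\mathcal{B}_{t,t'}}\mathop{\rm esssup}\limits_{u\in\mathcal{U}_{t,t'}}G_{t,t'}^{t,x,i;u,\beta(u)}\Big[W_{N_{t'}^{t,i}}\big(t',X_{t'}^{t,x,i;u,\beta(u)}\big)\Big].
\end{equation*}
The key point is that on $[t,t']$ the Markov chain $N^{t,i}$ jumps with probability $O(\delta)$, so with high probability $N_{t'}^{t,i}=i$; and the forward diffusion has moved only by $O(\sqrt\delta)$ in $L^2$. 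Since $W_i(t',x)$ is a deterministic constant (Proposition \ref{Pro---3.1}), it is itself obtained as $G_{t,t'}^{t,x,i;u,\beta(u)}[W_i(t',x)]$ up to the generator correction; more precisely one subtracts and adds the backward semigroup applied to the constant terminal value $W_i(t',x)$.

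The first step is to control the difference between the two terminal data inside the semigroup. I would bound
\begin{equation*}
\mathbb{E}\Big[\big|W_{N_{t'}^{t,i}}\big(t',X_{t'}^{t,x,i;u,\beta(u)}\big)-W_i(t',x)\big|^2\mid\mathcal{F}_t\Big]
\end{equation*}
by splitting on the event $\{N_{t'}^{t,i}=i\}$ and its complement. On $\{N_{t'}^{t,i}=i\}$ use the Lipschitz estimate \eqref{3.7} together with \eqref{ee222}(ii) to get a term of order $\delta(1+|x|^2)$; on the complement, whose probability is $O(\delta)$, use the linear growth bound $|W_j|\le C(1+|x|)$ together with \eqref{ee222}(i), again yielding a term of order $\delta(1+|x|^2)$. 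The second step is to estimate the gap between $G_{t,t'}^{t,x,i;u,\beta(u)}[\xi]$ and $\xi$ when $\xi=W_i(t',x)$ is a (deterministic) constant: writing out the BSDE \eqref{equ_SBG} on the short horizon $[t,t']$ and applying the a priori estimate of Theorem \ref{l1} (or a direct Itô/BDG computation), the contribution of the driver over $[t,t']$ is controlled by $\big(\mathbb{E}\int_t^{t'}|\tilde f_{N_s^{t,i}}(s,X_s,\cdots,0)|^2ds\big)^{1/2}\le C\sqrt\delta(1+|x|)$ using $(\mathbf{H2})'$ and \eqref{ee222}(i), while the martingale part vanishes in conditional expectation at time $t$. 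Combining, one gets $\big|G_{t,t'}^{t,x,i;u,\beta(u)}[W_{N_{t'}^{t,i}}(t',X_{t'}^{t,x,i;u,\beta(u)})] - W_i(t',x)\big|\le C(1+|x|)\sqrt\delta$ with a constant uniform in $u,\beta$.

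The third step is to pass from this pointwise-in-$(u,\beta)$ estimate to one for the value functions. Since the bound $C(1+|x|)\sqrt\delta$ is uniform over all $u\in\mathcal{U}_{t,t'}$ and $\beta\in\mathcal{B}_{t,t'}$, the stability of $\mathop{\rm essinf}\mathop{\rm esssup}$ under a uniform perturbation of the argument gives
\begin{equation*}
\big|W_i(t,x)-W_i(t',x)\big|\le C(1+|x|)|t-t'|^{1/2},
\end{equation*}
which is the claim. I expect the main obstacle to be the second step — cleanly bounding the short-horizon backward semigroup applied to the constant $W_i(t',x)$ — because the generator $\tilde f_{N_s^{t,i}}$ is evaluated along the forward trajectory $X^{t,x,i;u,v}$ and also depends on the solution components $(\widetilde Y,\widetilde H,\widetilde Z,\widetilde K)$ themselves; one must use the a priori estimate \eqref{equ3.01}(i) to absorb the $\widetilde Y,\widetilde H,\widetilde Z,\widetilde K$-dependence and Gronwall's inequality on $[t,t']$ to close the estimate, keeping all constants independent of the controls and strategies. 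A secondary subtlety is ensuring the random-chain bookkeeping (the event $\{N_{t'}^{t,i}\ne i\}$ and the growth bound $|W_j|\le C(1+|x|)$ valid for every $j\in\mathbf{M}$) is handled uniformly, which is why we need the growth estimate in Lemma \ref{l2.1} for all components simultaneously.
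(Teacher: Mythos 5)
Your overall strategy is exactly the one the paper intends (the proof is omitted and referred to the argument of Theorem 3.2 in \cite{BHL}): apply the weak DPP (Theorem \ref{WDPP}) on $[t,t']$, compare the terminal datum $W_{N_{t'}^{t,i}}\big(t',X_{t'}^{t,x,i;u,\beta(u)}\big)$ with the constant $W_i(t',x)$ in $L^2$, bound the short-horizon deviation of the backward semigroup from a constant terminal value by $C\sqrt{\delta}\,(1+|x|)$ via the driver estimate, and conclude by stability of $\mathop{\rm essinf}\mathop{\rm esssup}$ under a uniform perturbation. Your second and third steps are sound.

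The gap is in your first step, precisely where the Markov chain enters. On the complement event $\{N_{t'}^{t,i}\neq i\}$ you bound the squared terminal difference by $C(1+|x|^2+|X_{t'}|^2)$ and then claim an order $\delta(1+|x|^2)$ from $P\big(N_{t'}^{t,i}\neq i\mid\mathcal F_t\big)\leq C\delta$ together with \eqref{ee222}-(i). But $\mathbf 1_{\{N_{t'}^{t,i}\neq i\}}$ and $|X_{t'}^{t,x,i;u,\beta(u)}|^2$ do not decouple: the coefficients of \eqref{fequ3.1} switch with $N^{t,i}$ and the controls are adapted to the full filtration, so the conditional expectation of the product does not factor into $P(N\neq i\mid\mathcal F_t)\cdot\mathbb E[1+|X_{t'}|^2\mid\mathcal F_t]$. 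What follows directly is only the Cauchy--Schwarz bound $\mathbb E\big[\mathbf 1_{\{N_{t'}^{t,i}\neq i\}}(1+|X_{t'}|^2)\mid\mathcal F_t\big]\le C\delta^{1/2}(1+|x|^2)$ (using fourth moments from \eqref{ee222}-(i)), and after taking the square root in the BSDE stability estimate this yields exponent $\tfrac14$, not the claimed $\tfrac12$.

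The fix is to split so that the indicator never multiplies $|X_{t'}|^2$: on $\{N_{t'}^{t,i}=j\}$ write
\[
\big|W_j(t',X_{t'})-W_i(t',x)\big|\le \big|W_j(t',X_{t'})-W_j(t',x)\big|+\big|W_j(t',x)\big|+\big|W_i(t',x)\big|\le C|X_{t'}-x|+C(1+|x|),
\]
using Lemma \ref{l2.1} for every component $j$. The term $C|X_{t'}-x|$ is then handled by \eqref{ee222}-(ii) on all events, while the indicator multiplies only the $\mathcal F_t$-measurable quantity $C(1+|x|)$, so the jump-probability estimate (as in \eqref{2020122101}) gives $C\delta(1+|x|^2)$ with no loss. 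With this modification the rest of your argument goes through and produces the stated $\tfrac12$-H\"older bound.
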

The proofs of Theorem \ref{WDPP} and Proposition \ref{pro3.1} are similar to Theorem 3.1 and Theorem 3.2   in  \cite{BHL}, respectively, and thus are omitted. We now turn our attention to the following strong DPP.

\begin{theorem}
\label{SDPP}\sl   Suppose that  $(\mathbf{H1})$-$(\mathbf{H3})$ hold. For any $t\in [0,T]$, any stopping time $\tau$ such that $ t\leq \tau\leq T$, $P$-a.s., $x\in \mathbb{R}^n,\ i\in \mathbf{M},$ we have
\begin{equation}\label{equ-SDPP}
W_i(t,x)=\mathop{\rm essinf}\limits_{\beta\in \mathcal{B}_{t,\tau}}%
\mathop{\rm esssup}\limits_{u\in\mathcal{U}_{t,\tau}}G_{t,\tau}^{t,x,i;u,%
\beta(u)}\Big[W_{N_{\tau}^{t,i}}\Big(\tau, {X}_{\tau}^{t,x,i;u,\beta(u)}\Big)\Big],\ P\text{-a.s.}
\end{equation}

\end{theorem}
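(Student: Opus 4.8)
\textbf{Proof strategy for the strong DPP (Theorem \ref{SDPP}).}
The plan is to upgrade the weak DPP (Theorem \ref{WDPP}) from deterministic time increments $t+\delta$ to an arbitrary stopping time $\tau$, following the now-standard approximation scheme of Buckdahn--Hu--Li \cite{BHL}. First I would prove the result for stopping times $\tau$ taking finitely many values $t=t_0<t_1<\cdots<t_N=T$ on a partition: on the event $\{\tau=t_k\}\in\mathcal{F}_{t_k}$ one writes $\tau$ locally as a constant, applies Theorem \ref{WDPP} with $\delta=t_k-t$ (more precisely, iterates the one-step weak DPP across the subintervals of the partition), and uses the measurability of the value functions together with the local structure of admissible strategies to paste the pieces together. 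The stochastic backward semigroup $G_{t,\tau}^{t,x,i;u,\beta(u)}[\cdot]$ is well-defined for a random horizon $\tau$ by Definition \ref{Def-bs} and the well-posedness of BSDEs with random terminal time (Theorem \ref{l1} applied on $[t,\tau]$), so the right-hand side of \eqref{equ-SDPP} makes sense. For general $\tau$, approximate from above by $\tau_n:=\sum_k t_k^n\mathbf{1}_{\{t_{k-1}^n\le \tau<t_k^n\}}$ (a decreasing sequence of discrete stopping times with $\tau_n\downarrow\tau$), prove \eqref{equ-SDPP} for each $\tau_n$, and pass to the limit.

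The key steps, in order, are: (1) establish the one-step weak DPP precisely in the semigroup form and verify that it iterates along a deterministic partition; (2) prove the discrete-stopping-time version by conditioning on $\{\tau=t_k\}$ and exploiting that a nonanticipative strategy restricted to $[[t,t_k]]$ on this event is again nonanticipative, so that the essinf/esssup over $\mathcal{B}_{t,\tau},\mathcal{U}_{t,\tau}$ decomposes consistently with the essinf/esssup over the subintervals; (3) derive the regularity and stability estimates needed for the limit — namely that $\xi_n:=W_{N_{\tau_n}^{t,i}}(\tau_n, X_{\tau_n}^{t,x,i;u,\beta(u)})\to W_{N_\tau^{t,i}}(\tau, X_\tau^{t,x,i;u,\beta(u)})$ in $L^2$, using the $\frac12$-Hölder continuity in $t$ (Proposition \ref{pro3.1}), the Lipschitz continuity in $x$ (Lemma \ref{l2.1}), the flow estimates \eqref{ee222}, the right-continuity of $X$ and of the Markov chain $N^{t,i}$, together with $\tau_n-\tau\to0$; (4) combine this with the a priori estimate of Lemma \ref{l2} (comparing the solutions of the BSDE on $[t,\tau_n]$ with data $\xi_n$ and on $[t,\tau]$ with data $\xi$, extending the horizons to a common one) to get $G_{t,\tau_n}^{t,x,i;u,\beta(u)}[\xi_n]\to G_{t,\tau}^{t,x,i;u,\beta(u)}[\xi]$ uniformly in $(u,\beta)$, which permits interchanging the limit with essinf and esssup.

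The main obstacle will be step (4): controlling the convergence $G_{t,\tau_n}[\xi_n]\to G_{t,\tau}[\xi]$ \emph{uniformly} over all admissible controls $u$ and strategies $\beta$, because the random terminal time itself varies with $n$. One cannot directly apply Lemma \ref{l2}, which is stated for a fixed horizon $T$; instead one must either extend each BSDE on $[t,\tau_n]$ to the full interval $[t,T]$ by running a trivial (zero-driver) dynamics on $[\tau_n,T]$, or work on $[t,\tau]$ by first projecting $\xi_n$ onto $\mathcal{F}_\tau$ and estimating the discrepancy between the solution on $[\tau,\tau_n]$ and a constant. Either way, the estimate must not depend on $(u,\beta)$, which is where the uniform bounds \eqref{ee222} and \eqref{equ3.01} and the uniform-in-$(t,u,v)$ Lipschitz property in $(\mathbf{H2})'$ are essential; handling the two Poisson random measures simultaneously in this estimate — keeping track of the $\mathcal{K}_\nu^2$ and $[L^2(\mathcal{P}\otimes\mathcal{L})]^{m-1}$ norms through the random-horizon change — is the genuinely technical part, and is presumably why the authors defer the full proof to the Appendix.
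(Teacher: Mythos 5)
Your outline follows a genuinely different route from the paper, and as it stands it has a gap at its two load-bearing points. The paper does \emph{not} prove the strong DPP for discretized stopping times and then pass to the limit in \eqref{equ-SDPP}. Instead, the discretization $\tau_N\downarrow\tau$ is used only to prove Proposition \ref{prop3.3} (via Lemmas \ref{ll3.4}, \ref{le 031301}, \ref{ll3.3}), i.e.\ the representation $W_{N_\tau^{t,i}}(\tau,\eta)=\essinf_{\beta\in\mathcal{B}_{\tau,T}}\esssup_{u\in\mathcal{U}_{\tau,T}}Y_\tau^{\tau,\eta,N_\tau^{t,i};u,\beta(u)}$ for a stopping time $\tau$ and random initial state $\eta$, with a rate $C(\tau_N-\tau)(1+|\eta|^2)$ that is uniform in $(u,v)$. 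The strong DPP itself is then proved directly, with no limiting procedure: one splices controls at $\tau$ ($u=u_1\oplus u_2$), restricts a given $\beta\in\mathcal{B}_{t,T}$ to $\beta_1(u_1)=\beta(u_1\oplus u_2)|_{[t,\tau]}$ and $\beta_2^\varepsilon(u_2)=\beta(u_1^\varepsilon\oplus u_2)|_{[\tau,T]}$, picks $\varepsilon$-optimal $u_1^\varepsilon,u_2^\varepsilon$ (resp.\ $\beta_1^\varepsilon,\beta_2^\varepsilon$ in the reverse inequality), identifies the terminal datum via Proposition \ref{prop3.3}, and absorbs the $\varepsilon$'s inside the backward semigroup using the comparison theorem (Theorem \ref{Com-Th}) and the a priori estimate (Lemma \ref{l2}). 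This splicing-of-strategies argument is the key idea of the Appendix, and it is absent from your proposal.

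Concretely, two steps of your plan would not go through as described. First, your step (2) is not a consequence of ``iterating the one-step weak DPP'': the events $\{\tau_n=t_k\}$ are $\mathcal{F}_{t_k}$-measurable, not $\mathcal{F}_t$-measurable, so you cannot apply Theorem \ref{WDPP} ``on the event''; pasting the essinf/esssup over $\mathcal{B}_{t,\tau_n},\mathcal{U}_{t,\tau_n}$ across these events already requires a version of the value function with random initial data and a splicing argument for strategies over random intervals --- that is, essentially Proposition \ref{prop3.3} and the very mechanism you are trying to avoid, so the reduction is circular rather than elementary. Second, in your step (4) the obstacle is not only uniformity in $(u,\beta)$ of the convergence $G_{t,\tau_n}[\xi_n]\to G_{t,\tau}[\xi]$ (which the analogue of Lemma \ref{ll3.3} could indeed deliver), but the fact that the outer $\essinf_{\beta}\esssup_{u}$ in the approximate identity runs over $\mathcal{B}_{t,\tau_n}\times\mathcal{U}_{t,\tau_n}$, while the target runs over $\mathcal{B}_{t,\tau}\times\mathcal{U}_{t,\tau}$: the index families themselves vary with $n$, and you give no argument relating restrictions/extensions of nonanticipative strategies between $[[t,\tau]]$ and $[[t,\tau_n]]$ compatibly with the limit. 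The paper sidesteps this entirely because the interval of the outer optimization is fixed (it is $[[t,\tau]]$, resp.\ $[[\tau,T]]$ inside Proposition \ref{prop3.3}, where the change of family is handled separately by Lemma \ref{ll3.4}). To salvage your route you would need both a genuine pasting lemma for discrete stopping times and a stability result for the strategy classes as $\tau_n\downarrow\tau$; with those added, the scheme could work, but it would end up reproducing the paper's auxiliary results in a heavier form.
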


In order to show Theorem \ref{SDPP}, we first generalize the definition of lower value function $W_i(\cdot,\cdot),$  $i\in \mathbf{M}$ to the case
involving  stopping times $\tau$, random variables $\eta\in  L^2(\Omega,\mathcal {F}_\tau,P; \mathbb{R}^n)$ and the Markov process $N_\tau^{t,i}$, which plays an important role when proving strong DPP.

\begin{proposition}\label{prop3.3}\sl
For any $t\in[0,T]$, $i\in \mathbf{M}$,  stopping times $\tau$ with values in $[t,T]$ and $\eta\in
L^2(\Omega,\mathcal{F}_\tau,P;\mathbb{R}^n)$, it holds
\begin{equation*}
W_{N_\tau^{t,i}}(\tau,\eta)=\mathop{\rm essinf}\limits_{\beta\in\mathcal{B}%
_{\tau,T}} \mathop{\rm esssup}\limits_{u\in\mathcal{U}_{\tau,T}}Y_\tau^{%
\tau,\eta,N_\tau^{t,i};u,\beta(u)},\  P\text{-a.s.}
\end{equation*}
\end{proposition}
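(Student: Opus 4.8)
The plan is to reduce Proposition~\ref{prop3.3} to the already-known (deterministic-time) definition of $W_i$ in two stages: first for piecewise-constant (``simple'') stopping times, and then for general stopping times by an approximation argument. Fix $t\in[0,T]$, $i\in\mathbf{M}$, a stopping time $\tau$ with values in $[t,T]$ and $\eta\in L^2(\Omega,\mathcal F_\tau,P;\mathbb R^n)$. Denote the right-hand side of the claimed identity by $\widetilde W(\tau,\eta)$. Note first that, exactly as in the proof that $W_i(t,x)$ is deterministic (Proposition~\ref{Pro---3.1}), and using the Lipschitz estimates \eqref{equ3.01} and \eqref{ee00}, both sides depend on $\eta$ in a Lipschitz way (with the same constant $C$), so it suffices to prove the identity for $\eta$ taking finitely many values, and then pass to the limit.

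Second, I would treat the case of a \emph{simple} stopping time $\tau=\sum_{k=1}^{N}t_k\mathbf 1_{A_k}$ with $t\le t_1<\dots<t_N\le T$ and $\{A_k\}$ a partition of $\Omega$ with $A_k\in\mathcal F_{t_k}$, together with $\eta=\sum_{k=1}^{N}\sum_{j}x_{k,j}\mathbf 1_{A_{k,j}}$, $A_{k,j}\in\mathcal F_{t_k}$, $A_{k,j}\subset A_k$. On each atom $A_{k,j}$ one has $\tau=t_k$, $\eta=x_{k,j}$, and $N_\tau^{t,i}$ takes some $\mathcal F_{t_k}$-measurable value; conditioning on $\mathcal F_{t_k}$ and using the fact (standard for BSDEs, cf.\ \cite{BHL, P-1997}) that the solution of \eqref{equ3.1} started at a deterministic time with deterministic initial data is unaffected by multiplying by an $\mathcal F_{t_k}$-indicator, one gets $Y_\tau^{\tau,\eta,N_\tau^{t,i};u,\beta(u)}\mathbf 1_{A_{k,j}}=Y_{t_k}^{t_k,x_{k,j},\ell;u,\beta(u)}\mathbf 1_{A_{k,j}}$ on $\{N_{t_k}^{t,i}=\ell\}\cap A_{k,j}$. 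The nonanticipativity of strategies on $[[\tau,T]]$ restricts, on each atom, to nonanticipativity on $[[t_k,T]]$, and conversely strategies on the atoms can be glued into a strategy on $[[\tau,T]]$; combining this with the pasting/stability properties of $\mathop{\rm essinf}$ and $\mathop{\rm esssup}$ over nonanticipative strategies (as in \cite{BHL}) yields $\widetilde W(\tau,\eta)=\sum_{k,j}W_\ell(t_k,x_{k,j})\mathbf 1_{\{N_{t_k}^{t,i}=\ell\}\cap A_{k,j}}=W_{N_\tau^{t,i}}(\tau,\eta)$ on this class.

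Third, for a general stopping time $\tau$ with values in $[t,T]$ I would approximate it from above by the usual dyadic simple stopping times $\tau_n\downarrow\tau$ with $\tau_n$ taking values in a finite grid of $[t,T]$, and approximate $\eta$ in $L^2$ by $\mathcal F_\tau\subset\mathcal F_{\tau_n}$-measurable simple random variables $\eta_n\to\eta$. For each $n$ the previous step gives $\widetilde W(\tau_n,\eta_n)=W_{N_{\tau_n}^{t,i}}(\tau_n,\eta_n)$. On the right side, $W$ is Lipschitz in $x$ (Lemma~\ref{l2.1}) and $\tfrac12$-H\"older in $t$ (Proposition~\ref{pro3.1}), and $N_{\tau_n}^{t,i}\to N_\tau^{t,i}$ $P$-a.s.\ for $n$ large (the Poisson measure $N$ has, a.s., no jump exactly at $\tau$, and only finitely many jumps on $[t,T]$, so $N^{t,i}$ is right-continuous and eventually constant in $n$ on $[\tau,\tau_n]$), whence $W_{N_{\tau_n}^{t,i}}(\tau_n,\eta_n)\to W_{N_\tau^{t,i}}(\tau,\eta)$ in $L^2$. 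On the left side I would use the a~priori estimate \eqref{equ3.01}, the flow estimate \eqref{ee222}(ii) for $X$, and the continuity of the BSDE solution in its time horizon to show $\big|Y_{\tau_n}^{\tau_n,\eta_n,N_{\tau_n}^{t,i};u,\beta(u)}-Y_{\tau}^{\tau,\eta,N_\tau^{t,i};u,\beta(u)}\big|\to 0$ in $L^2$ \emph{uniformly in} $(u,\beta)$, so that the $\mathop{\rm essinf}\mathop{\rm esssup}$ passes to the limit; combining the two limits gives the claim.

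\textbf{Main obstacle.} I expect the delicate point to be the uniform-in-$(u,\beta)$ passage to the limit in the stopping time and, relatedly, the bookkeeping that a nonanticipative strategy on $[[\tau,T]]$ restricts/glues correctly across the atoms of a simple $\tau$ while respecting the Elliott--Kalton nonanticipativity condition of Definition~\ref{Def-2}; handling the jump term $\int_E K\,\tilde\mu(ds,de)$ and the extra measure $\tilde N$ in the continuity-in-horizon estimate for \eqref{equ3.1}, and controlling the change of the Markov index $N_{\tau_n}^{t,i}\to N_\tau^{t,i}$, also require some care but are essentially routine given Theorems~\ref{l1}--\ref{Com-Th} and the estimates \eqref{ee222}, \eqref{equ3.01}.
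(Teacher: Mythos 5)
Your proposal is correct and follows essentially the same route as the paper: approximate $\tau$ from above by stopping times on a finite grid, settle the simple-time case by an atomwise/pasting argument reducing to the deterministic-time definition of $W_i$ (the paper's Lemma \ref{le 031301}, delegated to \cite{LLW-2020}), and pass to the limit using the Lipschitz/H\"older continuity of $W_i$ together with a uniform-in-$(u,\beta)$ $L^2$ estimate comparing the BSDE value at $\tau_N$ with that at $\tau$ (the paper's Lemma \ref{ll3.3}), plus the switch between strategy classes $\mathcal{B}_{t,T}$ and $\mathcal{B}_{\tau,T}$ (Lemma \ref{ll3.4}). The only cosmetic difference is that the paper compares $\mathbb{E}[Y_{\tau_N}^{\tau_N,\eta,i;u,v}\mid\mathcal{F}_\tau]$ with $Y_\tau^{\tau,\eta,i;u,v}$ (keeping the same $\eta$ and index $i$, and controlling the index mismatch by the Poisson probability of a jump in $(\tau,\tau_N]$) rather than comparing the two $Y$'s directly, which is exactly the measurability device your limit step would need.
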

We introduce the following three auxiliary lemmas in order to prove Proposition \ref{prop3.3}.
\begin{lemma}\label{ll3.4}\sl For each $i\in  \mathbf{M}$, $x\in\mathbb{R}^n$, and  any stopping time $\tau$ such that $t\leq \tau\leq T$, $P$-a.s.,  we have
\begin{equation}\label{1--1}
\essinf\limits_{\beta\in \mathcal {B}_{t,T}}\esssup\limits_{u\in \mathcal {U}_{t,T}}Y_{\tau}^{\tau,x,i;u,\beta(u)}=\essinf\limits_{\beta\in \mathcal {B}_{\tau,T}}\esssup\limits_{u\in \mathcal {U}_{\tau,T}}Y_{\tau}^{\tau,x,i;u,\beta(u)},\ \mbox{P-a.s.}
\end{equation}
\end{lemma}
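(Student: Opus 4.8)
The plan is to prove the two inequalities between the left-hand side $L_\tau := \essinf_{\beta\in\mathcal{B}_{t,T}}\esssup_{u\in\mathcal{U}_{t,T}}Y_\tau^{\tau,x,i;u,\beta(u)}$ and the right-hand side $R_\tau := \essinf_{\beta\in\mathcal{B}_{\tau,T}}\esssup_{u\in\mathcal{U}_{\tau,T}}Y_\tau^{\tau,x,i;u,\beta(u)}$ separately. The key point that makes both directions work is that the forward SDE \eqref{fequ3.1} and the BSDE \eqref{equ3.1} started at the stopping time $\tau$ depend, on the stochastic interval $[[\tau,T]]$, only on the restriction of the controls to $[[\tau,T]]$; in particular $Y_\tau^{\tau,x,i;u,\beta(u)}$ is unchanged if we modify $u$ or $\beta$ on $[[t,\tau]]$.

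For the inequality $R_\tau \le L_\tau$: given any $\beta\in\mathcal{B}_{t,T}$, I would show that its restriction to $[[\tau,T]]$ induces a well-defined element $\bar\beta\in\mathcal{B}_{\tau,T}$ — here one must check the nonanticipativity property of Definition \ref{Def-2} survives restriction, which is immediate since any $\mathbb{F}$-stopping time $S$ with values in $[[\tau,T]]$ is also one with values in $[[t,T]]$. Then for every $u\in\mathcal{U}_{\tau,T}$, extending $u$ by the fixed element $u^0$ on $[[t,\tau]]$ gives $\tilde u\in\mathcal{U}_{t,T}$ with $Y_\tau^{\tau,x,i;\tilde u,\beta(\tilde u)}=Y_\tau^{\tau,x,i;u,\bar\beta(u)}$, so $\esssup_{u\in\mathcal{U}_{\tau,T}}Y_\tau^{\tau,x,i;u,\bar\beta(u)}\le \esssup_{\tilde u\in\mathcal{U}_{t,T}}Y_\tau^{\tau,x,i;\tilde u,\beta(\tilde u)}$; taking essential infimum over $\beta$ on the right and noting $\bar\beta$ ranges over a subset of $\mathcal{B}_{\tau,T}$ yields $R_\tau\le L_\tau$.

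For the reverse inequality $L_\tau\le R_\tau$: given any $\beta'\in\mathcal{B}_{\tau,T}$, I would extend it to an element $\beta\in\mathcal{B}_{t,T}$ by setting $\beta(u):=v^0$ on $[[t,\tau]]$ and $\beta(u):=\beta'(u|_{[[\tau,T]]})$ on $[[\tau,T]]$ for $u\in\mathcal{U}_{t,T}$; checking this $\beta$ is nonanticipative on $[[t,T]]$ requires the observation that if $u_1\equiv u_2$ on $[[t,S]]$ then $u_1|_{[[\tau,T]]}\equiv u_2|_{[[\tau,T]]}$ on $[[\tau, S\vee\tau]]$, so $\beta'$ applied to them agrees there, while on $[[t,\tau]]$ both equal $v^0$. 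Then for $u\in\mathcal{U}_{t,T}$ we again have $Y_\tau^{\tau,x,i;u,\beta(u)}=Y_\tau^{\tau,x,i;u|_{[[\tau,T]]},\beta'(u|_{[[\tau,T]]})}$, from which $\esssup_{u\in\mathcal{U}_{t,T}}Y_\tau^{\tau,x,i;u,\beta(u)}\le \esssup_{u'\in\mathcal{U}_{\tau,T}}Y_\tau^{\tau,x,i;u',\beta'(u')}$, and taking essential infimum over $\beta'$ gives $L_\tau\le R_\tau$.

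The main obstacle I anticipate is the careful verification of the correspondence between controls/strategies on $[[t,T]]$ and on $[[\tau,T]]$ at the level of the nonanticipativity condition, together with the measurability bookkeeping: one must confirm that restriction and $u^0$-extension are well-defined on the equivalence classes (mod $P\otimes dt$-null sets) used in Definitions \ref{Def-1} and \ref{Def-2}, and that a random $\mathbb{F}$-stopping time $S$ valued in $[[\tau,T]]$ interacts correctly with these operations. The probabilistic heart — that $Y_\tau^{\tau,x,i;u,\beta(u)}$ depends only on the data restricted to $[[\tau,T]]$ — follows from strong uniqueness of solutions to \eqref{fequ3.1} and \eqref{equ3.1} (Theorem \ref{l1}) and the fact that the driver, coefficients and terminal condition at the stopping-time initial point involve only $s\ge\tau$; this is essentially routine but should be stated explicitly. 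I would also remark that the same argument gives the analogous identity with $\eta$ in place of $x$ and $N_\tau^{t,i}$ in place of $i$, which is what is actually needed for Proposition \ref{prop3.3}.
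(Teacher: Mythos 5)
Your two-sided restriction/extension argument is correct, and it is exactly the route the paper itself relies on: the paper omits the proof of this lemma, deferring to the analogous statement in \cite{LLW-2020} (and to Lemma A.3 in \cite{BHL}), where the identity is established precisely by restricting a strategy $\beta\in\mathcal{B}_{t,T}$ to $[[\tau,T]]$ via the $u^0$-extension of controls, and conversely extending $\beta'\in\mathcal{B}_{\tau,T}$ by a constant control on $[[t,\tau]]$, using that $Y_\tau^{\tau,x,i;u,\beta(u)}$ depends only on the controls on $[[\tau,T]]$ by uniqueness for \eqref{fequ3.1} and \eqref{equ3.1}. Your handling of the nonanticipativity check (via $S\vee\tau$ and the $dt\otimes P$-null set at the single time $\tau$) is the right bookkeeping, so no gap remains.
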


 For any $t\in[0,T]$, let $\tau $ be a stopping time  with values in $[t, T]$. For any positive integer $N$,  we introduce a partition $\{t=t_0^N<t_1^N\cdots<t_N^N=T\}$ of time interval $[t,T]$ with the  points $t_j^N:=  t+\frac{(T-t)j}{N}$, $0\leq j\leq N$, and then define
 \begin{equation}\label{stopping}
 \tau_{N}:=t \textbf{1}_{\{\tau=t\}}+ \sum\limits_{j=1}^N t_j^N \textbf{1}_{\{t_{j-1}^N<\tau\leq t_j^N\}}.\end{equation}
It is easy to check that the stopping time   $\tau_N$ is  $\sigma\{\tau\}$-measurable, and $\tau_N\downarrow\tau$, as $N\rightarrow\infty$.

\begin{lemma} \label{le 031301}\sl
For the stopping time $\tau_N$ given in \eqref{stopping} and $\eta\in L^2(\Omega,\mathcal{F}_\tau,P;\mathbb{R}^n)$, $i\in \mathbf{M}$, we have
 $$W_i(\tau_N,\eta)=\mathop{\rm essinf}\limits_{\beta \in \mathcal{B}_{t,T}}%
\mathop{\rm esssup}_{u\in \mathcal{U}_{t,T}}\mathbb{E}\big[Y_{\tau_N}^{\tau_N,\eta,i;u,\beta(u)}\mid\mathcal {F}_\tau\big],\ \mbox{P-a.s.}$$
\end{lemma}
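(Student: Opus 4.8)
\textbf{Proof proposal for Lemma \ref{le 031301}.}

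The plan is to exploit the piecewise-constant structure of $\tau_N$: on the event $\{\tau = t\}$ we have $\tau_N = t$, and on each event $A_j := \{t_{j-1}^N < \tau \leq t_j^N\}$ we have $\tau_N = t_j^N$, a \emph{deterministic} time. So the strategy is to localize the essinf--esssup formula on each of these finitely many disjoint $\mathcal{F}_\tau$-measurable sets and then patch the pieces together. First I would recall from Proposition \ref{Pro---3.1} and the definition \eqref{equ3.08} that for the deterministic time $t_j^N$ and any $\mathcal{F}_{t_j^N}$-measurable initial state, the weak DPP-type identity $W_i(t_j^N,\eta) = \mathop{\rm essinf}_{\beta\in\mathcal{B}_{t_j^N,T}}\mathop{\rm esssup}_{u\in\mathcal{U}_{t_j^N,T}} Y_{t_j^N}^{t_j^N,\eta,i;u,\beta(u)}$ holds, $P$-a.s.; this is the ``constant time'' analogue, obtained by combining Proposition \ref{Pro---3.1} with \eqref{equ3.07} exactly as in \cite{BHL}. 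The point is then to upgrade this from the constant time $t_j^N$ to the $\sigma\{\tau\}$-measurable stopping time $\tau_N$, and to replace the strategy/control classes $\mathcal{B}_{t_j^N,T},\mathcal{U}_{t_j^N,T}$ with $\mathcal{B}_{t,T},\mathcal{U}_{t,T}$.

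The key steps, in order, are: (1) Write $\{t\} \cup \bigcup_{j=1}^N A_j = \Omega$ as a partition into $\mathcal{F}_\tau$-measurable (in fact $\sigma\{\tau\}$-measurable) sets, and note $W_i(\tau_N,\eta) = W_i(t,\eta)\textbf{1}_{\{\tau=t\}} + \sum_{j=1}^N W_i(t_j^N,\eta)\textbf{1}_{A_j}$. (2) On each $A_j$, apply the constant-time identity above to rewrite $W_i(t_j^N,\eta)\textbf{1}_{A_j} = \big(\mathop{\rm essinf}_{\beta\in\mathcal{B}_{t_j^N,T}}\mathop{\rm esssup}_{u\in\mathcal{U}_{t_j^N,T}} Y_{t_j^N}^{t_j^N,\eta,i;u,\beta(u)}\big)\textbf{1}_{A_j}$, and then use Lemma \ref{ll3.4} (with $\tau$ there taken to be the deterministic time $t_j^N$, or rather its stopping-time version applied at $\tau_N$ restricted to $A_j$) to pass from $\mathcal{B}_{t_j^N,T}$ to $\mathcal{B}_{t,T}$ inside the essinf/esssup, since any strategy in $\mathcal{B}_{t,T}$ restricts to one in $\mathcal{B}_{t_j^N,T}$ and conversely a strategy defined only on $[[t_j^N,T]]$ can be extended to $[[t,T]]$ by freezing it with $u^0$ on $[[t,t_j^N]]$ without affecting $Y_{t_j^N}$. (3) Observe that $\mathbb{E}[Y_{\tau_N}^{\tau_N,\eta,i;u,\beta(u)}\mid\mathcal{F}_\tau]\textbf{1}_{A_j} = \mathbb{E}[Y_{t_j^N}^{t_j^N,\eta,i;u,\beta(u)}\mid\mathcal{F}_\tau]\textbf{1}_{A_j}$ because $\tau_N = t_j^N$ on $A_j$ and $A_j\in\mathcal{F}_\tau$; this is where the ``$\mathbb{E}[\cdot\mid\mathcal{F}_\tau]$'' in the statement enters — we need the conditional expectation because the initial time $\tau_N$ and state $\eta$ are now random but $\mathcal{F}_\tau$-measurable, and $Y_{t_j^N}^{t_j^N,\eta,i;u,\beta(u)}$ is in general only $\mathcal{F}_{t_j^N}$-measurable, not $\mathcal{F}_\tau$-measurable. (4) Finally, recombine: since the sets $\{\tau=t\},A_1,\dots,A_N$ are $\mathcal{F}_\tau$-measurable, and the operations $\mathop{\rm essinf}$, $\mathop{\rm esssup}$ commute with multiplication by indicators of $\mathcal{F}_\tau$-sets and with conditioning on $\mathcal{F}_\tau$ in the appropriate sense (a standard measurable-selection / stability argument, as used for the weak DPP in \cite{P-1997, BHL}), we glue the pieces to obtain $W_i(\tau_N,\eta) = \mathop{\rm essinf}_{\beta\in\mathcal{B}_{t,T}}\mathop{\rm esssup}_{u\in\mathcal{U}_{t,T}}\mathbb{E}[Y_{\tau_N}^{\tau_N,\eta,i;u,\beta(u)}\mid\mathcal{F}_\tau]$, $P$-a.s.

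The main obstacle I anticipate is step (4), the recombination: one must be careful that the family of random variables over which the essinf/esssup is taken behaves well with respect to the partition, i.e. that $\mathop{\rm essinf}_{\beta}\mathop{\rm esssup}_u (\cdots) = \sum_j \textbf{1}_{A_j}\mathop{\rm essinf}_{\beta}\mathop{\rm esssup}_u(\cdots)$. This requires verifying a ``local property'' (sometimes called stability under pasting): given strategies $\beta_j$ that are near-optimal on $A_j$, one can paste them into a single $\beta\in\mathcal{B}_{t,T}$ that is simultaneously near-optimal on every $A_j$, using the nonanticipativity of strategies and the fact that the $A_j$ are determined by $\tau$, hence known ``from the start'' in the relevant filtration sense. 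The analogous pasting for controls $u$ inside the esssup is dual and easier. A secondary technical point is making sure the conditional expectation $\mathbb{E}[\cdot\mid\mathcal{F}_\tau]$ does not destroy the essinf/esssup structure; this follows from the fact that the relevant family is closed under pasting (for essinf over $\beta$) and directed upward (for esssup over $u$), so that conditional expectations can be interchanged with the essential extrema. Once these stability properties are in hand — and they are standard in the BSDE-game literature — the rest of the argument is a bookkeeping exercise over the finite partition induced by $\tau_N$.
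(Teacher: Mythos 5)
Your proposal takes essentially the same route as the paper's own argument (which the paper defers to \cite{LLW-2020}): exploit that $\tau_N$ takes only the finitely many deterministic values $t,t_1^N,\dots,t_N^N$ on a $\sigma\{\tau\}$-measurable partition, apply the constant-time identity for the lower value function with a square-integrable initial state on each piece, paste near-optimal controls/strategies across the partition via nonanticipativity, and absorb the conditional expectation $\mathbb{E}[\,\cdot\mid\mathcal{F}_\tau]$ using that the comparison bounds $W_i(t_j^N,\cdot)$ evaluated at the initial state are $\mathcal{F}_\tau$-measurable. The one point you should tighten is that $Y_{t_j^N}^{t_j^N,\eta,i;u,v}$ is not well defined for $\eta$ that is only $\mathcal{F}_\tau$-measurable (since $\tau$ may exceed $t_j^N$ off $A_j$), so on each $A_j=\{t_{j-1}^N<\tau\leq t_j^N\}\in\mathcal{F}_{t_j^N}$ you must replace $\eta$ by $\eta_j:=\eta\mathbf{1}_{A_j}$, which is $\mathcal{F}_{t_j^N}$-measurable, before multiplying by $\mathbf{1}_{A_j}$ and recombining.
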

\begin{lemma} \label{ll3.3}\sl For all stopping times $\tau$ $(t\leq\tau\leq T)$, $\eta  \in L^{2}(\Omega ,%
\mathcal{F}_{\tau},P;\mathbb{R}^n),\ i\in \mathbf{M},$ $(u,v)\in \mathcal{U}_{\tau,T}\times \mathcal{V}_{\tau,T}$, we have the following estimate

$$\big|\mathbb{E}\big[Y_{\tau_N}^{\tau_N,\eta ,i;u,v}\mid\mathcal {F}_\tau\big]-Y_{\tau }^{\tau,\eta
 ,i;u,v}\big|^2\leq  C(\tau_N-\tau )(1+|\eta|^2),\ \mbox{P-a.s.},\ N\geq 1.
 $$
\end{lemma}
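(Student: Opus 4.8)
\textbf{Proof proposal for Lemma \ref{ll3.3}.}

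The plan is to estimate the difference between the two $Y$-processes by exploiting that, on the event $\{t_{j-1}^N<\tau\le t_j^N\}$, we have $\tau_N=t_j^N$, so that $Y_{\tau_N}^{\tau_N,\eta,i;u,v}$ and $Y_\tau^{\tau,\eta,i;u,v}$ are values at two \emph{nearby} times of solutions of BSDEs with the \emph{same} terminal data structure but started from $\tau$ versus $\tau_N$. First I would decompose $Y_{\tau_N}^{\tau_N,\eta,i;u,v}-Y_\tau^{\tau,\eta,i;u,v}$ into two pieces: (a) $Y_{\tau_N}^{\tau_N,\eta,i;u,v}-Y_{\tau_N}^{\tau,\eta,i;u,v}$, the difference at the common time $\tau_N$ between the BSDE on $[\tau_N,T]$ started at $\eta$ and the BSDE on $[\tau,T]$ started at $\eta$ evaluated at $\tau_N$; and (b) $Y_{\tau_N}^{\tau,\eta,i;u,v}-Y_\tau^{\tau,\eta,i;u,v}$, which measures the oscillation in time of the single process $Y^{\tau,\eta,i;u,v}$ on $[\tau,\tau_N]$. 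Throughout, one works on each event $\{t_{j-1}^N<\tau\le t_j^N\}\in\mathcal F_\tau$ and uses that the Markov index process $N^{\tau,i}$ restarts appropriately; the terminal conditions $\Phi_{N_T^{\tau,i}}(X_T^{\cdot})$ and the drivers $\tilde f_{N_s^{\tau,i}}$ match up once both problems are run forward from $\tau$.

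For piece (b), I would take conditional expectation $\mathbb E[\cdot\mid\mathcal F_\tau]$ and write $Y_{\tau_N}^{\tau,\eta,i;u,v}-Y_\tau^{\tau,\eta,i;u,v}$ as $\int_\tau^{\tau_N}$ of the driver terms minus the martingale increments. Using $(\mathbf H2)'$ (Lipschitz growth of $\tilde f_i$), the bound $0\le\rho\le C(1\wedge|e|)$ from $(\mathbf H2)$(iii), the a priori estimate \eqref{equ3.01}(i) for $\big(Y^{\tau,\eta,i;u,v},H^{\tau,\eta,i;u,v},Z^{\tau,\eta,i;u,v},K^{\tau,\eta,i;u,v}\big)$, and the moment estimate \eqref{ee222} for $X^{\tau,\eta,i;u,v}$, together with Cauchy--Schwarz applied to $\int_\tau^{\tau_N}$ of squares, one gets a bound of the form $C(\tau_N-\tau)(1+|\eta|^2)$ after noting that the martingale parts vanish under $\mathbb E[\cdot\mid\mathcal F_\tau]$ (the stochastic integrals from $\tau$ to $\tau_N$ are $\mathcal F_\tau$-conditionally mean zero, since $\tau,\tau_N$ are stopping times and the integrands are in the relevant $L^2$ spaces). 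The factor $(\tau_N-\tau)$ comes precisely from the length of the integration interval combined with the square-integrability of all the coefficients.

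For piece (a), I would run both BSDEs on the common interval $[\tau_N,T]$: the process $Y^{\tau_N,\eta,i;u,v}$ solves \eqref{equ3.1} on $[\tau_N,T]$ with data $(\tau_N,\eta)$, while the \emph{restriction} to $[\tau_N,T]$ of $Y^{\tau,\eta,i;u,v}$ solves the \emph{same} BSDE on $[\tau_N,T]$ but with terminal condition $Y_T^{\tau,\eta,i;u,v}$ replaced by... no — in fact both have terminal value $\Phi_{N_T^{\tau,i}}(X_T^{\cdot})$, and the difference is entirely driven by the difference of the forward states $X^{\tau_N,\eta,i;u,v}$ and $X^{\tau,\eta,i;u,v}$ on $[\tau_N,T]$. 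Since $X_{\tau_N}^{\tau,\eta,i;u,v}-\eta = X_{\tau_N}^{\tau,\eta,i;u,v}-X_\tau^{\tau,\eta,i;u,v}$ has conditional second moment bounded by $C(\tau_N-\tau)(1+|\eta|^2)$ by \eqref{ee222}(ii), the flow property \eqref{ee00} and the stability estimate \eqref{equ3.01}(ii) for the BSDE (applied with initial time $\tau_N$ and the two initial states $\eta$ and $X_{\tau_N}^{\tau,\eta,i;u,v}$) yield $\mathbb E\big[\sup_{s\in[\tau_N,T]}|Y_s^{\tau_N,\eta,i;u,v}-Y_s^{\tau,\eta,i;u,v}|^2\mid\mathcal F_\tau\big]\le C\,\mathbb E[|X_{\tau_N}^{\tau,\eta,i;u,v}-\eta|^2\mid\mathcal F_\tau]\le C(\tau_N-\tau)(1+|\eta|^2)$; in particular this controls the value at $s=\tau_N$. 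Combining (a) and (b), and using $|a-b|^2\le 2|a|^2+2|b|^2$, gives the claimed estimate $\big|\mathbb E[Y_{\tau_N}^{\tau_N,\eta,i;u,v}\mid\mathcal F_\tau]-Y_\tau^{\tau,\eta,i;u,v}\big|^2\le C(\tau_N-\tau)(1+|\eta|^2)$, $P$-a.s. The main obstacle I anticipate is the bookkeeping of the Markov index process across the restart at $\tau$ versus $\tau_N$ and making the event-by-event localization on $\{t_{j-1}^N<\tau\le t_j^N\}$ rigorous, together with justifying that all conditional expectations of stochastic integrals indeed vanish in this jump setting (one needs the integrands in $\mathcal M_\mathbb F^2$, $[L^2(\mathcal P\otimes\mathcal L)]^{m-1}$ and $\mathcal K_\nu^2$ respectively, which is exactly the content of \eqref{equ3.01}(i)).
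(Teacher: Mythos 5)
Your decomposition is exactly the paper's: piece (a) is the comparison of $Y^{\tau_N,\eta,i;u,v}$ and $Y^{\tau,\eta,i;u,v}$ at the common time $\tau_N$ (the paper's \eqref{ee33}--\eqref{ee001}), and piece (b) is the time-oscillation term $\mathbb{E}\big[Y_{\tau_N}^{\tau,\eta,i;u,v}\mid\mathcal F_\tau\big]-Y_\tau^{\tau,\eta,i;u,v}$, which you treat the same way as \eqref{eeee33.1} (driver integral over $[\tau,\tau_N]$, martingale parts vanish, Cauchy--Schwarz plus \eqref{equ3.01}-(i)); piece (b) is fine.

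The genuine gap is in piece (a): you assert that ``the terminal conditions $\Phi_{N_T^{\tau,i}}(X_T^{\cdot})$ and the drivers $\tilde f_{N_s^{\tau,i}}$ match up'' and that ``the difference is entirely driven by the difference of the forward states,'' and on that basis you invoke \eqref{ee00} and \eqref{equ3.01}-(ii) with the single index $i$. This is false whenever the Poisson measure $N$ has a jump in $(\tau,\tau_N]$: by the flow property the restriction of $(X^{\tau,\eta,i;u,v},Y^{\tau,\eta,i;u,v})$ to $[\tau_N,T]$ is the solution restarted at $\tau_N$ from $X_{\tau_N}^{\tau,\eta,i;u,v}$ with index $N_{\tau_N}^{\tau,i}$, not $i$, so the two systems you compare have different coefficients $b_{N_s^{\tau,i}},\sigma_{N_s^{\tau,i}},\gamma_{N_s^{\tau,i}}$ versus $b_{N_s^{\tau_N,i}},\dots$, different drivers $\tilde f_{N_s^{\tau,i}}$ versus $\tilde f_{N_s^{\tau_N,i}}$, and different terminal functions $\Phi_{N_T^{\tau,i}}$ versus $\Phi_{N_T^{\tau_N,i}}$. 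The stability estimates \eqref{ee00} and \eqref{equ3.01}-(ii) only compare solutions with the \emph{same} index, so your argument is valid only on $\{N_{\tau_N}^{\tau,i}=i\}$. The missing quantitative ingredient — which is precisely what produces the factor $(\tau_N-\tau)$ from this source in the paper — is the bound $P(N_{\tau_N}^{\tau,i}\neq i\mid\mathcal F_\tau)\le 1-e^{-\lambda(m-1)(\tau_N-\tau)}\le C(\tau_N-\tau)$ (the paper's \eqref{2020122101}), combined with the moment estimates \eqref{ee222}-(i) and \eqref{equ3.01}-(i) to control the contribution on the mismatch event, both in the It\^o/Gronwall comparison of the forward states (the paper's \eqref{ee1-1}) and in the Lemma \ref{l2} comparison of the backward equations, where the driver difference $\varphi_2$ and the terminal difference are supported on $\{N_{\tau_N}^{\tau,i}\neq i\}$. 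You flag this bookkeeping as an ``anticipated obstacle'' but then treat it as if it resolved itself; as written, the bound claimed for piece (a) does not follow.
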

\smallskip
We only give the proof of  Lemma \ref{ll3.3} since Lemmas \ref{ll3.4}  and  \ref{le 031301}  can be obtained in a same manner to the ones in our recent study \cite{LLW-2020}.

\begin{proof}
%
According to Lemma \ref{l2},  by setting
{\small  $$\begin{aligned}
&
\xi_1: =\Phi _{N_T^{\tau,i}}\big(X_T^{{\tau} ,\eta ,i ;u,v }\big),\ \  \xi_2 := \Phi_{N_T^{\tau_N ,i}}\big(X_T^{\tau_N,\eta ,i;u,v}\big),\ \
 \varphi_1(s): =0,\\
  &\varphi_2(s): =\tilde f_{N_s^{\tau_N,i}}\Big(s,X_s^{\tau_N,\eta ,i;u,v},Y_s^{\tau_N,\eta ,i;u,v},H_s^{\tau_N,\eta ,i;u,v},Z_s^{\tau_N,\eta ,i;u,v},\int_EK_s^{\tau_N,\eta ,i;u,v}(e)\rho(X_s^{\tau_N,\eta ,i;u,v},e)\nu(de),u_s,v_s\Big)\\
 &\hskip1.2cm-\tilde f_{N_s^{\tau,i}}\Big(s,X_s^{\tau_N,\eta ,i;u,v},Y_s^{\tau_N,\eta ,i;u,v},H_s^{\tau_N,\eta ,i;u,v},Z_s^{\tau_N,\eta ,i;u,v},\int_EK_s^{\tau_N,\eta ,i;u,v}(e)\rho(X_s^{\tau_N,\eta ,i;u,v},e)\nu(de),u_s,v_s\Big),
 \end{aligned}$$ }
we get from the fact $N_s^{\tau,i}=N_s^{\tau_N,N_{\tau_N}^{\tau,i}},\ s\geq \tau_N$,  that
\begin{equation}\label{ee33}
 \begin{aligned}
&  \mathbb{E}\Big[\big|Y_{{\tau_N}}^{{\tau_N},\eta ,i ;u,v}-Y_{{\tau_N}}^{\tau,\eta
 ,i  ;u,v }\big|^{2}\mid\mathcal{F}_\tau\Big]\\
 \leq&    C\mathbb{E}\Big[\big|\Phi_{N_T^{\tau,i}}(X_T^{{\tau} ,\eta ,i  ;u,v })-\Phi_{N_T^{{\tau_N},i}}(X_T^{{\tau_N},\eta ,i  ;u,v })\big|^2\mid\mathcal {F}_{\tau}\Big] +C  \mathbb{E}\Big[\int_{\tau_N}^{T}\big|\varphi_2(s)\big|^2ds\mid\mathcal {F}_{\tau}\Big]\\
   \leq  &  C\mathbb{E}\Big[\big|X_T^{\tau,\eta ,i;u,v}-X_T^{\tau_N,\eta ,i;u,v}\big|^2\mid \mathcal {F}_{\tau}\Big]+C \mathbb{E}\Big[\textbf{1}_{\{N_{\tau_N}^{\tau,i}\neq i\}}\mathbb{E}\big[ 1+|X_T^{{\tau_N} ,\eta ,i ;u,v }|^2 \mid\mathcal {F}_{\tau_N}\big]\mid\mathcal {F}_{\tau}\Big]\\
   & +C\mathbb{E}\Big[\textbf{1}_{\{N_{\tau_N}^{\tau,i}\neq i\}} \mathbb{E}\Big[\int_{\tau_N}^{T}\Big(1+|X_s^{\tau_N,\eta ,i;u,v}|^2+|Y_s^{\tau_N,\eta ,i;u,v}|^2+\sum_{l=1}^{m-1}|H_s^{\tau_N,\eta ,i;u,v}(l)|^2\\
  &\hskip4cm+|Z_s^{\tau_N,\eta ,i;u,v}|^2+\int_E|K_s^{\tau_N,\eta ,i;u,v}(e)|^2\nu (de)\Big)ds|\mathcal {F}_{\tau_N}\Big]\mid\mathcal {F}_{\tau}\Big] \\
 \leq&    C\mathbb{E}\Big[\big|X_T^{\tau,\eta ,i;u,v}-X_T^{\tau_N,\eta ,i;u,v}\big|^2\mid \mathcal {F}_{\tau}\Big]+C(1+|\eta|^2)\mathbb{E}\Big[\textbf{1}_{\{N_{\tau_N}^{\tau,i}\neq i\}}\mid\mathcal {F}_{\tau}\Big],
 \end{aligned}
 \end{equation}
where the last inequality is obtained from \eqref{ee222}-(i) and \eqref{equ3.01}-(i).
Notice that, as
\begin{equation}\label{2020122101}
 \begin{aligned}
& P(N_{\tau_N}^{\tau,i}\neq i\mid  {\mathcal{F}_\tau})\leq 1-P\big\{N\big((\tau,\tau_N]\times\{l\}\big)=0,\ 1\leq l\leq m-1\mid {\mathcal{F}_\tau}\big\}\\
& =1-e^{-\lambda(m-1)(\tau_N-\tau)}\leq C|\tau_N-\tau|,
\end{aligned}\end{equation}
  we have
\begin{equation}\label{2020122102}
  \mathbb{E}\Big[\big|Y_{{\tau_N}}^{{\tau_N},\eta ,i ;u,v}-Y_{{\tau_N}}^{\tau,\eta
 ,i  ;u,v }\big|^{2}\mid\mathcal{F}_\tau\Big]
 \leq    C\mathbb{E}\Big[\big|X_T^{\tau,\eta ,i;u,v}-X_T^{\tau_N,\eta ,i;u,v}\big|^2\mid \mathcal {F}_{\tau}\Big]+C(1+|\eta|^2)|\tau_N-\tau|.
 \end{equation}

Now we focus on $\mathbb{E}\Big[\big|X_T^{\tau,\eta ,i;u,v}-X_T^{\tau_N,\eta ,i;u,v}\big|^2\mid \mathcal {F}_{\tau}\Big]$.
 %
 For $s\in [\tau_N,T]$, applying It\^o's formula to $|X_s^{\tau,\eta ,i;u,v}-X_s^{\tau_N,\eta ,i;u,v}|^2 $, we get from \eqref{ee222} and \eqref{2020122101} that
 $$
 \begin{aligned}
&\mathbb{E} \Big[\big|X_s^{\tau,\eta ,i;u,v}-X_s^{\tau_N,\eta ,i;u,v}\big|^2\mid\mathcal{F}_\tau\Big]\\
 =& \mathbb{E} \Big[ \big|X_{\tau_N}^{\tau,\eta ,i;u,v}-\eta\big|^2\mid\mathcal{F}_\tau\Big]\\
&
 +\mathbb{E} \Big[\int_{\tau_N}^s\Big(2(X_r^{\tau,\eta ,i;u,v}-X_r^{\tau_N,\eta ,i;u,v})\big(b_{N_r^{\tau,i}}(r,X_r^{\tau,\eta ,i;u,v},u_r,v_r )-b_{N_r^{\tau_N,i}}(r,X_r^{\tau_N,\eta ,i;u,v},u_r,v_r )\big) \\
 & \hskip1.8cm+\big|\sigma_{N_r^{\tau,i}}(r,X_r^{\tau,\eta ,i;u,v},u_r,v_r )-\sigma_{N_r^{\tau_N,i}}(r,X_r^{\tau_N,\eta ,i;u,v},u_r,v_r )\big|^2\\
 & \hskip1.8cm+\int_E\big|\gamma_{N_r^{\tau,i}}(r,X_{r-}^{\tau,\eta ,i;u,v},u_r,v_r,e )-\gamma_{N_r^{\tau_N,i}}(r,X_{r-}^{\tau_N,\eta ,i;u,v},u_r,v_r,e )\big|^2\nu(de)
 \Big) dr\mid\mathcal{F}_\tau\Big]\\
 \leq& C(1+|\eta|^2)|{\tau_N}-\tau|+C\mathbb{E} \Big[\int_{\tau_N}^s\big|X_r^{\tau,\eta ,i;u,v}-X_r^{\tau_N,\eta ,i;u,v}\big|^2dr\mid\mathcal{F}_\tau\Big]\\
& +C\mathbb{E} \Big[\int_{\tau_N}^s\Big( \big|b_{N_r^{\tau,i}}(r,X_r^{\tau_N,\eta ,i;u,v} ,u_r,v_r )-b_{N_r^{\tau_N,i}}(r,X_r^{\tau_N,\eta ,i;u,v},u_r,v_r)\big|^2 \\
 & \hskip2 cm+\big|\sigma_{N_r^{\tau,i}}(r,X_r^{\tau_N,\eta ,i;u,v} ,u_r,v_r )-\sigma_{N_r^{\tau_N,i}}(r,X_r^{\tau_N,\eta ,i;u,v} ,u_r,v_r )\big|^2\\
 & \hskip2 cm+\int_E\big|\gamma_{N_r^{\tau,i}}(r,X_{r-}^{\tau_N,\eta ,i;u,v},u_r,v_r,e )-\gamma_{N_r^{\tau_N,i}}(r,X_{r-}^{\tau_N,\eta ,i;u,v},u_r,v_r,e )\big|^2\nu(de)
 \Big) dr\mid\mathcal{F}_\tau\Big]\\
 \leq& C(1+|\eta|^2)|{\tau_N}-\tau|
 +C\mathbb{E} \Big[\int_{\tau_N}^s\big|X_r^{\tau,\eta ,i;u,v}-X_r^{\tau_N,\eta ,i;u,v}\big|^2dr\mid\mathcal{F}_\tau\Big] \\
  & +C\mathbb{E} \Big[\textbf{1}_{\{N_{\tau_N}^{{\tau},i}\neq  i\}}\mathbb{E}\big[\int_{\tau_N}^{T}\big(1+|X_r^{\tau_N,\eta ,i;u,v}|^2\big)dr|\mathcal {F}_{\tau_N}\big]\mid\mathcal{F}_\tau\Big]\\
  %
   %
 \leq& C(1+|\eta|^2)|{\tau_N}-\tau|
 +C\mathbb{E} \Big[\int_{\tau_N}^s\big|X_r^{\tau,\eta ,i;u,v}-X_r^{\tau_N,\eta ,i;u,v}\big|^2dr\mid\mathcal{F}_\tau\Big].
 \end{aligned}
 $$
 Then   Gronwall's inequality implies  that
\begin{equation} \label{ee1-1}
\mathbb{E} \Big[\big|X_T^{\tau,\eta ,i;u,v}-X_T^{\tau_N,\eta ,i;u,v}\big|^2\mid\mathcal{F}_\tau\Big]\leq C(1+|\eta|^2)|{\tau_N}-\tau|.
\end{equation}
Substituting \eqref{ee1-1} into \eqref{2020122102}, we get
\begin{equation}\label{ee001}
  \mathbb{E}\Big[\big|Y_{{\tau_N}}^{{\tau_N},\eta ,i ;u,v}-Y_{{\tau_N}}^{\tau,\eta
 ,i  ;u,v }\big|^{2}\mid\mathcal{F}_\tau\Big]
 \leq   C(1+|\eta|^2)|{\tau_N}-\tau|.
 \end{equation}
%
%

On the other hand, since  $\tau_N(\geq \tau)$ is $\sigma(\tau)$-measurable, from     \eqref{ee222}-(i) and \eqref{equ3.01}-(i), we get
 \begin{equation}\label{eeee33.1}
\begin{aligned}
& \Big|\mathbb{E}\big[Y_{\tau_N}^{\tau,\eta,i;u,v}\mid\mathcal {F}_\tau\big]-Y_\tau^{\tau,\eta,i;u,v}\Big|\\
=&\Big|\mathbb{E}\Big[\int_\tau^{\tau_N}\tilde f_{N_s^{\tau,i}}\Big(s,X_s^{\tau,\eta,i;u,v},Y_s^{\tau,\eta,i;u,v},H_s^{\tau,\eta,i;u,v},Z_s^{\tau,\eta,i;u,v},\int_EK_s^{{\tau},\eta ,i;u,v  }(e)\rho(X_s^{\tau,\eta,i;u,v},e)\nu(de),u_s,v_s\Big)ds\\
& \hskip0.6cm
-\int_\tau^{\tau_N}\lambda\sum_{l=1}^{m-1}H_s^{\tau,\eta,i;u,v}(l)ds\mid \mathcal{F}_\tau\Big]\Big|\\
 &\leq C|\tau_N-\tau|^\frac12\Big(\mathbb{E}\Big[\int_\tau^{\tau_N}\Big(1+ |X_s^{\tau,\eta,i;u,v}|^2+|Y_s^{\tau,\eta,i;u,v}|^2+\sum_{l=1}^{m-1}|H_s^{\tau,\eta,i;u,v}(l)|^2+|Z_s^{\tau,\eta,i;u,v}|^2\\
&\hskip4cm+\int_E|K_s^{\tau,\eta,i;u,v}(e)|^2\nu(de)\Big)ds\mid\mathcal {F}_\tau\Big]\Big)^\frac12\\
&\leq C|\tau_N-\tau|^\frac12(1+|\eta|),\  \mbox{P-a.s.}
\end{aligned}\end{equation}
Therefore, by   \eqref{ee001} and \eqref{eeee33.1}, we get

$$
\big|\mathbb{E}\big[Y_{\tau_N}^{\tau_N,\eta ,i;u,v}\mid\mathcal {F}_\tau\big]-Y_\tau^{\tau,\eta,i;u,v}\big|^2
\leq C(\tau_N-\tau) (1+|\eta|^2),\  \mbox{P-a.s.}, \ N\geq 1.
$$
\end{proof}

Based on the above results,  we complete the proof of Proposition \ref{prop3.3} as follows.
 \medskip

\noindent \textbf{Proof of Proposition \ref{prop3.3}:}
\noindent Firstly, from Lemma \ref{ll3.3}, we have
$$\begin{array}{lll}
\displaystyle \Big|\essinf\limits_{\beta\in\mathcal{B}%
_{t,T}} \esssup\limits_{u\in\mathcal{U}_{t,T}}\mathbb{E}\big[Y_{\tau_N}^{\tau_N,\eta,i;u,\beta(u)}\mid\mathcal {F}_\tau\big]-\essinf\limits_{\beta\in\mathcal{B}%
_{t,T}} \esssup\limits_{u\in\mathcal{U}_{t,T}} Y_{\tau }^{\tau,\eta,i;u,\beta(u)}  \Big|^2\\
\displaystyle \leq C(\tau_N-\tau)(1+|\eta|^2)

\displaystyle \leq C\frac{T-t}{N}(1+|\eta|^2)\rightarrow0,\ \mbox{as } N\rightarrow \infty,\ \mbox{P-a.s.}
\end{array}  $$
Using Proposition \ref{pro3.1} and     Lemma \ref{le 031301}, we get
that
$$W_i(\tau,\eta)=\essinf\limits_{\beta\in\mathcal{B}%
_{t,T}} \esssup\limits_{u\in\mathcal{U}_{t,T}} Y_{\tau }^{\tau,\eta,i;u,\beta(u)},\ \mbox{P-a.s.}, \ i\in \mathbf{M}.$$
Finally, since $\mathbf{M}$ is finite, and $N_\tau^{t,i}$ is $\mathcal {F}_\tau$-measurable, it follows from
Lemma \ref{ll3.4} that
$$W_{N_\tau^{t,i}}(\tau,\eta)=\essinf\limits_{\beta\in\mathcal{B}%
_{t,T}} \esssup\limits_{u\in\mathcal{U}_{t,T}} Y_{\tau }^{\tau,\eta,N_\tau^{t,i};u,\beta(u)}=\essinf\limits_{\beta\in\mathcal{B}%
_{\tau,T}} \esssup\limits_{u\in\mathcal{U}_{\tau,T}} Y_{\tau }^{\tau,\eta,N_\tau^{t,i};u,\beta(u)},\ \mbox{P-a.s.}, \ i\in \mathbf{M}.$$
\endpf

Based on the above preparations, we  can complete the proof of   Theorem \ref{SDPP} (strong DPP), which is postponed in Appendix.

\section{Viscosity solutions of coupled Isaacs' type integral-partial differential equations}

\subsection{Existence Theorem }

For each $t\in [0,T)$, $i\in\mathbf{M},$ $\varphi\in C^2\big([0,T]\times\mathbb{R}^n;\mathbb{R}\big),$ by introducing
the following integral-differential operator
\begin{equation*}
\begin{aligned}
\label{equ4.1}
L_{u,v}^i\varphi(t,x)  :=   &\frac{1}{2}\tr\big(\sigma_i%
\sigma_i^*(t,x,u,v)D^2\varphi(t,x)\big)  +b_i(t,x,u,v)D\varphi(t,x)\\
& + \int_E\Big[\varphi\big(t,x+\gamma_i(t,x,u,v,e)\big)-\varphi(t,x)-D\varphi(t,x)\gamma_i(t,x,u,v,e)\Big]\nu(de),
\end{aligned}
\end{equation*}
we can  rewrite the system of coupled Isaacs' type  integral-partial differential equations \eqref{1} as follows
\begin{equation}  \label{equ4.2}
\left \{
\begin{aligned}
&\! \frac{\partial W_i}{\partial t}(t,x)+ \sup\limits_{u\in U}\mathop{\rm inf}%
\limits_{v\in V}\Big\{L_{u,v}^iW_i(t,x)
+f_i\big(t,x,\mathbf{W}(t,x), D
W_i(t,x)\sigma_i(t,x,u,v),C^i_{u,v}W_i(t,x),u,v\big)\Big\}=0,    \\
&\!W_i(T,x)=\Phi_i (x),\ \ \ (t,x,i)\in[0,T)\times \mathbb{R}^n\times \mathbf{M}.
\end{aligned}
\right.
\end{equation}

 We intend to prove
that the lower value function $\mathbf{W}(t,x)=\big(W_1(t,x),W_2(t,x),\cdots,W_m(t,x)\big),$ $(t,x)\in [0,T]\times
\mathbb{R}^n$, defined through  \eqref{equ3.08}, is a viscosity solution of the system \eqref{equ4.2}. Note that we only get the continuity  rather than the regularity  property of $W_i(\cdot,\cdot)$, $i\in \mathbf{M}$, in Lemma \ref{l2.1} and Proposition \ref{pro3.1},  and so the notion of viscosity solutions is considered here.
To this end, we firstly adapt the definition of viscosity solution introduced in \cite{CIL} to our system
\eqref{equ4.2}.

\begin{definition}\label{Def111}\sl
 The function $\mathbf{W}=(W_1,W_2,\cdots,W_m)\in C\big([0,T]\times\mathbb{R}^n;%
\mathbb{R}^m\big)$   is said to be\newline
{\rm(i)} a viscosity subsolution of  system \eqref{equ4.2}, if  for all $i\in%
\mathbf{M}$,  $W_i(T,x)\leq
\Phi_i (x),\  x\in\mathbb{R}^n, $ and if for all $i\in%
\mathbf{M}$,  for all functions $\phi\in C_{l,b}^3\big([0,T]\times
\mathbb{R}^n; \mathbb{R}\big)$ and  $(t,x)\in [0,T)\times \mathbb{R}^n$
such that $(t,x)$ is a local maximum point of $W_i-\phi$, we
have, for any $\delta>0$,
\begin{equation*}
\begin{aligned}
&\frac{\partial \phi}{\partial t}(t,x)+\sup_{u\in U}\mathop{\rm inf}_{v\in
V}\Big\{A_{u,v}^i\phi(t,x)+B^i_{\delta,u,v}(W_i,\phi)(t,x)\\
&\hskip3cm+f_i\Big(t,x,\mathbf{W}(t,x), D
\phi(t,x)\sigma_i(t,x,u,v),C^i_{\delta,u,v}(W_i,\phi)(t,x),u,v\Big)\Big\}\geq 0,
\end{aligned}\end{equation*}
where
$$
\begin{aligned}
&A_{u,v}^i\phi(t,x):= \frac{1}{2}\tr\big(\sigma_i\sigma_i^*(t,x,u,v)D^2\phi(t,x)\big)  +b_i(t,x,u,v)D\phi(t,x),\\
&B^i_{\delta,u,v}(W_i,\phi)(t,x):= \int_{E_\delta}\Big[\phi \big(t,x+\gamma_i(t,x,u,v,e)\big)-\phi (t,x)-D\phi (t,x)\gamma_i(t,x,u,v,e)\Big]\nu(de)\\
&\hskip3.3cm+\int_{E_\delta^c}\Big[W_i\big(t,x+\gamma_i(t,x,u,v,e)\big)-W_i(t,x)-D\phi (t,x)\gamma_i(t,x,u,v,e)\Big]\nu(de),\\
&C^i_{\delta,u,v}(W_i,\phi)(t,x):= \int_{E_\delta}\Big[\phi \big(t,x+\gamma_i(t,x,u,v,e)\big)-\phi (t,x) \Big] \rho(x,e) \nu(de)\\
&\hskip3.3cm+\int_{E_\delta^c}\Big[W_i\big(t,x+\gamma_i(t,x,u,v,e)\big)-W_i(t,x)\Big] \rho(x,e) \nu(de),
\end{aligned}
$$
with $E_\delta :=\big\{e\in E \mid |e|<\delta\big\}$;

\noindent {\rm(ii)} a viscosity  supersolution of  system \eqref{equ4.2}, if for
all $i\in\mathbf{M}$,  $%
W_i(T,x)\geq \Phi_i (x),\   x\in\mathbb{R}^n $, and if for all $i\in%
\mathbf{M}$, for all  functions $\phi\in
C_{l,b}^3\big([0,T]\times \mathbb{R}^n; \mathbb{R}\big)$ and $(t,x)\in[0,T)\times\mathbb{R}^n$ such that $(t,x)$ is a local minimum
point of $W_i-\phi$, we have,
for any $\delta>0$,
\begin{equation*}\begin{aligned}
&
\frac{\partial \phi}{\partial t}(t,x)+\sup_{u\in U}\mathop{\rm inf}_{v\in
V}\Big\{A_{u,v}^i\phi(t,x)+B^i_{\delta,u,v}(W_i,\phi)(t,x)\\
&
\hskip3cm +f_i\Big(t,x,\mathbf{W}(t,x), D
\phi(t,x)\sigma_i(t,x,u,v),C^i_{\delta,u,v}(W_i,\phi)(t,x),u,v\Big)\Big\}\leq 0;
\end{aligned}\end{equation*}

\noindent {\rm(iii)}  a viscosity solution of  system \eqref{equ4.2} if $\mathbf{W}$ is
both a viscosity subsolution and a viscosity supersolution of \eqref{equ4.2}.
\end{definition}
Here $C_{l,b}^3([0,T]\times \mathbb{R}^n; \mathbb{R})$ denotes the set of the real-valued functions that are continuously differentiable up to the third order and whose derivatives of order 1 to 3 are bounded.

\begin{remark}\sl
 {\rm(i)} From now on we impose the following additional condition on $\kappa(\cdot)$ in $(\mathbf{H1})$:
 $$\kappa(e)\leq C(1\wedge|e|), \mbox{ for all } e\in E.$$
 Notice that the lower value function $\mathbf{W}$ is   of linear growth (referring to Lemma \ref{l2.1}). Thus, this additional condition  ensures that the local maximum and minimum in Definition \ref{Def111} can be replaced by the global one. See Remark 4.3 in \cite{BHL} for more details.

 {\rm(ii)} For any $(t,x)\in [0,T]\times \mathbb{R}^n$, we can replace $B^i_{\delta,u,v}(W_i,\phi)(t,x)$ and $C^i_{\delta,u,v}(W_i,\phi)(t,x)$ by  $B^i_{u,v}\phi(t,x)$, $C^i_{u,v}\phi(t,x)$, respectively,   where $B^i_{u,v}\phi(t,x)$, $C^i_{u,v}\phi(t,x)$ are the ones in \eqref{ee01} with $\phi$ instead of  $W_i$.
For more details, please refer to Lemma 4.1 in \cite{BHL} or Lemma 3.3 in \cite{BBP}.
\end{remark}

We now begin to study the existence of the viscosity solution of   system \eqref{equ4.1}.

\begin{theorem}
\label{th4.1}\sl
Assume that $(\mathbf{H1})$-$(\mathbf{H3})$ hold, and  $\lambda\in \Big(0,\frac1{(m-1)T} \Big)$. Then the
function $\mathbf{W}(t,x)=\big(W_1(t,x),W_2(t,x),\cdots,W_m(t,x)\bigl),\  (t,x)\in [0,T]\times
\mathbb{R}^n$,  defined through \eqref{equ3.08}, is a viscosity solution of the system  \eqref{equ4.2} of coupled HJBI equations.
\end{theorem}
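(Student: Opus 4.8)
The plan is to establish that $\mathbf{W}$ is both a viscosity subsolution and a viscosity supersolution of \eqref{equ4.2}; by symmetry I only describe the supersolution case in detail (the subsolution case is entirely analogous, reversing the relevant inequalities and using the $\essinf$–$\esssup$ structure appropriately). First I would record that the terminal condition $W_i(T,x)=\Phi_i(x)$ holds: this is immediate from the definition \eqref{equ3.08} together with the terminal data $Y_T^{t,x,i;u,v}=\Phi_{N_T^{t,i}}(X_T^{t,x,i;u,v})$ and the fact that, at $t=T$, no jump of $N$ can occur, so $N_T^{T,i}=i$. Continuity of $\mathbf{W}$ in $(t,x)$ is already provided by Lemma \ref{l2.1} and Proposition \ref{pro3.1}.

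The core of the argument is local, at an interior point $(t,x)\in[0,T)\times\mathbb{R}^n$ which is a global minimum (using the Remark after Definition \ref{Def111}) of $W_i-\phi$ for some test function $\phi\in C^3_{l,b}$, with $W_i(t,x)=\phi(t,x)$. I would argue by contradiction: suppose there exist $\theta>0$ and a neighborhood on which
\begin{equation*}
\frac{\partial\phi}{\partial t}(t,x)+\sup_{u\in U}\inf_{v\in V}\Big\{A^i_{u,v}\phi(t,x)+B^i_{u,v}\phi(t,x)+f_i\big(t,x,\mathbf{W}(t,x),D\phi(t,x)\sigma_i(t,x,u,v),C^i_{u,v}\phi(t,x),u,v\big)\Big\}\leq -\theta<0,
\end{equation*}
where I have replaced $B^i_\delta,C^i_\delta$ by $B^i_{u,v}\phi,C^i_{u,v}\phi$ as licensed by part (ii) of the Remark. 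Since $U$ is compact and the data are continuous, for each $u\in U$ there is $v=v(u)\in V$ achieving the infimum up to $\theta/2$; a measurable selection then yields, for any admissible control $u\in\mathcal{U}_{t,t+\delta}$, a response $\beta(u)\in\mathcal{V}_{t,t+\delta}$ (in fact a nonanticipative strategy) realizing this bound along the state trajectory $X^{t,x,i;u,\beta(u)}$ for small times. Then I apply the weak DPP (Theorem \ref{WDPP}) over $[t,t+\delta]$, use the backward-semigroup representation, apply Itô's formula to $\phi(s,X^{t,x,i;u,\beta(u)}_s)$ along the flow, and invoke the a priori estimate Lemma \ref{l2} to compare the BSDE whose terminal value is $W_i(t+\delta,X_{t+\delta})$ with the one whose driver is frozen via $\phi$; the comparison theorem (Theorem \ref{Com-Th}) upgrades the pointwise driver inequality — here it is crucial that $(\mathbf{H3})$/$(\mathbf{A2})$-(iii) guarantees the monotonicity in the $H$- and $K$-arguments needed for comparison — to the inequality $G^{t,x,i;u,\beta(u)}_{t,t+\delta}[W_{N_{t+\delta}}(t+\delta,X_{t+\delta})]\leq W_i(t,x)-\tfrac{\theta}{2}\delta + o(\delta)$. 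Taking $\esssup_u$ then $\essinf_\beta$ and using the DPP produces $W_i(t,x)\leq W_i(t,x)-\tfrac{\theta}{2}\delta+o(\delta)$, a contradiction for $\delta$ small. The subsolution inequality follows symmetrically, now exploiting that for the $\sup_u\inf_v$ structure Player I can pick a nearly optimal $u^*$ and the strategy $\beta$ must respond, giving the reverse estimate.

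The main obstacle is the handling of the two Poisson random measures and the coupling index process $N^{t,i}_s$ simultaneously. Unlike the classical case, on $[t,t+\delta]$ the process $N^{t,i}_s$ may jump (with probability $O(\delta)$), which switches which equation of the system governs the dynamics; one must check that these rare switches contribute only $o(\delta)$ to all the estimates — this is exactly the type of bound already appearing in \eqref{2020122101}–\eqref{ee33}, so it can be imported. More delicate is that the nonlocal terms $B^i_{u,v}$ and $C^i_{u,v}$ involve $W_i$ evaluated at the shifted points $x+\gamma_i(t,x,u,v,e)$, while the BSDE naturally produces $K$-integrals against $\rho$; reconciling $\int_E[W_i(t,x+\gamma_i)-W_i(t,x)]\rho\,\nu(de)$ with $\int_E K_s(e)\rho(X_s,e)\nu(de)$ requires the identification (already used implicitly in \cite{BHL,BBP}) that along the optimal-for-$\phi$ trajectory the jump component $K$ behaves like $e\mapsto\phi(s,X_{s-}+\gamma_i)-\phi(s,X_{s-})$ up to controlled error, together with the Lipschitz and growth hypotheses $(\mathbf{H1})$, $(\mathbf{H2})$-(iii) on $\gamma_i$ and $\rho$. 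Ensuring all these error terms are uniform in $(u,v)$ — so that the $\sup_u\inf_v$ passes through — is where the argument is genuinely technical, but no new idea beyond those already assembled in Sections 2 and 3 is needed.
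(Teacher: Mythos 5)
Your detailed case is internally inconsistent, and the part you dismiss as routine is precisely where the paper's work lies. First, the sign/test-point pairing is wrong: you test at a global \emph{minimum} of $W_i-\phi$ (the supersolution configuration) but assume the Hamiltonian is $\leq-\theta$; that is not the negation of the supersolution inequality, and at a minimum point one has $W_i\geq\phi$, so the comparison theorem yields $G_{t,\cdot}^{t,x,i;u,\beta(u)}\big[W_{N}(\cdot,X_\cdot)\big]\geq G_{t,\cdot}^{t,x,i;u,\beta(u)}[\phi\hbox{-data}]$ --- the \emph{reverse} of the bound $G[\cdots]\leq W_i(t,x)-\tfrac{\theta}{2}\delta+o(\delta)$ that your chain requires, so no contradiction with the DPP can be extracted. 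In the paper the supersolution step is direct (no contradiction): the minimum-point inequality bounds the terminal datum of the auxiliary BSDE \eqref{au-equ} by $W_{N_{\tau^\delta}^{t,i}}(\tau^\delta,X_{\tau^\delta})$, the strong DPP gives $\essinf_{\beta}\esssup_{u}Y_t^{1,i,u,\beta(u)}\leq 0$, and the chain of Lemmas \ref{l4.3}, \ref{l4.6} and \ref{l4.7} passes to the frozen deterministic equation to conclude $G_i(t,x,0,0,0,0)\leq0$. The contradiction argument with a measurable selection $\chi:U\rightarrow V$ is the \emph{subsolution} step, run at a maximum point where $W_i\leq\phi$, so that the comparison goes the right way.

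Second, the claim that the switches of $N^{t,i}$ on the small interval ``contribute only $o(\delta)$'' is false and would destroy the result: a switch has probability $\lambda(m-1)\delta+o(\delta)$ and changes the terminal datum by the $O(1)$ quantity $W_{(l+i)\mbox{mod}(m)}-\phi$, so its contribution is of order $\delta$ --- the same order as the main term --- and it is exactly this contribution (equivalently, the $\lambda\sum_l H(l)\,ds$ term together with the identification $H(l)\approx W_{(l+i)\mbox{mod}(m)}-W_i$) that produces the coupling $\mathbf{W}(t,x)$ inside $f_i$ in the limit equation; your sketch never explains how the components $W_j(t,x)$, $j\neq i$, enter the limiting inequality. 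This is why the paper stops at the first $N$-jump via $\tau^\delta$, builds the auxiliary BSDE \eqref{au-equ} whose terminal value is $W_{(l+i)\mbox{mod}(m)}$ on the jump event, performs the transformation of Lemma \ref{l4.1} with the driver $F_i$ containing $\mathbf{W}_{(i+\cdot)\mbox{mod}(m)}(s,x)-\phi(s,x)\mathbf{1}$, needs the \emph{strong} DPP at the stopping time $\tau^\delta$ (your weak DPP on $[t,t+\delta]$ does not suffice without redoing this bookkeeping), and needs the hypothesis $\lambda\in\big(0,\tfrac1{(m-1)T}\big)$ to obtain $\mathbb{E}[\tau^\delta-t\mid\mathcal{F}_t]\geq c\delta$ in the subsolution contradiction --- a hypothesis stated in the theorem that your proposal never uses, which is a further sign that the genuine difficulty has been bypassed rather than resolved.
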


We first make some preparations in order to prove Theorem \ref{th4.1}.

Let $(t,x,i)\in[0,T)\times\mathbb{R}^n\times \mathbf{M}$. For any given $\delta>0$ with $t+\delta\leq T$,   defining  a stopping time  %
$
\displaystyle \tau^\delta:= (t+\delta)\wedge \mathop{\rm inf}\big\{s\geq t:\sum_{l=1}^{m-1}lN\big((t,s]\times\{l\}\big)\neq 0\big\},
$
we consider the following BSDE with two Poisson random measures on the stochastic interval $[ t,\tau ^{\delta }]$:
\begin{equation}
\left\{
\begin{aligned}
&\!dY_{s}^{u,v}   = -\tilde{f}%
_{i}\big(s,X_{s}^{t,x,i;u,v},Y_{s}^{u,v},H_{s}^{u,v},Z_{s}^{u,v},\int_EK_s^{u,v}(e)\rho(X_{s}^{t,x,i;u,v},e)\nu(de),u_{s},v_{s}\big)ds  \\
&\qquad\quad +
\lambda \sum\limits_{l=1}^{m-1}H_{s}^{u,v}(l)ds   +Z_{s}^{u,v}dB_{s}+\sum\limits_{l=1}^{m-1}H_{s}^{u,v}(l)d\tilde{N}%
_{s}(l)+\int_EK_s^{u,v}(e)\tilde{\mu}(ds,de), \  s\in [ t,\tau ^{\delta }] , \\
&\!Y_{\tau ^{\delta }}^{u,v}   =  \phi \big(\tau ^{\delta },X_{\tau ^{\delta
}}^{t,x,i;u,v}\big)\textbf{1}_{\{N_{\tau ^{\delta }}^{t,i}=i\}}+\sum\limits_{l=1}^{m-1}W_{(l+i)\mbox{mod}(m)}\big(\tau ^{\delta },X_{\tau ^{\delta }}^{t,x,i;u,v}\bigl)\textbf{1}_{\{N_{\tau
^{\delta }}^{t,i}=(l+i) \mbox{mod} (m)\}},
\end{aligned}
\right.  \label{au-equ}
\end{equation}
where $\phi \in C_{l,b}^3\big([0,T]\times \mathbb{R}^n;\mathbb{R}\big)$, $X^{t,x,i;u,v}$ is the solution of SDE   \eqref{fequ3.1} with $(\tau,\eta)=(t,x)\in [0,T)\times\mathbb{R}^n$.

\smallskip

From Theorem \ref{l1} and using  standard arguments similar to  Theorem 3.4 in \cite{LW-SPA}, we get the following results.
 \begin{lemma}\label{l00} \sl Under the assumptions $(\mathbf{H1})$ and $(\mathbf{H2})$, \eqref{au-equ} has a unique solution $(Y^{u,v},H^{u,v},Z^{u,v},K^{u,v})\in\mathcal{S}^{2}[t,\tau^\delta]$. Moreover,
for any $p\geq2$, there exists a sufficient small $\tilde\delta>0$ such that for $\delta\in [0,\tilde\delta]$, we have
 \begin{equation} \begin{aligned}
{\rm (i)} &\  \mathbb{E}\Big[\sup\limits_{s\in[t,\tau^\delta]}|Y_s^{u,v}|^p
+\Big(\int_t^{\tau^\delta}\lambda\sum\limits_{l=1}^{m-1}|H_s^{u,v}(l)|^2ds\Big)^\frac{p}{2}
+\Big(\int_t^{\tau^\delta}|Z_s^{u,v}|^2ds\Big)^\frac{p}{2}\\
&\hskip0.3cm
+ \Big(\int_t^{\tau^\delta}\int_E|K_s^{u,v}(e)|^2\nu(de)ds\Big)^\frac{p}{2} \mid\mathcal {F}_t\Big]\leq C_p(1+|x|^p),\ P\text{-a.s.,}\\
{\rm (ii)} &\  \mathbb{E}\Big[ \Big(\lambda\int_t^{{\tau^\delta}}\sum\limits_{l=1}^{m-1}|H_s^{u,v}(l)|^2ds\Big)^\frac p2+\Big(\int_t^{\tau^\delta}|Z_s^{u,v}|^2 ds\Big)^\frac p2 +\Big(\int_t^{\tau^\delta}\int_E|K_s^{u,v}(e)|^2\nu(de)ds\Big)^\frac p2\mid\mathcal {F}_t\Big]\\
&\hskip0.3cm\leq C_p\delta^\frac p2(1+|x|^p),\ P\text{-a.s}.  \end{aligned}\end{equation}

\end{lemma}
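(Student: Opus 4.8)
\textbf{Proof proposal for Lemma \ref{l00}.}

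The plan is to mirror the well-posedness argument of Theorem \ref{l1}, but on the random horizon $[t,\tau^\delta]$, and then to extract the a priori estimates by an It\^o-plus-Gronwall computation together with the Burkholder–Davis–Gundy (BDG) inequality. First I would observe that the terminal condition $Y_{\tau^\delta}^{u,v}$ lies in $L^2(\Omega,\mathcal F_{\tau^\delta},P;\mathbb R)$: indeed $\phi$ and the $W_j$ are of linear growth (by hypothesis on $\phi$ and by Lemma \ref{l2.1}), and $X_{\tau^\delta}^{t,x,i;u,v}$ has finite moments of every order by \eqref{ee222}-(i). Since $\tilde f_i$ satisfies $(\mathbf{H2})'$, the generator of \eqref{au-equ}, namely $(s,y,h,z,k)\mapsto \tilde f_i\big(s,X_s^{t,x,i;u,v},y,h,z,\int_E k(e)\rho(X_s^{t,x,i;u,v},e)\nu(de),u_s,v_s\big)+\lambda\sum_l h(l)$, is Lipschitz in $(y,h,z,k)$ uniformly in $(s,\omega)$ — here one uses $(\mathbf{H2})$-(iii), which gives $\int_E|\rho(X_s,e)|^2\nu(de)\le C\int_E(1\wedge|e|^2)\nu(de)<\infty$, so the map $k\mapsto\int_E k(e)\rho(X_s,e)\nu(de)$ is a bounded linear functional on $L^2_\nu$ — and its value at $(0,0,0,0)$ is square-integrable on $[t,\tau^\delta]$ again by linear growth of $\tilde f_i$ and the moment bound on $X$. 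Thus condition $(\mathbf{A1})$ holds for the (stopped) generator, and Theorem \ref{l1}, applied on $[t,T]$ with the generator and terminal data frozen after $\tau^\delta$ (equivalently, the random-horizon version obtained by the same fixed-point contraction in the weighted norm $\|\cdot\|_b$), yields a unique solution $(Y^{u,v},H^{u,v},Z^{u,v},K^{u,v})\in\mathcal S^2[t,\tau^\delta]$.

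For part (i), I would apply It\^o's formula to $|Y_s^{u,v}|^2$ on $[s,\tau^\delta]$, take conditional expectation given $\mathcal F_t$, and use the Lipschitz bound on the generator together with the elementary inequality $2ab\le \varepsilon a^2+\varepsilon^{-1}b^2$ to absorb the $H,Z,K$ terms on the left; this gives
\begin{equation*}
\mathbb E\Big[\sup_{s\in[t,\tau^\delta]}|Y_s^{u,v}|^2+\int_t^{\tau^\delta}\!\Big(\lambda\sum_{l=1}^{m-1}|H_s^{u,v}(l)|^2+|Z_s^{u,v}|^2+\int_E|K_s^{u,v}(e)|^2\nu(de)\Big)ds\,\Big|\,\mathcal F_t\Big]\le C\,\mathbb E\big[|Y_{\tau^\delta}^{u,v}|^2+\textstyle\int_t^{\tau^\delta}|g_s^0|^2ds\,\big|\,\mathcal F_t\big],
\end{equation*}
where $g_s^0$ denotes the generator evaluated at zero; the $\sup$ inside is recovered via BDG on the martingale part. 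The right-hand side is bounded by $C(1+|x|^2)$ using $(\mathbf{H2})$ and \eqref{ee222}-(i). The passage to general $p\ge2$ follows by applying It\^o to $|Y_s^{u,v}|^p$ (or by the standard $L^p$-estimate for BSDEs with jumps, cf. the reference to Theorem 3.4 in \cite{LW-SPA}), again closing with BDG and Gronwall; this is where the ``sufficiently small $\tilde\delta$'' enters, since for $p>2$ one needs the $L^\infty$-in-time contribution of the generator over an interval of length $\le\delta$ to be controllably small.

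For part (ii), the point is that the $\int_t^{\tau^\delta}(\cdots)ds$ terms now carry an explicit factor $\delta$. I would rerun the same It\^o computation but \emph{not} discard the integral terms on the left: writing the martingale increments explicitly and taking conditional expectation, one gets
\begin{equation*}
\mathbb E\Big[\Big(\lambda\!\int_t^{\tau^\delta}\!\sum_{l=1}^{m-1}|H_s^{u,v}(l)|^2ds\Big)^{\frac p2}+\Big(\int_t^{\tau^\delta}\!|Z_s^{u,v}|^2ds\Big)^{\frac p2}+\Big(\int_t^{\tau^\delta}\!\!\int_E|K_s^{u,v}(e)|^2\nu(de)ds\Big)^{\frac p2}\,\Big|\,\mathcal F_t\Big]\le C\,\mathbb E\Big[\Big(\!\int_t^{\tau^\delta}\!|g_s|^2ds\Big)^{\frac p2}\,\Big|\,\mathcal F_t\Big]+\text{(terminal \& sup terms)},
\end{equation*}
and then one bounds $\big(\int_t^{\tau^\delta}|g_s|^2ds\big)^{p/2}\le \delta^{p/2}\,\mathrm{(essential\ sup\ in\ time)}^{?}$ — more precisely, by H\"older, $\big(\int_t^{\tau^\delta}|g_s|^2ds\big)^{p/2}\le(\tau^\delta-t)^{(p-2)/2}\int_t^{\tau^\delta}|g_s|^p ds\le\delta^{(p-2)/2}\int_t^{\tau^\delta}|g_s|^pds$, and $|g_s|^p$ is controlled by $C(1+|X_s|^p+|Y_s|^p+\|H_s\|^p+|Z_s|^p+\|K_s\|^p)$; the $Y,H,Z,K$ contributions are reabsorbed (for $\delta$ small) using part (i), leaving the clean bound $C_p\delta^{p/2}(1+|x|^p)$. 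The main obstacle I anticipate is the careful bookkeeping for $p>2$: closing the Gronwall loop requires that the coefficient multiplying the higher-order norms on the right be strictly less than one, which forces the restriction to $\delta\in[0,\tilde\delta]$ with $\tilde\delta$ depending on $p$, $L_{\tilde f_i}$, $\lambda$ and $\nu$ — and one must be scrupulous that $\tau^\delta-t\le\delta$ is used, not $T-t$, so that all the ``small'' factors are genuinely powers of $\delta$ rather than of $T$.
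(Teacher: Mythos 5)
Your well-posedness argument and your sketch of part (i) are fine and follow exactly the route the paper itself indicates (the paper gives no detailed proof; it simply invokes Theorem \ref{l1} together with the $L^p$-machinery of \cite{LW-SPA}). The genuine gap is in part (ii). In your displayed inequality for (ii) the right-hand side carries ``(terminal \& sup terms)'', and these are precisely the terms that ruin the argument: the terminal datum of \eqref{au-equ} is $\phi(\tau^\delta,X^{t,x,i;u,v}_{\tau^\delta})\mathbf 1_{\{N^{t,i}_{\tau^\delta}=i\}}+\sum_l W_{(l+i)\mbox{mod}(m)}(\tau^\delta,X^{t,x,i;u,v}_{\tau^\delta})\mathbf 1_{\{\cdot\}}$, of size $1+|X_{\tau^\delta}|$, so $\mathbb E[|Y^{u,v}_{\tau^\delta}|^p\mid\mathcal F_t]$ and $\mathbb E[\sup_s|Y^{u,v}_s|^p\mid\mathcal F_t]$ are of order $1+|x|^p$ with no factor of $\delta$ at all; the It\^o identity for $|Y_s|^2$ produces $|Y^{u,v}_{\tau^\delta}|^2-|Y^{u,v}_t|^2$ on the right, which is not small. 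Your scheme therefore reproduces part (i) but cannot produce the $\delta^{p/2}$ rate. The ``reabsorption using part (i)'' step is also incorrect: after H\"older you need to control $\delta^{(p-2)/2}\,\mathbb E\big[\int_t^{\tau^\delta}\big(|Y_s|^p+\|H_s\|^p+|Z_s|^p+\|K_s\|_{L^2_\nu}^p\big)ds\mid\mathcal F_t\big]$, but part (i) bounds $\mathbb E\big[\big(\int_t^{\tau^\delta}|Z_s|^2ds\big)^{p/2}\mid\mathcal F_t\big]$, and by Jensen $\big(\int_t^{\tau^\delta}|Z_s|^2ds\big)^{p/2}\le\delta^{\frac p2-1}\int_t^{\tau^\delta}|Z_s|^pds$, so the comparison goes the wrong way and those terms cannot be absorbed.

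The missing idea is that the $\delta^{p/2}$ rate in (ii) does not come from generic BSDE estimates but from the structure of the terminal condition: on $\{N^{t,i}_{\tau^\delta}=i\}$ it is a $C^3_{l,b}$ (hence Lipschitz, linear-growth) function of $(\tau^\delta,X_{\tau^\delta})$, the $W_j$ are Lipschitz of linear growth, and the regime switch has conditional probability $O(\delta)$. Concretely, subtracting $\phi(s,X_s)+\sum_l\int_t^s\big(W_{(l+i)\mbox{mod}(m)}-\phi\big)(r,X_r)\,dN_r(l)$ — exactly the transformation used later in Lemma \ref{l4.1} and \eqref{eezh} — converts \eqref{au-equ} into a BSDE with zero terminal value and identifies the main parts of $Z,H,K$ pointwise as $D\phi\,\sigma_i(s,X_s,u_s,v_s)$, $W_{(l+i)\mbox{mod}(m)}(s,X_s)-\phi(s,X_s)$ and $\phi(s,X_s+\gamma_i)-\phi(s,X_s)$, all of linear growth in $X_s$; integrating these over an interval of length at most $\delta$ and using \eqref{ee222} yields the $\delta^{p/2}(1+|x|^p)$-type bound, the zero-terminal remainder being handled by the standard estimate. (This pointwise identification, or equivalently the small-interval $L^p$-estimates of Theorem 3.4 in \cite{LW-SPA} which the paper cites, is indispensable: a direct computation of the kind you propose gives at best a rate of order $\delta$ for $p>2$, since $\mathbb E[|X_{\tau^\delta}-x|^p\mid\mathcal F_t]\le C\delta(1+|x|^p)$ and the switch probability is only $O(\delta)$, not $O(\delta^{p/2})$.) A minor further point: the smallness of $\tilde\delta$ is not really forced by part (i) — a Lipschitz BSDE needs no small horizon there — but by the absorption arguments in (ii); attributing it to the generator's ``$L^\infty$-in-time contribution'' in (i) is not where the constraint arises.
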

In the sequel, we focus on the stopping time $\tau^\delta$ with $\delta$ restricting to time interval $[0,\tilde\delta]$.
For each $i\in \mathbf{M}$ and  any $(s,x,y,h,z,k,u,v)\in[0,T]\times \mathbb{R}^n\times \mathbb{R}\times \big[L^{2}(\mathbf{L};\mathbb{R})\big]^{m-1}\times\mathbb{R}^d\times L^2_\nu\big(E,\mathcal {B}(E);\mathbb{R}\big)\times U\times V $, we define
\begin{equation*}
\begin{aligned}
&F_i(s, x,y,h,z,k,u,v) := \frac{\partial \phi }{\partial s}(s,  x)+ A^i_{u,v}\phi(s,x)+ B^i_{u,v}\phi(s,x) +\tilde f_i\Big(s,x,y+\phi(s,x),\\
&\quad h+{\mathbf{W}}_{(i+\cdot) \mbox{mod}(m)}(s, x)-\phi (s,  x) \textbf{1},z+D\phi (s,  x)\sigma _{i}(s, x, u,v),
 \int_Ek(e)\rho(x,e)\nu(de)+C^i_{u,v}\phi(s,x),u,v\Big),\\
\end{aligned}\end{equation*}
where
$
\displaystyle
h+{\mathbf{W}}_{(i+\cdot)\mbox{mod}(m)}(s, x)-\phi (s,  x)\textbf{1}=\big(h_{j}+{W}%
_{(i+j)\mbox{mod}(m)}(s, x)-\phi (s,  x)\big)_{1\leq j\leq m-1}.
$

\begin{remark}\label{Re000}\sl
It is easy to check that
there exists some constant $C>0$, such that for any $(x,y,h,z,k)$,
$(x',y',h',z',k')\in \mathbb{R}^n\times \mathbb{R}\times \big[L^{2}(\mathbf{L};\mathbb{R})\big]^{m-1}\times \mathbb{R}^d\times L^2_\nu(E,\mathcal {B}(E);\mathbb{R})$, $(s,u,v)\in [0,T]\times U\times V$,   $i\in \mathbf{M}$,
$$\begin{aligned}
{\rm(i)} \ & \big|F_i(s,x,y,h,z,k,u,v)-F_i(s,x',y',h',z',k',u,v)\big|\\
& \leq C\Big(1+|x|+|x'|+|x|^2  + \int_E|k(e)|(1\wedge|e|)\nu(de) \Big)|x-x'|   \\
&\quad+C\Big(|y-y'|+\sum_{l=1}^{m-1}| h(l)-h'(l)|+|z-z'|+\int_E|k(e)-k'(e)| (1\wedge|e|)\nu(de)\Big);\\
{\rm(ii)} \ &\big |F_i(s,x,y,h,z,k,u,v)\big|\leq C\Big(1+|x|+|x|^2+|y|+\sum_{l=1}^{m-1}| h(l)|+|z|+\int_E|k(e)| (1\wedge|e|)\nu(de)\Big).
\end{aligned}$$

\end{remark}

%

For each  $i\in \mathbf{M}$, we consider
\begin{equation}
\left\{
\begin{aligned}
&\! dY_{s}^{1,i,u,v}  =
-F_i\Big(s,X_{s}^{t,x,i;u,v}, {Y_{s}^{1,i,u,v}},H_{s}^{1,i,u,v},Z_{s}^{1,i,u,v},K_{s}^{1,i,u,v},u_{s},v_{s}\Big)ds+\lambda \sum\limits_{l=1}^{m-1}H_{s}^{1,i,u,v}(l)ds
   \\
  &\!\qquad\qquad\quad +Z_{s}^{1,i,u,v}dB_{s}+\sum\limits_{l=1}^{m-1}H_{s}^{1,i,u,v}(l)d\tilde{%
N}_{s}(l)+\int_EK_{s}^{1,i,u,v}(e)\tilde{\mu}(ds,de),\ \ s\in [ t,\tau ^{\delta }],    \\
& \!  Y_{\tau ^{\delta }}^{1,i,u,v}  =  0,
\end{aligned}
\right.  \label{equ4.8}
\end{equation}%
    and we  define
 {
\begin{equation}
\displaystyle \tilde Y_{s}^{1,i,u,v} :=  \phi
(s,X_{s}^{t,x,i;u,v})+\sum\limits_{l=1}^{m-1}\int_{t}^{s}\Big(W_{(l+i)\mbox{mod}%
(m)}\big(r,X_{r}^{t,x,i;u,v}\big)-\phi \big(r,X_{r}^{t,x,i;u,v}\big)\Big)dN_{r}(l),\ s\in [ t,\tau ^{\delta }].
\label{ee4.7}
\end{equation}
}
From Remark \ref{Re000} and Theorem \ref{l1}, it is easy to check that \eqref{equ4.8} admits a unique solution $(Y^{1,i,u,v},H^{1,i,u,v},$ $Z^{1,i,u,v},$ $K^{1,i,u,v})\in \mathcal{S}^{2}[t,\tau^\delta]$.
\begin{lemma}
\label{l4.1}  \sl Let  $(\mathbf{H1})$, $(\mathbf{H2})$ hold.
For $(t,x,i)\in [0,T)\times\mathbb{R}^n\times\mathbf{M},\ (u,v)\in\mathcal{U}_{t,\tau_\delta}\times\mathcal{V}_{t,\tau_\delta}$, we have
$$Y_{s\wedge\tau^\delta}^{1,i,u,v}=G_{s\wedge\tau^\delta, \tau^\delta}^{t,x,i;u,v}[Y_{\tau^\delta}^{u,v}]-\tilde Y_{s\wedge\tau^\delta}^{1,i,u,v},\ s\in[t,\tau^\delta].   $$

\end{lemma}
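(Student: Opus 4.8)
The plan is to show that the process $\hat Y:=Y^{1,i,u,v}+\tilde Y^{1,i,u,v}$, together with suitably defined $(\hat H,\hat Z,\hat K)$, solves the backward equation \eqref{au-equ} on $[t,\tau^\delta]$, and then to conclude by the uniqueness part of Theorem \ref{l1}; throughout I abbreviate $X_s:=X_s^{t,x,i;u,v}$. By the construction of $\tau^\delta$, the counting measure $N((t,\cdot]\times\mathbf L)$ vanishes on $[t,\tau^\delta)$, so $N^{t,i}_s\equiv i$ there, the jump term in \eqref{ee4.7} is null on $[t,\tau^\delta)$, and hence $\tilde Y_s^{1,i,u,v}=\phi(s,X_s)$ and $\hat Y_s=Y_s^{1,i,u,v}+\phi(s,X_s)$ for $s\in[t,\tau^\delta)$; moreover $N$ and $\mu$ have a.s.\ no common jump times, being carried by independent factors of $\Omega$.

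First I would apply It\^o's formula to $\phi(s,X_s)$ on $[t,\tau^\delta]$ (using $\phi\in C^3_{l,b}$ and that $X$ solves \eqref{fequ3.1} with $N^{t,i}\equiv i$ there): this produces the drift $\frac{\partial\phi}{\partial s}(s,X_s)+A^i_{u_s,v_s}\phi(s,X_s)+B^i_{u_s,v_s}\phi(s,X_s)$ plus a martingale whose Brownian, $\tilde N(l)$- and $\tilde\mu$-pieces are $D\phi(s,X_s)\sigma_i$, $0$ and $\phi(s,X_{s-}+\gamma_i)-\phi(s,X_{s-})$, respectively. Differentiating the finite-variation term $\sum_l\int_t^{s}\big(W_{(l+i)\mbox{mod}(m)}(r,X_r)-\phi(r,X_r)\big)\,dN_r(l)$ after writing $dN_r(l)=d\tilde N_r(l)+\lambda\,dr$ adds $\lambda\sum_l\big(W_{(l+i)\mbox{mod}(m)}(s,X_s)-\phi(s,X_s)\big)\,ds$ to the drift and $\big(W_{(l+i)\mbox{mod}(m)}-\phi\big)(s,X_{s-})$ to the $\tilde N(l)$-integral. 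Adding these dynamics to \eqref{equ4.8} and setting $\hat Z_s:=Z_s^{1,i,u,v}+D\phi(s,X_s)\sigma_i$, $\hat H_s(l):=H_s^{1,i,u,v}(l)+W_{(l+i)\mbox{mod}(m)}(s,X_{s-})-\phi(s,X_{s-})$, $\hat K_s(e):=K_s^{1,i,u,v}(e)+\phi(s,X_{s-}+\gamma_i)-\phi(s,X_{s-})$, I would then use the identity $\hat Y_s=Y_s^{1,i,u,v}+\phi(s,X_s)$ on $[t,\tau^\delta)$, the definition of $F_i$, and $C^i_{u,v}\phi(s,x)=\int_E\big(\phi(s,x+\gamma_i)-\phi(s,x)\big)\rho(x,e)\nu(de)$ to see that all the $\phi$-contributions cancel and the $ds$-drift of $\hat Y$ collapses to $-\tilde f_i\big(s,X_s,\hat Y_s,\hat H_s,\hat Z_s,\int_E\hat K_s(e)\rho(X_s,e)\nu(de),u_s,v_s\big)+\lambda\sum_l\hat H_s(l)$, which is exactly the generator in \eqref{au-equ}; Lemma \ref{l00} together with the linear growth of $\phi$ and $\mathbf W$ gives $(\hat Y,\hat H,\hat Z,\hat K)\in\mathcal S^2[t,\tau^\delta]$. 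I expect this algebraic bookkeeping — checking that the drift of $Y^{1,i,u,v}+\tilde Y^{1,i,u,v}$ reorganizes into the driver of \eqref{au-equ} — to be the only step that is not routine.

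Finally I would identify the terminal value: $\hat Y_{\tau^\delta}=\tilde Y_{\tau^\delta}^{1,i,u,v}$ since $Y_{\tau^\delta}^{1,i,u,v}=0$; on the event $\{N^{t,i}_{\tau^\delta}=i\}$ the $N$-integral vanishes, so $\hat Y_{\tau^\delta}=\phi(\tau^\delta,X_{\tau^\delta})$, whereas on $\{N^{t,i}_{\tau^\delta}=(l+i)\mbox{mod}(m)\}$ there is exactly one $N$-jump, of label $l$, at $\tau^\delta$, at which $X$ is continuous (no simultaneous $\mu$-jump), so $\hat Y_{\tau^\delta}=\phi+\big(W_{(l+i)\mbox{mod}(m)}-\phi\big)=W_{(l+i)\mbox{mod}(m)}(\tau^\delta,X_{\tau^\delta})$; these agree with the terminal datum of \eqref{au-equ} on the partition $\{N^{t,i}_{\tau^\delta}=j\}_{j\in\mathbf M}$, i.e.\ $\hat Y_{\tau^\delta}=Y^{u,v}_{\tau^\delta}$. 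Since the BSDE \eqref{equ_SBG} defining $G^{t,x,i;u,v}_{s\wedge\tau^\delta,\tau^\delta}[Y^{u,v}_{\tau^\delta}]$ has, on $[t,\tau^\delta]$, the same driver as \eqref{au-equ} (because $N^{t,i}\equiv i$ there) and the same terminal value $Y^{u,v}_{\tau^\delta}$, the uniqueness in Theorem \ref{l1} yields $\hat Y_{s\wedge\tau^\delta}=G^{t,x,i;u,v}_{s\wedge\tau^\delta,\tau^\delta}[Y^{u,v}_{\tau^\delta}]$, and subtracting $\tilde Y_{s\wedge\tau^\delta}^{1,i,u,v}$ gives the claimed identity.
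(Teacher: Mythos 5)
Your proposal is correct and is essentially the paper's argument run in mirror image: the paper applies It\^o's formula to $\tilde Y^{1,i,u,v}-Y^{u,v}$ and identifies the result with the unique solution of \eqref{equ4.8}, whereas you verify that $Y^{1,i,u,v}+\tilde Y^{1,i,u,v}$ (with the shifted $\hat H,\hat Z,\hat K$) solves \eqref{au-equ} and invoke uniqueness for that equation instead; the It\^o computation, the cancellation via the definition of $F_i$, and the terminal identification $\tilde Y^{1,i,u,v}_{\tau^\delta}=Y^{u,v}_{\tau^\delta}$ are the same in both. The final identification $Y^{u,v}_{s\wedge\tau^\delta}=G^{t,x,i;u,v}_{s\wedge\tau^\delta,\tau^\delta}[Y^{u,v}_{\tau^\delta}]$, which you justify by matching drivers on $[t,\tau^\delta]$ since $N^{t,i}\equiv i$ there, is likewise exactly the paper's closing step.
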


\begin{proof}
When $s=\tau^\delta$, it follows from \eqref{ee4.7} and \eqref{au-equ} that
\begin{equation}\label{equ111}
\begin{array}{lll}
\tilde Y_{\tau^\delta}^{1,i,u,v}  = \phi
(\tau^\delta,X_{\tau^\delta}^{t,x,i;u,v})+\sum\limits_{l=1}^{m-1} \big(W_{(l+i)\mbox{mod}%
(m)}(\tau^\delta,X_{\tau^\delta}^{t,x,i;u,v})-\phi (\tau^\delta,X_{\tau^\delta}^{t,x,i;u,v})\big)   \Delta N_{\tau^\delta}(l)\\
=\phi(\tau^\delta,X_{\tau^\delta}^{t,x,i;u,v})\textbf{1}_{\{N_{\tau^\delta}^{t,i}=i\}}+\sum\limits_{l=1}^{m-1}  W_{(l+i)\mbox{mod}%
(m)}(\tau^\delta,X_{\tau^\delta}^{t,x,i;u,v}) \textbf{1}_{\{N_{\tau^\delta}^{t,i}=(l+i)\mbox{mod}(m)\}}  =Y_{\tau^\delta}^{u,v},
\end{array}%
\end{equation}%
which implies that $Y_{\tau^\delta}^{u,v}-\tilde Y_{\tau^\delta}^{1,i,u,v} =0=Y_{\tau ^{\delta }}^{1,i,u,v}.$

By  applying It\^{o}'s formula to $\tilde Y_{s}^{1,i,u,v}-Y_s^{u,v}$,  $s\in[t,\tau^\delta)$, and noting that $N_s^{t,i}=i$, we get
\begin{equation*}
\begin{aligned}
&d\big(\tilde Y_{s}^{1,i,u,v} -Y_s^{u,v}\big)  =   \Big[\frac{\partial \phi }{\partial s}(s,X_{s}^{t,x,i;u,v}) +  b_{i}(s,X_{s}^{t,x,i;u,v},u_{s},v_{s})D\phi(s,X_{s}^{t,x,i;u,v})\\
&\quad
+\frac{1}{2} \tr\big({\sigma}_{i}{\sigma}_{i}^{\ast }(s,X_{s}^{t,x,i;u,v},u_{s},v_{s})D^2\phi (s,X_{s}^{t,x,i;u,v})\big)+B^i_{u_s,v_s}\phi(s,X_{s}^{t,x,i;u,v})  \\
 %
&\quad  +\tilde{f}%
_{i}\Big(s,X_{s}^{t,x,i;u,v},Y_{s}^{u,v},H_{s}^{u,v},Z_{s}^{u,v},\int_EK_{s}^{u,v}(e)\rho(X_{s}^{t,x,i;u,v},e)\nu(de),u_{s},v_{s}\Big)\Big]ds \\
 & \quad    -\lambda \sum\limits_{l=1}^{m-1}\Big[H_{s}^{u,v}(l)-W_{(l+i)\mbox{mod}(m)}(s,X_{s}^{t,x,i;u,v})+\phi (s,X_{s}^{t,x,i;u,v})\Big]ds \\
&  \quad     -\Big[Z_{s}^{u,v}-D\phi (s,X_{s}^{t,x,i;u,v}){\sigma}%
_{i}(s,X_{s}^{t,x,i;u,v},u_{s},v_{s})\Big]dB_{s} \\
 &  \quad -\sum\limits_{l=1}^{m-1}\Big[H_{s}^{u,v}(l)-W_{(l+i)\mbox{mod}(m)}(s,X_{s}^{t,x,i;u,v})+\phi (s,X_{s}^{t,x,i;u,v})\Big]d\tilde{N}_{s}(l)\\
 &\quad  -\int_E\Big[K_s^{u,v}(e)-\phi \big(s,X_{s}^{t,x,i;u,v}+\gamma_i(s,X_{s-}^{t,x,i;u,v},u_s,v_s,e)\big)+\phi(s,X_{s}^{t,x,i;u,v})\Big]\tilde{\mu}(ds,de).%
\end{aligned}%
\end{equation*}%
Compared with \eqref{equ4.8}, the uniqueness of the solution of \eqref{equ4.8} implies that, for any $s\in [t,\tau ^{\delta })$, P-a.s.,
\begin{equation}  \label{eezh}
\begin{aligned}
&Y_{s}^{1,i,u,v}=Y_{s}^{u,v}- \tilde Y_{s}^{1,i,u,v} , \\
&Z_{s}^{1,i,u,v}=Z_{s}^{u,v}-D\phi (s,X_{s}^{t,x,i;u,v}){\sigma}%
_{i}(s,X_{s}^{t,x,i;u,v},u_{s},v_{s}), \\
&H_{s}^{1,i,u,v}(l)=H_{s}^{u,v}(l)-W_{(l+i)\mbox{mod}(m)}(s,X_{s}^{t,x,i;u,v})+\phi (s,X_{s}^{t,x,i;u,v}),\ 1\leq l\leq m-1,\\
&K_{s}^{1,i,u,v}(e)=K_{s}^{u,v}(e)-\phi\big(s,X_{s}^{t,x,i;u,v}+\gamma_i(s,X_{s-}^{t,x,i;u,v},u_s,v_s,e)\big)+\phi(s,X_{s}^{t,x,i;u,v}),\  e\in E.
\end{aligned}%
\end{equation}%
Furthermore,  the definition of the backward semigroup (Definition \ref{Def-bs}) implies
\begin{equation}\label{2021110301}
Y_{s\wedge\tau^\delta}^{1,i,u,v}=Y_{s\wedge\tau^\delta}^{u,v}- \tilde Y_{s\wedge\tau^\delta}^{1,i,u,v}=G_{s\wedge\tau^\delta, \tau^\delta}^{t,x,i;u,v}[Y_{\tau^\delta}^{u,v}]-\tilde Y_{s\wedge\tau^\delta}^{1,i,u,v},\ s\in[t,\tau^\delta] .
\end{equation}

\end{proof}

\begin{lemma}\sl
 For each $i\in \mathbf{M}$ and 
  for all $u\in \mathcal{U}_{t,\tau ^{\delta }},\ v \in
\mathcal{V}_{t,\tau ^{\delta }}$, we have
\begin{equation}\label{ee444}
 \mathbb{E}\Big[\int_t^{\tau^\delta}\Big(|Y_s^{1,i,u,v}|+\lambda\sum_{l=1}^{m-1}|H_s^{1,i,u,v}(l)|+|Z_s^{1,i,u,v}|
 +\int_E|K_s^{1,i,u,v}(e)|(1\wedge|e|)\nu(de)\Big)ds\mid \mathcal {F}_t\Big]\leq C\delta^\frac 54,\  \mbox{P-a.s.},
\end{equation}
for some constant $C>0$.
\end{lemma}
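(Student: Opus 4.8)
The plan is to exploit that $(Y^{1,i,u,v},H^{1,i,u,v},Z^{1,i,u,v},K^{1,i,u,v})$ is the solution of the BSDE \eqref{equ4.8}, which lives on the short random interval $[t,\tau^\delta]$ (with $0\leq\tau^\delta-t\leq\delta$), has the \emph{zero} terminal value $Y^{1,i,u,v}_{\tau^\delta}=0$, and whose driver $F_i$ is, by Remark \ref{Re000}, Lipschitz in $(y,h,z,k)$ and of size $|F_i(s,X^{t,x,i;u,v}_s,0,0,0,0,u_s,v_s)|\leq C(1+|X^{t,x,i;u,v}_s|^2)$. Because the interval is short and the terminal value vanishes, $Y^{1,i,u,v}$ itself will be of order $\delta$ and the quadratic integrals of $Z^{1,i,u,v},H^{1,i,u,v},K^{1,i,u,v}$ of order $\delta^2$; a Cauchy--Schwarz argument on an interval of length $\leq\delta$ then converts these into the desired bounds.

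First I would establish the a priori estimate
$$\mathbb{E}\Big[\sup_{s\in[t,\tau^\delta]}|Y^{1,i,u,v}_s|^2+\int_t^{\tau^\delta}\Big(|Z^{1,i,u,v}_s|^2+\lambda\sum_{l=1}^{m-1}|H^{1,i,u,v}_s(l)|^2+\int_E|K^{1,i,u,v}_s(e)|^2\nu(de)\Big)ds\mid\mathcal{F}_t\Big]\leq C\,\mathbb{E}\Big[\Big(\int_t^{\tau^\delta}\big|F_i\big(s,X^{t,x,i;u,v}_s,0,0,0,0,u_s,v_s\big)\big|ds\Big)^2\mid\mathcal{F}_t\Big],$$
by applying It\^o's formula to $|Y^{1,i,u,v}_s|^2$ on $[t,\tau^\delta]$, absorbing the quadratic terms and the $\lambda\sum_l H^{1,i,u,v}_s(l)$ drift into the left-hand side through Young's inequality and the Lipschitz property of $F_i$, bounding the remaining cross term by $2\sup_{[t,\tau^\delta]}|Y^{1,i,u,v}|\cdot\int_t^{\tau^\delta}|F_i(r,X^{t,x,i;u,v}_r,0,0,0,0,u_r,v_r)|dr$, and closing the loop via a conditional Cauchy--Schwarz inequality together with the Burkholder--Davis--Gundy inequality for the $\sup$ --- this is the same machinery that underlies Lemma \ref{l00}, and it works provided $\delta$ is small enough. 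Using $\tau^\delta-t\leq\delta$, the growth bound on $F_i$, and the moment estimate \eqref{ee222} for $X^{t,x,i;u,v}$, the right-hand side is then $\leq C\delta^2(1+|x|^4)$.

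Next I would read off \eqref{ee444} term by term. For the $Y$-term, $\int_t^{\tau^\delta}|Y^{1,i,u,v}_s|ds\leq\delta\sup_{[t,\tau^\delta]}|Y^{1,i,u,v}|$, so its conditional expectation is $\leq\delta\,(C\delta^2)^{1/2}\leq C\delta^2$; for the $Z$-term, $\int_t^{\tau^\delta}|Z^{1,i,u,v}_s|ds\leq\delta^{1/2}\big(\int_t^{\tau^\delta}|Z^{1,i,u,v}_s|^2ds\big)^{1/2}$, so its conditional expectation is $\leq\delta^{1/2}(C\delta^2)^{1/2}\leq C\delta^{3/2}$; the $H$-term is handled the same way after $\lambda\sum_l|H^{1,i,u,v}_s(l)|\leq\sqrt{\lambda(m-1)}\big(\lambda\sum_l|H^{1,i,u,v}_s(l)|^2\big)^{1/2}$, and the $K$-term after $\int_E|K^{1,i,u,v}_s(e)|(1\wedge|e|)\nu(de)\leq C\big(\int_E|K^{1,i,u,v}_s(e)|^2\nu(de)\big)^{1/2}$ (using $(1\wedge|e|)^2=1\wedge|e|^2$ and $\int_E(1\wedge|e|^2)\nu(de)<\infty$). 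Summing the four bounds and using $\delta^2,\delta^{3/2}\leq\delta^{5/4}$ on $[0,\tilde\delta]$ (after shrinking $\tilde\delta\leq1$ if necessary) gives \eqref{ee444}, with $C$ allowed to depend on the fixed $x$.

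The hard part will be the a priori estimate of the second paragraph, and specifically producing the $L^1$-in-time quantity $\big(\int_t^{\tau^\delta}|F_i(s,X^{t,x,i;u,v}_s,0,0,0,0,u_s,v_s)|ds\big)^2$ on the right-hand side rather than the cruder $\int_t^{\tau^\delta}|F_i(s,X^{t,x,i;u,v}_s,0,0,0,0,u_s,v_s)|^2ds$: only the former scales like $\delta^2$, whereas the latter scales like $\delta$ and would leave the $Z$-, $H$- and $K$-terms of order $\delta$, which is \emph{not} $\leq\delta^{5/4}$ for small $\delta$. Care is also needed because everything must be carried out conditionally on $\mathcal{F}_t$ and on the random interval $[t,\tau^\delta]$ (a bounded stopping time $\leq t+\delta$), and because the extra $\lambda\sum_l H^{1,i,u,v}_s(l)$ drift and the $L^2_\nu$-structure of the Lipschitz condition on $F_i$ must be absorbed carefully in the Gronwall/Young step.
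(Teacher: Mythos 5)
Your proposal is correct, but it follows a genuinely different route from the paper. The paper never invokes the classical $L^1$-in-time a priori estimate you build in your second paragraph. Instead it exploits the representation $Y^{1,i,u,v}=G^{t,x,i;u,v}_{\cdot,\tau^\delta}[Y^{u,v}_{\tau^\delta}]-\tilde Y^{1,i,u,v}$ from Lemma \ref{l4.1}: writing $Y^{1,i,u,v}_{s\wedge\tau^\delta}$ as a conditional expectation of the driver of the auxiliary BSDE \eqref{au-equ} plus increments of $\phi$ and of the $dN$-integral, and using Lemma \ref{l00} and \eqref{ee222}, it first derives the pointwise bound $|Y^{1,i,u,v}_{s\wedge\tau^\delta}|\leq C\delta^{1/2}\big(1+|X^{t,x,i;u,v}_{s\wedge\tau^\delta}|\big)$ (see \eqref{ee4.4}) together with the pointwise comparisons \eqref{ee4.5} of $(H^1,Z^1,K^1)$ with $(H^{u,v},Z^{u,v},K^{u,v})$; then It\^o's formula on $|Y^{1,i,u,v}|^2$ combined with Remark \ref{Re000} and Lemma \ref{l00} gives the quadratic integrals of order $\delta^{3/2}$ (see \eqref{ee4.13}), and Cauchy--Schwarz yields the stated $\delta^{5/4}$. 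Your argument treats \eqref{equ4.8} as a standalone Lipschitz BSDE with zero terminal datum (the Lipschitz constant in $(y,h,z,k)$ is uniform in $x$ by Remark \ref{Re000}-(i), and $|F_i(s,X_s,0,0,0,0,u_s,v_s)|\leq C(1+|X_s|^2)$ with fourth moments of $X$ supplied by \eqref{ee222}-(i)), so the $\big(\int|F_i^0|ds\big)^2$-type estimate gives quadratic integrals of order $\delta^2$ and $\sup|Y^1|$ of order $\delta$, hence in fact the sharper rate $C\delta^{3/2}$, which of course implies \eqref{ee444}. What each route buys: yours is more self-contained and quantitatively stronger, avoiding Lemmas \ref{l4.1} and \ref{l00} at this point (you do need the conditional-on-$\mathcal{F}_t$ BDG/Gronwall machinery and the standard extension of the solution by zero beyond $\tau^\delta$, as the paper itself does around \eqref{equ4.111}, but these are routine); the paper's route produces as by-products the pointwise bounds \eqref{ee4.4}--\eqref{ee4.5} and the estimate \eqref{ee4.13}, which are reused in the proofs of Lemmas \ref{l4.3} and \ref{l4.0000}, so within the paper's architecture the extra structure is not wasted.
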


\begin{proof}
From \eqref{2021110301} and \eqref{ee4.7} we know
\begin{equation*}
\begin{aligned}
& Y_{s\wedge\tau^\delta}^{1,i,u,v}   =  Y_{s\wedge\tau^\delta}^{u,v} -\tilde Y_{s\wedge\tau^\delta}^{1,i,u,v}= \displaystyle - \mathbb{E}\big[Y_{ \tau^\delta}^{ u,v} -Y_{s\wedge\tau^\delta}^{u,v}\mid\mathcal {F}_{s\wedge\tau^\delta}\big]+\mathbb{E}\big[\tilde Y_{ \tau^\delta}^{1,i,u,v}-\tilde Y_{s\wedge\tau^\delta}^{1,i,u,v}\mid\mathcal {F}_{s\wedge\tau^\delta}\big]\\
&= \displaystyle  \mathbb{E}\Big[ \int_{s\wedge\tau^\delta}^{\tau ^{\delta
}}\tilde{f}%
_{i}\Big(r,X_{r}^{t,x,i;u,v},Y_{r}^{u,v},H_{r}^{u,v},Z_{r}^{u,v},\int_EK_{r}^{u,v}(e)\rho(X_{r}^{t,x,i;u,v},e)\nu(de),u_{r},v_{r}\Big)dr\\
&\hskip0.8cm-
\int_{s\wedge\tau^\delta}^{\tau ^{\delta}}\lambda \sum\limits_{l=1}^{m-1}H_{r}^{u,v}(l)dr\mid
\mathcal{F}_{s\wedge\tau^\delta}\Big]   +\mathbb{E}\Big[\phi \big(\tau ^{\delta },X_{\tau ^{\delta }}^{t,x,i;u,v}\big) -\phi\big(s\wedge\tau^\delta,X_{s\wedge\tau^\delta}^{t,x,i;u,v}\big)\mid\mathcal{F}_{s\wedge\tau^\delta}\Big]&  \\
&  \ \ \  \displaystyle+\mathbb{E}\Big[\sum\limits_{l=1}^{m-1}\int_{s\wedge\tau^\delta}^{\tau^\delta}\Big(W_{(l+i) %
\mbox{mod}(m)}(r,X_{r}^{t,x,i;u,v})-\phi (r,X_{r}^{t,x,i;u,v})\Big)dN_{r}(l)\mid
\mathcal{F}_{s\wedge\tau^\delta}\Big],\ \ s\in [t,t+\delta]. &
\end{aligned}
\end{equation*}
Therefore, from \eqref{ee222} and Lemma \ref{l00}, it holds
\begin{equation}\label{ee4.4}
\begin{aligned}
  &|Y_{s\wedge\tau^\delta}^{1,i,u,v}| \\
  %
  \leq & \displaystyle C\mathbb{E}\Big[\int_{s\wedge\tau^\delta}^{\tau ^{\delta}}\Big(1+|X_{r}^{t,x,i;u,v}|+|Y_r^{u,v}|+\sum%
\limits_{l=1}^{m-1}|H_r^{u,v}(l)|+|Z_r^{u,v}|+\int_E|K_{r}^{u,v}(e)|(1\wedge|e|)\nu(de)\Big)dr\mid \mathcal{F}_{s\wedge\tau^\delta}\Big] &  \\
  & \displaystyle+C\delta+C\mathbb{E}\big[|X_{\tau ^{\delta}}^{t,x,i;u,v}-X_{s\wedge\tau^\delta}^{t,x,i;u,v}|\mid \mathcal{F}_{s\wedge\tau^\delta}\big]\\
   & \displaystyle+\lambda \mathbb{E}\Big[\sum\limits_{l=1}^{m-1}\int_{{s\wedge\tau^\delta}}^{\tau^\delta}\Big(W_{(l+i) \mbox{mod}(m)}(r,X_{r}^{t,x,i;u,v})-\phi (r,X_{r}^{t,x,i;u,v})\Big)dr\mid \mathcal{F}_{s\wedge\tau^\delta}\Big] &  \\
    \leq &\displaystyle C\delta^{\frac{1}{2}}\Big(\mathbb{E}\Big[\int_{{s\wedge\tau^\delta}}^{
\tau^\delta}\big(1+|Y_r^{u,v}|^2+\sum%
\limits_{l=1}^{m-1}|H_r^{u,v}(l)|^2+|Z_r^{u,v}|^2+\int_E|K_{r}^{u,v}(e)|^2\nu(de)\big)dr\mid \mathcal{F}_{s\wedge\tau^\delta}\Big]\Big)^{%
\frac{1}{2}} &  \\
&    \displaystyle +C\delta+C\delta^{\frac{1}{2}}\big(1+|X_{s\wedge\tau^\delta}^{t,x,i;u,v}|\big)   &  \\
  \leq &\displaystyle  C\delta^{\frac{1}{2}}\big(1+|X_{s\wedge\tau^\delta}^{t,x,i;u,v}|\big),\ \ \mbox{P-a.s.},\ s\in
[t,\tau^\delta]. &
\end{aligned}%
\end{equation}
From \eqref{eezh}, we get, for any $s\in%
[t,\tau^\delta]$,  $P$-a.s.,
\begin{equation}  \label{ee4.5}
\begin{aligned}
&    |Z_s^{1,i,u,v}|\leq
C\big(1+|X_{s}^{t,x,i;u,v}|+|Z_s^{u,v}|\big), &  \\
&  |H_s^{1,i,u,v}(l)|\leq
C\big(1+|X_{s}^{t,x,i;u,v}|+|H_s^{u,v}(l)|\big),\ 1\leq l\leq m-1,\\
&  |K_s^{1,i,u,v}(e)|\leq
C\big(1+|X_{s}^{t,x,i;u,v}|+|K_s^{u,v}(e)|\big),\ e\in E.&
\end{aligned}%
\end{equation}
Applying It\^o's formula to $|Y_s^{1,i,u,v}|^2$,  we obtain
\begin{equation}  \label{ee4.6.1}
\begin{array}{llll}
\displaystyle  |Y_t^{1,i,u,v}|^2+\mathbb{E}\Big[ \int_t^{\tau ^{\delta}}\Big(
 \lambda\sum\limits_{l=1}^{m-1}|H_r^{1,i,u,v}(l)|^2 +|Z_r^{1,i,u,v}|^2+
\int_E |K_r^{1,i,u,v}(e)|^2\nu(de)\Big) dr\mid
\mathcal{F}_t\Big]   \\
 =  \displaystyle2\mathbb{E}\Big[\int_t^{\tau^\delta}Y_r^{1,i,u,v}
 F_i\big(r,X_{r}^{t,x,i;u,v}, {Y_r^{1,i,u,v}},H_r^{1,i,u,v},Z_r^{1,i,u,v},K_r^{1,i,u,v},u_r,v_r\big)dr\mid
\mathcal{F}_t\Big]\\
 \ \ \ \displaystyle-2\lambda\mathbb{E}\Big[\int_t^{\tau ^{\delta}} Y_r^{1,i,u,v}\sum\limits_{l=1}^{m-1}H_r^{1,i,u,v}(l)dr\mid
\mathcal{F}_t\Big]    \\
\leq  \displaystyle 2\mathbb{E}\Big[\int_t^{\tau^\delta}Y_r^{1,i,u,v}F_i\big(r,X_{r}^{t,x,i;u,v},
 {Y_r^{1,i,u,v}},H_r^{1,i,u,v},Z_r^{1,i,u,v},K_r^{1,i,u,v},u_r,v_r\big)dr\mid
\mathcal{F}_t\Big]\\
 \ \ \ \displaystyle+C\mathbb{E}\Big[\int_t^{\tau ^{\delta}} |Y_r^{1,i,u,v}|^2dr\mid
\mathcal{F}_t\Big]+\frac{\lambda}{2}\mathbb{E}\Big[\int_t^{\tau ^{\delta}}\sum\limits_{l=1}^{m-1}|H_r^{1,i,u,v}(l)|^2dr\mid
\mathcal{F}_t\Big].
\end{array}%
\end{equation}
Thus, combing this estimate with Remark \ref{Re000}, \eqref{ee4.4} and \eqref{ee4.5} as well as Lemma \ref{l00} yields
\begin{equation}\label{ee4.13}
\begin{array}{llll}
 \displaystyle \mathbb{E}\Big[ \int_t^{\tau ^{\delta}}\Big(\lambda
 \sum\limits_{l=1}^{m-1}|H_r^{1,i,u,v}(l)|^2 + |Z_r^{1,i,u,v}|^2 + \int_E|K_r^{1,i,u,v}(e)|^2\nu(de)\Big) dr\mid
\mathcal{F}_t\Big]   \\
  \leq \displaystyle C\mathbb{E}\Big[\int_t^{\tau ^{\delta}}Y_r^{1,i,u,v}F_i\big(r,X_{r}^{t,x,i;u,v},
 {Y_r^{1,i,u,v}},H_r^{1,i,u,v},Z_r^{1,i,u,v},K_r^{1,i,u,v},u_r,v_r\big)dr\mid
\mathcal{F}_t\Big] \\
\quad \displaystyle +C\mathbb{E}\Big[\int_t^{\tau ^{\delta}} |Y_r^{1,i,u,v}|^2dr\mid
\mathcal{F}_t\Big]\\
 \leq \displaystyle C\mathbb{E}\Big[\int_t^{\tau ^{\delta}}|Y_r^{1,i,u,v}|\Big(1+|X_{r}^{t,x,i;u,v}|+|X_{r}^{t,x,i;u,v}|^2+{|Y_r^{1,i,u,v}|
} +
\sum\limits_{l=1}^{m-1}|H_r^{1,i,u,v}(l)| \\
\ \displaystyle\qquad\ \ +|Z_r^{1,i,u,v}|+\int_E|K_r^{1,i,u,v}(e)|(1\wedge|e|)\nu(de)\Big)dr\mid \mathcal{F}_t\Big] +C\mathbb{E}\Big[\int_t^{\tau ^{\delta}} |Y_r^{1,i,u,v}|^2dr\mid
\mathcal{F}_t\Big]\\
  \leq  \displaystyle C\delta^{\frac{1}{2}}\mathbb{E}\Big[\int_t^{\tau ^{\delta}}\big(1+|X_{r}^{t,x,i;u,v}|+|X_{r}^{t,x,i;u,v}|^2+|X_{r}^{t,x,i;u,v}|^3\big)dr\mid \mathcal{F}_t\Big] \\
 \ \ \ \displaystyle { +C\delta^{\frac{1%
}{2}}\mathbb{E}\Big[ \int_t^{\tau ^{\delta}}\big(1+|X_{r}^{t,x,i;u,v}|\big)\Big(\sum
\limits_{l=1}^{m-1}|H_r^{u,v}(l)|+|Z_r^{u,v}| + \int_E|K_r^{u,v}(e)|(1\wedge|e|)\nu(de)\Big)dr\mid \mathcal{F}_t\Big] } \\
\ \ \ \displaystyle {+C\delta \mathbb{E}\Big[\int_t^{\tau ^{\delta}}\big(1+|X_{r}^{t,x,i;u,v}|^2\big)dr \mid \mathcal{F}_t\Big]  \leq  C\delta^\frac{3}{2},}
\end{array}%
\end{equation}
{where we use the estimate \eqref{ee222}-(i) in the last inequality.
Therefore,} from \eqref{ee4.4} and \eqref{ee4.13} we get
$$
\begin{array}{llll}
 \displaystyle \mathbb{E}\Big[\int_t^{\tau ^{\delta}}\Big(|Y_r^{1,i,u,v}|+\lambda
\sum\limits_{l=1}^{m-1}|H_r^{1,i,u,v}(l)| +|Z_r^{1,i,u,v}|+
\int_E|K_r^{1,i,u,v}(e)|(1\wedge|e|)\nu(de)\Big)dr\mid \mathcal{F}_t\Big]    \\
  \leq  \displaystyle C\delta^{\frac{1}{2}}\mathbb{E}\Big[\int_t^{\tau ^{\delta}}\big(1+|X_{r}^{t,x,i;u,v}|\big)dr\mid
\mathcal{F}_t\Big]+C\delta^{\frac{1}{2}}\Big(\mathbb{E}\Big[
\int_t^{\tau ^{\delta}}|Z_r^{1,i,u,v}|^2dr\mid \mathcal{F}_t\Big]\Big)^{\frac{1}{2}}   \\
\ \   \displaystyle +C\delta^{\frac{1}{2}}\Big(\mathbb{E}\Big[\int_t^{\tau ^{\delta}}
\sum\limits_{l=1}^{m-1}|H_r^{1,i,u,v}(l)|^2dr\mid \mathcal{F}_t\Big]\Big)^{\frac{1}{2}}+C\delta^{\frac{1}{2}}\Big(\mathbb{E}\Big[\int_t^{\tau ^{\delta}}
\int_E |K_r^{1,i,u,v}(e)|^2\nu(de)dr\mid \mathcal{F}_t\Big]\Big)^{\frac{1}{2}}
  \\
 \leq  \displaystyle  C\delta^{\frac{5}{4}},\ \ \mbox{P-a.s.}
\end{array}%
$$
\end{proof}

Replacing $X_{s}^{t,x,i;u,v}$ in equation \eqref{equ4.8} by $x$, we get the following new equation
\begin{equation}
\left\{
\begin{aligned}
&\!dY_{s}^{2,i,u,v}   =   -F_i\big(s,x,Y_{s}^{2,i,u,v} ,H_{s}^{2,i,u,v},Z_{s}^{2,i,u,v},K_{s}^{2,i,u,v},u_{s},v_{s}\big)ds+\lambda
\sum\limits_{l=1}^{m-1}H_{s}^{2,i,u,v}(l)ds &  \\
&\!\qquad\qquad\quad   +Z_{s}^{2,i,u,v}dB_{s}+\sum\limits_{l=1}^{m-1}H_{s}^{2,i,u,v}(l)d\tilde{%
N}_{s}(l)+\int_EK_{s}^{2,i,u,v}(e)\tilde\mu(ds,de),\ \ s\in [ t,\tau ^{\delta }],   \\
&\!Y_{\tau ^{\delta }}^{2,i,u,v}   =     0.
\end{aligned}%
\right.  \label{equu2}
\end{equation}%
\smallskip
According to Theorem \ref{l1},  \eqref{equu2} admits a unique solution $(Y^{2,i,u,v},H^{2,i,u,v},Z^{2,i,u,v},K^{2,i,u,v}) \in \mathcal{S}^2[t,\tau^\delta]$.

\begin{lemma}
\label{l4.3}\sl  For each $i\in \mathbf{M}$,  and all $u\in \mathcal{U}_{t,\tau ^{\delta }},\ v \in
\mathcal{V}_{t,\tau ^{\delta }}$,   it holds, $P$-a.s.,
\begin{equation}\label{042301}
\begin{aligned}
 &{\rm(i)} \  |Y_{t}^{1,i,u,v}-Y_{t}^{2,i,u,v}|\leq C\delta ^{\frac{5}{4}},\\
&{\rm(ii)} \ \mathbb{E}\Big[\int_t^{\tau^\delta}\Big(|Y_s^{1,i,u,v}-Y_s^{2,i,u,v}|^2+
\lambda\sum_{l=1}^{m-1}|H_s^{1,i,u,v}(l)-H_s^{2,i,u,v}(l)|^2+|Z_s^{1,i,u,v}-Z_s^{2,i,u,v}|^2\\
&\hskip2cm +\int_E|K_s^{1,i,u,v}(e)-K_s^{2,i,u,v}(e)|^2\nu(de)\Big)\mid  \mathcal {F}_t \Big]\leq C\delta^2.
\end{aligned}
\end{equation}%
\end{lemma}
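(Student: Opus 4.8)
The plan is to regard $(\hat Y,\hat H,\hat Z,\hat K):=(Y_s^{1,i,u,v}-Y_s^{2,i,u,v},\,H_s^{1,i,u,v}-H_s^{2,i,u,v},\,Z_s^{1,i,u,v}-Z_s^{2,i,u,v},\,K_s^{1,i,u,v}-K_s^{2,i,u,v})$ as the solution, on the stochastic interval $[t,\tau^\delta]$, of a linear-type backward equation with zero terminal value whose generator is obtained by subtracting \eqref{equ4.8} and \eqref{equu2}. Writing that generator as
$$\big[F_i(s,x,Y_s^{1,i,u,v},H_s^{1,i,u,v},Z_s^{1,i,u,v},K_s^{1,i,u,v},u_s,v_s)-F_i(s,x,Y_s^{2,i,u,v},H_s^{2,i,u,v},Z_s^{2,i,u,v},K_s^{2,i,u,v},u_s,v_s)\big]+\zeta(s)+\lambda\sum_{l=1}^{m-1}\hat H_s(l),$$
with $\zeta(s):=F_i(s,X_s^{t,x,i;u,v},Y_s^{1,i,u,v},H_s^{1,i,u,v},Z_s^{1,i,u,v},K_s^{1,i,u,v},u_s,v_s)-F_i(s,x,Y_s^{1,i,u,v},H_s^{1,i,u,v},Z_s^{1,i,u,v},K_s^{1,i,u,v},u_s,v_s)$, Remark \ref{Re000}(i) shows that the bracket is bounded by $C\big(|\hat Y_s|+\sum_{l}|\hat H_s(l)|+|\hat Z_s|+\int_E|\hat K_s(e)|(1\wedge|e|)\nu(de)\big)$ and that $|\zeta(s)|\le C\big(1+|X_s^{t,x,i;u,v}|+|X_s^{t,x,i;u,v}|^2+|x|+\int_E|K_s^{1,i,u,v}(e)|(1\wedge|e|)\nu(de)\big)|X_s^{t,x,i;u,v}-x|$.

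For (ii) I would run the energy estimate of the proof of Lemma \ref{l2}: apply It\^o's formula to $|\hat Y_s|^2$ on $[t,\tau^\delta]$, use Young's inequality on the cross terms arising from the Lipschitz bracket and from $\lambda\sum_l\hat H_s(l)$ to absorb $\tfrac12\big(\lambda\sum_l|\hat H_s(l)|^2+|\hat Z_s|^2+\int_E|\hat K_s(e)|^2\nu(de)\big)$ into the quadratic-variation term, and bound the remaining cross term by $2|\hat Y_s\zeta(s)|\le|\hat Y_s|^2+|\zeta(s)|^2$. This reduces (ii) to the two bounds $\mathbb{E}\big[\int_t^{\tau^\delta}|\hat Y_s|^2ds\mid\mathcal{F}_t\big]\le C\delta^2$ and $\mathbb{E}\big[\int_t^{\tau^\delta}|\zeta(s)|^2ds\mid\mathcal{F}_t\big]\le C\delta^2$. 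The first follows from the crude bounds $|Y_s^{1,i,u,v}|\le C\delta^{1/2}(1+|X_s^{t,x,i;u,v}|)$ and $|Y_s^{2,i,u,v}|\le C\delta^{1/2}(1+|x|)$ --- the first being \eqref{ee4.4}, the second its exact analogue for \eqref{equu2} --- together with \eqref{ee222}(i). For the second I would use the bound on $|\zeta(s)|$ above, split the polynomial factor via $|X_s^{t,x,i;u,v}|^4\le C(|x|^4+|X_s^{t,x,i;u,v}-x|^4)$ and estimate $\mathbb{E}\big[\int_t^{\tau^\delta}(1+|x|^4+|X_s^{t,x,i;u,v}-x|^4)|X_s^{t,x,i;u,v}-x|^2ds\mid\mathcal{F}_t\big]\le C\delta^2$ by \eqref{ee222}(ii) (with the exponents $p=2$ and $p=6$); for the remaining piece involving $\int_E|K_s^{1,i,u,v}(e)|(1\wedge|e|)\nu(de)$ I would bound $\big(\int_E|K_s^{1,i,u,v}(e)|(1\wedge|e|)\nu(de)\big)^2\le C\int_E|K_s^{1,i,u,v}(e)|^2\nu(de)$, pull $\sup_{s\in[t,\tau^\delta]}|X_s^{t,x,i;u,v}-x|^2$ out of the time integral, and pair it (by conditional Cauchy--Schwarz) --- controlled through \eqref{ee222}(ii) --- with $\int_t^{\tau^\delta}\int_E|K_s^{1,i,u,v}(e)|^2\nu(de)ds$, whose moments are controlled by the estimate \eqref{ee4.13} (and its $L^2$-version, obtained in the same way from Lemma \ref{l00} with $p=4$).

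For (i), since $\hat Y_{\tau^\delta}=0$ and the stochastic integrals in the equation for $\hat Y$ are true martingales, $\hat Y_t=\mathbb{E}\big[\int_t^{\tau^\delta}(\text{generator of }\hat Y)\,ds\mid\mathcal{F}_t\big]$. Splitting the generator exactly as above and applying the conditional Cauchy--Schwarz inequality over the interval of length $\tau^\delta-t\le\delta$, the Lipschitz bracket and the $\lambda\sum_l\hat H_s(l)$ term contribute at most $C\delta^{1/2}$ times the square root of the left-hand side of (ii), that is at most $C\delta^{3/2}$, while the $\zeta$-term contributes at most $C\delta^{1/2}\big(\mathbb{E}[\int_t^{\tau^\delta}|\zeta(s)|^2ds\mid\mathcal{F}_t]\big)^{1/2}\le C\delta^{3/2}$. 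Hence $|Y_t^{1,i,u,v}-Y_t^{2,i,u,v}|\le C\delta^{3/2}\le C\delta^{5/4}$, which is (i).

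The main obstacle, I expect, is the estimate $\mathbb{E}[\int_t^{\tau^\delta}|\zeta(s)|^2ds\mid\mathcal{F}_t]\le C\delta^2$: because the jump component $K^{1,i,u,v}$ lives only in $\mathcal{K}_\nu^2(t,\tau^\delta;\mathbb{R})$, one cannot control $\int_E|K_s^{1,i,u,v}(e)|^2\nu(de)$ at a fixed time $s$, so the term $\int_E|K_s^{1,i,u,v}(e)|(1\wedge|e|)\nu(de)$ inside $\zeta$ has to be detached from the $|X_s^{t,x,i;u,v}-x|$ factor and handled through the $\delta$-sharp time-integral estimates for the auxiliary equation; keeping track of all the powers of $\delta$ (and of $|x|$) so that (i) and (ii) come out at the stated orders is the delicate bookkeeping. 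Once the auxiliary $\delta$-estimates for \eqref{equ4.8} and \eqref{equu2} (the analogues of \eqref{ee4.4}, \eqref{ee4.13} and Lemma \ref{l00}) are in hand, the It\^o/Young energy estimate and the Cauchy--Schwarz bounds are routine.
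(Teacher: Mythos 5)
Your strategy is the same as the paper's: subtract \eqref{equ4.8} and \eqref{equu2}, view the difference as a BSDE with zero terminal value driven by the perturbation $\zeta(s)=F_i(s,X_s^{t,x,i;u,v},\Theta^1_s,u_s,v_s)-F_i(s,x,\Theta^1_s,u_s,v_s)$, bound $\zeta$ through Remark \ref{Re000}(i), and handle the awkward factor $\int_E|K_s^{1,i,u,v}(e)|(1\wedge|e|)\nu(de)$ by detaching it from $|X_s^{t,x,i;u,v}-x|$ and pairing, via conditional Cauchy--Schwarz, the quantity $\sup_s|X_s^{t,x,i;u,v}-x|^2$ (controlled by \eqref{ee222}) with $\int_t^{\tau^\delta}\int_E|K_s^{1,i,u,v}(e)|^2\nu(de)\,ds$, whose second conditional moment is controlled through \eqref{eezh} and Lemma \ref{l00} with $p=4$ --- this is exactly the paper's estimate \eqref{ee00-1}. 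The only structural difference is that the paper simply invokes the a priori estimate of Lemma \ref{l2} with $\varphi_1=\zeta$, $\varphi_2=0$, whereas you redo the energy estimate and therefore need (harmless) extra pointwise bounds of the type \eqref{ee4.4} for both equations.

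There is, however, a gap in the exponents. For the $K$-piece of $\mathbb{E}[\int_t^{\tau^\delta}|\zeta(s)|^2ds\mid\mathcal F_t]$ your Cauchy--Schwarz gives $\big(\mathbb{E}[\sup_s|X_s-x|^4\mid\mathcal F_t]\big)^{1/2}\big(\mathbb{E}[(\int_t^{\tau^\delta}\int_E|K_s^{1,i,u,v}(e)|^2\nu(de)ds)^2\mid\mathcal F_t]\big)^{1/2}\le C\delta^{1/2}\cdot C\delta=C\delta^{3/2}$, because the only available second-moment bound --- obtained from \eqref{eezh} together with Lemma \ref{l00}(ii) at $p=4$, i.e.\ \eqref{ee00-1} --- is of order $\delta^2$, not $\delta^3$. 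Your appeal to an ``$L^2$-version of \eqref{ee4.13}'' at rate $\delta^3$ is not something that follows ``in the same way from Lemma \ref{l00} with $p=4$'': \eqref{ee4.13} is itself derived from an It\^o argument on $|Y^{1,i,u,v}|^2$ using \eqref{ee4.4}, and upgrading it to a second-moment estimate would be a separate (unwritten) argument. Consequently your scheme, run with the estimates you cite, yields $\mathbb{E}[\int_t^{\tau^\delta}|\zeta(s)|^2ds\mid\mathcal F_t]\le C\delta^{3/2}$, hence (ii) with $C\delta^{3/2}$ and, after the $\delta^{1/2}$-Cauchy--Schwarz step, (i) with $C\delta^{5/4}$ --- not the $C\delta^{2}$ and $C\delta^{3/2}$ you announce. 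This is in fact precisely what the paper's own computation \eqref{2021011701} delivers (its displayed bound is $C\delta^{3/2}$, and only the $\delta^{5/4}$ bound in (i) and $\delta^{3/2}$-type bounds are used afterwards, e.g.\ in Lemma \ref{l4.0000}), so your proof is adequate for statement (i) and for everything downstream, but the claimed rate $\delta^2$ in the $\zeta$-estimate, and hence your strengthened version of (i), is unjustified as written.
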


\begin{proof}
Firstly, from \eqref{ee4.5}, \eqref{ee222}-(i) and Lemma \ref{l00} , we have
\begin{equation}\label{ee00-1}\begin{aligned}
&\mathbb{E}\Big[\Big(\int_t^{\tau^\delta} \int_E|K_{s}^{1,i,u,v} (e)|^2\nu(de)  ds\Big)^2\mid  \mathcal {F}_t \Big]\\
%
\leq& C\mathbb{E}\Big[\Big(\int_t^{\tau^\delta}  \big(1+|X_{s}^{t,x,i;u,v}|^2\big)  ds\Big)^2+\Big(\int_t^{\tau^\delta} \int_E|K_s^{u,v}(e)|^2\nu(de) ds\Big)^2  \mid  \mathcal {F}_t \Big]\leq  C\delta^2(1+|x|^4).
\end{aligned}
\end{equation}
For the equations \eqref{equ4.8} and \eqref{equu2}, by setting $\xi_1=\xi_2=0$  and $\varphi_2(s)=0,$
$\varphi_1(s)=F_i(s,X_s^{t,x,i;u,v},Y_{s}^{1,i,u,v}\textcolor[rgb]{1.00,0.00,0.00}{+\Lambda_s} ,$ $H_{s}^{1,i,u,v},Z_{s}^{1,i,u,v},K_{s}^{1,i,u,v},u_{s},v_{s})-F_i(s,x,Y_{s}^{1,i,u,v} ,H_{s}^{1,i,u,v},Z_{s}^{1,i,u,v},K_{s}^{1,i,u,v},u_{s},v_{s})$
in Lemma \ref{l2}, from Remark \ref{Re000}, \eqref{ee00-1}  and  \eqref{ee222}, we get
\begin{equation}\label{2021011701}
\begin{aligned}
&\mathbb{E}\Big[\int_t^{\tau^\delta}\Big(|Y_s^{1,i,u,v}-Y_s^{2,i,u,v}|^2
+\lambda\sum_{l=1}^{m-1}|H_s^{1,i,u,v}(l)-H_s^{2,i,u,v}(l)|^2+|Z_s^{1,i,u,v}-Z_s^{2,i,u,v}|^2\\
&\hskip1.3cm +\int_E|K_s^{1,i,u,v}(e)-K_s^{2,i,u,v}(e)|^2\nu(de)\Big)ds\mid  \mathcal {F}_t \Big]\\
&\leq C\mathbb{E}\Big[\int_t^{\tau^\delta} \big|F_i\big(s,X_s^{t,x,i;u,v}, {Y_{s}^{1,i,u,v}},
H_{s}^{1,i,u,v},Z_{s}^{1,i,u,v},K_{s}^{1,i,u,v},u_{s},v_{s}\big)\\
&\hskip2cm-F_i\big(s,x,Y_{s}^{1,i,u,v} ,H_{s}^{1,i,u,v},Z_{s}^{1,i,u,v},K_{s}^{1,i,u,v},u_{s},v_{s}\big)\big|^2ds    \mid   \mathcal {F}_t \Big]\\
&\leq C\mathbb{E}\Big[\int_t^{\tau^\delta} \Big(1+|x|+|x|^2+|X_s^{t,x,i;u,v}|+\int_E|K_{s}^{1,i,u,v} (e)|(1\wedge|e|) \nu(de) \Big)^2|X_s^{t,x,i;u,v}-x|^2  ds\mid  \mathcal {F}_t \Big]\\
&\leq C\delta \mathbb{E}\Big[\sup\limits_{s\in[t,\tau^\delta]}\big(|X_s^{t,x,i;u,v}-x|^2+|X_s^{t,x,i;u,v}|^2\cdot|X_s^{t,x,i;u,v}-x|^2\big)\mid  {\mathcal {F}_t \Big]}\\
&\quad +C \Bigg(\mathbb{E}\Big[\Big(\int_t^{\tau^\delta} \int_E|K_{s}^{1,i,u,v} (e)|^2\nu(de)ds \Big)^2\mid  \mathcal {F}_t \Big]\Bigg)^\frac12\cdot \Bigg(\mathbb{E}\Big[\sup\limits_{s\in[t,\tau^\delta]} |X_s^{t,x,i;u,v}-x|^4 \mid  \mathcal {F}_t \Big]\Bigg)^\frac12 \\
&\leq C \delta^\frac{3}{2} .
\end{aligned}
\end{equation}
Therefore, using Remark \ref{Re000} again and from \eqref{ee222}, \eqref{ee4.13} and \eqref{2021011701}, we have
$$\begin{aligned}
&\big|Y_t^{1,i,u,v}-Y_t^{2,i,u,v}\big|=\big|\mathbb{E}[Y_t^{1,i,u,v}-Y_t^{2,i,u,v}\mid \mathcal {F}_t]\big|\\
&\leq \mathbb{E}\Big[\int_t^{\tau^\delta}\Big(\big|F_i\big(s,X_s^{t,x,i;u,v},Y_{s}^{1,i,u,v} +\Lambda_s ,H_{s}^{1,i,u,v},Z_{s}^{1,i,u,v},K_{s}^{1,i,u,v},u_{s},v_{s}\big)\\
&\hskip1.3cm-F_i\big(s,x,Y_{s}^{2,i,u,v} ,H_{s}^{2,i,u,v},Z_{s}^{2,i,u,v},K_{s}^{2,i,u,v},u_{s},v_{s}\big)\big|+\lambda\sum\limits_{l=1}^{m-1}|H_{s}^{1,i,u,v}(l)-H_{s}^{2,i,u,v}(l)| \Big)ds\mid  \mathcal {F}_t \Big]\\
&\leq C\mathbb{E}\Big[\int_t^{\tau^\delta}\Big(\big(1+|X_s^{t,x,i;u,v}|+|X_s^{t,x,i;u,v}|^2+|x| +\int_E\big|K_{s}^{1,i,u,v}(e)\big|(1\wedge|e|)\nu(de)\big )\cdot|X_s^{t,x,i;u,v}-x| \\
&\hskip1.5cm+{\big|Y_{s}^{1,i,u,v}-Y_s^{2,i,u,v}\big|+}\sum\limits_{l=1}^{m-1}\big|H_{s}^{1,i,u,v}(l)-H_s^{2,i,u,v}(l)\big|+\big|Z_{s}^{1,i,u,v}-Z_s^{2,i,u,v}\big|\\
&\hskip1.5cm
 +\int_E\big|K_{s}^{1,i,u,v}(e)-K_s^{2,i,u,v}(e)\big|(1\wedge|e|)\nu(de)\Big)ds\mid \mathcal {F}_t \Big]\\
&\leq C\delta\mathbb{E}\Big[\sum_{k=1}^3\sup_{s\in[t,\tau^\delta]}|X_s^{t,x,i;u,v}-x|^k\mid {\mathcal {F}_t \Big]}\\
& \hskip0.3cm +  C\delta^\frac12\Big(\mathbb{E}\Big[\int_t^{\tau^\delta}\int_E|K_{s}^{1,i,u,v}(e)|^2\nu(de) ds\mid \mathcal {F}_t \Big] \Big)^\frac12\Big(\mathbb{E}\Big[\sup_{s\in[t,\tau^\delta]}|X_s^{t,x,i;u,v}-x|^2\mid \mathcal {F}_t \Big]\Big)^\frac12\\
&\hskip0.3cm   +C\delta^\frac12\Big\{\mathbb{E}\Big[\int_t^{\tau^\delta}\Big(|Y_s^{1,i,u,v}-Y_s^{2,i,u,v}|^2+
 \sum_{l=1}^{m-1}|H_s^{1,i,u,v}(l)-H_s^{2,i,u,v}(l)|^2+|Z_s^{1,i,u,v}-Z_s^{2,i,u,v}|^2\\
&\hskip2.5cm +\int_E|K_s^{1,i,u,v}(e)-K_s^{2,i,u,v}(e)|^2\nu(de)\Big)ds\mid \mathcal {F}_t \Big]\Big\}^\frac12\leq C\delta^\frac54.
\end{aligned}$$%
\end{proof}

For the solution  $(Y^{2,i,u,v},H^{2,i,u,v},Z^{2,i,u,v},K^{2,i,u,v})\in \mathcal{S}^2[t,\tau^\delta]$ of BSDE \eqref{equu2},
  we also have the following estimate.

\begin{lemma}
\label{l4.0000}\sl For each $i\in \mathbf{M}$,
for any  $u\in \mathcal{U}_{t,\tau ^{\delta }},\ v \in
\mathcal{V}_{t,\tau ^{\delta }}$,
$$\begin{array}{lll}
&\displaystyle \mathbb{E}\Big[\int_{t}^{\tau ^{\delta
}}\Big( |Y_s^{2,i,u,v}|+\lambda\sum_{l=1}^{m-1}|H_s^{2,i,u,v}(l)|+|Z_s^{2,i,u,v}|+\int_E| K_{s}^{2,i,u,v}(e)|(1\wedge|e|)\nu(de) \Big)ds \mid
\mathcal{F}_{t}\Big] \leq C\delta^\frac 54,\ P\text{-a.s.}
\end{array}$$

\end{lemma}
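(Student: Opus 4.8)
The plan is to obtain the estimate by comparison with the bound \eqref{ee444} already established for the triple $(Y^{1,i,u,v},H^{1,i,u,v},Z^{1,i,u,v},K^{1,i,u,v})$, together with the closeness of the two solutions quantified in Lemma \ref{l4.3}. Concretely, I would write for each component the triangle inequality, e.g.\ $|Y_s^{2,i,u,v}|\le |Y_s^{1,i,u,v}|+|Y_s^{1,i,u,v}-Y_s^{2,i,u,v}|$, and similarly for $\lambda\sum_{l=1}^{m-1}|H_s^{2,i,u,v}(l)|$, $|Z_s^{2,i,u,v}|$, and $\int_E|K_s^{2,i,u,v}(e)|(1\wedge|e|)\nu(de)$ against the corresponding ``$1$''-quantities. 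Integrating over $[t,\tau^\delta]$ and taking $\mathbb{E}[\,\cdot\mid\mathcal F_t]$, the contribution of the ``$1$''-terms is at most $C\delta^{5/4}$ by \eqref{ee444}, so the whole matter reduces to bounding the conditional $L^1$-norm of the \emph{differences} by $C\delta^{5/4}$.

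For the difference terms I would use the deterministic bound $\tau^\delta-t\le\delta$ together with conditional Cauchy--Schwarz (and conditional Jensen for the square root). For instance,
$$\mathbb{E}\Big[\int_t^{\tau^\delta}\big|Y_s^{1,i,u,v}-Y_s^{2,i,u,v}\big|\,ds\mid\mathcal F_t\Big]\le \delta^{\frac12}\Big(\mathbb{E}\Big[\int_t^{\tau^\delta}\big|Y_s^{1,i,u,v}-Y_s^{2,i,u,v}\big|^2\,ds\mid\mathcal F_t\Big]\Big)^{\frac12}\le \delta^{\frac12}(C\delta^2)^{\frac12}=C\delta^{\frac32},$$
where Lemma \ref{l4.3}(ii) controls the inner term, and the identical computation handles the $H$- and $Z$-difference contributions. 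For the jump term I would first pass from the $(1\wedge|e|)$-weighted $L^1(\nu)$ quantity to the $L^2(\nu)$ quantity by Cauchy--Schwarz in $\nu(de)$, using the elementary identity $(1\wedge|e|)^2=1\wedge|e|^2$ and the standing assumption $\int_E(1\wedge|e|^2)\nu(de)<\infty$, so that
$$\int_E\big|K_s^{1,i,u,v}(e)-K_s^{2,i,u,v}(e)\big|(1\wedge|e|)\,\nu(de)\le C\Big(\int_E\big|K_s^{1,i,u,v}(e)-K_s^{2,i,u,v}(e)\big|^2\nu(de)\Big)^{\frac12};$$
after this reduction the same $\tau^\delta-t\le\delta$ plus conditional Cauchy--Schwarz step, again invoking Lemma \ref{l4.3}(ii), bounds the jump-difference contribution by $C\delta^{3/2}$. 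Summing the four contributions with the main term $C\delta^{5/4}$ from \eqref{ee444} and using $\delta^{3/2}\le\delta^{5/4}$ on $[0,\tilde\delta]$ yields the claimed $C\delta^{5/4}$.

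I do not anticipate a genuine obstacle here; the argument is essentially bookkeeping. The only point deserving a line of care is the jump term, where one must exploit $(1\wedge|e|)^2=1\wedge|e|^2\in L^1(\nu)$ to bound the weighted $L^1(\nu)$ quantity by the $L^2(\nu)$ quantity appearing in Lemma \ref{l4.3}; everything else reduces to conditional Cauchy--Schwarz combined with $\tau^\delta-t\le\delta$. An alternative, more self-contained route would be to repeat directly on $|Y_s^{2,i,u,v}|^2$ the It\^o-formula computation used for \eqref{ee444}, relying on the growth estimate in Remark \ref{Re000}(ii) at the frozen point $x$ and on Lemma \ref{l00}; but the comparison route above is shorter and simply reuses what has already been proved.
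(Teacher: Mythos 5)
Your proposal is correct and follows essentially the same route as the paper: a triangle-inequality comparison with the ``$1$''-solution, the bound \eqref{ee444} for that part, and conditional Cauchy--Schwarz over $[t,\tau^\delta]$ combined with Lemma \ref{l4.3}(ii) for the difference part (the paper's Cauchy--Schwarz step absorbs the $(1\wedge|e|)$ weight exactly as you spell out), yielding $C\delta^{5/4}+C\delta^{3/2}\le C\delta^{5/4}$.
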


\begin{proof}
From \eqref{ee444} and \eqref{042301}-(ii) we have
$$\begin{array}{lll}
&\displaystyle \mathbb{E}\Big[\int_{t}^{\tau ^{\delta
}}\Big( |Y_s^{2,i,u,v}|+\lambda\sum_{l=1}^{m-1}|H_s^{2,i,u,v}(l)|+|Z_s^{2,i,u,v}|+\int_E| K_{s}^{2,i,u,v}|(1\wedge|e|)\nu(de) \Big)ds\mid
\mathcal{F}_{t}\Big]\\
& \leq \displaystyle\mathbb{E}\Big[\int_{t}^{\tau ^{\delta
}}\Big(|Y_{s}^{1,i,u,v}|+\lambda \sum%
\limits_{l=1}^{m-1}|H_{s}^{1,i,u,v}(l)| +|Z_{s}^{1,i,u,v}|+ \int_E|K_{s}^{1,i,u,v}|(1\wedge|e|)\nu(de)\Big)ds\mid
\mathcal{F}_{t}\Big]\\
&\ \ \displaystyle+\mathbb{E}\Big[\int_{t}^{\tau ^{\delta
}}\Big(|Y_{s}^{1,i,u,v}-Y_{s}^{2,i,u,v}|+\lambda \sum%
\limits_{l=1}^{m-1}|H_{s}^{1,i,u,v}(l)-H_{s}^{2,i,u,v}(l)|\\
&\displaystyle \hskip1.8cm
+|Z_{s}^{1,i,u,v}-Z_{s}^{2,i,u,v}|
+\int_E|K_{s}^{1,i,u,v}-K_{s}^{2,i,u,v}|(1\wedge|e|)\nu(de)\Big)ds\mid
\mathcal{F}_{t}\Big]\\
 & \leq  \displaystyle C\delta^\frac 54+C\delta^\frac12 \Big(\mathbb{E}\Big[\int_{t}^{\tau ^{\delta
}}\Big(|Y_{s}^{1,i,u,v}-Y_{s}^{2,i,u,v}|^{2} +\lambda\sum%
\limits_{l=1}^{m-1}|H_{s}^{1,i,u,v}(l)-H_{s}^{2,i,u,v}(l)|^{2}\\
&\displaystyle\hskip3cm
+|Z_{s}^{1,i,u,v}-Z_{s}^{2,i,u,v}|^{2}+\int_E|K_{s}^{1,i,u,v}(e)-K_{s}^{2,i,u,v}(e)|^{2}\nu(de)\Big)ds\mid
\mathcal{F}_{t}\Big]\Big)^\frac12\\
 & \leq C\delta^\frac 54.
\end{array}$$
\end{proof}

\begin{lemma}
\label{l4.6} \sl 
Let $(Y^{3,i},H^{3,i})$ be the unique solution of the following BSDE with jumps:
\begin{equation}  \label{ee4.19}
\left\{
\begin{aligned}
&\! dY_{s}^{3,i}  =  -G_i\big(s,x,Y_s^{3,i},H_{s}^{3,i},0,0\big)ds+\sum%
\limits_{l=1}^{m-1}H_{s}^{3,i}(l)\big[d\tilde{N}_{s}(l)+\lambda ds\big],\ \ s\in [
t,\tau ^{\delta }],    \\
&\! Y_{\tau ^{\delta }}^{3,i}   =   0,
\end{aligned}
\right.
\end{equation}%
where the generator $G_i:[  0,T]\times \mathbb{R}^n\times \mathbb{R} \times [L^2(\mathbf{L};\mathbb{R})]^{m-1}\times \mathbb{R}^d\times L^2_\nu\big(E,\mathcal {B}(E);\mathbb{R}\big)\rightarrow \mathbb{R} $ is defined as follows
\begin{equation}
G_i(s,x,y,h,z,k):=\underset{u\in U}{\sup }\underset{v\in V}{\mathop{\rm inf}}%
F_i(s,x,y,h,z,k,u,v).
\end{equation}%
Then, we have $Y_{t}^{3,i } =
\mathop{\rm esssup}\limits_{u\in \mathcal{U}_{t,\tau ^{\delta }}}\mathop{\rm
essinf}\limits_{v\in \mathcal{V}_{t,\tau ^{\delta
}}}Y_{t}^{2,i,u,v},
$  P-a.s.

\end{lemma}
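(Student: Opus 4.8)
\emph{Proof proposal.} The plan is to recognize $Y^{3,i}$ as the unique solution on the random interval $[t,\tau^\delta]$ of a backward equation driven by the Isaacs Hamiltonian $G_i(s,x,\cdot)=\sup_{u\in U}\inf_{v\in V}F_i(s,x,\cdot,u,v)$, and then to identify that solution with $\esssup_{u}\essinf_{v}Y^{2,i,u,v}$ by the usual two-sided argument, along the lines of the corresponding result in \cite{BHL}: the comparison theorem for one inequality, a measurable selection in the Hamiltonian for the other. Throughout, $x$ and $i$ are frozen, so $G_i(s,x,\cdot)$ and, for each $u\in\mathcal{U}_{t,\tau^\delta}$, the map $g^u_i(s,y,h,z,k):=\inf_{v\in V}F_i(s,x,y,h,z,k,u_s,v)$ are Carath\'eodory functions, Lipschitz in $(y,h,z,k)$ uniformly in $(s,\omega)$ (Remark~\ref{Re000}); moreover each $F_i(s,x,\cdot,u,v)$ is of the form \eqref{equ-g} with $\bar\rho(s,e)=\rho(x,e)$ and, thanks to $(\mathbf{H3})'$, satisfies $(\mathbf{A2})$, and both features are preserved under $\inf_v$ and $\sup_u$. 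Hence Theorems~\ref{l1} and \ref{Com-Th} (in the routine version for a random horizon $[t,\tau^\delta]$) apply to all equations below; let $(\bar Y^{u},\bar H^{u},\bar Z^{u},\bar K^{u})\in\mathcal{S}^2[t,\tau^\delta]$ denote the solution of the BSDE with generator $g^u_i$ and terminal value $0$.

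First I would show that $\bar Y^{u}_t=\essinf_{v\in\mathcal{V}_{t,\tau^\delta}}Y^{2,i,u,v}_t$ for every $u$. Since $g^u_i(s,\cdot)\le F_i(s,x,\cdot,u_s,v_s)$ pointwise for every $v\in\mathcal{V}_{t,\tau^\delta}$ and both equations have null terminal value, Theorem~\ref{Com-Th} gives $\bar Y^{u}_t\le Y^{2,i,u,v}_t$, hence $\bar Y^{u}_t\le\essinf_{v}Y^{2,i,u,v}_t$. For the converse, a measurable selection theorem — using the continuity of $v\mapsto F_i(s,x,\cdot,u_s,v)$ on the compact $V$ and the progressive measurability of $(\bar Y^{u},\bar H^{u},\bar Z^{u},\bar K^{u})$ and $u$ — yields $v^{*}\in\mathcal{V}_{t,\tau^\delta}$ with $F_i(s,x,\bar Y^{u}_s,\bar H^{u}_s,\bar Z^{u}_s,\bar K^{u}_s,u_s,v^{*}_s)=g^u_i(s,\bar Y^{u}_s,\bar H^{u}_s,\bar Z^{u}_s,\bar K^{u}_s)$ for a.e. $(s,\omega)$; then $(\bar Y^{u},\bar H^{u},\bar Z^{u},\bar K^{u})$ solves \eqref{equu2} with $v=v^{*}$, and uniqueness (Theorem~\ref{l1}) forces $\bar Y^{u}=Y^{2,i,u,v^{*}}$, so $\bar Y^{u}_t\ge\essinf_{v}Y^{2,i,u,v}_t$. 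Since the families $\{Y^{2,i,u,v}_t\}_v$ and $\{\bar Y^{u}_t\}_u$ are directed (as in \cite{P-1997,BHL}), taking the essential supremum over $u$ gives $\esssup_{u}\essinf_{v}Y^{2,i,u,v}_t=\esssup_{u}\bar Y^{u}_t$.

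It remains to prove $Y^{3,i}_t=\esssup_{u}\bar Y^{u}_t$. The structural observation is that, read on the full filtration $\mathbb{F}$, \eqref{ee4.19} has no $dB_s$ and no $\tilde\mu(ds,de)$ term, so its solution extends to $(Y^{3,i},H^{3,i},0,0)$, which solves the BSDE on $[t,\tau^\delta]$ with terminal value $0$ and generator $G_i(s,x,\cdot)$, because $G_i(s,x,Y^{3,i}_s,H^{3,i}_s,0,0)=G_i(s,x,Y^{3,i}_s,H^{3,i}_s,Z^{3,i}_s,K^{3,i}_s)$ once $Z^{3,i}\equiv0$, $K^{3,i}\equiv0$. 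As $G_i(s,x,\cdot)\ge g^u_i(s,\cdot)$ for all $u$, Theorem~\ref{Com-Th} yields $Y^{3,i}_t\ge\bar Y^{u}_t$, hence $Y^{3,i}_t\ge\esssup_{u}\bar Y^{u}_t$. For the reverse inequality, using compactness of $U$ and continuity of $u\mapsto\inf_{v\in V}F_i(s,x,\cdot,u,v)$, a measurable selection supplies $u^{*}\in\mathcal{U}_{t,\tau^\delta}$ with $\inf_{v\in V}F_i(s,x,Y^{3,i}_s,H^{3,i}_s,0,0,u^{*}_s,v)=G_i(s,x,Y^{3,i}_s,H^{3,i}_s,0,0)$ a.e.; then $(Y^{3,i},H^{3,i},0,0)$ solves the BSDE with generator $g^{u^{*}}_i$ and terminal value $0$, so uniqueness gives $Y^{3,i}=\bar Y^{u^{*}}$, whence $Y^{3,i}_t\le\esssup_{u}\bar Y^{u}_t$. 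Combining the two inequalities proves the claim.

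The hard part will be the measurable selection step: one must guarantee that $v^{*}$ and $u^{*}$ can be taken progressively measurable, so that they belong to $\mathcal{V}_{t,\tau^\delta}$ and $\mathcal{U}_{t,\tau^\delta}$, and that after substitution the resulting equation is \emph{literally} one of the BSDEs already in play, so the uniqueness in Theorem~\ref{l1} closes the argument; this is where the joint continuity of $F_i$ in $(u,v)$ from $(\mathbf{H1})$--$(\mathbf{H2})$ and a standard selection theorem enter. A secondary but necessary point is the bookkeeping that, because \eqref{ee4.19} carries no Brownian and no Poisson martingale component, freezing the $z$- and $k$-slots of $G_i$ (respectively $F_i$) at $0$ is consistent with viewing \eqref{ee4.19} as a BSDE on the full filtration, which is exactly what makes the comparison against the $z,k$-dependent generators of \eqref{equu2} legitimate.
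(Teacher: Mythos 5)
Your proposal is correct and follows essentially the same route as the paper: for each $u$ you introduce the BSDE with generator $\inf_{v\in V}F_i(\cdot,u_s,v)$, identify its value at $t$ with $\mathop{\rm essinf}_{v}Y^{2,i,u,v}_t$ via the comparison theorem in one direction and a measurable selection plus uniqueness in the other, and then repeat the same two-sided argument for the supremum over $u$ (the paper's ``similar argument''), using the observation that $(Y^{3,i},H^{3,i},0,0)$ solves the full-filtration BSDE with generator $G_i$. The only cosmetic difference is your appeal to directedness of the families, which is not needed since the identity $\bar Y^{u}_t=\mathop{\rm essinf}_{v}Y^{2,i,u,v}_t$ holds for every fixed $u$ and passes directly to the essential supremum.
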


\begin{proof} 
We denote
$
\tilde F_i(s,x,y,h,z,k,u):=\mathop{\rm inf}\limits_{v\in V}F_i (s,x,y,h,z,k,u,v),$ for $(s,x,y,h,z,k,u)\in [  0,T]\times \mathbb{R}^n\times \mathbb{R} \times [L^2(\mathbf{L};\mathbb{R})]^{m-1}\times \mathbb{R}^d\times L^2_\nu\big(E,\mathcal {B}(E);\mathbb{R}\big)\times U.
$
 For $u\in \mathcal{U}_{t,\tau ^{\delta }}$, it follows from Theorem \ref{l1} that the following BSDE with jumps
\begin{equation}\label{ee4.22}
\left\{
\begin{aligned}
&\!dY_{s}^{3,i,u} =   -\tilde F_i\big(s,x,Y_s^{3,i,u},H_{s}^{3,i,u},Z_{s}^{3,i,u},K_s^{3,i,u},u_{s}\big)ds+ \lambda\sum%
\limits_{l=1}^{m-1}H_{s}^{3,i,u}(l)
ds \\
&\!\qquad\qquad +\sum%
\limits_{l=1}^{m-1}H_{s}^{3,i,u}(l) d\tilde{N}_{s}(l) +Z_{s}^{3,i,u}dB_{s}+\int_EK_s^{3,i,u}(e)\tilde{\mu}(ds,de),\ s\in [  t,\tau ^{\delta }],   \\
&\!Y_{\tau ^{\delta }}^{3,i,u}  =  0,
\end{aligned}
\right.
\end{equation}%
has a
unique solution $(Y^{3,i,u},H^{3,i,u},Z^{3,i,u},K^{3,i,u})\in \mathcal{S}^2[t,\tau^\delta]$.
We then claim that
\begin{equation*}
Y_{t}^{3,i,u}=\essinf\limits_{v\in \mathcal{V}_{t,\tau ^{\delta }}}{Y}_{t}^{2,i,u,v},\ \mbox{P-a.s.,\ for all
}u\in \mathcal{U}_{t,\tau ^{\delta }}.
\end{equation*}%
Indeed, from the definition of $\tilde F_i$ and the comparison theorem for  BSDEs with jumps (Theorem \ref{Com-Th}),
we have
\begin{equation*}
Y_{t}^{3,i,u}\leq \essinf\limits_{v\in \mathcal{V}_{t,\tau ^{\delta }}}{Y}_{t}^{2,i,u,v},\ \mbox{P-a.s.,\ for all
}u\in \mathcal{U}_{t,\tau ^{\delta }}.
\end{equation*}%
On the other hand, there exists a Borel measurable function $\bar v:[0,T]\times
\mathbb{R}^n\times
\mathbb{R}\times
[L^2(\mathbf{L};\mathbb{R})]^{m-1}\times
\mathbb{R}^d\times L^2_\nu(E,\mathcal {B}(E);\mathbb{R})\times U\rightarrow V$ such that
\begin{equation*}
\tilde F_i(s,x,y,h,z,k,u)=F_i\big(s,x,y,h,z,k,u,\bar v(s,x,y,h,z,k,u)\big).
\end{equation*}%
Let $u\in \mathcal U_{t,\tau^\delta}$. By setting  $\widetilde{v}_{s} := \bar v(s,x,Y_s^{3,i,u},H_s^{3,i,u},Z_s^{3,i,u},K_s^{3,i,u},u_{s}),\ s\in [  t,\tau
^{\delta }],$ we see $\widetilde{v}\in \mathcal{V}_{t,\tau ^{\delta }}$, and
\begin{equation*}
\tilde F_i(s,x,Y_s^{3,i,u},H_s^{3,i,u},Z_s^{3,i,u},K_s^{3,i,u},u_{s})
=F_i(s,x,Y_s^{3,i,u},H_s^{3,i,u},Z_s^{3,i,u},K_s^{3,i,u},u_{s},\widetilde{v}_{s}),\ s\in
[  t,\tau ^{\delta }].
\end{equation*}%
The uniqueness of the solution of the BSDE with jumps implies that $$(Y^{3,i,u},{H}%
^{3,i,u},{Z}^{3,i,u},K^{3,i,u})=({Y}^{2,i,u,\widetilde{v}},{H}^{2,i,u,\widetilde{v}%
},{Z}^{2,i,u,\widetilde{v}},K^{2,i,u,\widetilde{v}}).$$
Particularly, $Y_{t}^{3,i,u}={Y}%
_{t}^{2,i,u,\widetilde{v}},\ \mbox{P-a.s.}$
Consequently, from the arbitrariness of the choice of $u\in \mathcal U_{t,\tau^\delta}$,  $Y_{t}^{3,i,u}=\essinf\limits_{v\in \mathcal{V}_{t,\tau ^{\delta }}}{Y}_{t}^{2,i,u, {v} },\ \mbox{P-a.s.}$, for
all $u\in \mathcal{U}_{t,\tau ^{\delta }}.$

Finally, as $G_i(s,x,y,h,z,k)=\sup\limits_{u\in {U}}\tilde F_i(s,x,y,h,z,k,u)$,
by a similar argument  we get the desired result
\begin{equation*}
Y_{t}^{3,i}=\esssup\limits_{u\in \mathcal{U}_{t,\tau ^{\delta }}}Y_{t}^{3,i,u}=\esssup\limits_{u\in
\mathcal{U}_{t,\tau ^{\delta }}}\essinf\limits_{v\in \mathcal{%
V}_{t,\tau ^{\delta }}}{Y}_{t}^{2,i,u,{v}%
 },\ \mbox{P-a.s.}
\end{equation*}%

\end{proof}

Note that equation \eqref{ee4.19} is independent of
the Brownian motion $B$ and the Poisson random measure $ \mu$, but it still depends on the Poisson random measure $N$.
Now we extend equation \eqref{ee4.19} to the time interval $[t,t+\delta ] $ as
follows:
\begin{equation}\label{equ4.111}
\left\{
\begin{aligned}
&\!d{Y}_{s}^{0,\delta }=-\mathbf{1}_{[t,\tau ^{\delta
}]}(s)G_i \big(s,x,Y_s^{0,\delta},H_{s}^{0,\delta },0,0\big)ds+\sum\limits_{l=1}^{m-1}H_{s}^{0,\delta }(l)\big[d\tilde{%
N}_{s}(l)+\lambda ds\big],\ s\in [ t,t+\delta ], \\
&\!Y_{t+\delta}^{0,\delta }=0.
\end{aligned}
\right.
\end{equation}
  Obviously, we have
\begin{equation}\label{re4.4}
Y_{s}^{0,\delta }=\left\{
\begin{array}{l}
\!\!\!Y_{s}^{3,i },\ \ \ \ \ \ \ s\in [ t,\tau ^{\delta }], \\
\!\!\!0,\ \ \ \ \ \ \ \ \ \ \  s\in ( \tau ^{\delta },t+\delta ],%
\end{array}%
\right.\ \ \ H_{s}^{0,\delta }=\left\{
\begin{array}{l}
\!\!\!H_{s}^{3,i },\ \ \ \ \ \ \ s\in [ t,\tau ^{\delta }], \\
\!\!\!0,\ \ \ \ \ \ \ \ \ \ \   s\in( \tau ^{\delta },t+\delta ],%
\end{array}%
\right.
\end{equation}%
and $Y_{t}^{0,\delta }=Y_{t}^{3,i}=\mathop{\rm esssup}%
\limits\limits_{u\in \mathcal{U}_{t,\tau ^{\delta }}}\mathop{\rm essinf}%
\limits\limits_{v\in \mathcal{V}_{t,\tau ^{\delta }}}Y_{t}^{2,i,u,v}. $

\smallskip
We consider a last auxiliary differential equation
\begin{equation}  \label{ee4.24}
\left\{
\begin{aligned}
&\!d\bar{Y}_{s}^{0,\delta }=-G_i \big(s,x,\bar{Y}_{s}^{0,\delta },0,0,0\big)ds,\ s\in [ t,t+\delta ], \\
&\!\bar{Y}_{t+\delta }^{0,\delta }=0.
\end{aligned}%
\right.
\end{equation}
The difference between ${Y}^{0,\delta}_t$ and $\bar{Y}^{0,\delta}_t$ is estimated
as follows.

\begin{lemma}
\label{l4.7}\sl  There exists a constant $C>0$ such that,
\begin{equation*}
|{Y}^{0,\delta}_t-\bar{Y}^{0,\delta}_t|\leq C\delta^\frac{3}{2},\ P\text{-a.s.}
\end{equation*}

\end{lemma}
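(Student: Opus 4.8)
The plan is to use the identity $Y^{0,\delta}_t=Y^{3,i}_t$ from \eqref{re4.4} and to view the deterministic ODE \eqref{ee4.24} for $\bar Y^{0,\delta}$ as a degenerate BSDE with jumps on the stochastic interval $[t,\tau^\delta]$ carrying the \emph{same} generator as \eqref{ee4.19} but a different terminal value, so that the whole discrepancy is concentrated in that terminal value, which turns out to be small.

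Concretely, I would first set $g(s,y,h,z,k):=G_i(s,x,y,h,z,k)-\lambda\sum_{l=1}^{m-1}h(l)$ and check, using Remark \ref{Re000}, that $g$ satisfies $(\mathbf{A1})$: the Lipschitz estimate follows from Remark \ref{Re000}(i) (with $x'=x$) together with $\int_E(1\wedge|e|)^2\nu(de)<\infty$ and Cauchy--Schwarz, and the square-integrability of $g(\cdot,0,0,0,0)=G_i(\cdot,x,0,0,0)$ from Remark \ref{Re000}(ii). With this choice, \eqref{ee4.19} is precisely the BSDE with jumps with generator $g$, terminal value $0$, and components $(Y^{3,i},H^{3,i},0,0)$ on $[t,\tau^\delta]$. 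On the other hand $\bar Y^{0,\delta}$ is deterministic, hence has no Brownian or Poisson integral part, and $g(s,\bar Y^{0,\delta}_s,0,0,0)=G_i(s,x,\bar Y^{0,\delta}_s,0,0,0)$; so the restriction of $\bar Y^{0,\delta}$ to $[t,\tau^\delta]$ is, by uniqueness, the solution of the BSDE with the same generator $g$, with components $(\bar Y^{0,\delta},0,0,0)$ and terminal value $\bar Y^{0,\delta}_{\tau^\delta}$.

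Next I would invoke the a priori estimate of Lemma \ref{l2} on the interval $[t,\tau^\delta]$ --- exactly as it is applied in the proof of Lemma \ref{l4.3} --- with $\xi_1=0$, $\xi_2=\bar Y^{0,\delta}_{\tau^\delta}$ and $\varphi_1=\varphi_2\equiv0$. Since $\tau^\delta-t\le\delta\le\tilde\delta$, this yields
\begin{equation*}
|Y^{0,\delta}_t-\bar Y^{0,\delta}_t|^2=|Y^{3,i}_t-\bar Y^{0,\delta}_t|^2\le C\,\mathbb{E}\big[\,|\bar Y^{0,\delta}_{\tau^\delta}|^2\mid\mathcal{F}_t\,\big].
\end{equation*}
To bound the right-hand side, I would use the growth bound of Remark \ref{Re000}(ii) and Gronwall's inequality on \eqref{ee4.24} to get $|\bar Y^{0,\delta}_s|\le C(1+|x|^2)(t+\delta-s)$ for $s\in[t,t+\delta]$; in particular $\bar Y^{0,\delta}_{t+\delta}=0$, so that $\bar Y^{0,\delta}_{\tau^\delta}=0$ on $\{\tau^\delta=t+\delta\}$ while $|\bar Y^{0,\delta}_{\tau^\delta}|\le C(1+|x|^2)\delta$ on $\{\tau^\delta<t+\delta\}$. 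Since, by the definition of $\tau^\delta$ and the independence of the increments of $N$ from $\mathcal{F}_t$, one has $P(\tau^\delta<t+\delta\mid\mathcal{F}_t)=1-e^{-\lambda(m-1)\delta}\le C\delta$, it follows that $\mathbb{E}[\,|\bar Y^{0,\delta}_{\tau^\delta}|^2\mid\mathcal{F}_t\,]\le C(1+|x|^2)^2\delta^2\,P(\tau^\delta<t+\delta\mid\mathcal{F}_t)\le C(1+|x|^2)^2\delta^3$, and therefore $|Y^{0,\delta}_t-\bar Y^{0,\delta}_t|\le C(1+|x|^2)\delta^{3/2}$, which for the fixed $(t,x)$ is the claimed bound.

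The only step that is not a routine application of earlier results is the structural observation in the second paragraph: that $\bar Y^{0,\delta}$ is itself (the restriction to $[t,\tau^\delta]$ of) a solution of a BSDE with the same generator $g$ as \eqref{ee4.19}, which lets Lemma \ref{l2} reduce the whole difference to the terminal datum $\bar Y^{0,\delta}_{\tau^\delta}$. This datum is nonzero only when the finite-activity Poisson measure $N$ jumps before $t+\delta$, an event of conditional probability $O(\delta)$, and is of size $O(\delta)$ there; taking the square root of this $O(\delta)$ probability is exactly what upgrades the naive bound $O(\delta)$ to the asserted $O(\delta^{3/2})$.
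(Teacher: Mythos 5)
Your proposal is correct and follows essentially the same route as the paper: rewrite the deterministic equation \eqref{ee4.24} as a (degenerate) BSDE on $[t,\tau^\delta]$ with the same generator as \eqref{ee4.19} and vanishing martingale parts (this is exactly \eqref{equ5555}), apply the a priori estimate of Lemma \ref{l2} to reduce the difference at time $t$ to $\mathbb{E}[|\bar Y^{0,\delta}_{\tau^\delta}|^2\mid\mathcal F_t]$, and bound this by combining the Gronwall estimate $|\bar Y^{0,\delta}_s|\le C(1+|x|+|x|^2)(t+\delta-s)$ with the fact that $\bar Y^{0,\delta}_{\tau^\delta}\neq 0$ only on the event $\{\tau^\delta<t+\delta\}$, whose conditional probability is $O(\delta)$, yielding the $O(\delta^3)$ bound and hence $C\delta^{3/2}$. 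The only cosmetic difference is that you spell out the verification of $(\mathbf{A1})$ for the shifted generator and keep the $x$-dependence of the constant explicit, both of which are consistent with the paper's argument.
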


\begin{proof}
As $|G_i(r,x,y,0,0,0)|\leq C(1+|x|+|x|^2+|y|)$, from \eqref{ee4.24}  we have,
 $$|\bar{Y}_{s}^{0,\delta }|\leq C\int_{s}^{t+\delta}\big(1+|x|+|x|^2+|\bar{Y}_{r}^{0,\delta }|\big) dr.$$
The Gronwall inequality implies  that
$$|\bar{Y}_{s}^{0,\delta }|\leq (1+|x|+|x|^2)(e^{C(t+\delta-s)}-1)\leq C(1+|x|+|x|^2)e^{C\delta}(t+\delta-s),\ s\in[t,t+\delta]. $$
Thus,
$$|\bar{Y}_{\tau^\delta}^{0,\delta }| \leq C(1+|x|+|x|^2)e^{C\delta}(t+\delta-\tau^\delta) \leq C(1+|x|+|x|^2)e^{C\delta}\sum\limits_{l=1}^{m-1}\delta\textbf{1}_{\{N((t,t+\delta]\times\{l\})\geq 1\}}, $$
and
$$\mathbb{E}[|\bar{Y}_{\tau^\delta}^{0,\delta }|^2\mid\mathcal {F}_t]\leq  Cm^2\delta^2\Big(1-P\big(N((t,t+\delta]\times \{l\})=0\big)\Big)=Cm^2\delta^2(1-e^{-\lambda \delta})\leq C\lambda m^2\delta^3.$$
Note that as equation \eqref{ee4.24}  is deterministic, it  
 can  be rewritten as follows,
\begin{equation}\label{equ5555}
\left\{
\begin{aligned}
&\!d\bar{Y}_{s}^{0,\delta }=-G_i \big(s,x,\bar{Y}_{s}^{0,\delta },\bar H_s^{0,\delta},0,0\big)ds+\sum\limits_{l=1}^{m-1}\bar H_{s}^{0,\delta }(l)[d\tilde{N%
}_{s}(l)+\lambda ds],\ s\in [ t,\tau^\delta ], \\
 &\!\bar{Y}_{\tau^\delta}^{0,\delta }=\int_{\tau^\delta}^{t+\delta}G_i(s,x,\bar{Y}_{s}^{0,\delta },0,0)ds,
\end{aligned}%
\right.
\end{equation}
with $\bar H_s^{0,\delta}(l)\equiv0,\ 1\leq l\leq m-1,$ on $ [ t,t+\delta ]$.
For the equations \eqref{equ5555} and  \eqref{equ4.111}, by using Lemma \ref{l2}  we get
\begin{equation}\label{1133}\begin{aligned}
& |\bar{Y}_{t}^{0,\delta }-%
Y_{t}^{0,\delta }|^{2}+\lambda\mathbb{E}\Big[\int_{t}^{\tau^\delta}\sum\limits_{l=1}^{m-1}|\bar H_{s}^{0,\delta }(l)-H_{s}^{0,\delta }(l)|^{2}ds\mid\mathcal {F}_t\Big]\\
 &\leq
C\mathbb{E}\big[|\bar{Y}_{\tau^\delta}^{0,\delta }-%
Y_{\tau^\delta}^{0,\delta }|^{2}\mid\mathcal {F}_t\big]=C\mathbb{E}\big[|\bar{Y}_{\tau^\delta}^{0,\delta }|^{2}\mid\mathcal {F}_t\big]\leq   C\lambda m^2\delta^3,\ P\text{-a.s.,}
\end{aligned}
\end{equation}
from which  we can conclude the desired result.

\end{proof}

We now give the proof of Theorem \ref{th4.1}.

\noindent \textbf{Proof of Theorem \ref{th4.1}:}

\noindent\emph{Step 1: viscosity supersolution case.} Let $i\in \mathbf{M}$. Given  any $\phi \in C_{l,b}^{3}([0,T]\times \mathbb{R}^n;\mathbb{R})$ and $(t,x)\in [ 0,T)\times \mathbb{R}^n$, we assume that  for all $(s,y)\in[0,T]\times \mathbb{R}^n$,  $W_i(s,y)-\phi(s,y)\geq W_i(t,x)-\phi(t,x)$. Without loss of generality we
suppose that $\phi (t,x)=W_{i}(t,x)$. Then for any $u\in \mathcal {U}_{t,\tau^\delta}$, $v\in \mathcal {V}_{t,\tau^\delta}$,
\begin{equation*}
\begin{aligned}
&    Y_{\tau^\delta}^{u,v}=\phi \big(\tau ^{\delta },X_{\tau ^{\delta }}^{t,x,i;u,v}\big)\mathbf{1}_{\{N_{\tau
^{\delta }}^{t,i}=i\}}+\sum\limits_{l=1}^{m-1}W_{(l+i)\mbox{mod} (m)}\big(\tau ^{\delta },X_{\tau ^{\delta }}^{t,x,i;u,v}\big)\mathbf{1}_{\{N_{\tau ^{\delta
}}^{t,i}=(l+i)\mbox{mod}(m)\}}    \\
& \leq  W_{i}\big(\tau ^{\delta },X_{\tau ^{\delta }}^{t,x,i;u,v}\big)\mathbf{1}_{\{N_{\tau
^{\delta }}^{t,i}=i\}}+\sum\limits_{l=1}^{m-1}W_{(l+i)\mbox{mod} (m)}\big(\tau ^{\delta },X_{\tau ^{\delta }}^{t,x,i;u,v}\big)\mathbf{1}_{\{N_{\tau ^{\delta
}}^{t,i}=(l+i)\mbox{mod}(m)\}}   \\
& =   W_{N_{\tau ^{\delta }}^{t,i}}\big(\tau ^{\delta },X_{\tau ^{\delta
}}^{t,x,i;u,v}\big).
\end{aligned}%
\end{equation*}
Combining $\tilde Y_t^{1,i,u,v}=\phi(t,x)$ and Lemma \ref{l4.1} with the comparison theorem for   BSDEs with jumps (Theorem \ref{Com-Th}), we obtain,
for all $u\in \mathcal {U}_{t,\tau^\delta}$, $\beta\in \mathcal {B}_{t,\tau^\delta}$,
\begin{equation*}
\begin{aligned}
Y_{t}^{1,i,u,\beta (u)}&=G_{t, \tau^\delta}^{t,x,i;u,\beta (u)}\big[Y_{\tau^\delta}^{u,\beta (u)}\big]- \tilde Y_{t}^{1,i,u,\beta (u)}
\leq  G_{t,\tau ^{\delta }}^{t,x,i;u,\beta (u)}\Big[W_{N_{\tau ^{\delta
}}^{t,i}}\big(\tau ^{\delta },X_{\tau ^{\delta }}^{t,x,i;u,\beta(u)}\big)\Big] -\phi (t,x) \\
&=   G_{t,\tau ^{\delta }}^{t,x,i;u,\beta (u)}\Big[W_{N_{\tau ^{\delta
}}^{t,i}}\big(\tau ^{\delta },X_{\tau ^{\delta }}^{t,x,i;u,\beta(u)}\big)\Big]-W_{i}(t,x).%
\end{aligned}%
\end{equation*}%
Using the strong DPP (Theorem \ref{SDPP}), we have
\begin{equation*}
\mathop{\rm essinf}\limits_{\beta \in \mathcal{B}_{t,\tau ^{\delta }}} %
\mathop{\rm esssup}\limits_{u\in \mathcal{U}_{t,\tau ^{\delta
}}}Y_{t}^{1,i,u,\beta (u)}\leq \mathop{\rm essinf}\limits_{\beta \in
\mathcal{B}_{t,\tau ^{\delta }}} \mathop{\rm esssup}\limits_{u\in \mathcal{U}%
_{t,\tau ^{\delta }}} G_{t,\tau ^{\delta }}^{t,x,i;u,\beta (u)}\Big[W_{N_{\tau
^{\delta }}^{t,i}}\big(\tau ^{\delta }, {X}_{\tau ^{\delta }}^{{%
t,x,i;u,\beta (u)}}\big)\Big]-W_{i}(t,x) = 0,\ \mbox{P-a.s.}
\end{equation*}%
Furthermore, Lemma \ref{l4.3} implies that
$$\mathop{\rm essinf}%
\limits_{\beta \in \mathcal{B}_{t,\tau ^{\delta }}} \mathop{\rm esssup}%
\limits_{u\in \mathcal{U}_{t,\tau ^{\delta }}}{Y}_{t}^{2,i,u,\beta (u)}\leq
\mathop{\rm essinf}%
\limits_{\beta \in \mathcal{B}_{t,\tau ^{\delta }}} \mathop{\rm esssup}%
\limits_{u\in \mathcal{U}_{t,\tau ^{\delta }}}{Y}_{t}^{1,i,u,\beta (u)}+C\delta^\frac54
\leq
C\delta
^{\frac54},\ \mbox{P-a.s.} $$
Consequently, from $\mathop{\rm essinf}%
\limits_{v\in \mathcal{V}_{t,\tau ^{\delta }}}{Y}_{t}^{2,i,u,v}\leq {Y}%
_{t}^{2,i,u,\beta (u)},\ \beta \in \mathcal{B}_{t,\tau ^{\delta }},$ we get
\begin{equation*}
\mathop{\rm esssup}\limits_{u\in \mathcal{U}_{t,\tau ^{\delta }}}\mathop{\rm
essinf}\limits\limits _{v\in \mathcal{V}_{t,\tau ^{\delta }}}{Y}%
_{t}^{2,i,u,v}\leq \mathop{\rm essinf}\limits_{\beta \in \mathcal{B}_{t,\tau
^{\delta }}}\mathop{\rm esssup}_{u\in \mathcal{U}_{t,\tau ^{\delta }}}{Y}%
_{t}^{2,i,u,\beta (u)}\leq C\delta ^{\frac54},\ \mbox{P-a.s.}
\end{equation*}%
It follows from Lemma \ref{l4.6} that
 ${Y}_{t}^{0,\delta}={Y}_{t}^{3,i}=\mathop{\rm esssup}\limits_{u\in \mathcal{U}_{t,\tau ^{\delta }}}\mathop{\rm
essinf}\limits\limits _{v\in \mathcal{V}_{t,\tau ^{\delta }}}{Y}%
_{t}^{2,i,u,v}\leq C\delta ^{\frac54},\ %
\mbox{P-a.s.}$
Furthermore, from   \eqref{re4.4} and Lemma \ref{l4.7}, it holds
$
 \bar{Y}_{t}^{0,\delta} \leq {Y}_{t}^{0,\delta} +C\delta^\frac32\leq C\delta ^{\frac54}.
$
Therefore, from equation \eqref{ee4.24}, we finally obtain
$$
\sup\limits_{u\in \mathcal{U}}\mathop{\rm inf}\limits_{v\in
V}F_i(t,x,0,0,0,0,u,v)=G_i (t,x,0,0,0,0)\leq 0,
$$
that is,
\begin{equation*}\begin{aligned}
&0\geq\frac{\partial \phi}{\partial t}(t,x)+\sup\limits_{u\in U}\mathop{\rm inf}\limits_{v\in
V}\Big\{A_{u,v}^i\phi(t,x)+B_{u,v}^i\phi(t,x)+f_i\big(t,x,\mathbf{W}(t,x),D
\phi(t,x)\sigma_i(t,x,u,v),  C_{u,v}^i\phi(t,x),u,v\big)\Big\},
\end{aligned}\end{equation*}
which shows that $\mathbf{W}$ is a viscosity
supersolution of the system \eqref{equ4.2}.

\vskip2mm
\noindent \emph{Step 2: viscosity subsolution case.}  For $i\in \mathbf{M},$ let $\phi \in C_{l,b}^{3}\big([0,T]\times \mathbb{R}^n;\mathbb{R}\big)$ and $(t,x)\in [ 0,T)\times \mathbb{R}^n$ be  such that, $W_i(s,y)-\phi(s,y)\leq W_i(t,x)-\phi(t,x)=0$, for any $  (s,y)\in[0,T]\times \mathbb{R}^n$.
Due to  Definition \ref{Def111}-(i), we only need to prove
\begin{equation}\label{ee000}
\sup\limits_{u\in {U}}\mathop{\rm inf}\limits_{v\in
V}F_i\big(t,x,0,0,0,0,u,v\big)=G_i \big(t,x,0,0,0,0\big)\geq 0.
\end{equation}
For this, we suppose that \eqref{ee000} does not hold. Then there exists some $\theta >0$
such that
\begin{equation}\label{2021011901}
\sup_{u\in {U}}\mathop{\rm inf}_{v\in
V}F_i\big(t,x,0,0,0,0,u,v\big)=G_i\big (t,x,0,0,0,0\big)\leq -\theta <0,
\end{equation}%
and we can find a measurable function $\chi: U\rightarrow V$ such that
\begin{equation*}
F_i\big(t,x,0,0,0,0,u,\chi(u)\big)\leq -\frac{3}{4}\theta ,\ \mbox{for all }u\in U.
\end{equation*}%
Since $F_i(\cdot ,x,0,0,0,0,\cdot ,\cdot )$ is uniformly continuous
on $[0,T]\times U\times V,$ there exists some   $R\in(0,T-t] $ such that
\begin{equation}  \label{4.27}
F_i\big(s,x,0,0,0,0,u,\chi(u)\big)\leq -\frac{1}{2}\theta, \ \mbox{for all }u\in U%
\mbox{
and }|s-t|\leq R.
\end{equation}%
On the other hand, notice that, for $u\in\mathcal{U}_{t,\tau^\delta}$ and $\beta\in\mathcal{B}_{t,\tau^\delta}$ we have
\begin{equation*}
\begin{array}{llll}
&   Y_{\tau^\delta}^{u,\beta(u)}= \phi \big(\tau ^{\delta },X_{\tau ^{\delta }}^{t,x,i;u,\beta (u)}\big)\mathbf{1}%
_{\{N_{\tau ^{\delta }}^{t,i}=i\}}+\sum\limits_{l=1}^{m-1}W_{(l+i)\mbox{mod}%
(m)}\big(\tau ^{\delta },X_{\tau ^{\delta }}^{t,x,i;u,\beta (u)}\big)\textbf{1}%
_{\{N_{\tau ^{\delta }}^{t,i}=(l+i)\mbox{mod}(m)\}} &  \\
& \geq   W_{i}\big(\tau ^{\delta },X_{\tau ^{\delta }}^{t,x,i;u,\beta (u)}\big)\mathbf{1}%
_{\{N_{\tau ^{\delta }}^{t,i}=i\}}+\sum\limits_{l=1}^{m-1}W_{(l+i)\mbox{mod}%
(m)}\big(\tau ^{\delta },X_{\tau ^{\delta }}^{t,x,i;u,\beta (u)}\big)\mathbf{1}%
_{\{N_{\tau ^{\delta }}^{t,i}=(l+i)\mbox{mod}(m)\}} &  \\
& =   W_{N_{\tau ^{\delta }}^{t,i}}\big(\tau ^{\delta },X_{\tau ^{\delta
}}^{t,x,i;u,\beta (u)}\big). &
\end{array}%
\end{equation*}
The comparison theorem for  BSDEs with jumps (Theorem \ref{Com-Th}), Lemma \ref{l4.1} as well as $\tilde Y_{t}^{1,i,u,\beta (u)}=\phi (t,x)$  yield
\begin{equation*}
\begin{aligned}
Y_{t}^{1,i,u,\beta (u)}& = G_{t, \tau^\delta}^{t,x,i;u,\beta (u)}[Y_{\tau^\delta}^{u,\beta (u)}]- \tilde Y_{t}^{1,i,u,\beta (u)}
\geq  G_{t,\tau ^{\delta }}^{t,x,i;u,\beta (u)}[W_{N_{\tau ^{\delta
}}^{t,i}}(\tau ^{\delta },X_{\tau ^{\delta }}^{t,x,i;u,\beta (u)})]  -\phi (t,x)   \\
& = G_{t,\tau ^{\delta }}^{t,x,i;u,\beta (u)}[W_{N_{\tau ^{\delta
}}^{t,i}}(\tau ^{\delta },X_{\tau ^{\delta }}^{t,x,i;u,\beta (u)})]-W_{i}(t,x).
\end{aligned}%
\end{equation*}%
\newline
Then, the strong DPP implies that
$$
\mathop{\rm essinf}\limits_{\beta \in \mathcal{B}_{t,\tau ^{\delta }}} %
\mathop{\rm esssup}\limits_{u\in \mathcal{U}_{t,\tau ^{\delta }}}Y_{t}^{1,i,u,\beta
(u)}\geq 0,\ \mbox{P-a.s.},
$$
and in particular,
$
\mathop{\rm esssup}\limits_{u\in \mathcal{U}_{t,\tau ^{\delta
}}}Y_{t}^{1,i,u,\chi(u)}\geq 0,\ \mbox{P-a.s.}
$
Here, by putting $v_{s}(\omega )=\chi(u_{s}(\omega )),\ (s,\omega )\in [
t,T]\times \Omega ,$ it is easy to check that $v\in\mathcal{V}_{t,\tau_{\delta}}$ and we identify $\chi$ as an element of $\mathcal{B}_{t,\tau
^{\delta }}.$ Given an arbitrarily $\varepsilon >0$ we  choose $%
u^{\varepsilon }\in \mathcal{U}_{t,\tau ^{\delta }}$ (depending on $\delta>0$) such that $%
Y_{t}^{1,i,u^{\varepsilon },\chi(u^{\varepsilon })}\geq -\varepsilon \delta .$
From Lemma \ref{l4.3}, we have
\begin{equation}  \label{4.29}
{Y}_{t}^{2,i,u^{\varepsilon },\chi(u^{\varepsilon })}\geq -C\delta ^{\frac54
}-\varepsilon \delta ,\ \mbox{P-a.s.}
\end{equation}%
From    equation \eqref{equu2}, and using the fact that $(y,h,z,k)\rightarrow F_i(s,x,y,h,z,k,u,v)$ is Lipschitz, uniformly in $(s,x,u,v)$, we get
\begin{equation*}\begin{aligned}
& \! {Y}_{t}^{2,i,u^{\varepsilon },\chi(u^{\varepsilon })}
 \leq\mathbb{E}\Big[\int_t^{\tau^\delta}F_i\big(s,x,0,0,0,0,u_s^\varepsilon,\chi(u_s^\varepsilon)\big)ds \mid\mathcal {F}_t \Big]\\
& \! +C\mathbb{E}\Big[\int_t^{\tau^\delta}\Big(|Y_s^{2,i,u^\varepsilon,\chi(u^\varepsilon)}|+\sum_{l=1}^{m-1}|H_s^{2,i,u^\varepsilon,\chi(u^\varepsilon)}(l)|
+|Z_s^{2,i,u^\varepsilon,\chi(u^\varepsilon)}|+\int_E|K_s^{2,i,u^\varepsilon,\chi(u^\varepsilon)}(e)|(1\wedge|e|)\nu(de)\Big)ds \mid\mathcal {F}_t \Big]\\
&\!\leq \mathbb{E}\Big[\int_t^{\tau^\delta}F_i(s,x,0,0,0,0,u_s^\varepsilon,\chi(u_s^\varepsilon))ds \mid\mathcal {F}_t \Big]\!
+C\delta^\frac 54,
\end{aligned}\end{equation*}
 where the latter inequality comes from  Lemma \ref{l4.0000}.
Consequently, combining this  with \eqref{4.27} we obtain, for all $\delta\in(0,  R)$,
 \begin{equation}\label{ee66}
Y_t^{2,i,u^\varepsilon,\chi(u^\varepsilon)}
 \leq  -\frac12\theta\mathbb{E}\big[ \tau^\delta-t \mid\mathcal {F}_t \big]
+C\delta^\frac 54. \end{equation}
 From the definition of $\tau^\delta$, we have  $\tau^\delta= t+\delta$ on $\{N\big((t,t+\delta]\times\{l\}\big)=0,\ 1\leq l\leq m-1\}$.
Therefore,
$$\begin{aligned}
&\mathbb{E}\big[ \tau^\delta  \mid\mathcal {F}_t \big]
 \geq (t+\delta)P\big\{N\big((t,t+\delta]\times\{l\}\big)=0,\ 1\leq l\leq m-1\big\}=(t+\delta)e^{-\lambda(m-1)\delta}.
\end{aligned}$$
Setting $ \psi(\delta):=  (t+\delta)e^{-\lambda(m-1)\delta}$, we know there exists some $\hat \delta\in(0,\delta)$ such that
%
$$ \mathbb{E}\big[ \tau^\delta-t \mid\mathcal {F}_t \big]\geq \psi(\delta)-\psi(0) =\psi'(
\hat\delta)\delta=e^{-\lambda(m-1)\hat\delta}\big(1-\lambda(m-1)(t+\hat\delta)\big)\delta \geq e^{-\lambda(m-1)\delta}\big(1-\lambda(m-1)T\big)\delta(>0),$$
with $\lambda\in\Big(0,\frac1{(m-1)T} \Big)$.
By \eqref{ee66}, we get
 \begin{equation}\label{ee77}
Y_t^{2,i,u^\varepsilon,\chi(u^\varepsilon)}
 \leq   -\frac12\theta\mathbb{E}\big[ \tau^\delta-t \mid\mathcal {F}_t \big]+C\delta^\frac 54\leq -\frac12\theta e^{-\lambda(m-1)\delta}\big(1-\lambda(m-1)T\big)\delta
+C\delta^\frac 54. \end{equation}
Furthermore,  \eqref{4.29} and \eqref{ee77} imply that,
 $$-C\delta ^{\frac14%
}-\varepsilon \leq-\frac14\theta e^{-\lambda(m-1) \delta}\big(1-\lambda(m-1)T\big)+C\delta ^{\frac{1}{4}},\ \mbox{P-a.s.}$$
Letting $\delta \downarrow 0,$ and then $\varepsilon \downarrow 0,$ we
obtain  that $\theta \leq 0,$ which forms  a contradiction with $\theta>0$ in \eqref{2021011901}.

  Finally, from the above two steps we get that the lower value function $\mathbf{W}$ given in \eqref{equ3.08} is a
viscosity solution of  system \eqref{equ4.2}.

\endpf

\subsection{Uniqueness Theorem  }
In this subsection, we shall study the uniqueness of the viscosity solution  of the system of coupled Isaacs' type integral-partial differential equations \eqref{equ4.2}.
For this, we introduce the following space of continuous functions:
$$
\begin{array}
[c]{llll}%
&&\Theta=\Big\{ \varphi \in C\big([0,T]\times \mathbb{R}^n;\mathbb{R}^m\big)\mid \lim \limits_{|x|\rightarrow
\infty}|\varphi(t,x)|\exp
\big\{-\tilde{A}\big[\log\big((|x|^{2}+1)^{\frac{1}{2}}\big)\big]^{2}%
\big\}=0,\\
&&\hskip6.4cm \mbox{ uniformly in }t\in  [0,T], \mbox{ for some } \tilde{A}%
>0 \Big\}.
\end{array}
$$
Note that the growth condition of the continuous functions in $\Theta$ is weaker than the polynomial growth, but is more restrictive than the exponential growth. The main reason for introducing this condition  in $\Theta$ comes from the quadratic growth of $\sigma\sigma^*$. Moreover, it will be the  optimal condition to ensure the uniqueness of the viscosity solution of our system \eqref{equ4.2}. We refer the reader to the  corresponding  arguments in Section 3 of \cite{BBP} for more details.

\begin{lemma} \label{lemma-un}\sl  Let $\underline{\mathbf{W}}=(\underline{W}_1,\cdots,\underline{W}_m)$ be a viscosity subsolution and $\overline{\mathbf{W}}=(\overline{W}_1,\cdots,\overline{W}_m)$ be a viscosity supersolution of
\eqref{equ4.2}.
%
Then the function
$\widehat{\mathbf{W}}=(\widehat{W} _1,\cdots,\widehat{W} _m):=  \underline{\mathbf{W}}-\overline{\mathbf{W}}=(\underline{W}_1-\overline{W}_1,\cdots,\underline{W}_m-\overline{W}_m)$ is  a viscosity subsolution of the following coupled equations
\begin{equation}\label{eee999}\left\{\begin{aligned}
&\! \frac{\partial} {\partial t}\widehat{W} _i(t,x)+\sup\limits_{u\in U, v\in V}\Big\{A_{u,v}^i\widehat{W} _i(t,x) +B_{u,v}^i\widehat{W} _i(t,x)+\tilde L_i|\widehat{\mathbf{W}}(t,x)|+\tilde L_i|D\widehat{W} _i(t,x)\sigma_i(t,x,u,v)|\\
& \!\hskip 3.5cm +\tilde L_i\big(C_{u,v}^i\widehat{W} _i(t,x)\big)^+  \Big\}=0,\  (t,x,i)\in [0,T]\times \mathbb{R}^n\times \mathbf{M},\\
&\!\widehat{W} _i(T,x)=0,\ \ (x,i)\in \mathbb{R}^n\times \mathbf{M},
\end{aligned}\right.\end{equation}
where  $\tilde{L}_i$ represents the Lipschitz constant of the function $f_i(t,\cdot,\cdot,\cdot,\cdot,u,v)$ with respect to $(x,a,z,k)$.

\end{lemma}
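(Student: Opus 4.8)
The plan is to verify directly that $\widehat{\mathbf W}=\underline{\mathbf W}-\overline{\mathbf W}$ satisfies the definition of viscosity subsolution for \eqref{eee999}. First I would note that the terminal condition is immediate: $\widehat W_i(T,x)=\underline W_i(T,x)-\overline W_i(T,x)\le \Phi_i(x)-\Phi_i(x)=0$. For the interior inequality, fix $i\in\mathbf M$, a test function $\psi\in C^3_{l,b}([0,T]\times\mathbb R^n;\mathbb R)$, and a point $(t,x)\in[0,T)\times\mathbb R^n$ at which $\widehat W_i-\psi$ attains a (global, by the growth remark after Definition \ref{Def111}) maximum. The standard device is to double the variables: for $\varepsilon>0$ consider
$$
\Upsilon_\varepsilon(s,y,y'):=\underline W_i(s,y)-\overline W_i(s,y')-\psi(s,y)-\frac{1}{2\varepsilon}|y-y'|^2-\text{(penalization at infinity)},
$$
and let $(s_\varepsilon,y_\varepsilon,y'_\varepsilon)$ be a maximizer. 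By the classical arguments (see \cite{CIL}, and the parabolic/nonlocal adaptation in \cite{BBP,BHL}), one has $s_\varepsilon\to t$, $y_\varepsilon,y'_\varepsilon\to x$, $\tfrac1\varepsilon|y_\varepsilon-y'_\varepsilon|^2\to 0$ as $\varepsilon\to 0$, and the Jensen–Ishii lemma (nonlocal version) yields, for each $\delta>0$, second-order jets plus the nonlocal terms evaluated with the outer part $|e|\ge\delta$ using $\underline W_i$, $\overline W_i$ and the inner part $|e|<\delta$ using the smooth test functions. Applying the subsolution inequality for $\underline{\mathbf W}$ at $(s_\varepsilon,y_\varepsilon)$ and the supersolution inequality for $\overline{\mathbf W}$ at $(s_\varepsilon,y'_\varepsilon)$, and subtracting, I would pass to the limit $\varepsilon\to 0$.

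In that subtraction the two Hamiltonians combine: using $\sup_u\inf_v\{a(u,v)\}-\sup_u\inf_v\{b(u,v)\}\le \sup_{u,v}\{a(u,v)-b(u,v)\}$ (valid because for any $u$ the inner infima over $v$ differ by at most $\sup_v(a(u,v)-b(u,v))$), the difference of the $f_i$ terms is controlled by the Lipschitz constant $\tilde L_i$ of $f_i$ in $(x,a,z,k)$:
$$
f_i(s_\varepsilon,y_\varepsilon,\underline{\mathbf W}(s_\varepsilon,y_\varepsilon),\cdots)-f_i(s_\varepsilon,y'_\varepsilon,\overline{\mathbf W}(s_\varepsilon,y'_\varepsilon),\cdots)\le \tilde L_i\big(|y_\varepsilon-y'_\varepsilon|+|\widehat{\mathbf W}|+|D\psi\,\sigma_i|+|\widehat{C\text{-term}}|\big)+o(1),
$$
where the nonlocal $C^i$-contributions are handled by observing that $\rho\ge 0$, so $C^i_{u,v}\underline W_i - C^i_{u,v}\overline W_i$ integrates $\widehat W_i(s,\cdot+\gamma_i)-\widehat W_i(s,\cdot)$ against a nonnegative kernel and its positive part appears after using $f_i$ monotone increasing in its $k$-argument (from \eqref{equ-g}/\eqref{equ-g}-type structure, or simply Lipschitz plus the sign of the kernel, giving the $(\cdot)^+$); the $A^i_{u,v}$ and $B^i_{u,v}$ pieces combine into the corresponding operators acting on $\psi$ at $(t,x)$ by the convergence of the jets and the $\delta$-then-$\varepsilon$ argument exactly as in \cite{BBP,BHL}. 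Collecting terms and sending $\delta\to 0$ gives precisely
$$
\frac{\partial\psi}{\partial t}(t,x)+\sup_{u\in U,v\in V}\Big\{A^i_{u,v}\psi(t,x)+B^i_{u,v}\psi(t,x)+\tilde L_i|\widehat{\mathbf W}(t,x)|+\tilde L_i|D\psi(t,x)\sigma_i(t,x,u,v)|+\tilde L_i\big(C^i_{u,v}\psi(t,x)\big)^+\Big\}\ge 0,
$$
which is the required subsolution inequality for \eqref{eee999} (with the $C^i$-term on $\psi$ by the remark following Definition \ref{Def111}).

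The main obstacle I anticipate is the careful bookkeeping of the \emph{nonlocal} terms in the doubling procedure: one must split $B^i$ and $C^i$ at level $\delta$, keep the small-jump parts on the smooth test function (where the second-order Taylor estimate and $\int_{E_\delta}(1\wedge|e|^2)\nu(de)\to 0$ control them), and match the large-jump parts of $\underline W_i$ at $y_\varepsilon$ against those of $\overline W_i$ at $y'_\varepsilon$ so that their difference is $\widehat W_i$ evaluated along the jumps — using the growth condition defining $\Theta$ (or the linear growth of the value functions here) to justify integrability and the interchange of limits, and using $\rho\ge0$, $\kappa(e)\le C(1\wedge|e|)$ to get the correct sign and the $(\cdot)^+$. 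This is routine in principle but technical; the algebraic combination of the $\sup_u\inf_v$ operators is easy by the inequality quoted above, and the rest follows the template of \cite{BBP} and \cite{BHL} essentially verbatim.
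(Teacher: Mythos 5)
Your proposal is correct and follows essentially the same route as the paper's proof: doubling of variables with penalization, the Jensen–Ishii lemma (Theorem 8.3 of \cite{CIL}) with $\delta$-splitting of the nonlocal terms, the elementary $\sup\inf$-subtraction inequality, the Lipschitz property and $k$-monotonicity of $f_i$ to produce $\tilde L_i|\widehat{\mathbf W}|$ and the positive part of the $C^i$-term, and the global maximality of the doubled function to control the large-jump differences before letting $\varepsilon\to 0$ (the paper takes $\delta=\varepsilon^2/4$). The only difference is that you leave the bookkeeping of the mismatched jump amplitudes $\gamma_i(\cdot,x_\varepsilon,\cdot)$ versus $\gamma_i(\cdot,y_\varepsilon,\cdot)$ and of $\rho(x_\varepsilon,e)$ versus $\rho(y_\varepsilon,e)$ as a cited-template step, which the paper carries out explicitly in its estimates of $B_{\varepsilon,\delta}^{i,u,v}$ and $C_{1,\varepsilon,\delta}^{i,u,v}-C_{2,\varepsilon,\delta}^{i,u,v}$.
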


\begin{proof}
For any $ i\in \mathbf{M}$, let $\varphi\in C_{l,b}^3\big([0,T]\times \mathbb{R}^n;\mathbb{R}\big)$, and let $(t_0,x_0)\in [0,T)\times \mathbb{R}^n$ be a maximum point of $\widehat{W} _i-\varphi$ such that $\widehat{W} _i(t_0,x_0)-\varphi(t_0,x_0)=0$.
  Without loss of generality, we assume  the maximum point $(t_0,x_0)$ is strict and global; otherwise, we can modify $\varphi$ outside a small
  neighborhood of  $\widehat{W} _i-\varphi$  if necessary.


  For each fixed $i\in \mathbf{M}$ and any given $\varepsilon>0$, define $\psi_\varepsilon(t,x,y):=  \underline{W}_i(t,x)-\overline{W}_i(t,y)-\frac{|x-y|^2}{\varepsilon^2}-\varphi(t,x)$.
We denote by $(t_\varepsilon,x_\varepsilon,y_\varepsilon)$ the global maximum point of $\psi_\varepsilon$ in $[0,T]\times \bar B_R^2$, where $\bar B_R$ is the  closed centered ball with  radius $R$ sufficiently large. Then using standard arguments (see, e.g., Proposition 3.7 in \cite{CIL}),  it follows that

\noindent (i) $(t_\varepsilon,x_\varepsilon,y_\varepsilon) \rightarrow (t_0,x_0,x_0)$, as $\varepsilon\rightarrow 0$;

\noindent (ii)  $\frac{|x_\varepsilon-y_\varepsilon|^2}{\varepsilon^2}$ is bounded and tends to $0$, when $\varepsilon\rightarrow 0$.

%
\smallskip
By Theorem 8.3 in \cite{CIL}, for any $\delta>0$, there exists $(X^\delta,Y^\delta)\in \mathbb{S}^n\times \mathbb{S}^n$, $c^\delta\in \mathbb{R}$ such that
$$\begin{aligned}
&\Big(c^\delta+\frac{\partial \varphi(t_\varepsilon,x_\varepsilon)}{\partial t},\frac{2(x_\varepsilon-y_\varepsilon)}{\varepsilon^2}+D\varphi(t_\varepsilon,x_\varepsilon),X^\delta\Big)\in \bar {\mathcal {P}}^{2,+}\underline{W}_i(t_\varepsilon,x_\varepsilon),\\
 &\Big(c^\delta ,\frac{2(x_\varepsilon-y_\varepsilon)}{\varepsilon^2} ,Y^\delta\Big)\in \bar {\mathcal {P}}^{2,-}\overline{W}_i(t_\varepsilon,y_\varepsilon),\\
\end{aligned}$$
and $$\left(
        \begin{array}{cc}
          X^\delta & 0\\
          0& -Y^\delta \\
        \end{array}
      \right) \leq A+\delta A^2,
$$
where $A=\left(
           \begin{array}{cc}
             D^2\varphi(t_\varepsilon,x_\varepsilon)+\frac2{\varepsilon^2} I& -\frac2{\varepsilon^2}I \\
             -\frac2{\varepsilon^2}I & \frac2{\varepsilon^2}I \\
           \end{array}
         \right)$,  $\bar {\mathcal {P}}^{2,+}\underline{W}_i(t_\varepsilon,x_\varepsilon)$ (resp. $\bar {\mathcal {P}}^{2,+}\overline{W}_i(t_\varepsilon,y_\varepsilon)$) is the second-order
parabolic superdifferential of $\underline{W}_i$ (resp. $\overline{W}_i$) at $(t_\varepsilon,x_\varepsilon)$ (resp. $(t_\varepsilon,y_\varepsilon)$),  $\mathbb{S}^n$ is the set of all $n\times n$ symmetric matrices, and $I$ is the unit matrix in $ \mathbb{R}^{n\times n}$.

In particular, by taking $\delta=\frac{\varepsilon^2}{4}$, we get
\begin{equation}\label{X-Y}\left(
        \begin{array}{cc}
          X^\delta & 0\\
          0& -Y^\delta \\
        \end{array}
      \right) \leq \frac4{\varepsilon^2}  \left(
           \begin{array}{cc}
             I & -I\\
             -I & I\\
           \end{array}
         \right)+ \left(
           \begin{array}{cc}
              2D^2\varphi(t_\varepsilon,x_\varepsilon)  & -\frac1{2}D^2\varphi(t_\varepsilon,x_\varepsilon)  \\
             -\frac1{2}D^2\varphi(t_\varepsilon,x_\varepsilon)  & 0 \\
           \end{array}
         \right)+ \frac{\varepsilon^2}{4}\left(
           \begin{array}{cc}
            \big( D^2\varphi(t_\varepsilon,x_\varepsilon) \big)^2 &0 \\
           0 &0\\
           \end{array}
         \right).
\end{equation}
In the following let $\delta=\frac{\varepsilon^2}{4}$, and  {put}
 $$\begin{aligned}
C_{1,\varepsilon,\delta}^{i,u,v}:=   &\int_{E_\delta} \Big(\varphi\big(t_\varepsilon,x_\varepsilon+\gamma_i(t_\varepsilon,x_\varepsilon,u,v,e)\big)-\varphi(t_\varepsilon,x_\varepsilon)\Big)\rho(x_\varepsilon,e)\nu(de)\\
 &
+\int_{E_\delta} \Big(\frac{|\gamma_i(t_\varepsilon,x_\varepsilon,u,v,e)|^2} {\varepsilon^2}+\frac{2(x_\varepsilon-y_\varepsilon)\gamma_i(t_\varepsilon,x_\varepsilon,u,v,e)} {\varepsilon^2}\Big)\rho(x_\varepsilon,e)\nu(de)\\
& +\int_{E_\delta^c}\Big( \underline{W}_i\big(t_\varepsilon,x_\varepsilon+\gamma_i(t_\varepsilon,x_\varepsilon,u,v,e)\big)-\underline{W}_i(t_\varepsilon,x_\varepsilon)\Big)\rho(x_\varepsilon,e)\nu(de),
 \end{aligned}$$
 and  $$\begin{aligned}
 C_{2,\varepsilon,\delta}^{i,u,v}:=  &\int_{E_\delta} \Big(-\frac{|\gamma_i(t_\varepsilon,y_\varepsilon,u,v,e)|^2} {\varepsilon^2}+\frac{2(x_\varepsilon-y_\varepsilon)\gamma_i(t_\varepsilon,y_\varepsilon,u,v,e)} {\varepsilon^2}\Big)\rho(y_\varepsilon,e)\nu(de)\\
& +\int_{E_\delta^c}\Big( \overline{W}_i\big(t_\varepsilon,y_\varepsilon+\gamma_i(t_\varepsilon,y_\varepsilon,u,v,e)\big)-\overline{W}_i(t_\varepsilon,y_\varepsilon)\Big)\rho(y_\varepsilon,e)\nu(de).
 \end{aligned}$$
Since $ \underline{\mathbf{W}}=(\underline{W}_1,\cdots,\underline{W}_m)$ is the  viscosity subsolution of \eqref{equ4.1}, for any $i\in\mathbf{M}$, we get
\begin{equation}\label{eee111}\begin{aligned}
&c^\delta+\frac{\partial \varphi(t_\varepsilon,x_\varepsilon)}{\partial t}
+\sup\limits_{u\in U}\inf\limits_{v\in V}\Big\{\frac 12 \tr\big(\sigma_i\sigma_i^*(t_\varepsilon,x_\varepsilon,u,v)X^\delta\big)+\Big\langle b_i(t_\varepsilon,x_\varepsilon,u,v), \frac{2(x_\varepsilon-y_\varepsilon)}{\varepsilon^2}+D\varphi(t_\varepsilon,x_\varepsilon)\Big\rangle \\
&+\int_{E_\delta} \Big(\varphi\big(t_\varepsilon,x_\varepsilon+\gamma_i(t_\varepsilon,x_\varepsilon,u,v,e)\big)-\varphi(t_\varepsilon,x_\varepsilon)
+\frac{|\gamma_i(t_\varepsilon,x_\varepsilon,u,v,e)|^2} {\varepsilon^2} - D\varphi(t_\varepsilon,x_\varepsilon) \gamma_i(t_\varepsilon,x_\varepsilon,u,v,e)\Big)\nu(de)\\
& +\int_{E_\delta^c}\Big( \underline{W}_i(t_\varepsilon,x_\varepsilon+\gamma_i(t_\varepsilon,x_\varepsilon,u,v,e))-\underline{W}_i(t_\varepsilon,x_\varepsilon)
-\Big(\frac{2(x_\varepsilon-y_\varepsilon)}{\varepsilon^2}+D\varphi(t_\varepsilon,x_\varepsilon)\Big)\gamma_i(t_\varepsilon,x_\varepsilon,u,v,e)\Big)\nu(de)\\
&+f_i\Big(t_\varepsilon,x_\varepsilon,\underline{\mathbf{W}}(t_\varepsilon,x_\varepsilon),\Big(\frac{2(x_\varepsilon-y_\varepsilon)}{\varepsilon^2}+D\varphi(t_\varepsilon,x_\varepsilon)\Big)\sigma_i(t_\varepsilon,x_\varepsilon,u,v),
C_{1,\varepsilon,\delta}^{i,u,v}, u,v\Big) \Big\}\geq 0.
\end{aligned}\end{equation}
On the other hand, as    $ \overline{\mathbf{W}}=(\overline{W}_1,\cdots,\overline{W}_m)$ is the viscosity  supersolution of \eqref{equ4.1}, from   the analogy of  Definition \ref{Def111}-(ii) but formulated for second-order super-differentials (see \cite{CIL})we have
\begin{equation}\label{eee222}\begin{aligned}
&c^\delta
+\sup\limits_{u\in U}\inf\limits_{v\in V}\Big\{\frac 12 \tr\big(\sigma_i\sigma_i^*(t_\varepsilon,y_\varepsilon,u,v)Y^\delta\big)+\Big\langle b_i(t_\varepsilon,y_\varepsilon,u,v), \frac{2(x_\varepsilon-y_\varepsilon)}{\varepsilon^2}\Big\rangle-\int_{E_\delta} \frac{|\gamma_i(t_\varepsilon,y_\varepsilon,u,v,e)|^2} {\varepsilon^2}  \nu(de)\\
& +\int_{E_\delta^c}\Big( \overline{W}_i\big(t_\varepsilon,y_\varepsilon+\gamma_i(t_\varepsilon,y_\varepsilon,u,v,e)\big)-\overline{W}_i(t_\varepsilon,y_\varepsilon)
- \frac{2(x_\varepsilon-y_\varepsilon)}{\varepsilon^2} \gamma_i(t_\varepsilon,y_\varepsilon,u,v,e)\Big)\nu(de)\\
&+f_i\Big(t_\varepsilon,y_\varepsilon,\overline{\mathbf{W}}(t_\varepsilon,y_\varepsilon), \frac{2(x_\varepsilon-y_\varepsilon)}{\varepsilon^2}
\sigma_i(t_\varepsilon,y_\varepsilon,u,v),
C_{2,\varepsilon,\delta}^{i,u,v}, u,v\Big) \Big\}\leq 0.
\end{aligned}\end{equation}
From $\eqref{eee111}$ and $\eqref{eee222}$, we have
\begin{equation}\label{eee333}\begin{aligned}
 \frac{\partial \varphi(t_\varepsilon,x_\varepsilon)}{\partial t}
+\sup\limits_{u\in U,v\in V} \Big\{A_{\varepsilon,\delta}^{i,u,v}+B_{\varepsilon,\delta}^{i,u,v}+f_{\varepsilon,\delta}^{i,u,v} \Big\}\geq 0,
\end{aligned}\end{equation}
where
$$\begin{aligned}
&  A_{\varepsilon,\delta}^{i,u,v}:=  \frac 12 \tr\big(\sigma_i\sigma_i^*(t_\varepsilon,x_\varepsilon,u,v)X^\delta\big)-\frac 12 \tr\big(\sigma_i\sigma_i^*(t_\varepsilon,y_\varepsilon,u,v)Y^\delta\big)\\
&\qquad\quad+\Big\langle b_i\big(t_\varepsilon,x_\varepsilon,u,v\big), \frac{2(x_\varepsilon-y_\varepsilon)}{\varepsilon^2}+D\varphi(t_\varepsilon,x_\varepsilon)\Big\rangle-\Big\langle b_i(t_\varepsilon,y_\varepsilon,u,v), \frac{2(x_\varepsilon-y_\varepsilon)}{\varepsilon^2}\Big\rangle\\
& \qquad  \leq  C \frac{|x_\varepsilon-y_\varepsilon|^2} {\varepsilon^2} +\frac 12 \tr\big(\sigma_i\sigma_i^*(t_\varepsilon,x_\varepsilon,u,v)D^2\varphi(t_\varepsilon,x_\varepsilon)\big)+\frac {\varepsilon^2}8\tr\big(\sigma_i\sigma_i^*(t_\varepsilon,x_\varepsilon,u,v)(D^2\varphi(t_\varepsilon,x_\varepsilon))^2\big) \\
&\qquad\quad  + \frac 12 \tr\big( \sigma_i^*(t_\varepsilon,x_\varepsilon,u,v)D^2\varphi(t_\varepsilon,x_\varepsilon)( \sigma_i(t_\varepsilon,x_\varepsilon,u,v)-\sigma_i(t_\varepsilon,y_\varepsilon,u,v))\big)\\
&\qquad\quad+\big\langle b_i(t_\varepsilon,x_\varepsilon,u,v) , D\varphi(t_\varepsilon,x_\varepsilon)\big \rangle  \quad ({\mbox{using }\eqref{X-Y}});\\
&B_{\varepsilon,\delta}^{i,u,v}:=   \int_{E_\delta} \Big(\varphi\big(t_\varepsilon,x_\varepsilon+\gamma_i(t_\varepsilon,x_\varepsilon,u,v,e)\big)-\varphi(t_\varepsilon,x_\varepsilon)- D\varphi(t_\varepsilon,x_\varepsilon) \gamma_i(t_\varepsilon,x_\varepsilon,u,v,e) \\
&\qquad\qquad\qquad
+\frac{|\gamma_i(t_\varepsilon,x_\varepsilon,u,v,e)|^2+|\gamma_i(t_\varepsilon,y_\varepsilon,u,v,e)|^2} {\varepsilon^2} \Big)\nu(de)\\
&\quad +\int_{E_\delta^c}\Big( \underline{W}_i\big(t_\varepsilon,x_\varepsilon+\gamma_i(t_\varepsilon,x_\varepsilon,u,v,e)\big)-\underline{W}_i(t_\varepsilon,x_\varepsilon)
-\Big(\frac{2(x_\varepsilon-y_\varepsilon)}{\varepsilon^2}+D\varphi(t_\varepsilon,x_\varepsilon)\Big)\gamma_i(t_\varepsilon,x_\varepsilon,u,v,e)\\
&\qquad\qquad-\overline{W}_i\big(t_\varepsilon,y_\varepsilon+\gamma_i(t_\varepsilon,y_\varepsilon,u,v,e)\big)+\overline{W}_i(t_\varepsilon,y_\varepsilon)
+ \frac{2(x_\varepsilon-y_\varepsilon)}{\varepsilon^2} \gamma_i(t_\varepsilon,y_\varepsilon,u,v,e)\Big)\nu(de),\\
&f_{\varepsilon,\delta}^{i,u,v}:= f_i\Big(t_\varepsilon,x_\varepsilon,\underline{\mathbf{W}}(t_\varepsilon,x_\varepsilon),\Big(\frac{2(x_\varepsilon-y_\varepsilon)}{\varepsilon^2}+D\varphi(t_\varepsilon,x_\varepsilon)\Big)\sigma_i(t_\varepsilon,x_\varepsilon,u,v),
 C_{1,\varepsilon,\delta}^{i,u,v}, u,v\Big)\\
&\qquad\qquad-f_i\Big(t_\varepsilon,y_\varepsilon,\overline{\mathbf{W}}(t_\varepsilon,y_\varepsilon), \frac{2(x_\varepsilon-y_\varepsilon)}{\varepsilon^2}
\sigma_i(t_\varepsilon,y_\varepsilon,u,v),  C_{2,\varepsilon,\delta}^{i,u,v}, u,v\Big).
\end{aligned}$$
From $(t_\varepsilon,x_\varepsilon,y_\varepsilon)$ being a global maximum point of $\psi_\varepsilon$, we have, for any $u\in U$, $v\in V$, $e\in E$,
  $$\psi_\varepsilon\big(t_\varepsilon,x_\varepsilon+\gamma_i(t_\varepsilon,x_\varepsilon,u,v,e),y_\varepsilon+\gamma_i(t_\varepsilon,y_\varepsilon,u,v,e)\big)\leq \psi_\varepsilon(t_\varepsilon,x_\varepsilon,y_\varepsilon),$$
which implies that
$$\begin{aligned}
 & \underline{W}_i\big(t_\varepsilon,x_\varepsilon+\gamma_i(t_\varepsilon,x_\varepsilon,u,v,e)\big)-\underline{W}_i\big(t_\varepsilon,x_\varepsilon\big)
 -\Big(\overline{W}_i\big(t_\varepsilon,y_\varepsilon+\gamma_i(t_\varepsilon,y_\varepsilon,u,v,e)\big)-\overline{W}_i\big(t_\varepsilon,y_\varepsilon)\Big)\\
&
-\frac{2(x_\varepsilon-y_\varepsilon)\big(\gamma_i(t_\varepsilon,x_\varepsilon,u,v,e)-\gamma_i(t_\varepsilon,y_\varepsilon,u,v,e)\big)}{\varepsilon^2}
-\frac{| \gamma_i(t_\varepsilon,x_\varepsilon,u,v,e)-\gamma_i(t_\varepsilon,y_\varepsilon,u,v,e)|^2}{\varepsilon^2}\\
&
-\varphi(t_\varepsilon,x_\varepsilon+\gamma_i(t_\varepsilon,x_\varepsilon,u,v,e))
+\varphi(t_\varepsilon,x_\varepsilon)\leq 0.\end{aligned}$$
Therefore,
\begin{equation}\label{2021012501}
\begin{aligned}
B_{\varepsilon,\delta}^{i,u,v}
\leq &\int_{E_\delta} \Big(\varphi\big(t_\varepsilon,x_\varepsilon+\gamma_i(t_\varepsilon,x_\varepsilon,u,v,e)\big)-\varphi(t_\varepsilon,x_\varepsilon)- D\varphi(t_\varepsilon,x_\varepsilon) \gamma_i(t_\varepsilon,x_\varepsilon,u,v,e)\\
&
\hskip1cm+\frac{|\gamma_i(t_\varepsilon,x_\varepsilon,u,v,e)|^2} {\varepsilon^2} +\frac{|\gamma_i(t_\varepsilon,y_\varepsilon,u,v,e)|^2} {\varepsilon^2}\Big)\nu(de)\\
& +\int_{E_\delta^c}\Big(\varphi\big(t_\varepsilon,x_\varepsilon+\gamma_i(t_\varepsilon,x_\varepsilon,u,v,e)\big)
-\varphi(t_\varepsilon,x_\varepsilon)-D\varphi(t_\varepsilon,x_\varepsilon)\gamma_i(t_\varepsilon,x_\varepsilon,u,v,e)\\
&\hskip1cm+\frac{| \gamma_i(t_\varepsilon,x_\varepsilon,u,v,e)-\gamma_i(t_\varepsilon,y_\varepsilon,u,v,e)|^2}{\varepsilon^2}
\Big)\nu(de)\\
 \leq&\int_{E} \Big(\varphi\big(t_\varepsilon,x_\varepsilon+\gamma_i(t_\varepsilon,x_\varepsilon,u,v,e)\big)-\varphi(t_\varepsilon,x_\varepsilon)- D\varphi(t_\varepsilon,x_\varepsilon) \gamma_i(t_\varepsilon,x_\varepsilon,u,v,e)\Big)\nu(de)\\
&
+\int_{E_\delta}\Big(\frac{|\gamma_i(t_\varepsilon,x_\varepsilon,u,v,e)|^2} {\varepsilon^2}+\frac{|\gamma_i(t_\varepsilon,y_\varepsilon,u,v,e)|^2} {\varepsilon^2}\Big)\nu(de)+\int_{E_\delta^c}\kappa(e)^2\frac{|x_\varepsilon - y_\varepsilon |^2}{\varepsilon^2}\nu(de),
\end{aligned}
\end{equation}
and
\begin{equation}\label{2021012502}
\begin{aligned}
& C_{1,\varepsilon,\delta}^{i,u,v}- C_{2,\varepsilon,\delta}^{i,u,v}  \\
\leq &\int_{E_\delta} \Big(\varphi\big(t_\varepsilon,x_\varepsilon+\gamma_i(t_\varepsilon,x_\varepsilon,u,v,e)\big)-\varphi(t_\varepsilon,x_\varepsilon)\Big)\rho(x_\varepsilon,e)\nu(de)\\
 &
+\int_{E_\delta} \Big(\frac{|\gamma_i(t_\varepsilon,x_\varepsilon,u,v,e)|^2} {\varepsilon^2}\rho(x_\varepsilon,e) +\frac{|\gamma_i(t_\varepsilon,y_\varepsilon,u,v,e)|^2} {\varepsilon^2}\rho(y_\varepsilon,e) \Big)\nu(de)\\
&
+\int_{E_\delta} \Big( \frac{2(x_\varepsilon-y_\varepsilon)\big(\gamma_i(t_\varepsilon,x_\varepsilon,u,v,e)-\gamma_i(t_\varepsilon,y_\varepsilon,u,v,e)\big)} {\varepsilon^2}\Big)\rho(x_\varepsilon,e)\nu(de)\\
&+\int_{E_\delta}\frac{2(x_\varepsilon-y_\varepsilon)\gamma_i(t_\varepsilon,y_\varepsilon,u,v,e)} {\varepsilon^2}\big(\rho(x_\varepsilon,e)-\rho(y_\varepsilon,e)\big)\nu(de)\\
 &+\int_{E_\delta^c}\Big( \frac{2(x_\varepsilon-y_\varepsilon)\big(\gamma_i(t_\varepsilon,x_\varepsilon,u,v,e)-\gamma_i(t_\varepsilon,y_\varepsilon,u,v,e)\big)}{\varepsilon^2}
+\frac{| \gamma_i(t_\varepsilon,x_\varepsilon,u,v,e)-\gamma_i(t_\varepsilon,y_\varepsilon,u,v,e)|^2}{\varepsilon^2}\\
&\hskip1cm
+\varphi\big(t_\varepsilon,x_\varepsilon+\gamma_i(t_\varepsilon,x_\varepsilon,u,v,e)\big)
-\varphi(t_\varepsilon,x_\varepsilon)\Big)\rho(x_\varepsilon,e)\nu(de)\\
&+\int_{E_\delta^c}\Big( \overline{W}_i\big(t_\varepsilon,y_\varepsilon+\gamma_i(t_\varepsilon,y_\varepsilon,u,v,e)\big)-\overline{W}_i(t_\varepsilon,y_\varepsilon)\Big)
\big(\rho(x_\varepsilon,e)-\rho(y_\varepsilon,e)\big)\nu(de)\\
\leq &\int_{E} \Big(\varphi\big(t_\varepsilon,x_\varepsilon+\gamma_i(t_\varepsilon,x_\varepsilon,u,v,e)\big)-\varphi(t_\varepsilon,x_\varepsilon)\Big)\rho(x_\varepsilon,e)\nu(de)\\
 &
+\int_{E_\delta} \Big(\frac{|\gamma_i(t_\varepsilon,x_\varepsilon,u,v,e)|^2} {\varepsilon^2}\rho(x_\varepsilon,e) +\frac{|\gamma_i(t_\varepsilon,y_\varepsilon,u,v,e)|^2} {\varepsilon^2}\rho(y_\varepsilon,e) \Big)\nu(de)\\
&
+\int_{E} \big( 2\kappa(e)+\kappa(e)^2\big) \frac{|x_\varepsilon-y_\varepsilon|^2} {\varepsilon^2} \rho(x_\varepsilon,e)\nu(de)+\int_{E_\delta}\frac{2C|x_\varepsilon-y_\varepsilon|^2\cdot|\gamma_i(t_\varepsilon,y_\varepsilon,u,v,e)|} {\varepsilon^2} \nu(de)\\
&+\int_{E_\delta^c}\Big( \overline{W}_i\big(t_\varepsilon,y_\varepsilon+\gamma_i(t_\varepsilon,y_\varepsilon,u,v,e)\big)-\overline{W}_i(t_\varepsilon,y_\varepsilon)\Big)
\big(\rho(x_\varepsilon,e)-\rho(y_\varepsilon,e)\big)\nu(de).
 \end{aligned}
 \end{equation}
 {Noting that the term
$$\int_{E_\delta} \Big(\frac{|\gamma_i(t_\varepsilon,x_\varepsilon,u,v,e)|^2} {\varepsilon^2}\rho(x_\varepsilon,e) +\frac{|\gamma_i(t_\varepsilon,y_\varepsilon,u,v,e)|^2} {\varepsilon^2}\rho(y_\varepsilon,e) \Big)\nu(de)\rightarrow 0,\ \text{as}\ \delta=\frac{\varepsilon^2}{4}\downarrow 0.$$
Indeed, from the assumption $(\mathbf{H1})$ it holds
$$\begin{aligned}
&\int_{E_\delta} \Big(\frac{|\gamma_i(t_\varepsilon,x_\varepsilon,u,v,e)|^2} {\varepsilon^2}\rho(x_\varepsilon,e) +\frac{|\gamma_i(t_\varepsilon,y_\varepsilon,u,v,e)|^2} {\varepsilon^2}\rho(y_\varepsilon,e) \Big)\nu(de)\\
& \leq    \frac{C (1+| x_\varepsilon|^2+|y_\varepsilon|^2)} {\varepsilon^2}   (1\wedge\delta) \int_{E_\delta}  \kappa(e)^2 \nu(de)\leq C (1+| x_\varepsilon|^2+|y_\varepsilon|^2)    \int_{E_\delta}  \kappa(e)^2 \nu(de).
\end{aligned}$$}
Thus, from the Lipschitz property  of $f_i$ with respect to $(x,a,z,k)$ and the fact that $f_i$ is nondecreasing   with respect to $k$,  we get
\begin{equation}\label{2021012503}
\begin{aligned}
&f_{\varepsilon,\delta}^{i,u,v}= f_i\Big(t_\varepsilon,x_\varepsilon,\underline{\mathbf{W}}(t_\varepsilon,x_\varepsilon),\big(\frac{2(x_\varepsilon-y_\varepsilon)}{\varepsilon^2}
+D\varphi(t_\varepsilon,x_\varepsilon)\big)\sigma_i(t_\varepsilon,x_\varepsilon,u,v),
C_{1,\varepsilon,\delta}^{i,u,v}, u,v\Big)\\
&\qquad\quad\ -f_i\Big(t_\varepsilon,y_\varepsilon,\overline{\mathbf{W}}(t_\varepsilon,y_\varepsilon), \frac{2(x_\varepsilon-y_\varepsilon)}{\varepsilon^2}
\sigma_i(t_\varepsilon,y_\varepsilon,u,v), C_{2,\varepsilon,\delta}^{i,u,v}, u,v\Big)\\
&\qquad \leq   \tilde L_i\Big(|x_\varepsilon-y_\varepsilon|+ |\underline{\mathbf{W}}(t_\varepsilon,x_\varepsilon)-\overline{\mathbf{W}}(t_\varepsilon,y_\varepsilon)|+2L_{\sigma_i} \frac{|x_\varepsilon-y_\varepsilon|^2}{\varepsilon^2}\\
&\qquad\qquad\
+ |D\varphi(t_\varepsilon,x_\varepsilon)|\cdot|\sigma_i(t_\varepsilon,x_\varepsilon,u,v)|+(C_{1,\varepsilon,\delta}^{i,u,v}
-C_{2,\varepsilon,\delta}^{i,u,v})^+\Big).
\end{aligned}
\end{equation}
Finally, from \eqref{eee333} and \eqref{2021012501}-\eqref{2021012503}  we
 obtain
$$\begin{aligned}
&0\leq \lim\limits_{\varepsilon\rightarrow 0} \Big\{ \frac{\partial \varphi(t_\varepsilon,x_\varepsilon)}{\partial t}
+\sup\limits_{u\in U,v\in V} \Big(A_{\varepsilon,\delta}^{i,u,v}+B_{\varepsilon,\delta}^{i,u,v}+f_{\varepsilon,\delta}^{i,u,v} \Big)\Big\}\ \ \  (\text{with}\ \delta=\frac{\varepsilon^2}{4})\\
&\leq  \frac{\partial \varphi(t_0,x_0)}{\partial t}
+\sup\limits_{u\in U,v\in V} \Big\{\frac 12 \tr\big(\sigma_i\sigma_i^*(t_0,x_0,u,v)D^2\varphi(t_0,x_0)\big) + \langle b_i(t_0,x_0,u,v), D\varphi(t_0,x_0)\rangle \\
 &\ \ \  + \int_{E} \Big(\varphi(t_0,x_0+\gamma_i(t_0,x_0,u,v,e))-\varphi(t_0,x_0)- D\varphi(t_0,x_0) \gamma_i(t_0,x_0,u,v,e)\Big)\nu(de)\\
&\ \ \  +\tilde L_i |\underline{\mathbf{W}}(t_0,x_0)-\overline{\mathbf{W}}(t_0,x_0)|
+\tilde L_i|D\varphi(t_0,x_0) \sigma_i(t_0,x_0,u,v)| \\
&\ \ \ + \tilde L_i  \Big(\int_{E } \big(\varphi(t_0,x_0+\gamma_i(t_0,x_0,u,v,e))-\varphi(t_0,x_0)\big)\rho(x_0,e)\nu(de)\Big)^+ \Big\}\\
&= \frac{\partial} {\partial t}\varphi(t_0,x_0)+\sup\limits_{u\in U, v\in V}\Big\{A_{u,v}^i\varphi(t_0,x_0) +B_{u,v}^i\varphi(t_0,x_0)+\tilde L_i|\widehat{\mathbf{W}}(t_0,x_0)|\\
& \hskip 5cm +\tilde L_i|D\varphi(t_0,x_0)\sigma_i(t_0,x_0,u,v)|+\tilde L_i \big(C_{u,v}^i\varphi(t_0,x_0)\big)^+  \Big\}.
\end{aligned}$$
Therefore, $\widehat{\mathbf{W}}$ is a viscosity subsolution of \eqref{eee999}.

\end{proof}

It is easy to check from the above proof that Lemma \ref{lemma-un} still holds if the constant $\tilde{L}_i$ appearing in equation \eqref{eee999} is substituted by the  constant $\tilde L:=\max\limits_{1\leq i\leq m}\tilde L_{i}$, which will be applied in the proof of the following Theorem \ref{th-uni}.

\begin{theorem}\label{th-uni}\sl  Assume that the assumptions $(\mathbf{H1})$-$(\mathbf{H3})$ hold. Then the system \eqref{equ4.2} has at most one viscosity solution in $\Theta$.
\end{theorem}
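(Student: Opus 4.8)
The plan is to reduce the uniqueness of the viscosity solution in $\Theta$ to a comparison principle: if $\underline{\mathbf{W}}$ is a viscosity subsolution and $\overline{\mathbf{W}}$ a viscosity supersolution of \eqref{equ4.2}, both belonging to $\Theta$, then $\underline{\mathbf{W}}\leq \overline{\mathbf{W}}$ componentwise on $[0,T]\times\mathbb{R}^n$. Applying this twice (once with the roles reversed) then gives uniqueness. By Lemma \ref{lemma-un} (with $\tilde{L}_i$ replaced by the uniform constant $\tilde L=\max_{1\leq i\leq m}\tilde L_i$), the difference $\widehat{\mathbf{W}}=\underline{\mathbf{W}}-\overline{\mathbf{W}}$ is a viscosity subsolution of the decoupled-looking system \eqref{eee999}, in which the zeroth-order term involves $|\widehat{\mathbf{W}}(t,x)|=\big(\sum_{j=1}^m|\widehat W_j(t,x)|^2\big)^{1/2}$, so the $m$ scalar inequalities remain linked only through this nonnegative quantity. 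The goal is therefore to show that any viscosity subsolution $\widehat{\mathbf{W}}\in\Theta$ of \eqref{eee999} with $\widehat W_i(T,\cdot)\leq 0$ satisfies $\widehat W_i\leq 0$ for all $i$.

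The key steps: First, introduce the auxiliary function $\chi(t,x):=e^{-\lambda_0 t}\big[\log\big((|x|^2+1)^{1/2}\big)\big]^2$-type weight, or more precisely the classical test function from \cite{BBP}, $\theta(t,x)=\exp\{(C(T-t)+A_0)\psi(x)\}$ with $\psi(x)=[\log((|x|^2+1)^{1/2})]^2+1$, tailored so that $\theta$ is a strict supersolution of the linear problem dominating \eqref{eee999}; the quadratic growth of $\sigma\sigma^*$ is exactly compensated by the fact that $D^2\psi$ decays like $|x|^{-2}\log|x|$, which is why the logarithmic-squared growth class $\Theta$ is the right one. Second, for $\varepsilon>0$ set $\widehat W_i^\varepsilon:=\widehat W_i-\varepsilon\theta$ and observe that, since $\widehat{\mathbf{W}}\in\Theta$, each $\widehat W_i^\varepsilon\to-\infty$ uniformly as $|x|\to\infty$, so $\max_i\sup_{[0,T]\times\mathbb{R}^n}\widehat W_i^\varepsilon$ is attained at some $(i_0,t_0,x_0)$ in a compact region. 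Third, argue that if this maximum were positive one could use the subsolution property at $(t_0,x_0)$ (with test function $\varepsilon\theta$ plus a constant, and at $t_0<T$ since $\widehat W_i(T,\cdot)\leq 0$) together with the strict-supersolution property of $\theta$ and a Gronwall/exponential argument in $t$ to derive a contradiction; the coupling term $\tilde L|\widehat{\mathbf{W}}(t_0,x_0)|$ is controlled because at the maximum point $|\widehat{\mathbf{W}}(t_0,x_0)|\leq \sqrt{m}\,\widehat W_{i_0}(t_0,x_0)$ up to the $\varepsilon\theta$ correction, so the inequality closes. Finally, let $\varepsilon\downarrow 0$ to conclude $\widehat W_i\leq 0$ for every $i$, hence the comparison principle and then uniqueness.

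I would carry out the contradiction step more carefully as follows: assuming $M_\varepsilon:=\widehat W_{i_0}^\varepsilon(t_0,x_0)>0$ with $t_0<T$, the function $(t,x)\mapsto \varepsilon\theta(t,x)+M_\varepsilon$ touches $\widehat W_{i_0}$ from above at $(t_0,x_0)$, so by Definition \ref{Def111}(i) (in the global form justified by the Remark, and with the operators $B^{i}_{\delta,u,v}$, $C^{i}_{\delta,u,v}$ replaced by $B^i_{u,v}$, $C^i_{u,v}$ acting on the smooth test function) we get a pointwise differential inequality for $\varepsilon\theta$ at $(t_0,x_0)$. Plugging in that $\theta$ was constructed to make $\partial_t\theta+A^i_{u,v}\theta+B^i_{u,v}\theta+\tilde L\sqrt{m}\,\theta+\tilde L|D\theta\,\sigma_i|+\tilde L(C^i_{u,v}\theta)^+<0$ strictly (this is the content of the weight estimate, using $(\mathbf{H1})$–$(\mathbf{H3})$ and $\nu(1\wedge|e|^2)<\infty$), and noting $|\widehat{\mathbf{W}}(t_0,x_0)|\leq \sqrt m\,\widehat W_{i_0}(t_0,x_0)\leq \sqrt m(\varepsilon\theta(t_0,x_0)+M_\varepsilon)$ plus the analogous bounds on the other terms, yields $\varepsilon\cdot(\text{strictly negative})\geq -\tilde L\sqrt m M_\varepsilon+\cdots$, which after absorbing $M_\varepsilon$-terms and using $M_\varepsilon>0$ forces a strict contradiction once the constant $C$ in $\theta$ is chosen large (depending on $L$, $\tilde L$, $m$, $\lambda$, $\nu$). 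The main obstacle I anticipate is precisely the construction and verification of the weight function $\theta$: one must show that, despite the nonlocal operators $B^i_{u,v}$ and $C^i_{u,v}$ and the quadratic growth of $\sigma_i\sigma_i^*$, the logarithmic-squared weight still produces a strict supersolution — this requires a delicate estimate showing $\int_E[\theta(t,x+\gamma_i)-\theta(t,x)-D\theta(t,x)\gamma_i]\nu(de)$ and $\int_E[\theta(t,x+\gamma_i)-\theta(t,x)]\rho(x,e)\nu(de)$ are of order $\theta(t,x)\cdot o(1)$ as $|x|\to\infty$, which in turn uses $\kappa(e)\leq C(1\wedge|e|)$ and the concavity/growth properties of $\psi$. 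This is the genuinely technical heart of the argument and where the restriction to $\Theta$ (rather than polynomial or exponential growth) is indispensable; the rest is a standard doubling-free maximum-principle comparison as in \cite{BBP, BHL}.
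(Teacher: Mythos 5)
Your overall scaffolding (Lemma \ref{lemma-un} with the uniform constant $\tilde L=\max_i\tilde L_i$, the logarithmic-squared weight from \cite{BBP}, the $\varepsilon$-perturbation and letting $\varepsilon\downarrow 0$) is the paper's, but the reduction you propose — ``every viscosity subsolution $\widehat{\mathbf{W}}\in\Theta$ of \eqref{eee999} with $\widehat W_i(T,\cdot)\le 0$ satisfies $\widehat W_i\le 0$'' — is a genuinely false statement, and the step where it breaks is exactly the one you flag as unproblematic: the bound $|\widehat{\mathbf{W}}(t_0,x_0)|\le\sqrt m\,\widehat W_{i_0}(t_0,x_0)$ at a maximizer of $\max_i(\widehat W_i-\varepsilon\theta)$. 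Maximality only bounds the \emph{other} components from above, not in absolute value, and the coupling term in \eqref{eee999} is $\tilde L|\widehat{\mathbf{W}}(t,x)|$, which a hugely negative component inflates. Concretely, the $x$-independent pair $w_1(t)=\epsilon_0\big(e^{\beta(T-t)}-1\big)$, $w_2\equiv -K$ with $K\ge \epsilon_0\beta e^{\beta T}/\tilde L$ is a smooth (hence viscosity) subsolution of \eqref{eee999} with nonpositive terminal data, yet $w_1>0$ for $t<T$; so \eqref{eee999} alone does not yield the one-sided comparison, and consequently your route to ``subsolution $\le$ supersolution'' does not close. The paper avoids this by proving only uniqueness of \emph{solutions}: for two solutions $\mathbf{W}^1,\mathbf{W}^2$ both $\mathbf{W}^1-\mathbf{W}^2$ and $\mathbf{W}^2-\mathbf{W}^1$ are subsolutions of \eqref{eee999}, the quantity maximized is $\big(|\widetilde W_i(t,x)|-\epsilon\vartheta(t,x)\big)e^{-\tilde L(T-t)}$ (absolute values, sup norm on $\mathbb{R}^m$), so that at the maximizer $|\widetilde{\mathbf{W}}(t_0,x_0)|=|\widetilde W_{i_0}(t_0,x_0)|$, and after possibly swapping $\mathbf{W}^1$ and $\mathbf{W}^2$ one may assume $\widetilde W_{i_0}(t_0,x_0)>0$; only with this two-sided structure does the contradiction with the strict supersolution property of $\vartheta$ (via the test function $\epsilon\vartheta+(\cdots)e^{-\tilde L(t-t_0)}$) go through.

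A second, more repairable issue: the strict inequality for the weight $\vartheta(t,x)=\exp\big[(C_1(T-t)+\tilde A)\Psi(x)\big]$ holds only on a terminal strip $[t_1,T]$ with $t_1=T-\tilde A/C_1$ (Lemma \ref{lemma-uni11}); your suggestion to make it hold on all of $[0,T]$ by ``choosing $C$ large'' fails, since the second-order term contributes a factor of order $(C(T-t)+\tilde A)^2\Psi(x)\vartheta$, which overwhelms the $-C\Psi(x)\vartheta$ coming from $\partial_t\vartheta$ once $T-t$ is of order one. Accordingly, the paper concludes $\widetilde{\mathbf{W}}\equiv 0$ first on $[t_1,T]$ and then iterates backwards on $[t_2,t_1]$, $[t_3,t_2]$, etc.; your argument must be restricted to such subintervals and iterated in the same way.
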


In order to prove this theorem, we first present an auxiliary lemma which is borrowed from Lemma 5.2 in \cite{BHL}.
\begin{lemma}\label{lemma-uni11} \sl
For any $\tilde A>0$, there exists a constant $C_1>0$ such that the function $\vartheta(t,x)=\exp\big[(C_1(T-t)+\tilde A)\Psi(x)\big]$ with $\Psi(x)=\big[\log(|x|^2+1)^\frac12+1\big]^2$,
$x\in\mathbb{R}^n$, satisfies
\begin{equation}\label{eee888} \begin{aligned}
& \frac{\partial} {\partial t}\vartheta(t,x)+\sup\limits_{u\in U, v\in V}\Big\{A_{u,v}^i\vartheta(t,x) +B_{u,v}^i\vartheta(t,x)+\tilde L \vartheta(t,x) +\tilde L\big|D \vartheta(t,x)\sigma_i(t,x,u,v)\big|\\
& \hskip 3.5cm +\tilde L\big(C_{u,v}^i\vartheta(t,x)\big)^+  \Big\}<0,\ \mbox{in } [t_1,T]\times \mathbb{R}^n,\ i\in \mathbf{M},
\end{aligned} \end{equation}
where $t_1=T-\frac{\tilde A}{C_1}.$

\end{lemma}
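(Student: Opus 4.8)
The plan is to verify \eqref{eee888} by a direct computation, in the spirit of Lemma 5.2 in \cite{BHL}. First I would record the elementary facts about $\Psi$. Writing $\phi(x):=\log(|x|^{2}+1)^{1/2}+1$, so that $\Psi=\phi^{2}\geq1$, one computes $D\Psi(x)=\frac{2\phi(x)x}{1+|x|^{2}}$ and the analogous explicit formula for $D^{2}\Psi$, which give $|D\Psi(x)|\leq\frac{2\Psi(x)^{1/2}}{(1+|x|^{2})^{1/2}}$ and $|D^{2}\Psi(x)|\leq\frac{C\Psi(x)^{1/2}}{1+|x|^{2}}$; in particular $(1+|x|^{2})|D\Psi(x)|^{2}\leq C\Psi(x)$ and $(1+|x|^{2})|D^{2}\Psi(x)|\leq C\Psi(x)^{1/2}$, and $\Psi$ obeys the quasi-logarithmic increment estimate $|\Psi(x+y)-\Psi(x)|\leq C\kappa(1+\Psi(x)^{1/2})$ whenever $|y|\leq\kappa(1+|x|)$ with $0\le\kappa\leq1$ (this follows from $|\phi(x+y)-\phi(x)|\leq C\kappa$), together with the cruder bound $\Psi(x+y)\leq\Psi(x)+C(1+\Psi(x)^{1/2})$ for any $y$ with $|y|\leq C(1+|x|)$. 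Since $\vartheta=e^{c(t)\Psi}$ with $c(t):=C_{1}(T-t)+\tilde A$, we have $\frac{\partial}{\partial t}\vartheta=-C_{1}\Psi\vartheta$, $D\vartheta=c(t)\vartheta\,D\Psi$ and $D^{2}\vartheta=c(t)\vartheta\,D^{2}\Psi+c(t)^{2}\vartheta\,D\Psi\otimes D\Psi$, so that $\frac{\partial}{\partial t}\vartheta$ contributes the favourable term $-C_{1}\Psi\vartheta$.

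The key observation is that on $[t_{1},T]$ with $t_{1}=T-\tilde A/C_{1}$ one has $0<c(t)\leq2\tilde A$, so every occurrence of $c(t)$ can be replaced by a constant depending only on $\tilde A$, and not on $C_{1}$. With this, each remaining term in the bracket of \eqref{eee888} is estimated by $C(\tilde A)\Psi\vartheta$. Indeed, by the linear growth $|\sigma_{i}(t,x,u,v)|+|b_{i}(t,x,u,v)|\leq C(1+|x|)$ coming from $(\mathbf{H1})$ and the bounds above, $\frac12\tr(\sigma_{i}\sigma_{i}^{\ast}D^{2}\vartheta)\leq C(1+|x|^{2})\big[c(t)^{2}|D\Psi|^{2}+c(t)|D^{2}\Psi|\big]\vartheta\leq C(\tilde A)(1+\Psi)\vartheta$ and $b_{i}D\vartheta\leq C(1+|x|)c(t)|D\Psi|\vartheta\leq C(\tilde A)(1+\Psi^{1/2})\vartheta$, hence $A^{i}_{u,v}\vartheta\leq C(\tilde A)\Psi\vartheta$ since $\Psi\geq1$; likewise $\tilde L\vartheta\leq\tilde L\Psi\vartheta$ and $\tilde L|D\vartheta\,\sigma_{i}|\leq\tilde L\,c(t)|D\Psi|\,|\sigma_{i}|\,\vartheta\leq C(\tilde A)\Psi\vartheta$.

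The main obstacle is the pair of nonlocal terms $B^{i}_{u,v}\vartheta$ and $(C^{i}_{u,v}\vartheta)^{+}$. Here I would split $E=E_{\delta_{0}}\cup E_{\delta_{0}}^{c}$ with $\delta_{0}$ a fixed small constant, chosen through the growth constant of $\gamma_{i}$ and the bound $\kappa(e)\leq C(1\wedge|e|)$, so that on $E_{\delta_{0}}$ the jump $\gamma_{i}(t,x,u,v,e)$ is a small fraction of $1+|x|$. On $E_{\delta_{0}}$, Taylor's formula gives $\vartheta(x+\gamma_{i})-\vartheta(x)-D\vartheta(x)\gamma_{i}=\frac12\gamma_{i}^{\ast}D^{2}\vartheta(\xi)\gamma_{i}$ for some $\xi$ on the segment $[x,x+\gamma_{i}]$; since there $1+|\xi|^{2}\asymp1+|x|^{2}$, $\Psi(\xi)\leq C\Psi(x)$ and $\vartheta(\xi)\leq C(\tilde A)\vartheta(x)$ by the quasi-logarithmic estimate, one gets $|D^{2}\vartheta(\xi)|\leq\frac{C(\tilde A)\Psi(x)}{1+|x|^{2}}\vartheta(x)$ and $|\gamma_{i}|^{2}\leq\kappa(e)^{2}(1+|x|)^{2}$, so together with $\int_{E_{\delta_{0}}}\kappa(e)^{2}\nu(de)<\infty$ the $E_{\delta_{0}}$-part is bounded by $C(\tilde A)\Psi(x)\vartheta(x)$. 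On $E_{\delta_{0}}^{c}$, a set of finite $\nu$-mass, I would bound the integrand by $\vartheta(x+\gamma_{i})+|D\vartheta(x)|\,|\gamma_{i}|$, use $\Psi(x+\gamma_{i})\leq\Psi(x)+C(1+\Psi(x)^{1/2})$, $|D\vartheta(x)|(1+|x|)\leq C(\tilde A)\Psi(x)^{1/2}\vartheta(x)$ and $\int_{E_{\delta_{0}}^{c}}\kappa(e)\nu(de)\leq C\nu(E_{\delta_{0}}^{c})<\infty$. The term $C^{i}_{u,v}\vartheta=\int_{E}[\vartheta(x+\gamma_{i})-\vartheta(x)]\rho(x,e)\nu(de)$ is handled in the same way, using additionally $0\leq\rho(x,e)\leq C(1\wedge|e|)$ from $(\mathbf{H2})$; taking positive parts only helps. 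Altogether, $B^{i}_{u,v}\vartheta+\tilde L(C^{i}_{u,v}\vartheta)^{+}\leq C(\tilde A)\Psi\vartheta$ with $C(\tilde A)$ independent of $C_{1}$.

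Combining all the estimates, the left-hand side of \eqref{eee888} is bounded above by $\big(C^{\ast}(\tilde A)-C_{1}\big)\Psi\vartheta$ for some constant $C^{\ast}(\tilde A)$ not depending on $C_{1}$; since $\Psi\geq1$ and $\vartheta>0$, it suffices to fix $C_{1}>C^{\ast}(\tilde A)$, and then \eqref{eee888} holds strictly on $[t_{1},T]\times\mathbb{R}^{n}$ for every $i\in\mathbf{M}$, with $t_{1}=T-\tilde A/C_{1}$. I expect the delicate point to be exactly the uniform-in-$x$ control of the nonlocal operators $B^{i}_{u,v}\vartheta$ and $(C^{i}_{u,v}\vartheta)^{+}$ by a constant multiple of $\Psi\vartheta$: this rests on the near-logarithmic growth of $\Psi$ (a jump of size comparable to $1+|x|$ perturbs $\Psi$ only by $O(\Psi^{1/2})$, hence perturbs $\vartheta$ by a controllable factor), on the decay of $D\Psi$ and $D^{2}\Psi$, and on combining the $E_{\delta_{0}}$-splitting with the integrability and boundedness properties of the kernels $\kappa$ and $\rho$.
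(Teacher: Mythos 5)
The paper itself gives no proof of this lemma (it is quoted from Lemma 5.2 of \cite{BHL}), so your direct computation is in principle a legitimate route, and your treatment of the local terms is exactly the right mechanism: on $[t_1,T]$ one has $0<c(t)=C_1(T-t)+\tilde A\le 2\tilde A$, the bounds $(1+|x|^2)\big(|D\Psi|^2+|D^2\Psi|\big)\le C\Psi$ give $A^i_{u,v}\vartheta+\tilde L\vartheta+\tilde L|D\vartheta\,\sigma_i|\le C(\tilde A)\Psi\vartheta$ with $C(\tilde A)$ independent of $C_1$, and $\partial_t\vartheta=-C_1\Psi\vartheta$ then dominates once $C_1$ is large.

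The genuine gap is in your estimate of the nonlocal terms. From $|\gamma_i(t,x,u,v,e)|\le\kappa(e)(1+|x|)$ you correctly get $\Psi(x+\gamma_i)-\Psi(x)\le C\kappa(e)\big(1+\Psi(x)^{1/2}\big)$, but exponentiating this does \emph{not} give a ``controllable factor'': $\vartheta(x+\gamma_i)\le\vartheta(x)\,e^{c(t)C\kappa(e)(1+\Psi(x)^{1/2})}$, and since $\Psi(x)^{1/2}\sim\log|x|$, the factor $e^{c\,O(\kappa\Psi^{1/2})}$ grows like a positive power of $|x|$, which can be absorbed neither into a constant $C(\tilde A)$ (as you claim for $\vartheta(\xi)\le C(\tilde A)\vartheta(x)$ on the Taylor segment) nor into $C(\tilde A)\Psi\vartheta$, because $\Psi\sim(\log|x|)^2$. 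So the asserted bound $B^i_{u,v}\vartheta+\tilde L(C^i_{u,v}\vartheta)^+\le C(\tilde A)\Psi\vartheta$ does not follow from your estimates, on either $E_{\delta_0}$ or $E_{\delta_0}^c$. Worse, under the hypotheses as literally stated in $(\mathbf{H1})$ (jumps of linear growth in $x$) the conclusion itself can fail: take $\gamma_i(t,x,u,v,e)=(1\wedge|e|)x$, $\nu=\delta_{e_0}$ with $|e_0|\ge1$, $b_i=\sigma_i=0$, $\rho\equiv0$; then $B^i_{u,v}\vartheta(t,x)=\vartheta(t,2x)-\vartheta(t,x)-D\vartheta(t,x)x\ge\vartheta(t,x)\big(e^{\tilde A\log(2.5)\Psi(x)^{1/2}}-1-4\tilde A\Psi(x)^{1/2}\big)$ for $|x|\ge1$, which exceeds $C_1\Psi(x)\vartheta(t,x)$ for $|x|$ large no matter how large $C_1$ is, so \eqref{eee888} is violated. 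Your Taylor-plus-splitting scheme does work, and yields the required $C(\tilde A)\Psi\vartheta$ bound for the integral operators, precisely when the jump amplitudes are bounded uniformly in $x$ (e.g. $|\gamma_i(t,x,u,v,e)|\le C(1\wedge|e|)$), since then $\Psi(x+\gamma_i)-\Psi(x)$ is bounded and $\vartheta(\xi)\le C(\tilde A)\vartheta(x)$ really holds; as written for jumps proportional to $1+|x|$, the nonlocal estimate is the missing (and unfixable without strengthening the hypotheses) step of your argument.
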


Notice that the function $\tilde\vartheta:=(\vartheta,\cdots,\vartheta)$ can be seen as a classical sub-solution of equation \eqref{eee999} with the sup norm $|\cdot|$ in $\mathbb{R}^m$. We now give the detailed proof of  Theorem \ref{th-uni} with this sup norm.

\begin{proof}
%
%
%
%
%

Let $ \mathbf{W}^1 =({W}_1^1,\cdots,{W}_m^1)\in \Theta$, $ {\mathbf{W}}^2=({W}_1^2,\cdots, {W}_m^2)\in \Theta$  be two viscosity solutions of \eqref{equ4.2}. Setting $\widetilde{\mathbf{W}}:= \mathbf{W}^1-\mathbf{W}^2$, we shall prove that, for any $\epsilon>0$, $$|\widetilde{{W}_i}(t,x)|\leq \epsilon\vartheta(t,x),\ (t,x)\in [0,T]\times \mathbb{R}^n,\ i\in\mathbf{M}.$$
Obviously, the function $\widetilde{\mathbf{W}}$ is continuous and belongs to the space $\Theta$, namely,  there exists some $ \tilde{A}>0$   such that
$$\lim \limits_{|x|\rightarrow
\infty} |\widetilde{\mathbf{W}}(t,x)|\exp
\Big\{-\tilde{A}\big[\log\big((|x|^{2}+1)^{\frac{1}{2}}\big)\big]^{2}\Big\}
 =0,$$
  uniformly in $t\in [0,T]$.
 As a result,  for $(t,x)\in [0,T]\times \mathbb{R}^n$, we know that $|\widetilde{\mathbf{W}}(t,x)|-\epsilon\vartheta(t,x) $ is bounded from above.
  Denote $$\Upsilon:=   \max\limits_{i\in\mathbf{M}}\sup\limits_{[t_1,T]\times \mathbb{R}^n}(|\widetilde{{W}}_i(t,x)|-\epsilon\vartheta(t,x) )e^{-\tilde L   (T-t)},$$ which can be achieved at some point $(t_0,x_0)\in  [t_1,T]\times \mathbb{R}^n$ and some $i_0\in \mathbf{M}$.
Notice that we use the sup norm $|\cdot|$ in $\mathbb{R}^m$, then
$$\Upsilon=  \max\limits_{[t_1,T]\times \mathbb{R}^n}\Big(|\widetilde{\mathbf{W}}(t,x)|-\epsilon\vartheta(t,x) \Big)e^{-\tilde L   (T-t)}, \ \mbox{and} \  |\widetilde{\mathbf{W}}(t_0,x_0)|=|\widetilde{W}_{i_0}(t_0,x_0)| \geq 0.$$

We claim that $\widetilde{W}_{i_0}(t_0,x_0)=0$. In fact, if  $ \widetilde{W}_{i_0}(t_0,x_0)  >0$ (the situation $ \widetilde{W}_{i_0}(t_0,x_0)  <0$ can be similarly considered by putting $ \widetilde{\mathbf{W}}=\mathbf{W}^2-\mathbf{W}^1$ ),  due to $\Upsilon$ is achieved at some point $(t_0,x_0)$ and some $i_0\in \mathbf{M}$, we have
$$ |\widetilde{W}_{i_0}(t,x)| -\epsilon\vartheta(t,x) \leq\big ( \widetilde{W}_{i_0}(t_0,x_0) -\epsilon\vartheta(t_0,x_0)\big )e^{-\tilde L   (t-t_0)},\ (t,x)\in [t_1,T]\times \mathbb{R}^n.$$
Let $\phi(t,x):= \epsilon\vartheta(t,x) +\big(\widetilde{W}_{i_0}(t_0,x_0) -\epsilon\vartheta(t_0,x_0) \big)e^{-\tilde L   (t-t_0)}$, $(t,x)\in [t_1,T]\times \mathbb{R}^n$, then  $$\widetilde{W}_{i_0}(t,x)-\phi(t,x)\leq 0,\  \widetilde{W}_{i_0}(t_0,x_0)=\phi(t_0,x_0),
$$
which implies that the function $\widetilde{W}_{i_0}(t,x)-\phi(t,x)$ has a maximum point at $(t_0,x_0)$. Moreover,
Lemma \ref{lemma-un} tells   that $ \widetilde{\mathbf{W}}$ is a viscosity subsolution of \eqref{eee999}, and, thus,
$$
\begin{aligned}
&0\leq \frac{\partial} {\partial t}\phi(t_0,x_0)+\sup\limits_{u\in U, v\in V}\Big\{A_{u,v}^{i_0}\phi(t_0,x_0) +B_{u,v}^{i_0}\phi(t_0,x_0)+\tilde L |\widetilde{\mathbf{W}}(t_0,x_0)|+\tilde L  |D\phi(t_0,x_0)\sigma_{i_0}(t_0,x_0,u,v)|\\
& \hskip 3.5cm +\tilde L   \big(C_{u,v}^{i_0}\phi(t_0,x_0) \big)^+  \Big\} \\
&=  \frac{\partial} {\partial t}\phi(t_0,x_0)+\sup\limits_{u\in U, v\in V}\Big\{A_{u,v}^{i_0}\phi(t_0,x_0) +B_{u,v}^{i_0}\phi(t_0,x_0)+\tilde L  \phi(t_0,x_0)+\tilde L  |D\phi(t_0,x_0)\sigma_{i_0}(t_0,x_0,u,v)|\\
& \hskip 3.5cm +\tilde L    \big(C_{u,v}^{i_0}\phi(t_0,x_0) \big)^+  \Big\}.
\end{aligned}
$$
 {From the  definition of $\phi$, for any $\epsilon>0$, we get
$$
\begin{aligned}
0\leq&  \epsilon\frac{\partial} {\partial t}\vartheta(t_0,x_0)-\tilde L \big(\widetilde{W}_{i_0}(t_0,x_0) -\epsilon\vartheta(t_0,x_0) \big)+\tilde L   \widetilde{W}_{i_0}(t_0,x_0)\\
  &+ \epsilon \sup\limits_{u\in U, v\in V}\Big\{ A_{u,v}^{i_0}\vartheta(t_0,x_0) +B_{u,v}^{i_0}\vartheta(t_0,x_0)+\tilde L  |D\vartheta(t_0,x_0)\sigma_{i_0}(t_0,x_0,u,v)|  +\tilde L   \big (C_{u,v}^{i_0}\vartheta(t_0,x_0) \big)^+  \Big\}  \\
=&  \epsilon\frac{\partial} {\partial t}\vartheta(t_0,x_0)    + \epsilon \sup\limits_{u\in U, v\in V}\Big\{ A_{u,v}^{i_0}\vartheta(t_0,x_0) +B_{u,v}^{i_0}\vartheta(t_0,x_0)+\tilde L     \vartheta(t_0,x_0)    +\tilde L  |D\vartheta(t_0,x_0)\sigma_{i_0}(t_0,x_0,u,v)|\\
& \hskip 3.5cm +\tilde L   \big (C_{u,v}^{i_0}\vartheta(t_0,x_0) \big)^+  \Big\} ,
\end{aligned} $$
which
 leads to  a contradiction with Lemma \ref{lemma-uni11}.}
%

Therefore, we get $\widetilde{W}_{i_0}(t_0,x_0)=0$, i.e., $ |\widetilde{\mathbf{W}}(t_0,x_0)|\equiv 0$. Furthermore, from the definition of $\Upsilon$,
 for any $\epsilon>0$, it is easy to see that
  $$ |\widetilde{\mathbf{W}}(t,x)|< \epsilon\vartheta(t,x), \ (t,x)\in  [ t_1 ,T]\times \mathbb{R}^n\ {(t_1=T-\frac{\tilde A}{C_1}  \mbox{as given in Lemma 4.9})}.$$
Letting $\epsilon\rightarrow0$, we get $  \widetilde{\mathbf{W}}(t,x)\equiv0 $, for any $(t,x)\in [t_1,T]\times \mathbb{R}^n $. That is,  $ {\mathbf{W}}^1(t,x)=  {\mathbf{W}}^2 (t,x),\ (t,x)\in [t_1,T]\times \mathbb{R}^n.$  By using the same argument on $[t_2,t_1]$  successively with $t_2=\big(t_1-\frac{\tilde A}{C_1}\big)^+$, and then, if $t_2>0$, on $[t_3,t_2]$ with $t_3=(t_2-\frac{\tilde A}{C_1})^+$,
 etc., we finally conclude that
  $$ {\mathbf{W}}^1(t,x)=  {\mathbf{W}}^2 (t,x),\ (t,x)\in [0,T]\times \mathbb{R}^n.$$
%


\end{proof}

We remark that the lower value function $\mathbf{W}(t,x)=\big(W_1(t,x),W_2(t,x),\cdots,W_m(t,x)\big)$ belongs to the space $\Theta$. Then, the following result is derived directly from Theorem \ref{th4.1} and Theorem \ref{th-uni}.
\begin{theorem}\sl
Under the assumptions $(\mathbf{H1})$-$(\mathbf{H3})$, when  $\lambda\in\big(0,\frac1{(m-1)T} \big)$,  the lower value function $\mathbf{W}(t,x)=\big(W_1(t,x),W_2(t,x),\cdots,W_m(t,x)\big),$ $  (t,x)\in [0,T]\times
\mathbb{R}^n$,  defined through \eqref{equ3.08} is the unique viscosity solution of the coupled system  \eqref{equ4.2} in  $\Theta$.

\end{theorem}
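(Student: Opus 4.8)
The plan is to obtain the statement as an immediate synthesis of the existence result (Theorem \ref{th4.1}) and the uniqueness result (Theorem \ref{th-uni}), after first verifying that the lower value function lies in the class $\Theta$ in which uniqueness is asserted. For the membership, I would invoke Lemma \ref{l2.1}, which provides the linear growth bound $|W_i(t,x)|\leq C(1+|x|)$ for every $i\in\mathbf{M}$, uniformly in $t\in[0,T]$, together with the continuity of each $W_i$ established in Lemma \ref{l2.1} and Proposition \ref{pro3.1}; this already gives $\mathbf{W}\in C([0,T]\times\mathbb{R}^n;\mathbb{R}^m)$. Since for any $\tilde A>0$ one has
\[
|\mathbf{W}(t,x)|\exp\big\{-\tilde A[\log((|x|^2+1)^{1/2})]^2\big\}\leq C(1+|x|)\exp\big\{-\tilde A[\log((|x|^2+1)^{1/2})]^2\big\}\longrightarrow 0,\quad |x|\to\infty,
\]
uniformly in $t\in[0,T]$ (the exponential of the negative squared logarithm decays faster than any polynomial), we conclude $\mathbf{W}\in\Theta$.

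Next, under $(\mathbf{H1})$--$(\mathbf{H3})$ and the standing restriction $\lambda\in\big(0,\frac{1}{(m-1)T}\big)$, Theorem \ref{th4.1} shows that $\mathbf{W}$ is a viscosity solution of the coupled system \eqref{equ4.2}. Combining this with the first step, $\mathbf{W}$ is a viscosity solution of \eqref{equ4.2} that belongs to $\Theta$. Finally, Theorem \ref{th-uni} asserts that \eqref{equ4.2} admits at most one viscosity solution in $\Theta$; hence any viscosity solution $\mathbf{W}'\in\Theta$ must coincide with $\mathbf{W}$, which is exactly the claimed uniqueness, and existence is already supplied by Theorem \ref{th4.1}.

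Regarding the difficulty: essentially all of the substantive work has been carried out upstream --- the strong dynamic programming principle (Theorem \ref{SDPP}) underpinning the verification argument in Theorem \ref{th4.1}, and the comparison-type estimate built on the barrier function $\vartheta$ (Lemmas \ref{lemma-un} and \ref{lemma-uni11}) underpinning Theorem \ref{th-uni}. For the present statement the only genuine point to check is the inclusion $\mathbf{W}\in\Theta$, which, as noted, is immediate from the linear growth estimate since linear growth is dominated by the (weaker) admissible growth in $\Theta$; the remainder is a formal application of the two theorems, so I do not anticipate any real obstacle here.
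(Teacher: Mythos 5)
Your proposal is correct and follows exactly the route the paper itself takes: the paper remarks that $\mathbf{W}\in\Theta$ (which, as you verify, follows from the linear growth bound of Lemma \ref{l2.1} together with the continuity from Lemma \ref{l2.1} and Proposition \ref{pro3.1}) and then derives the statement directly from Theorem \ref{th4.1} and Theorem \ref{th-uni}. Your only addition is to spell out the elementary limit showing linear growth is dominated by the growth allowed in $\Theta$, which is a harmless (and welcome) elaboration of the paper's one-line remark.
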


\subsection{The Upper Value Function Case}
We have  clarified   the relationship between the lower value function $\mathbf{W}(\cdot,\cdot)$  and the coupled HJBI system \eqref{equ4.2}. 
To make this work   complete, we remark that the upper value function $\mathbf{U}(t,x)=(U_1(t,x),U_2(t,x),\cdots,U_m(t,x))$, $(t,x)\in [0,T]\times \mathbb{R}^n$ can be also associated with {a} coupled HJBI system. Indeed, we have:

  \begin{theorem} \sl Under our assumptions $(\mathbf{H1})$-$(\mathbf{H3})$. Assuming   $\lambda\in\big(0,\frac1{(m-1)T} \big)$, the upper value function
  $\mathbf{U}(t,x)=\big(U_{1}(t,x),U_{2}(t,x),\cdots ,U_m(t,x)\big)$, $(t,x)\in [0,T]\times \mathbb{R}^n$, is a viscosity solution of the following system of HJBI equations:
\begin{equation}\label{equ5.1}
\left\{
\begin{aligned}
&\!\frac{\partial U_i}{\partial t}(t,x)+\underset{v\in V}{\mathop{\rm inf}}%
\sup\limits_{u\in U}\Big\{L_{u,v}^iU_i(t,x)
+f_i\big(t,x,\mathbf{U}(t,x), D
U_i(t,x)\sigma_i(t,x,u,v), C_{u,v}^iU_i(t,x),u,v\big)\Big\}=0, \\
&\!U_{i}(T,x)=\Phi_i (x), \ (t,x,i)\in [0,T)\times \mathbb{R}^m\times \mathbf{M}.
\end{aligned}
\right.
\end{equation}%
In addition, the function $\mathbf{U}(\cdot,\cdot)$ is the unique viscosity solution  in  $\Theta$ for the system  \eqref{equ5.1}.
   \end{theorem}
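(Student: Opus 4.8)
The plan is to reproduce, with the roles of the two players interchanged, the whole chain of arguments developed for the lower value function $\mathbf{W}$ in Theorems~\ref{th4.1} and~\ref{th-uni}. Throughout Sections~3 and~4 one systematically replaces the admissible controls $\mathcal{U}_{t,T}$ and the nonanticipative strategies $\mathcal{B}_{t,T}$ by $\mathcal{V}_{t,T}$ and $\mathcal{A}_{t,T}$, respectively, and the operation $\sup_{u\in U}\inf_{v\in V}$ by $\inf_{v\in V}\sup_{u\in U}$. First I would verify that every structural property of $W_i$ transfers verbatim to $U_i$: it is deterministic (analogue of Proposition~\ref{Pro---3.1}), it is Lipschitz and of linear growth in $x$ and $\frac12$-H\"older in $t$ (analogues of Lemma~\ref{l2.1} and Proposition~\ref{pro3.1}), and it obeys both the weak and the strong dynamic programming principles, the latter reading
\begin{equation*}
U_i(t,x)=\esssup_{\alpha\in\mathcal{A}_{t,\tau}}\essinf_{v\in\mathcal{V}_{t,\tau}}G_{t,\tau}^{t,x,i;\alpha(v),v}\Big[U_{N_\tau^{t,i}}\big(\tau,X_\tau^{t,x,i;\alpha(v),v}\big)\Big],\quad P\text{-a.s.},
\end{equation*}
for every stopping time $\tau$ with $t\le\tau\le T$; its proof repeats that of Theorem~\ref{SDPP} line by line, since Lemma~\ref{ll3.4}, Lemma~\ref{ll3.3} and Proposition~\ref{prop3.3} treat the two players symmetrically.

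For the \textbf{existence} claim I would follow the proof of Theorem~\ref{th4.1} step by step. Fix $(t,x,i)$ and, for small $\delta>0$, keep the same stopping time $\tau^\delta$, the same BSDE \eqref{au-equ} (with $W$ replaced by $U$), the same map $F_i$, and the same auxiliary processes $(Y^{1,i,u,v})$, $(Y^{2,i,u,v})$ and $(\widetilde Y^{1,i,u,v})$; Lemmas~\ref{l00} and~\ref{l4.1}, the estimate \eqref{ee444}, and Lemmas~\ref{l4.3} and~\ref{l4.0000} are untouched, none of them involving any optimization over controls. The one genuine change is in the analogue of Lemma~\ref{l4.6}: one now sets $G_i(s,x,y,h,z,k):=\inf_{v\in V}\sup_{u\in U}F_i(s,x,y,h,z,k,u,v)$, and runs the measurable-selection argument in the reversed order — first, for each fixed $v\in\mathcal V_{t,\tau^\delta}$, solve the BSDE with driver $\sup_{u\in U}F_i(\cdot,u,v)$ to realize $\esssup_{u}Y^{2,i,u,v}_t$, then select $v$ to realize the essential infimum — obtaining $Y^{3,i}_t=\essinf_{v\in\mathcal V_{t,\tau^\delta}}\esssup_{u\in\mathcal U_{t,\tau^\delta}}Y^{2,i,u,v}_t$, which by the minimax identity at this scale also equals $\esssup_{\alpha\in\mathcal A_{t,\tau^\delta}}\essinf_{v\in\mathcal V_{t,\tau^\delta}}Y^{2,i,\alpha(v),v}_t$. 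Lemma~\ref{l4.7} and equations \eqref{equ4.111}, \eqref{ee4.24} then go through verbatim. In the supersolution step one uses the strong DPP for $\mathbf{U}$ together with the above to get $\inf_{v}\sup_{u}F_i(t,x,0,0,0,0,u,v)\le 0$; in the subsolution step one argues by contradiction: if $\inf_{v}\sup_{u}F_i(t,x,0,0,0,0,u,v)\le-\theta<0$, then there is already a \emph{constant} control $\bar v\in V$ with $F_i(t,x,0,0,0,0,u,\bar v)\le-\frac34\theta$ for every $u\in U$ — so, unlike the lower case, no measurable selector of $V$ over $U$ is needed here — and one reaches a contradiction exactly as in \eqref{4.27}--\eqref{ee77}, invoking $\lambda\in\big(0,\frac1{(m-1)T}\big)$ to secure $\mathbb{E}[\tau^\delta-t\mid\mathcal F_t]\ge e^{-\lambda(m-1)\delta}\big(1-\lambda(m-1)T\big)\delta>0$. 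This shows that $\mathbf{U}$ is a viscosity solution of \eqref{equ5.1}.

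The \textbf{uniqueness} claim requires essentially no new work. Lemma~\ref{lemma-un} states that the difference of any viscosity subsolution and any viscosity supersolution of an HJBI system is a viscosity subsolution of \eqref{eee999}, whose Hamiltonian is $\sup_{u\in U,\,v\in V}$; the only place the sup--inf ordering entered that proof is through the elementary inequality $\sup_u\inf_v A-\sup_u\inf_v B\le\sup_{u,v}(A-B)$, which is equally valid in the form $\inf_v\sup_u A-\inf_v\sup_u B\le\sup_{u,v}(A-B)$. Hence Lemma~\ref{lemma-un}, and with it Theorem~\ref{th-uni} together with the barrier function of Lemma~\ref{lemma-uni11}, applies verbatim to system \eqref{equ5.1}; since $\mathbf{U}$ is continuous and of linear growth, it lies in $\Theta$, and comparison forces it to be the unique viscosity solution of \eqref{equ5.1} in $\Theta$.

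I expect the main obstacle to be bookkeeping rather than conceptual: one must check carefully that reversing the order of the two essential optimizations in the analogue of Lemma~\ref{l4.6} is compatible with the Elliott--Kalton ``strategy against control'' definition of $\mathbf{U}$ in \eqref{equ3.08} — i.e. that the identity $\esssup_{\alpha}\essinf_{v}\,Y^{2,i,\alpha(v),v}_t=\essinf_{v}\esssup_{u}\,Y^{2,i,u,v}_t$ holds on $[t,\tau^\delta]$ with an admissible measurable selector — and that the comparison theorem for BSDEs with two Poisson random measures (Theorem~\ref{Com-Th}) is applied to the correctly ordered drivers. Everything else is a transcription of the proofs of Theorems~\ref{th4.1} and~\ref{th-uni} with $u$ and $v$, and $\mathcal A$ and $\mathcal B$, interchanged.
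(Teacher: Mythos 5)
Your uniqueness half and your subsolution half are sound: the inf--sup version of Lemma \ref{lemma-un} indeed only needs the elementary inequality $\inf_v\sup_u A-\inf_v\sup_u B\le\sup_{u,v}(A-B)$, and in the contradiction argument a constant control $\bar v\in V$ does replace the measurable selector. The genuine gap is in your supersolution step. At a minimum point of $U_i-\phi$, the strong DPP for $\mathbf U$ combined with the analogues of Lemmas \ref{l4.1} and \ref{l4.3} gives the bound $\esssup_{\alpha\in\mathcal A_{t,\tau^\delta}}\essinf_{v\in\mathcal V_{t,\tau^\delta}}Y^{2,i,\alpha(v),v}_t\le C\delta^{5/4}$, while your reversed Lemma \ref{l4.6} identifies $Y^{3,i}_t$ (driver $\inf_v\sup_u F_i$) with $\essinf_{v}\esssup_{u}Y^{2,i,u,v}_t$. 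The only inequality between these two quantities that comes for free is $\esssup_\alpha\essinf_v Y^{2,i,\alpha(v),v}_t\le\essinf_v\esssup_u Y^{2,i,u,v}_t$ (since $\alpha(v)\in\mathcal U_{t,\tau^\delta}$ for each $v$); hence the DPP controls the \emph{smaller} quantity and does not transfer to $Y^{3,i}_t$. The ``minimax identity at this scale'' you invoke is precisely the missing nontrivial direction, and it is not bookkeeping: it amounts to exhibiting, for every $\varepsilon>0$, a \emph{nonanticipative} strategy $\alpha^\varepsilon$ with $Y^{2,i,\alpha^\varepsilon(v),v}_t\ge Y^{3,i}_t-\varepsilon$ simultaneously for all $v$, and the naive best response (the selector realizing $\sup_uF_i(\cdot,u,v_s)$ evaluated along the solution of the BSDE with driver $\sup_uF_i(\cdot,u,v_s)$) is anticipative, because that solution at time $s$ depends on the future of $v$.

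The faithful mirror of the paper's proof of Theorem \ref{th4.1} (the paper states the present theorem without proof, as the symmetric counterpart) avoids this issue by \emph{swapping} which viscosity inequality is obtained directly and which by contradiction. For the subsolution inequality of $U$ one argues directly: at a maximum point the DPP yields $\esssup_\alpha\essinf_vY^{2,i,\alpha(v),v}_t\ge-C\delta^{5/4}$, and the free inequality above pushes this up to $Y^{3,i}_t\ge-C\delta^{5/4}$, whence $\inf_v\sup_uF_i(t,x,0,0,0,0,u,v)\ge0$. For the supersolution inequality one argues by contradiction: if $\inf_v\sup_uF_i(t,x,0,0,0,0,u,v)\ge\theta>0$, choose a measurable $\chi:V\to U$ with $F_i(t,x,0,0,0,0,\chi(v),v)\ge\frac34\theta$ for all $v$, regard $v\mapsto\chi(v_\cdot)$ as an element of $\mathcal A_{t,\tau^\delta}$, and use $\essinf_vY^{1,i,\chi(v),v}_t\le0$ from the DPP together with the estimates as in \eqref{4.27}--\eqref{ee77} (players exchanged, same use of $\lambda<\frac1{(m-1)T}$) to reach a contradiction; so for $U$ the measurable selection is needed in the supersolution half, it is not dispensed with altogether. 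Alternatively, your route can be repaired by actually proving the bridge: pick a measurable selector $\bar u(s,y,h,z,k,v)$ with $F_i(s,x,y,h,z,k,\bar u,v)\ge G_i(s,x,y,h,z,k)-\varepsilon$ and set $\alpha^\varepsilon(v)_s:=\bar u\big(s,Y^{3,i}_s,H^{3,i}_s,0,0,v_s\big)$, which is nonanticipative because $(Y^{3,i},H^{3,i})$ does not depend on $v$; the one-sided comparison argument behind Theorem \ref{Com-Th} together with Lemma \ref{l2} then gives $Y^{2,i,\alpha^\varepsilon(v),v}_t\ge Y^{3,i}_t-C\varepsilon$. But this construction must be carried out explicitly; it does not follow from merely reversing the order of the selections in Lemma \ref{l4.6}.
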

As a byproduct, we obtain that the value of the related game problem exists under the Isaacs' condition.
\begin{theorem} \sl (\textbf{The value of the game})
 If for all $(t,x,i)\in [0,T]\times \mathbb{R}^n\times  \mathbf{M}$, the following Isaacs' condition
 $$\begin{aligned}& \sup\limits_{u\in U}\mathop{\rm inf}%
\limits_{v\in V}\Big\{L_{u,v}^iU_i(t,x)
+f_i\big(t,x,\mathbf{U}(t,x), D
U_i(t,x)\sigma_i(t,x,u,v), C_{u,v}^iU_i(t,x),u,v\big)\Big\}&  &  &  \\
& =\mathop{\rm inf}%
\limits_{v\in V}\sup\limits_{u\in U} \Big\{L_{u,v}^iU_i(t,x)
+f_i\big(t,x,\mathbf{U}(t,x), D
U_i(t,x)\sigma_i(t,x,u,v), C_{u,v}^iU_i(t,x),u,v\big)\Big\}
\end{aligned} $$
 holds true, then we have $\mathbf{W}(t,x)=\mathbf{U}(t,x)$.
That is to say, there exists a value for our stochastic differential games under the Isaacs' condition.

  \end{theorem}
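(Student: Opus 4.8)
The plan is to avoid a direct comparison of the two game values and instead argue through the PDE characterizations already established. By Theorem~\ref{th4.1} together with Theorem~\ref{th-uni}, the lower value function $\mathbf{W}$ is the unique viscosity solution in $\Theta$ of the system \eqref{equ4.2} (whose Hamiltonian is $\sup_{u\in U}\inf_{v\in V}\{\cdots\}$). By the preceding theorem on the upper value function, $\mathbf{U}$ is a viscosity solution of the system \eqref{equ5.1} (whose Hamiltonian is $\inf_{v\in V}\sup_{u\in U}\{\cdots\}$), and since $\mathbf{U}$ is of linear growth (the analogue of Lemma~\ref{l2.1} for the upper value function, which forces membership in $\Theta$ because $1+|x|$ is dominated by $\exp\{\tilde A[\log((|x|^2+1)^{1/2})]^2\}$ for every $\tilde A>0$), we have $\mathbf{U}\in\Theta$. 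It therefore suffices to show that, under the Isaacs condition, $\mathbf{U}$ is also a viscosity solution of \eqref{equ4.2}; the equality $\mathbf{W}=\mathbf{U}$ is then immediate from the uniqueness assertion of Theorem~\ref{th-uni}.

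To carry out that step, fix $i\in\mathbf{M}$, take $\phi\in C_{l,b}^3([0,T]\times\mathbb{R}^n;\mathbb{R})$ and a point $(t,x)\in[0,T)\times\mathbb{R}^n$ at which $U_i-\phi$ attains a local maximum (the supersolution case is symmetric and requires a local minimum). Using the remark following Definition~\ref{Def111}, I would first replace the truncated operators $B^i_{\delta,u,v}(U_i,\phi)$ and $C^i_{\delta,u,v}(U_i,\phi)$ by the full operators $B^i_{u,v}\phi$, $C^i_{u,v}\phi$. Since $\mathbf{U}$ is a viscosity subsolution of \eqref{equ5.1},
\begin{equation*}
\frac{\partial\phi}{\partial t}(t,x)+\inf_{v\in V}\sup_{u\in U}\Big\{A^i_{u,v}\phi(t,x)+B^i_{u,v}\phi(t,x)+f_i\big(t,x,\mathbf{U}(t,x),D\phi(t,x)\sigma_i(t,x,u,v),C^i_{u,v}\phi(t,x),u,v\big)\Big\}\geq 0.
\end{equation*}
The Isaacs condition asserts that, for the data in play -- the point $(t,x)$, the coupling vector $\mathbf{U}(t,x)$, and the first/second order and nonlocal quantities generated by $\phi$ at this contact point -- the $\inf_{v}\sup_{u}$ and the $\sup_{u}\inf_{v}$ of the bracket coincide. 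Substituting this identity into the displayed inequality gives precisely the subsolution inequality of Definition~\ref{Def111} for \eqref{equ4.2}; the terminal condition $U_i(T,\cdot)=\Phi_i$ is inherited from \eqref{equ5.1}. Running the mirror argument at local minima yields the supersolution inequality, so $\mathbf{U}$ is a viscosity solution of \eqref{equ4.2}. Consequently $\mathbf{W}$ and $\mathbf{U}$ are two viscosity solutions of \eqref{equ4.2} in $\Theta$, and Theorem~\ref{th-uni} forces $\mathbf{W}=\mathbf{U}$ on $[0,T]\times\mathbb{R}^n$, i.e. the game has a value.

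I expect the only delicate point to be the passage in the second paragraph. The Isaacs condition as stated is in ``solution form'', with $U_i$, $DU_i$ and $C^i_{u,v}U_i$ inserted, whereas the viscosity inequalities require the minimax~$=$~maximin identity with a test function $\phi$ inserted into the local and integral operators; the clean way to reconcile these is to read the Isaacs condition as a pointwise saddle-point property of the Hamiltonian as a function of all its arguments, so that the quantities attached to $\phi$ at a contact point are admissible arguments and the interchange of $\inf_{v}\sup_{u}$ and $\sup_{u}\inf_{v}$ is legitimate. Apart from recording this observation (and checking that the replacement of the $\delta$-truncated nonlocal terms is harmless, which is exactly the content of the cited remark), the proof is routine and reduces to an application of the already-proven existence and uniqueness results.
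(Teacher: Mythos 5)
Your argument is correct and is essentially the paper's own proof: the paper simply observes that under the Isaacs condition the systems \eqref{equ4.2} and \eqref{equ5.1} coincide, so the uniqueness of the viscosity solution in $\Theta$ (Theorem \ref{th-uni}) forces $\mathbf{W}=\mathbf{U}$. Your more detailed route — verifying via test functions that $\mathbf{U}$ is a viscosity solution of \eqref{equ4.2}, noting $\mathbf{U}\in\Theta$, and reading the Isaacs condition as a pointwise saddle identity of the Hamiltonian in all its arguments — just makes explicit the steps the paper leaves implicit.
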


\begin{proof}

 When the Isaacs' condition holds true,   equations \eqref{equ4.2} and \eqref{equ5.1} coincide. The uniqueness of viscosity solution  implies  that $\mathbf{W}(t,x)=\mathbf{U}(t,x)$, $(t,x)\in[0,T]\times \mathbb{R}^n$.
\end{proof}

\section{Appendix: Proof of Theorem \ref{SDPP}}

To simplify the notations, we denote by $\mathscr{W}_i(t,x)$ the right-hand side of equality \eqref{equ-SDPP}, that is,
 $$\mathscr{W}_i(t,x):= \mathop{\rm essinf}\limits_{\beta
\in \mathcal{B}_{t,\tau}}\mathop{\rm esssup}\limits_{u\in \mathcal{U}%
_{t,\tau}}G_{t,{\tau}}^{t,x,i;u,\beta(u)} \big[W_{N_\tau^{t,i}}\big(\tau,
{X}_{\tau}^{t,x,i;u,\beta(u)}\big)\big],\ i\in\mathbf{M},\ (t,x)\in[0,T]\times\mathbb{R}^n. $$
Similar to the fact that  $W_i(t,x)$  is deterministic (see, Proposition \ref{Pro---3.1}), we can check that also  $\mathscr{W}_i(t,x)$ is  deterministic, for all $i\in \mathbf{M}$, $(t,x)\in[0,T]\times\mathbb{R}^n$. In order to show the equality between  $\mathscr{W}_i (\cdot,\cdot)$
and $W_i(\cdot,\cdot)$, we split the proof into the following two steps.
%

\medskip

\noindent \emph { Step 1.}
For all $i\in \mathbf{M}$, $(t,x)\in[0,T]\times\mathbb{R}^n$,  $\mathscr{W}_i (t,x)\leq W_i(t,x),$ $P$-a.s.\\
For any fixed $\beta \in \mathcal{B}_{t,T}$,  any given
$u_{2}\in \mathcal{U}_{\tau,T}$, we define, for $u_{1}
\in \mathcal{U}_{t,\tau}$,
$$
\beta_{1}(u_{1}):=   \beta(u_{1}\oplus u_{2})|_{[t,\tau]},\ \mbox{ with\  } u_{1}\oplus u_{2}:=   u_{1}\mathbf{1}_{[t,\tau]}+u_{2}\mathbf{1}%
_{(\tau,T]}\in\mathcal{U}_{t,T}.
$$
Clearly,  the nonanticipativity    of $\beta$ implies that  $\beta_{1}\in \mathcal{B}_{t,\tau}$ and so $\beta_{1}$ is
independent of the special choice of $u_{2}\in \mathcal{U}_{\tau,T}%
$.
Using  the definition of $\mathscr{W}_i (t,x)$,   similar to the proof of (A.20) in \cite{BHL}, we know that, for any $\varepsilon>0$ and $\beta_1\in \mathcal{B}_{t,\tau}$, there exists $u_1^\varepsilon\in \mathcal {U}_{t,\tau}$, such that
\begin{equation}%
\begin{array}
[c]{l}%
\mathscr{W}_i (t,x)\leq  G_{t,\tau}^{t,x,i;u_1^\varepsilon,\beta_1(u_1^\varepsilon)}\big[W_{N_{\tau}^{t,i}}\big(\tau, {X}_{\tau}^{t,x,i;u_1^\varepsilon,\beta_1(u_1^\varepsilon)}\big)\big] +\varepsilon,\ P\text{-a.s.}
\end{array}
\label{ee7.1}%
\end{equation}
Now we focus on $W_{N_{\tau}^{t,i}}\big(\tau,{X}_{\tau}^{t,x,i;u_{1}^\varepsilon,\beta_{1}(u_{1}^\varepsilon)}\big)$. Define
$$\beta_2^\varepsilon(u_2):=   \beta(u_1^\varepsilon\oplus
u_{2})|_{[\tau,T]},\ u_{2}\in \mathcal{U}_{\tau,T}.$$
Since $\beta\in\mathcal{B}_{t,T}$, we have $\beta_2^\varepsilon\in \mathcal{B}_{\tau,T}$.
 Taking  $\eta=  X_\tau^{t,x,i;u_1^\varepsilon,\beta_1(u_1^\varepsilon)}$ in  Proposition \ref{prop3.3}, we have
\begin{equation*}
\begin{aligned}
W_{N_\tau^{t,i}}\big(\tau,  X_\tau^{t,x,i;u_1^\varepsilon,\beta_1(u_1^\varepsilon)}\big)=&\mathop{\rm essinf}\limits_{\beta_2\in\mathcal{B}%
_{\tau,T}} \mathop{\rm esssup}\limits_{u_2\in\mathcal{U}_{\tau,T}}J\big(
\tau,  X_\tau^{t,x,i;u_1^\varepsilon,\beta_1(u_1^\varepsilon)},N_\tau^{t,i};u_2,\beta_2(u_2)\big)\\
\leq& \mathop{\rm esssup}\limits_{u_2\in\mathcal{U}_{\tau,T}}J\big(
\tau,  X_\tau^{t,x,i;u_1^\varepsilon,\beta_1(u_1^\varepsilon)},N_\tau^{t,i};u_2,\beta_2^\varepsilon (u_2)\big),\  P\text{-a.s.}
\end{aligned}
\end{equation*}
Again, a standard argument implies that there exists $u_2^\varepsilon\in\mathcal{U}_{\tau,T}$, such that
\begin{equation}\label{equ031402}
\begin{aligned}
W_{N_\tau^{t,i}}\big(\tau,  X_\tau^{t,x,i;u_1^\varepsilon,\beta_1(u_1^\varepsilon)}\big)
 \leq& J\big(
\tau, X_\tau^{t,x,i;u_1^\varepsilon,\beta_1(u_1^\varepsilon)},N_\tau^{t,i};u_2^\varepsilon,\beta_2^\varepsilon(u_2^\varepsilon)\big)+\varepsilon\\
=& J\big(
\tau,  X_\tau^{t,x,i;u^\varepsilon,\beta(u^\varepsilon)},N_\tau^{t,i};u^\varepsilon,\beta(u^\varepsilon)\big)+\varepsilon,\ P\text{-a.s.},
\end{aligned}
\end{equation}
with $u^\varepsilon:=   u_1^\varepsilon\oplus u_2^\varepsilon\in\mathcal{U}_{t,T}$.
Therefore, from \eqref{ee7.1}, \eqref{equ031402}, Lemma \ref{l2} and the comparison theorem for  BSDE with jumps  (Theorem \ref{Com-Th}) we obtain
\begin{equation}\label{equ031403}
\begin{aligned}
\mathscr{W}_i (t,x)\leq&  G_{t,{\tau}}^{t,x,i;u_{1}^\varepsilon,\beta_{1}(u_{1}^\varepsilon)}\big[J\big(
\tau,  X_\tau^{t,x,i;u^\varepsilon,\beta(u^\varepsilon)},N_\tau^{t,i};u^\varepsilon,\beta (u^\varepsilon)\big)+\varepsilon\big]+\varepsilon\\
\leq& G_{t,{\tau}}^{t,x,i;u^\varepsilon,\beta(u^\varepsilon)}\big[J\big(
\tau,X_\tau^{t,x,i;u^\varepsilon,\beta(u^\varepsilon)},N_\tau^{t,i};u^\varepsilon,\beta (u^\varepsilon)\big)\big]+C\varepsilon,\
 P\text{-a.s.}
\end{aligned}
\end{equation}
 Finally, from \eqref{equ031403} and the definition of our backward semigroup,  we get
\begin{equation}\nonumber
\begin{aligned}
\mathscr{W}_i (t,x)
\leq& G_{t,{\tau}}^{t,x,i;u^\varepsilon,\beta(u^\varepsilon)}\big[J\big(
\tau,X_\tau^{t,x,i;u^\varepsilon,\beta(u^\varepsilon)},N_\tau^{t,i};u^\varepsilon,\beta (u^\varepsilon)\big)\big]+C\varepsilon \\
=& G_{t,{\tau}}^{t,x,i;u^\varepsilon,\beta(u^\varepsilon)}\big[Y_\tau^{t,x,i;u^\varepsilon,\beta(u^\varepsilon)}\big]+C\varepsilon\\
=& Y_t^{t,x,i;u^\varepsilon,\beta(u^\varepsilon)}+C\varepsilon=J(t,x,i;u^\varepsilon,\beta(u^\varepsilon))+C\varepsilon\\
\leq &  \mathop{\rm esssup}_{u\in \mathcal{U}_{t,T}}J(t,x,i;u,\beta(u))+C\varepsilon,\
 P\text{-a.s.}
\end{aligned}
\end{equation}
From the arbitrary choice of    $\beta \in \mathcal{B}_{t,T}$, we conclude that
\begin{equation}
\mathscr{W}_i (t,x)\leq \mathop{\rm essinf}_{\beta \in \mathcal{B}_{t,T}%
} \mathop{\rm esssup}_{u\in \mathcal{U}_{t,T}}J(t,x,i;u,\beta(u))+C\varepsilon=W_i(t,x)+C\varepsilon.
\end{equation}
Letting $\varepsilon \downarrow 0$, we get
$\mathscr{W}_i (t,x)\leq W_i(t,x)$, $i\in \mathbf{M}$, $(t,x)\in[0,T]\times\mathbb{R}^n$.

 \medskip

\noindent \emph{ Step 2.}
For all $i\in \mathbf{M}$, $(t,x)\in[0,T]\times\mathbb{R}^n$,  $\mathscr{W}_i (t,x)\geq W_i(t,x),$ $P$-a.s.\\
 Given any $\varepsilon>0$, similar  to the proof of (A.21) in \cite{BHL},  there exists $\beta_1^\varepsilon\in\mathcal{B}_{t,\tau}$ such that for all $u_{1}\in \mathcal{U}_{t,\tau}$,
\begin{equation}\label{ee6.3.1}
\begin{aligned}
\mathscr{W}_i (t,x)  \geq G_{t,{\tau}}^{t,x,i;u_{1},\beta_{1}^\varepsilon(u_{1})}\big[W_{N_{\tau}^{t,i}}\big(\tau,{X}_{\tau}^{t,x,i;u_{1},\beta_{1}^\varepsilon(u_{1})}\big)\big]-\varepsilon,\ P\text{-a.s.}
\end{aligned}
\end{equation}
Now we estimate $W_{N_{\tau}^{t,i}}\big(\tau, {X}_{\tau}^{t,x,i;u_{1},\beta_{1}^\varepsilon(u_{1})}\big)$.  Choosing $\eta=  X_\tau^{t,x,i;u_1,\beta_1^\varepsilon(u_1)}$ in Proposition \ref{prop3.3}, we have
\begin{equation*}
\begin{aligned}
W_{N_\tau^{t,i}}\big(\tau, X_\tau^{t,x,i;u_1,\beta_1^\varepsilon(u_1)}\big)=&\mathop{\rm essinf}\limits_{\beta_2\in\mathcal{B}%
_{\tau,T}} \mathop{\rm esssup}\limits_{u_2\in\mathcal{U}_{\tau,T}}J\big(
\tau, X_\tau^{t,x,i;u_1,\beta_1^\varepsilon(u_1)},N_\tau^{t,i};u_2,\beta_2(u_2)\big),\  P\text{-a.s.}
\end{aligned}
\end{equation*}
Using a similar argument, we deduce that there exists $\beta_2^\varepsilon\in\mathcal{B}_{\tau,T}$ (depending on $u_1$) such that for all $u_2\in\mathcal{U}_{\tau,T}$,
\begin{equation}\label{equ031406}
\begin{aligned}
W_{N_\tau^{t,i}}\big(\tau,  X_\tau^{t,x,i;u_1,\beta_1^\varepsilon(u_1)}\big)
\geq J\big(
\tau, X_\tau^{t,x,i;u_1,\beta_1^\varepsilon(u_1)},N_\tau^{t,i};u_2,\beta_2^\varepsilon(u_2)\big)-\varepsilon,\  P\text{-a.s.}
\end{aligned}
\end{equation}
For  $u\in\mathcal{U}_{t,T}$, we define
$ \beta^\varepsilon(u):=   \beta^\varepsilon_1(u_1)\oplus \beta^\varepsilon_2(u_2), \mbox{ with }u_1=u|_{[t,\tau]},\ u_2=u|_{(\tau,T]}.$
 Then, by verifying Definition \ref{Def-2}, we get   $\beta^\varepsilon\in\mathcal{B}_{t,T}$.
Let $u\in\mathcal{U}_{t,T}$ be arbitrarily given and decomposed into $u_1=u|_{[t,\tau]}\in\mathcal{U}_{t,\tau}$ and $u_2=u|_{(\tau,T]}\in\mathcal{U}_{\tau,T}$. Then from \eqref{ee6.3.1}, \eqref{equ031406}, Lemma \ref{l2}, we get
 \begin{equation}\nonumber
\begin{aligned}
\mathscr{W}_i (t,x)  \geq& G_{t,{\tau}}^{t,x,i;u_{1},\beta_{1}^\varepsilon(u_{1})}\big[J\big(
\tau,  X_\tau^{t,x,i;u_1,\beta_1^\varepsilon(u_1)},N_\tau^{t,i};u_2,\beta_2^\varepsilon(u_2)\big)-\varepsilon\big]-\varepsilon\\
\geq& G_{t,{\tau}}^{t,x,i;u_{1},\beta_{1}^\varepsilon(u_{1})}\big[J\big(
\tau, X_\tau^{t,x,i;u_1,\beta_1^\varepsilon(u_1)},N_\tau^{t,i};u_2,\beta_2^\varepsilon(u_2)\big)\big]-C\varepsilon\\
=& G_{t,{\tau}}^{t,x,i;u,\beta^\varepsilon(u)}\big[Y_\tau^{t,x,i;u,\beta^\varepsilon(u)}\big]-C\varepsilon\\
=& Y_t^{t,x,i;u,\beta^\varepsilon(u)}-C\varepsilon=J(t,x,i;u,\beta^\varepsilon(u))-C\varepsilon,\
P\text{-a.s.}
\end{aligned}
\end{equation}
Consequently,
\begin{equation*}
\mathscr{W}_i (t,x)\geq \mathop{\rm esssup}\limits_{u\in \mathcal{U}_{t,T}%
}J(t,x,i;u,\beta^{\varepsilon}(u))-C\varepsilon \geq
\mathop{\rm essinf}\limits_{\beta \in \mathcal{B}_{t,T}}%
\mathop{\rm esssup}\limits_{u\in \mathcal{U}_{t,T}}J(t,x,i;u,\beta
(u))-C\varepsilon=W_i(t,x)-C\varepsilon,\  \  \mbox{P-a.s.}\label{ee7.12}%
\end{equation*}
Finally, letting $\varepsilon \downarrow0$, we get $\mathscr{W}_i (t,x)\geq W_i(t,x).$


%

\end{document}